\documentclass[11pt,english]{article}
\usepackage[T1]{fontenc}
\usepackage[utf8]{inputenc}
\usepackage{geometry}
\geometry{verbose,tmargin=2cm,bmargin=2cm,lmargin=2cm,rmargin=2cm}
\setcounter{tocdepth}{2}
\usepackage{color}
\usepackage{babel}
\usepackage{array}
\usepackage{prettyref}
\usepackage{units}
\usepackage{amsmath}
\usepackage{amsthm}
\usepackage{amssymb}
\usepackage{stackrel}
\usepackage{graphicx}
\usepackage[all]{xy}
\PassOptionsToPackage{normalem}{ulem}
\usepackage{ulem}
\usepackage[unicode=true,pdfusetitle,
 bookmarks=true,bookmarksnumbered=false,bookmarksopen=false,
 breaklinks=false,pdfborder={0 0 1},backref=page,colorlinks=true]
 {hyperref}
\hypersetup{
 linkcolor=blue,citecolor=blue}

\makeatletter

\providecommand{\tabularnewline}{\\}

\numberwithin{equation}{section}
\numberwithin{figure}{section}
\theoremstyle{plain}
\newtheorem{thm}{\protect\theoremname}[section]
\theoremstyle{definition}
\newtheorem{defn}[thm]{\protect\definitionname}
\theoremstyle{remark}
\newtheorem*{rem*}{\protect\remarkname}
\theoremstyle{plain}
\newtheorem{cor}[thm]{\protect\corollaryname}
\theoremstyle{plain}
\newtheorem{conjecture}[thm]{\protect\conjecturename}
\theoremstyle{remark}
\newtheorem{rem}[thm]{\protect\remarkname}
\theoremstyle{definition}
\newtheorem{example}[thm]{\protect\examplename}
\theoremstyle{plain}
\newtheorem{prop}[thm]{\protect\propositionname}
\theoremstyle{plain}
\newtheorem{lem}[thm]{\protect\lemmaname}

\usepackage{babel}

\usepackage{ytableau}

\makeatother

\providecommand{\conjecturename}{Conjecture}
\providecommand{\corollaryname}{Corollary}
\providecommand{\definitionname}{Definition}
\providecommand{\examplename}{Example}
\providecommand{\lemmaname}{Lemma}
\providecommand{\propositionname}{Proposition}
\providecommand{\remarkname}{Remark}
\providecommand{\theoremname}{Theorem}

\begin{document}
\global\long\def\F{\mathbb{\mathbb{\mathbf{F}}}}%
 
\global\long\def\rk{\mathbb{\mathrm{rk}}}%
 
\global\long\def\crit{\mathbb{\mathrm{Crit}}}%
 
\global\long\def\critak{\crit_{\ak}}%
 
\global\long\def\Hom{\mathrm{Hom}}%
 
\global\long\def\defi{\stackrel{\mathrm{def}}{=}}%
 
\global\long\def\kl{k_{1},\ldots,k_{\ell}}%
 
\global\long\def\ak{\alpha_{1},\ldots,\alpha_{k}}%
 
\global\long\def\qak{\xi_{1}^{\alpha_{1}}\cdots\xi_{k}^{\alpha_{k}}}%
 
\global\long\def\tr{{\cal T}r }%
 
\global\long\def\id{\mathrm{id}}%
 
\global\long\def\Aut{\mathrm{Aut}}%
 
\global\long\def\wl{w_{1},\ldots,w_{\ell}}%
 
\global\long\def\alg{\le_{\mathrm{alg}}}%
 
\global\long\def\ff{\stackrel{*}{\le}}%
 
\global\long\def\pmulti{\pi_{1}^{\mathrm{multi}}}%
 
\global\long\def\mocc{{\cal MOCC}}%
 
\global\long\def\mcg{{\cal M}\mathrm{u}{\cal C}{\cal G}}%
 
\global\long\def\decomp{{\cal D}\mathrm{ecomp}_{B}}%
 
\global\long\def\decompt{{\cal D}\mathrm{ecomp}_{B}^{3}}%
 
\global\long\def\algdecomp{{\cal D}\mathrm{ecomp}_{\mathrm{alg}}}%
 
\global\long\def\algdecompt{{\cal D}\mathrm{ecomp}_{\mathrm{alg}}^{3}}%
 
\global\long\def\A{{\cal A}}%
 
\global\long\def\z{{\cal \xi}}%
 
\global\long\def\mobius{M\dacute{o}bius}%
\global\long\def\chimax{\chi^{\mathrm{max}}}%
 
\global\long\def\chiak{\chi_{\ak}^{\mathrm{max}}}%
 
\global\long\def\uexp{\mathbb{E}_{\mathrm{unif}}}%
\global\long\def\symirr{\widehat{S_{\infty}}}%
 
\global\long\def\scl{\mathrm{scl}}%
 
\global\long\def\H{{\cal H}}%
 
\global\long\def\J{{\cal J}}%
 
\global\long\def\piol{\pi_{1}^{\mathrm{lab}}}%
 
\global\long\def\E{\overrightarrow{E}}%
 
\global\long\def\spec{\mathrm{Spec}}%
 
\global\long\def\cyr{\mathfrak{CR}}%

\title{Word Measures on Symmetric Groups}
\author{Liam Hanany~~~~and~~~~Doron Puder}
\maketitle
\begin{abstract}
Fix a word $w$ in a free group $\F$ on $r$ generators. A $w$-random
permutation in the symmetric group $S_{N}$ is obtained by sampling
$r$ independent uniformly random permutations $\sigma_{1},\ldots,\sigma_{r}\in S_{N}$
and evaluating $w\left(\sigma_{1},\ldots,\sigma_{r}\right)$. In \cite{Puder2014,PP15}
it was shown that the average number of fixed points in a $w$-random
permutation is $1+\theta\left(N^{1-\pi\left(w\right)}\right)$, where
$\pi\left(w\right)$ is the smallest rank of a subgroup $H\le\F$
containing $w$ as a non-primitive element. We show that $\pi\left(w\right)$
plays a role in estimates of all stable characters of symmetric groups.
In particular, we show that for all $t\ge2$, the average number of
$t$-cycles is $\frac{1}{t}+O\left(N^{-\pi\left(w\right)}\right)$.
As an application, we prove that for every $s$, every $\varepsilon>0$
and every large enough $r$, Schreier graphs with $r$ random generators
depicting the action of $S_{N}$ on $s$-tuples, have second eigenvalue
at most $2\sqrt{2r-1}+\varepsilon$ asymptotically almost surely.
An important ingredient in this work is a systematic study of not-necessarily
connected Stallings core graphs.
\end{abstract}
\tableofcontents{}

\section{Introduction\label{sec:Introduction}}

Fix $r\in\mathbb{Z}_{\ge1}$. Throughout this paper we let $\F$ denote
the free group on $r$ generators. A word $w\in\F$ induces a map
on any finite group, $w:G^{r}\to G$, by substituting the letters
of $w$ with elements of $G$. This map defines a distribution on
the group $G$: the push forward of the uniform distribution on $G^{r}$.
Equivalently, this distribution is the normalized number of times
each element in $G$ is obtained by a substitution in $w$. We call
this distribution \emph{the $w$-measure on $G$}. For example, if
$w=xyxy^{-2}$, a $w$-random element in $G$ is $ghgh^{-2}$ where
$g,h$ are independent, uniformly random elements of $G$.

More concretely, we study the expected value with respect to word
measures of certain class functions (functions invariant under conjugation).
Given a class function $f\colon G\to\mathbb{R}$, we analyze $\mathbb{E}_{w}\left[f\right]$,
the average value of this function under the $w$-measure on $G$.
Word measures are constant on conjugacy classes of $G$, i.e.~are
themselves class functions on the group $G$. Therefore, the expressions
$\mathbb{E}_{w}\left[f\right]$, running over a suitable family of
class functions (for example, all irreducible characters of $G$),
uniquely determine the $w$-measure on $G$. Several papers studying
word measures on various groups are motivated by questions from the
field of free probability, where the asymptotic statistics of such
measures on families of groups was analyzed. In recent years, different
works found more refined and deeper structure in these measures. We
mention some of these works in Section \ref{subsec:Similar-phenomena-in}.
The current work is the first one where non-trivial bounds are given
on all ``natural'' families of class functions on a given family
of groups, as we now explain.

Our focus in this paper is on word measures on the symmetric groups
$S_{N}$, and especially on the following class functions. For every
$k\in\mathbb{Z}_{\ge1}$, denote

\begin{equation}
\z_{k}\left(\sigma\right)\defi\#\mathrm{fix}\left(\sigma^{k}\right)\label{eq:xi k}
\end{equation}
where $\#\mathrm{fix}(\tau)$ is the number of fixed points of the
permutation $\tau$. We study the expected value under word measures
of products of the $\xi_{k}$'s in the form of $\xi_{1}^{\alpha_{1}}\xi_{2}^{\alpha_{2}}\cdots\xi_{k}^{\alpha_{k}}$
with $k\in\mathbb{Z}_{\ge1}$ and $\alpha_{1},\ldots,\alpha_{k}\in\mathbb{Z}_{\ge0}$
with $\sum\alpha_{k}\ge1$. When we write $\mathbb{E}_{w}\left[\xi_{1}^{\alpha_{1}}\cdots\xi_{k}^{\alpha_{k}}\right]$
there is a suppressed parameter $N$, namely, $\mathbb{E}_{w}\left[\xi_{1}^{\alpha_{1}}\cdots\xi_{k}^{\alpha_{k}}\right]$
is a map $\mathbb{Z}_{\ge1}\to\mathbb{Q}$, where $N$ is mapped to
the average value of this class function under the $w$-measure on
$S_{N}$.

\subsection{Main theorem}

For every word $w\in\F$ and $k,\ak$ as above, the expectation $\mathbb{E}_{w}\left[\qak\right]$
is a rational function of $N$, for large enough $N$: this is essentially
a result of \cite{nica1994number}, and see also Section 4 and especially
Remark 31 in \cite{Linial2010}. (It also follows from the analysis
in the current paper -- see Corollary \ref{cor:Phi, L, R, C all rational}.)
For example, $\mathbb{E}_{xyx^{-1}y^{-1}}\left[\xi_{1}\z_{2}\right]=3+\frac{4\left(N^{4}-9N^{3}+23N^{2}-13N-1\right)}{N\left(N-1\right)\left(N-2\right)\left(N-3\right)\left(N-5\right)}$
for all $N\ge6$. In particular, for large enough $N$, $\mathbb{E}_{w}\left[\qak\right]$
can be written as a Laurent series in $N$. Our main goal in this
paper is to estimate the leading terms of this Laurent series expansion.
The special case of $\z_{1}=\#\mathrm{fix}(\sigma)$, the average
number of fixed points, was studied in \cite{Puder2014,PP15}. These
papers show a connection between $\mathbb{E}_{w}\left[\z_{1}\right]$
and invariants of $w$ as an element of the free group.

In order to explain these invariants, we need a few notions from the
realm of free groups. A \emph{basis }of a free group is a free generating
set (or, equivalently for finitely generated free groups, a generating
set of minimal size). An element $w\in\F$ is called \emph{primitive}
if it belongs to a basis of $\F$. The rank of the free group $\F$,
denoted $\rk\F,$ is the size of a basis of $\F$. The classical Nielsen-Schreier
theorem states that subgroups of free groups are free. The primitivity
rank of a word, which plays an important role in this paper, was first
introduced in \cite{Puder2014}:
\begin{defn}
\label{def:primitivity-rank}The primitivity rank $\pi(w)$ of a word
$w\in\F$ is the minimal rank of a subgroup $H\leq\F$ containing
$w$ as a non-primitive element. If there are no such subgroups, set
$\pi\left(w\right)=\infty$. We also consider the set of \emph{critical
subgroups} of $w$ defined as
\[
\crit\left(w\right)=\left\{ H\le\F\,\middle|\,\rk H=\pi\left(w\right),H\ni w~\mathrm{and}~w~\mathrm{non-primitive~in}~H\right\} .
\]
\end{defn}

For example, $\pi\left(w\right)=0\iff w=1$ as the trivial word is
contained in the trivial subgroup but not as a primitive element.
Words with $\pi\left(w\right)=1$ are precisely proper powers and
if $u\in\F$ is not a proper power and $m\ge2$, then $\crit\left(u^{m}\right)=\left\{ \left\langle u^{d}\right\rangle \,\middle|\,d\mid m,~1\le d<m\right\} $.
Finally, $\pi\left(w\right)=\infty$ if and only if $w$ is primitive
in $\F$, and in any other case $\pi\left(w\right)\le r=\rk\F$ \cite[Lemma 4.1]{Puder2014}.
The set $\crit\left(w\right)$ is always finite \cite[Section 4]{PP15}.
We can now state the aforementioned result from \cite{PP15}.
\begin{thm}
\cite[Theorem 1.8]{PP15} \label{thm:PP15}For every word $w\in\F$

\[
\mathbb{E}_{w}\left[\#\mathrm{fix}(\sigma)\right]=1+\frac{|\crit(w)|}{N^{\pi(w)-1}}+O\left(\frac{1}{N^{\pi(w)}}\right).
\]
\end{thm}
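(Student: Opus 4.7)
The proof follows the Stallings core graph framework developed in \cite{Puder2014, PP15}. By $S_{N}$-equivariance, $\mathbb{E}_{w}[\#\mathrm{fix}(\sigma)] = N \cdot \Pr[w(\sigma_{1},\ldots,\sigma_{r})\cdot 1 = 1]$. Tracing the trajectory of $1$ while reading the letters of $w$ from left to right, the closure event decomposes according to the combinatorial ``shape'' of the trajectory: each shape corresponds to a pointed morphism from the cyclic $w$-graph into the Stallings core graph $\Gamma_{H}$ of some finitely generated $H \leq \F$ containing $w$, together with an injective labeling of $\Gamma_{H}$'s vertex set by elements of $[N]$. Collating these contributions yields an identity of the form
\[
\mathbb{E}_{w}[\#\mathrm{fix}(\sigma)] = \sum_{H \leq \F,\, w \in H} a_{w}(H) \cdot \frac{(N)_{|V(\Gamma_{H})|}}{N^{|E(\Gamma_{H})|}},
\]
where $(N)_{k} := N(N-1)\cdots(N-k+1)$, the sum runs over finitely generated subgroups $H$ containing $w$, and $a_{w}(H)$ is a combinatorial multiplicity. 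Since $|V(\Gamma_{H})| - |E(\Gamma_{H})| = 1 - \rk(H)$, the summand attached to $H$ is $(1 + O(1/N)) \cdot N^{1 - \rk(H)}$, so high-rank subgroups contribute negligibly to the Laurent expansion.

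The next step is Möbius inversion over the poset of subgroups in which $\langle w\rangle$ sits as an \emph{algebraic extension}: that is, subgroups $H$ containing $\langle w\rangle$ such that $\langle w\rangle$ is not contained in any proper free factor of $H$. The essential algebraic input is the characterization of primitivity in free groups: $w$ is primitive in $H$ precisely when $\langle w\rangle$ is a free factor of $H$. Applied systematically, the Möbius correction causes every contribution indexed by a subgroup in which $w$ is primitive to cancel, leaving a universal constant $+1$ together with a sum exclusively over subgroups in which $w$ is non-primitive.

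Among the surviving subgroups, the minimum rank is $\pi(w)$ by definition, attained precisely by the elements of $\crit(w)$. A direct evaluation of the Möbius-corrected leading coefficient shows that each $H \in \crit(w)$ contributes $N^{1-\pi(w)}(1 + O(1/N))$, yielding the main term $|\crit(w)|/N^{\pi(w)-1}$; deeper subgroups (with $\rk(H) > \pi(w)$) contribute at most $O(N^{-\pi(w)})$, and the finiteness of $\crit(w)$ (established in \cite{PP15}) is what allows one to extract a single clean leading coefficient. The principal obstacle is executing the Möbius inversion rigorously: one must set up the correct poset of algebraic extensions, relate its Möbius function to morphisms between Stallings core graphs, and verify that every contribution coming from a primitive occurrence of $w$ cancels. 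This in turn demands the detailed combinatorial-algebraic analysis of free-factor decompositions of core graphs that forms the technical core of \cite{Puder2014, PP15}.
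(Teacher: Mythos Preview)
Your proposal is correct and follows essentially the same approach as \cite{PP15}, which is the source of this theorem; the present paper does not reprove it directly but recovers it as the connected case of Theorem~\ref{thm:Phi-approximation}, whose proof is built on precisely the same ingredients you describe (the $L^{B}$/$R^{B}$/$C^{B}$ M\"obius inversions over decompositions and the fact that the right inversion is supported on algebraic extensions). One small correction: the exact injective-lift count is $\prod_{v}(N)_{|\eta^{-1}(v)|}\big/\prod_{e}(N)_{|\eta^{-1}(e)|}$ rather than $(N)_{|V|}/N^{|E|}$, but this does not affect your asymptotic analysis since both are $N^{1-\rk H}(1+O(1/N))$.
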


Since the expected number of fixed point in a uniformly random permutation
is $1$, the theorem can be restated as

\[
\mathbb{E}_{w}\left[\z_{1}\right]=\uexp\left[\z_{1}\right]+\frac{\left|\crit\left(w\right)\right|}{N^{\pi(w)-1}}+O\left(\frac{1}{N^{\pi(w)}}\right),
\]
where $\mathbb{E}_{\mathrm{unif}}\left[f\right]$ is the expectation
of the function $f$ with respect to the uniform distribution on $S_{N}$.
The main result of this paper is the following generalization of Theorem
\ref{thm:PP15}. The quantity $\left\langle \qak,\z_{1}-1\right\rangle $
appearing in the statement is defined on page \pageref{inner product of class functions}
below.
\begin{thm}
\label{thm:Word-Measure-Character-Bound}For every non-power $w\in\F$,
and for every $k\in\mathbb{Z}_{\ge1}$ and $\ak\in\mathbb{Z}_{\ge0}$,
there exists a positive integer $C_{\ak}\in\mathbb{Z}_{\ge1}$ such
that
\begin{equation}
\mathbb{E}_{w}\left[\qak\right]=\uexp\left[\qak\right]+C_{\ak}\cdot\frac{|\crit(w)|}{N^{\pi(w)-1}}+O\left(\frac{1}{N^{\pi(w)}}\right).\label{eq:main thm}
\end{equation}
Moreover, the constant $C_{\ak}$ is equal to $\left\langle \qak,\z_{1}-1\right\rangle $. 
\end{thm}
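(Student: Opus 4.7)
The plan is to reduce Theorem~\ref{thm:Word-Measure-Character-Bound} to Theorem~\ref{thm:PP15} by decomposing the class function $\qak$ over the irreducible characters of $S_{N}$. Both $\mathbb{E}_{w}\left[\cdot\right]$ and $\uexp\left[\cdot\right]$ are linear functionals on class functions, so the first step is to write
\[
\qak \;=\; \sum_{\rho}\, m_{\rho}\,\chi_{\rho},\qquad m_{\rho}=\left\langle \qak,\chi_{\rho}\right\rangle \in\mathbb{Z}_{\ge0},
\]
where the sum ranges over irreducible characters $\chi_{\rho}$ of $S_{N}$. Since $\uexp\left[\chi_{\rho}\right]=\delta_{\rho,\mathrm{triv}}$, separating off the trivial character yields
\[
\mathbb{E}_{w}\left[\qak\right]-\uexp\left[\qak\right] \;=\; \sum_{\rho\ne\mathrm{triv}}m_{\rho}\,\mathbb{E}_{w}\left[\chi_{\rho}\right].
\]

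The standard representation has character $\chi_{\mathrm{std}}=\xi_{1}-1$, and Theorem~\ref{thm:PP15} gives $\mathbb{E}_{w}\left[\chi_{\mathrm{std}}\right]=\frac{|\crit(w)|}{N^{\pi(w)-1}}+O(1/N^{\pi(w)})$. Hence the $\rho=\mathrm{std}$ summand contributes exactly
\[
\left\langle \qak,\xi_{1}-1\right\rangle \cdot\frac{|\crit(w)|}{N^{\pi(w)-1}}+O\!\left(\frac{1}{N^{\pi(w)}}\right),
\]
matching the main correction in~\eqref{eq:main thm} with $C_{\ak}=\left\langle \qak,\xi_{1}-1\right\rangle $. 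The positivity $C_{\ak}\ge1$ follows from an elementary character computation: when $\alpha_{1}\ge1$, expanding $\xi_{1}=\chi_{\mathrm{triv}}+\chi_{\mathrm{std}}$ already shows $\chi_{\mathrm{std}}$ appears in $\qak$, and when $\alpha_{1}=0$ one uses that $\xi_{j}$ (as the power sum $p_{j}$ in the eigenvalues of the defining permutation representation) contains $\chi_{\mathrm{std}}$ for $N$ large. What remains, and this is the main obstacle, is to prove the uniform bound
\[
\sum_{\rho\ne\mathrm{triv},\mathrm{std}}m_{\rho}\,\mathbb{E}_{w}\left[\chi_{\rho}\right] \;=\; O\!\left(\frac{1}{N^{\pi(w)}}\right).
\]

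The Stallings core graph machinery enters precisely here, through the systematic study of not-necessarily connected core graphs advertised in the abstract. One expresses each $\mathbb{E}_{w}\left[\chi_{\rho}\right]$ as a weighted sum, via a M\"obius-type inversion, over subgroups $H\le\F$ (equivalently, over core graphs) through which the combinatorial constraints defining $\chi_{\rho}$ factor. The leading $N^{-(\pi(w)-1)}$ contributions arise from the critical subgroups in $\crit(w)$, but characters other than $\chi_{\mathrm{triv}}$ and $\chi_{\mathrm{std}}$ impose stronger constraints -- one fixed point suffices to detect $\chi_{\mathrm{std}}$, whereas other irreducibles demand several coincident fixed points or more elaborate cycle structure -- which either force $H$ to have rank strictly larger than $\pi(w)$ or strictly lower the power of $1/N$ contributed. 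Carrying this out quantitatively, and controlling the M\"obius-type cancellations between the contributions of different core graphs uniformly in $\rho$ in a way that survives the summation against the multiplicities $m_{\rho}$, is the main technical content still to be established.
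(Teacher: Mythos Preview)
Your reduction is formally correct but circular in content. The bound you isolate as ``the main obstacle,''
\[
\mathbb{E}_{w}\left[\chi_{\rho}\right]=O\!\left(N^{-\pi(w)}\right)\quad\text{for every stable irreducible }\chi_{\rho}\ne 1,\,\xi_{1}-1,
\]
is exactly Corollary~\ref{cor:bound-for-irreducible-representations}, and the paper observes explicitly (right after that corollary) that since the stable irreducibles form a linear basis of $\A$, this statement is \emph{equivalent} to Theorem~\ref{thm:Word-Measure-Character-Bound}. So you have restated the theorem in its dual form and then declared the equivalent statement to be the remaining work. Your final paragraph acknowledges this (``the main technical content still to be established'') but offers only a heuristic: the claim that higher irreducibles ``force $H$ to have rank strictly larger than $\pi(w)$'' is precisely what must be proved, and nothing in your sketch supplies a mechanism.

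The paper's proof does not go character by character. It works directly with the monomial $\qak$, interpreting $\mathbb{E}_{w}\left[\qak\right]$ as $\Phi_{\eta_{\ak}^{w}}$ for a morphism $\eta_{\ak}^{w}\colon\Gamma_{\ak}^{w}\to X_{B}$ from a \emph{disconnected} core graph (a union of cycles for $w,w^{2},\ldots$). A M\"obius inversion over algebraic decompositions (Section~\ref{sec:M=0000F6bius-inversions}) reduces the problem to classifying algebraic morphisms $\Gamma_{\ak}^{w}\to\Sigma$ with $\chi(\Sigma)=1-\pi(w)$. The decisive input is Louder's dependence theorem (Theorem~\ref{thm:Louder}): it is used in Proposition~\ref{prop:critical-tuple} to show that every such critical morphism must factor through the single cycle $\Gamma_{1}^{w}$, hence corresponds to an element of $\crit(w)$. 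Without this group-theoretic input there is no reason the disconnected graph $\Gamma_{\ak}^{w}$ could not admit ``new'' algebraic targets of Euler characteristic $1-\pi(w)$ that do not come from $\crit(w)$; ruling these out is the heart of the argument, and your proposal contains no substitute for it.
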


In particular, $\mathbb{E}_{w}\left[\qak\right]\ge\uexp\left[\qak\right]$
for all large enough $N$. Note that the exclusion of powers in the
statement of the theorem is necessary: for example, let $x\in\F$
be a basis element. while $\mathbb{E}_{x^{3}}\left[\xi_{2}\right]=4$
for $N\ge6$, we have $\pi\left(x^{3}\right)=1$, $\crit\left(x^{3}\right)=\left\{ \left\langle x\right\rangle \right\} $,
$\uexp\left[\xi_{2}\right]=2$ for $N\ge2$ and $\left\langle \xi_{2},\xi_{1}-1\right\rangle =1$,
and so \eqref{eq:main thm} would give in this case $2+1\cdot\frac{1}{N^{0}}+O\left(\frac{1}{N}\right)=3+O\left(\frac{1}{N}\right)$,
which is incorrect. However, these expected values can still be understood.
Indeed, $\left(\qak\right)\left(\sigma^{t}\right)=\left(\xi_{t}^{\alpha_{1}}\cdots\xi_{kt}^{\alpha_{k}}\right)(\sigma)$.
Hence, we can still obtain an approximation for the expected value
of $\qak$ under a power-word, and see also Corollary \ref{cor:fixed points of power words}.
In Remark \ref{rem:expectaion in uniform measure of qak} we provide
a combinatorial formula for $\uexp\left[\qak\right]$.
\begin{rem*}
Throughout this paper, $N^{-\infty}$ should be interpreted as zero.
In particular, the results in this paper, Theorems \ref{thm:PP15}
and \ref{thm:Word-Measure-Character-Bound} included, hold for primitive
words as well, for which, as mentioned above, $\pi\left(w\right)=\infty$.
For example, in this case \eqref{eq:main thm} becomes $\mathbb{E}_{w}\left[\qak\right]=\mathbb{E}_{\mathrm{unif}}\left[\qak\right]$.
Indeed, primitive words induce the uniform measure on every finite
group \cite[Observation 1.2]{PP15}.
\end{rem*}
We perceive our results as interesting and elegant for their own sake.
We do, however, have further motivation for this study. One piece
of motivation comes from consequences of Theorem \ref{thm:Word-Measure-Character-Bound}
to the expansion of random Schreier graphs of $S_{N}$ as detailed
in Section \ref{subsec:Expansion-of-random} below. More motivation
comes from a growing body of evidence to an interplay between word
measures in general and primitivity rank of words in particular on
the one hand, and seemingly unrelated questions and challenges in
combinatorial and geometric group theory on the other hand. This is
illustrated by the works \cite{PP15,hanany2020some} dealing with
``profinite rigidity'' of words, and by the papers \cite{louder2018negative,louder2021uniform}
where the primitivity rank of a word is shown to have crucial consequences
for the one-relator group this word defines. We add here further evidence:
in Section \ref{subsec:Similar-phenomena-in} we explain how some
of the ideas in this paper are related to the notion of stable commutator
length, our proof of Theorem \ref{thm:Word-Measure-Character-Bound}
in Section \ref{sec:Proof-of-Theorem} suggests a deep connection
between the ``dependence theorems'' of \cite{louder2013scott,louder2018negative}
and word measures on $S_{N}$, and in Appendix \ref{sec:conjugacy-seperability}
we use our main result to get a new and simple proof of the conjugacy
separability of free groups.

\subsection{General \textquotedblleft stable\textquotedblright{} class functions}

Consider the abstract polynomial ring $\A=\mathbb{Q}\left[\z_{1},\z_{2},\ldots\right]$
\label{the ring A}in countably many variables. Every element $f\in\A$
corresponds to a class function in $S_{N}$ for all $N$. The elements
analyzed in Theorem \ref{thm:Word-Measure-Character-Bound} are precisely
the monomials in $\A$, and thus give a linear basis for $\A$. For
every class functions $f,g\in\A$ and every $N\in\mathbb{Z}_{\ge1}$,
we have the ordinary inner product in $S_{N}$ defined as 
\[
\left\langle f,g\right\rangle _{S_{N}}\defi\frac{1}{N!}\sum_{\sigma\in S_{N}}f\left(\sigma\right)\cdot\overline{g\left(\sigma\right)}.
\]
For every $f,g\in\A$, this inner product stabilizes for large enough
$N$ -- see Proposition \ref{prop:inner product stabilizes}. We
denote this constant value by\label{inner product of class functions}
$\left\langle f,g\right\rangle $\label{<f,g>}. In particular, note
that $\left\langle f,1\right\rangle =\mathbb{E}_{\mathrm{unif}}\left[f\right]$
for every large enough $N$. The following corollary thus follows
immediately from Theorem \ref{thm:Word-Measure-Character-Bound}.
As above, for $f\in\A$ we denote by $\mathbb{E}_{w}\left[f\right]$
a function $\mathbb{Z}_{\ge1}\to\mathbb{Q}$ which maps $N$ to the
average value of $f$ in $S_{N}$ under the $w$-measure.
\begin{cor}
\label{cor:general sym function}For every class function $f\in\A$
and every non-power $w\in\F$,
\[
\mathbb{E}_{w}\left[f\right]=\left\langle f,1\right\rangle +\left\langle f,\z_{1}-1\right\rangle \cdot\frac{\left|\crit\left(w\right)\right|}{N^{\pi\left(w\right)-1}}+O\left(\frac{1}{N^{\pi\left(w\right)}}\right).
\]
\end{cor}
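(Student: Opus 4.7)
The plan is to derive the corollary directly from Theorem \ref{thm:Word-Measure-Character-Bound} by linearity. Since the monomials $\qak$ (with $k\ge 1$ and $\ak\in\mathbb{Z}_{\ge 0}$), together with the constant $1$, form a linear basis of $\A=\mathbb{Q}[\z_1,\z_2,\ldots]$, every $f\in\A$ can be written as a finite linear combination $f=\sum c_{\ak}\,\qak$.

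First I would apply linearity of expectation to get $\mathbb{E}_w[f]=\sum c_{\ak}\,\mathbb{E}_w[\qak]$, and plug in Theorem \ref{thm:Word-Measure-Character-Bound} for each monomial. The uniform-expectation terms $\sum c_{\ak}\,\uexp[\qak]$ equal $\uexp[f]$, which by Proposition \ref{prop:inner product stabilizes} coincides with $\langle f,1\rangle$ for all large enough $N$. The "main correction" terms share the common factor $\frac{|\crit(w)|}{N^{\pi(w)-1}}$ (it depends only on $w$, not on the monomial), with coefficient $\sum c_{\ak}\,\langle \qak,\z_1-1\rangle$; by linearity of the inner product in its first argument this collapses to $\langle f,\z_1-1\rangle$. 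Finally, the $O(N^{-\pi(w)})$ error terms for the individual monomials sum to a single $O(N^{-\pi(w)})$ error term, since the sum is finite (the implicit constant depends on $f$, i.e.\ on the coefficients $c_{\ak}$ and the largest $k$ appearing).

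No real obstacle arises here: the argument is purely formal once Theorem \ref{thm:Word-Measure-Character-Bound} is in hand. The only minor points to record are the linearity of $\langle\cdot,g\rangle$ in its first argument (immediate from the definition $\frac{1}{N!}\sum f\bar g$), and the fact that the stabilization of the inner product on $\A$ actually holds, which is exactly Proposition \ref{prop:inner product stabilizes}. The case of primitive $w$ also fits into this framework through the $N^{-\infty}=0$ convention from the remark, giving back $\mathbb{E}_w[f]=\langle f,1\rangle=\uexp[f]$, consistent with the fact that primitive words induce the uniform measure on $S_N$.
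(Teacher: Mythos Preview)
Your proposal is correct and is exactly the intended argument: the paper states that the corollary ``follows immediately from Theorem \ref{thm:Word-Measure-Character-Bound}'' without giving further detail, and your linearity argument is precisely the immediate deduction they have in mind. The only additional remark is that the identification $\uexp[f]=\langle f,1\rangle$ is noted explicitly in the paper just before the statement of the corollary, so you may cite that rather than Proposition~\ref{prop:inner product stabilizes} directly.
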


Some elements of the ring $\A$ coincide with characters of families
of representations: such a family consists of a representation of
$S_{N}$ for every large enough $N$. These families are precisely
the families of \emph{stable} representations of $\left\{ S_{N}\right\} $,
in the sense of \cite{church2013representation}. These families were
studied in \cite{church2015fi} and subsequent works.

An interesting special case of Corollary \ref{cor:general sym function}
deals with statistics of short cycles in $S_{N}$. For $t\in\mathbb{Z}_{\ge1}$,
let $a_{t}\left(\sigma\right)$ denote the number of cycles of length
$t$ in the permutation $\sigma$. This is an element in $\A$: for
example, $a_{2}=\frac{\z_{2}-\z_{1}}{2}$. For $N\ge t,$the expected
number of $t$-cycles in a uniformly random permutation in $S_{N}$
is $\frac{1}{t}$. For $t\ge2$, $\left\langle a_{t},\z_{1}-1\right\rangle =0$
(see Appendix \ref{sec:class-functions} for more details). Therefore,
\begin{cor}
Let $t\geq2$. For every non-power $w\in\F$,

\[
\mathbb{E}_{w}\left[a_{t}\right]=\frac{1}{t}+O\left(\frac{1}{N^{\pi(w)}}\right).
\]
\end{cor}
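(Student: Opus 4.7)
The plan is to deduce this directly from Corollary \ref{cor:general sym function} applied to $f=a_{t}$. First, $a_{t}\in\A$: from the identity $\z_{k}=\sum_{d\mid k}d\cdot a_{d}$ and M\"obius inversion one has $a_{t}=\frac{1}{t}\sum_{d\mid t}\mu(t/d)\,\z_{d}$, a polynomial in the $\z_{k}$. The corollary therefore yields
\[
\mathbb{E}_{w}\left[a_{t}\right]=\left\langle a_{t},1\right\rangle +\left\langle a_{t},\z_{1}-1\right\rangle \cdot\frac{|\crit(w)|}{N^{\pi(w)-1}}+O\!\left(\frac{1}{N^{\pi(w)}}\right),
\]
so the whole task reduces to computing the two stable inner products $\left\langle a_{t},1\right\rangle $ and $\left\langle a_{t},\z_{1}-1\right\rangle $.

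The first is just $\uexp[a_{t}]$. Expanding $a_{t}$ as a sum of indicators indexed by the $\binom{N}{t}(t-1)!$ possible $t$-cycles on $\{1,\ldots,N\}$, each indicator has expectation $(N-t)!/N!$ under the uniform measure, so $\uexp[a_{t}]=1/t$ for every $N\ge t$, and this stabilizes to $\left\langle a_{t},1\right\rangle =1/t$.

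The second inner product carries the real content and demands the vanishing $\left\langle a_{t},\z_{1}-1\right\rangle =0$ for $t\ge 2$. Equivalently, I would show $\uexp[a_{t}\cdot\z_{1}]=\uexp[a_{t}]=1/t$. Writing $a_{t}\cdot\z_{1}=\sum_{c,i}\mathbf{1}[c\text{ is a $t$-cycle of }\sigma]\cdot\mathbf{1}[\sigma(i)=i]$, linearity of expectation splits the sum according to whether $i$ lies in $c$ or not. Since $t\ge 2$, the elements of a $t$-cycle are moved, so the $i\in c$ contribution vanishes; the remaining pairs contribute
\[
(N-t)\cdot\binom{N}{t}(t-1)!\cdot\frac{(N-t-1)!}{N!}\;=\;\frac{1}{t}
\]
for every $N\ge t+1$, giving the desired cancellation. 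No serious obstacle appears beyond this short count; the hypothesis $t\ge 2$ enters only to kill the $i\in c$ contribution, and for $t=1$ the same calculation gives $\left\langle a_{1},\z_{1}-1\right\rangle =\left\langle \z_{1},\z_{1}-1\right\rangle =1$, consistent with Theorem \ref{thm:PP15} rather than with the vanishing claimed here.
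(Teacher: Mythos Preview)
Your proof is correct and follows the same route the paper takes: apply Corollary~\ref{cor:general sym function} to $f=a_{t}$ and verify $\langle a_{t},1\rangle=1/t$ and $\langle a_{t},\z_{1}-1\rangle=0$. The only difference is in how the second inner product is computed: the paper defers to Appendix~\ref{sec:class-functions}, where the Diaconis--Shahshahani result gives $\langle a_{t}a_{1},1\rangle=\mathbb{E}[Z_{t}]\mathbb{E}[Z_{1}]=1/t$ via independent Poisson moments, whereas your direct count of pairs (cycle, fixed point) is more elementary and self-contained.
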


Another special case of Theorem \ref{thm:Word-Measure-Character-Bound}
we mention explicitly is that of the functions $\z_{d}$, as they
relate to a general conjecture about word measures. This conjecture
asks whether two words $w_{1}$ and $w_{2}$ in $\F$ inducing the
same measure on every finite group are necessarily in the same orbit
of $\mathrm{Aut}\F$. It appears, for example, as \cite[Question 2.2]{Amit2011}
and \cite[Conjecture 4.2]{Shalev2013}, and see also \cite{Collins2019automorphism-invariant}.
(The converse, that two words in the same orbit induce the same measure
on every finite group, is a simple observation.) The special case
of this conjecture when $w_{1}$ is primitive, namely, in the orbit
of the word $x$, was settled in \cite{PP15}: it follows from Theorem
\ref{thm:PP15} that if $w_{2}$ induces the same measure as $x$,
namely, uniform measure, on $S_{N}$ for all $N$, then $w_{2}$ must
be primitive too. This was generalized in \cite[Theorem 1.4]{hanany2020some}
to show that the conjecture is true when $w_{1}=x^{d}$ or $w_{1}=\left[x,y\right]^{d}$
for arbitrary $d\in\mathbb{Z}_{\ge1}$, and in \cite{wilton2021profinite}
to all surface words $w_{1}=\left[x_{1},y_{1}\right]\cdots\left[x_{g},y_{g}\right]$
or $w_{1}=x_{1}^{~2}\cdots x_{g}^{~2}$ and their powers.

Consider the case $w_{1}=x^{d}$. The proof in \cite[Theorem 1.4]{hanany2020some}
has two steps: first, it can be shown that if $w_{2}$ induces the
same measures as $x^{d}$ then $w_{2}=u^{d}$ is a $d$-th power of
some non-power word $u$. Then it is shown that if $w_{2}$ is not
in the orbit of $x^{d}$, then for every large enough $N$, the average
number of fixed points in a $w_{2}$-random permutation in $S_{N}$
is strictly larger than that of $x^{d}$. Our results here give a
quantitative version of this step. For every $d\in\mathbb{Z}_{\ge1}$,
$\left\langle \z_{d},1\right\rangle =\tau\left(d\right)$, where $\tau\left(d\right)$
is the number of positive divisors of $d$, and $\left\langle \z_{d},\z_{1}-1\right\rangle =1$.
Hence,
\begin{cor}
\label{cor:fixed points of power words}Assume $1\ne u\in\F$ is a
non-power and let $w=u^{d}$ for some $d\in\mathbb{Z}_{\ge1}$. Then,
\[
\mathbb{E}_{w}\left[\z_{1}\right]=\mathbb{E}_{u}\left[\z_{d}\right]=\tau\left(d\right)+\frac{\left|\crit\left(u\right)\right|}{N^{\pi\left(u\right)-1}}+O\left(\frac{1}{N^{\pi\left(u\right)}}\right).
\]
\end{cor}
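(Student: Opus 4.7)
The plan is to reduce the corollary to Theorem \ref{thm:Word-Measure-Character-Bound} applied to the non-power $u$ with the monomial $\z_d$, and then to evaluate the two constants that appear explicitly. First I would establish the middle equality $\mathbb{E}_w[\z_1] = \mathbb{E}_u[\z_d]$. Since $w = u^d$ in $\F$, for every tuple $(\sigma_1,\ldots,\sigma_r) \in S_N^{\,r}$ the permutation $w(\sigma_1,\ldots,\sigma_r)$ equals $u(\sigma_1,\ldots,\sigma_r)^d$. Combined with the pointwise identity $\z_1(\tau^d) = \#\mathrm{fix}(\tau^d) = \z_d(\tau)$ --- a special case of the formula $(\qak)(\sigma^t) = (\xi_t^{\alpha_1}\cdots\xi_{kt}^{\alpha_k})(\sigma)$ noted right after Theorem \ref{thm:Word-Measure-Character-Bound} --- this gives the equality of the two expectations for every $N$.

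Next I would invoke Theorem \ref{thm:Word-Measure-Character-Bound} on $u$ with $k = d$, $\alpha_d = 1$, and all other exponents zero. Since $u$ is a non-power, this yields
\[
\mathbb{E}_u[\z_d] = \uexp[\z_d] + \left\langle \z_d,\, \z_1 - 1 \right\rangle \cdot \frac{|\crit(u)|}{N^{\pi(u) - 1}} + O\!\left(\frac{1}{N^{\pi(u)}}\right),
\]
so it remains to verify $\uexp[\z_d] = \tau(d)$ and $\langle \z_d, \z_1 - 1\rangle = 1$. For both, I would use the pointwise cycle decomposition $\z_d = \sum_{t \mid d} t \cdot a_t$, which holds because a point is fixed by $\sigma^d$ exactly when its $\sigma$-cycle length divides $d$, and each cycle of length $t \mid d$ then contributes $t$ fixed points to $\sigma^d$. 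The classical identity $\uexp[a_t] = 1/t$ for $N \geq t$ gives $\uexp[\z_d] = \sum_{t\mid d} t \cdot \frac{1}{t} = \tau(d)$. For the inner product, Appendix \ref{sec:class-functions} records $\langle a_t, \z_1 - 1\rangle = 0$ for every $t \geq 2$, while $\langle a_1, \z_1 - 1\rangle = \langle \z_1, \z_1\rangle - \uexp[\z_1] = 2 - 1 = 1$ (reflecting that $\z_1$ is the permutation character of $S_N$, the sum of the trivial and standard irreducible characters). Hence $\langle \z_d, \z_1 - 1\rangle = \sum_{t\mid d} t \cdot \langle a_t, \z_1 - 1\rangle = 1$, and the corollary follows.

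The argument is essentially a direct specialization of the main theorem, so there is no genuine obstacle; the only mild care needed is to take $N$ large enough that both $\uexp[\z_d]$ and the inner product $\langle \z_d, \z_1 - 1\rangle$ have stabilized to their values in the ring $\A$, which is guaranteed by Proposition \ref{prop:inner product stabilizes}.
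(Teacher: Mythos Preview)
Your proof is correct and follows essentially the same approach as the paper: the paper records $\langle\xi_d,1\rangle=\tau(d)$ and $\langle\xi_d,\xi_1-1\rangle=1$ immediately before stating the corollary, and derives it as a direct specialization of Corollary~\ref{cor:general sym function} (equivalently Theorem~\ref{thm:Word-Measure-Character-Bound}) together with the identity $\mathbb{E}_{u^d}[\xi_1]=\mathbb{E}_u[\xi_d]$ noted after Theorem~\ref{thm:Word-Measure-Character-Bound}. Your write-up simply supplies the explicit verification of those two inner-product values via the decomposition $\xi_d=\sum_{t\mid d}t\,a_t$, which the paper leaves implicit.
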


\subsection{Stable irreducible characters}

Arguably, the most important elements in the ring $\A$ of class functions
are the elements corresponding to families of irreducible characters
$\chi=\left\{ \chi_{N}\right\} _{N\ge N_{0}}$ ($\chi_{N}$ being
an irreducible character of $S_{N}$): these are precisely the characters
of \emph{stable} families of irreducible representations. For a partition
$\lambda=\left(\lambda_{1}\ge\ldots\ge\lambda_{\ell}>0\right)$, denote
$\left|\lambda\right|=\sum_{i=1}^{\ell}\lambda_{i}$, so $\lambda\vdash\left|\lambda\right|$.
For every $N\ge\left|\lambda\right|+\lambda_{1}$, consider the partition
\[
\lambda\cup\left\{ N-\left|\lambda\right|\right\} =\left(N-\left|\lambda\right|\ge\lambda_{1}\ge\ldots\ge\lambda_{\ell}\right)\vdash N.
\]
These partitions give rise to a family of irreducible characters $\chi=\left\{ \chi_{N}\right\} _{N\ge\left|\lambda\right|+\lambda_{1}}$,
one for every $N\ge\left|\lambda\right|+\lambda_{1}$. This family
corresponds to an element of $\A$ -- see Appendix \ref{sec:class-functions}.
Table \ref{tab:Some-families-of-irreps} describes the four ``simplest''
families of irreducible characters in $\A$.

\begin{table}
\begin{tabular}{ccccc}
\hline 
\noalign{\vskip2mm}
\multicolumn{1}{|c|}{Young Diagram of $\chi$} & \multicolumn{1}{c|}{$\lambda$} & \multicolumn{1}{c|}{Element of $\A$} & \multicolumn{1}{c|}{Poly in the $a_{t}$'s} & \multicolumn{1}{c|}{Dimension of $\chi_{N}$}\tabularnewline[2mm]
\hline 
\noalign{\vskip2mm}
\ytableausetup {mathmode, boxsize=2em} \begin{ytableau} ~ & ~ & ~ & ~ & \none[\dots] &  ~  \end{ytableau} & $\emptyset$ & 1 & $1$ & $1$\tabularnewline[2mm]
\noalign{\vskip2mm}
\ytableausetup {mathmode, boxsize=2em} \begin{ytableau} ~ & ~ & ~ & ~ & \none[\dots] &  ~ \\ ~ \end{ytableau} & $1$ & $\z_{1}-1$ & $a_{1}-1$ & $N-1$\tabularnewline[2mm]
\noalign{\vskip2mm}
\ytableausetup {mathmode, boxsize=2em} \begin{ytableau} ~ & ~ & ~ & ~ & \none[\dots] & ~ \\ ~ & ~ \end{ytableau} & $2$ & $\frac{\z_{1}^{~2}+\z_{2}}{2}-2\z_{1}$ & $\frac{a_{1}\left(a_{1}-3\right)}{2}+a_{2}$ & $\frac{N\left(N-3\right)}{2}$\tabularnewline[2mm]
\noalign{\vskip2mm}
\ytableausetup {mathmode, boxsize=2em} \begin{ytableau} ~ & ~ & ~ & ~ & \none[\dots] & ~ \\ ~ \\ ~ \end{ytableau} & $1,1$ & $\frac{\z_{1}^{~2}-\z_{2}}{2}-\z_{1}+1$ & $\frac{\left(a_{1}-1\right)\left(a_{1}-2\right)}{2}-a_{2}$ & $\frac{\left(N-1\right)\left(N-2\right)}{2}$\tabularnewline[2mm]
\end{tabular}\caption{\label{tab:Some-families-of-irreps}Some families of irreducible characters
belonging to $\protect\A$}
\end{table}

Denote the set of all such families of irreducible characters by \label{stable irreps}$\symirr$.
We may thus consider $\symirr$ as a subset of $\A$. For every $\chi\in\symirr$,
$\mathbb{E}_{w}\left[\chi\right]$ is defined for $N\ge N_{0}$ (or
for every $N\ge1$ if we consider the corresponding element of $\A$).
By orthogonality of irreducible characters, if $\chi\ne1,\z_{1}-1$
then $\left\langle \chi,1\right\rangle =\left\langle \chi,\xi_{1}-1\right\rangle =0$,
so
\begin{cor}
\label{cor:bound-for-irreducible-representations}Let $\chi\in\symirr$
so that $\chi\ne1,\xi_{1}-1$. Then, for non-powers $w\in\F$,

\[
\mathbb{E}_{w}\left[\chi\right]=O\left(\frac{1}{N^{\pi(w)}}\right).
\]
\end{cor}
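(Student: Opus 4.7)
The plan is to derive the corollary as an essentially immediate consequence of Corollary \ref{cor:general sym function} applied to $f = \chi$, using orthogonality of irreducible characters. That corollary gives, for any non-power $w\in\F$ and any class function $f\in\A$,
\[
\mathbb{E}_{w}[f] = \langle f, 1 \rangle + \langle f, \xi_{1}-1 \rangle \cdot \frac{|\crit(w)|}{N^{\pi(w)-1}} + O\!\left(\frac{1}{N^{\pi(w)}}\right).
\]
Taking $f = \chi$, it therefore suffices to show that both coefficients $\langle \chi, 1 \rangle$ and $\langle \chi, \xi_{1}-1 \rangle$ vanish under the hypothesis $\chi \neq 1,\xi_{1}-1$.

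To this end, I would observe that the first two rows of Table \ref{tab:Some-families-of-irreps} identify $1$ and $\xi_{1}-1$ as elements of $\A$ that correspond to stable families of irreducible characters of $S_{N}$, namely the trivial family (for $\lambda = \emptyset$) and the standard family (for $\lambda = (1)$), respectively. In particular $1,\xi_{1}-1 \in \symirr$. By Proposition \ref{prop:inner product stabilizes}, the bilinear form $\langle \cdot,\cdot \rangle$ on $\A$ agrees with the ordinary inner product $\langle \cdot,\cdot \rangle_{S_{N}}$ on class functions of $S_{N}$ for all sufficiently large $N$. For such $N$, the three characters $\chi_{N}$, the trivial character, and the standard character are mutually distinct irreducible characters of $S_{N}$ (this is automatic for $N$ large enough, since the three partitions $\lambda\cup\{N-|\lambda|\}$, $(N)$, and $(N-1,1)$ are distinct once $\chi \neq 1, \xi_{1}-1$). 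Orthogonality of irreducible characters of a finite group then gives $\langle \chi, 1 \rangle_{S_{N}} = \langle \chi, \xi_{1}-1 \rangle_{S_{N}} = 0$, and passing to the stabilized value yields $\langle \chi, 1 \rangle = \langle \chi, \xi_{1}-1 \rangle = 0$.

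Plugging these vanishings into the expansion above leaves only the error term $O(N^{-\pi(w)})$, which is exactly the assertion of the corollary. The only mildly subtle step in this plan is the passage through the stabilization of the inner product: one must be confident that the value of $\langle \chi, g \rangle$ defined via $\A$ really coincides with the $S_{N}$-character inner product for all $N$ large enough simultaneously for $g\in\{1,\xi_{1}-1\}$, but this is precisely what Proposition \ref{prop:inner product stabilizes} supplies. Apart from this, the argument is a direct two-line invocation of Corollary \ref{cor:general sym function} plus character orthogonality, so there is no genuine obstacle to overcome.
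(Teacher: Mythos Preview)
Your proposal is correct and follows essentially the same approach as the paper: the paper derives the corollary immediately from Corollary~\ref{cor:general sym function} together with the one-line observation (stated just before the corollary) that orthogonality of irreducible characters gives $\langle\chi,1\rangle=\langle\chi,\xi_{1}-1\rangle=0$ when $\chi\ne1,\xi_{1}-1$. Your version is in fact slightly more detailed, spelling out why the stabilized inner product agrees with the $S_{N}$-inner product via Proposition~\ref{prop:inner product stabilizes}.
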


In fact, the elements of $\symirr$ form, too, a linear basis of $\A$
-- see Proposition \ref{prop:irreps as linear basis of A}, and so
Corollary \ref{cor:bound-for-irreducible-representations} is \emph{equivalent}
to Theorem \ref{thm:Word-Measure-Character-Bound}. We conjecture
the following much stronger bound:
\begin{conjecture}
\label{conj:irreducible-characters}Let $\chi\in\symirr$. Then for
every $w\in\F$, 
\[
\mathbb{\mathbb{E}}_{w}\left[\chi\right]=O\left(\frac{1}{\left(\dim\chi\right)^{\pi(w)-1}}\right).
\]
\end{conjecture}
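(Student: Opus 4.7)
The plan is to strengthen Theorem~\ref{thm:Word-Measure-Character-Bound} into a systematic asymptotic expansion of $\mathbb{E}_w[\qak]$ and then harvest massive cancellation through the orthogonality of the basis $\symirr$ of $\A$. Throughout, assume $w$ is a non-power; the conjecture for powers $w = u^d$ reduces to the non-power case for $u$ via the identity $\xi_j\circ(\sigma\mapsto\sigma^d) = \xi_{jd}$. Since $\symirr$ is a linear basis of $\A$ (Proposition~\ref{prop:irreps as linear basis of A}), we may write $\chi_\lambda = \sum_{\ak} c_{\ak}^\lambda \cdot \qak$ with finitely many nonzero coefficients, so that
\[
\mathbb{E}_w[\chi_\lambda] \;=\; \sum_{\ak} c_{\ak}^\lambda \cdot \mathbb{E}_w[\qak],
\]
and the problem reduces to expanding each $\mathbb{E}_w[\qak]$ finely enough that orthogonality of irreducibles can be brought to bear.

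The heart of the strategy is a structured Laurent expansion
\[
\mathbb{E}_w[\qak] \;=\; \uexp[\qak] \;+\; \sum_{j \geq \pi(w)-1} \frac{\langle \qak,\, \Psi_{j,w}\rangle}{N^{j}},
\]
where each $\Psi_{j,w} \in \A$ depends only on $w$ and $j$, and whose expansion in the basis $\symirr$ is supported on stable irreducibles $\chi_\mu$ with $|\mu| \leq \lfloor j/(\pi(w)-1)\rfloor$. The base level $j = \pi(w)-1$ is exactly Theorem~\ref{thm:Word-Measure-Character-Bound}, with $\Psi_{\pi(w)-1,w} = |\crit(w)| \cdot (\xi_1 - 1)$, which is supported on $\mu = (1)$. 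The intended shape of $\Psi_{j,w}$ for larger $j$ reflects the heuristic that deeper corrections arise from configurations of several critical-like subgroups: a configuration whose total ``corank'' is $k(\pi(w)-1)$ is combinatorially rich enough to detect Young diagrams with at most $k$ boxes beyond the top row, but no more.

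Once such a structured expansion is available, the orthogonality of the $\chi_\lambda$ forces $\langle \chi_\lambda,\,\Psi_{j,w}\rangle = 0$ whenever $|\lambda| > j/(\pi(w)-1)$. Hence the first correction to $\langle \chi_\lambda, 1\rangle = 0$ in $\mathbb{E}_w[\chi_\lambda]$ occurs at $j \geq |\lambda|(\pi(w)-1)$:
\[
\mathbb{E}_w[\chi_\lambda] \;=\; O\!\left(\frac{1}{N^{|\lambda|(\pi(w)-1)}}\right).
\]
Combined with the hook-length asymptotic $\dim \chi_\lambda \sim c_\lambda \cdot N^{|\lambda|}$ (for fixed $\lambda$ as $N\to\infty$), this is precisely the conjectured bound $O(1/(\dim\chi)^{\pi(w)-1})$.

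The main obstacle is the structured expansion itself, and in particular the support bound on $\Psi_{j,w}$ in $\symirr$. Establishing it requires generalising the not-necessarily-connected Stallings core graph analysis of this paper well beyond the leading critical-subgroup layer: one must organise contributions from graph configurations of higher total corank into class functions whose $\symirr$-support is carefully controlled. We do not currently see a clean combinatorial description of the $\Psi_{j,w}$ at all levels $j$. A conceptually cleaner route might proceed via a Schur--Weyl type reduction expressing $\mathbb{E}_w[\chi_\lambda]$ as a word-integral in a representation whose dimension scales like $N^{|\lambda|}$, from which the $\dim\chi$-dependence would drop out more transparently, or by adapting the surface-theoretic techniques that have been successfully used for the analogous word-measure questions on unitary and orthogonal groups, where bounds involving $\dim\chi$ of the same flavour are known.
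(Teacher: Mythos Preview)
The statement you are addressing is Conjecture~\ref{conj:irreducible-characters}, which the paper explicitly leaves \emph{open}; there is no proof in the paper to compare against. The paper establishes only the special cases $\pi(w)\le 2$ and words of the form $[x_1,y_1]\cdots[x_g,y_g]\cdot z_1^{k_1}\cdots z_m^{k_m}$, and otherwise records the conjecture as motivation.

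Your proposal is an honest strategy sketch rather than a proof, and you say as much. The genuine gap is that the key hypothesis --- the support bound on $\Psi_{j,w}$ in the basis $\symirr$ --- is not a lemma on the way to the conjecture but is \emph{equivalent} to it. Indeed, expanding any $f\in\A$ in the orthonormal basis $\symirr$, the coefficient of $N^{-j}$ in $\mathbb{E}_w[f]$ equals $\sum_{\mu}\langle f,\chi_\mu\rangle\cdot[\text{coeff.\ of }N^{-j}\text{ in }\mathbb{E}_w[\chi_\mu]]$; asking that this be representable as $\langle f,\Psi_{j,w}\rangle$ with $\Psi_{j,w}$ supported on $|\mu|\le j/(\pi(w)-1)$ is precisely the statement that $\mathbb{E}_w[\chi_\mu]$ has no $N^{-j}$ term when $|\mu|(\pi(w)-1)>j$, i.e.\ the conjecture. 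So the ``structured expansion'' does not reduce the problem; it repackages it. Even the weaker claim that $\Psi_{j,w}$ exists as an element of $\A$ (rather than of some completion) already asserts that for fixed $j$ only finitely many $\mathbb{E}_w[\chi_\mu]$ have a nonzero $N^{-j}$ coefficient, which is itself unproven.

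The suggestions in your last paragraph --- a Schur--Weyl reduction, or an adaptation of the surface/topological techniques used for $U(N)$ and $O(N)$ --- are reasonable directions, and indeed the paper's Section~\ref{subsec:Similar-phenomena-in} discusses the $U(N)$ analogue and its link to stable commutator length (Conjecture~\ref{conj:scl and pi}). But none of these are currently known to yield the $S_N$ conjecture, and your write-up should be labelled as a heuristic outline rather than a proof.
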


The dimension $\dim\chi$ is a polynomial function of $N$ (obtained
by substituting every $\xi_{j}$ in the corresponding polynomial in
$\A$ with $N$). \label{family of irreps has degree poly in N}The
degree of this polynomial is equal to the number of squares outside
the first row of the Young diagram, so if $\chi\neq1,\z_{1}-1$, the
degree is greater than $1$. Thus, Conjecture \ref{conj:irreducible-characters}
is stronger (for non-powers) than Corollary \ref{cor:bound-for-irreducible-representations}.
The conjecture holds for words of primitivity rank $1$, namely, for
proper powers: this follows from \cite{nica1994number} and from \cite[Section 4]{Linial2010}.
Another known special case of this conjecture is the commutator $\left[x,y\right]=xyx^{-1}y^{-1}$:
\label{simple commutator}indeed, $\pi\left(\left[x,y\right]\right)=2$
and already in 1896, Frobenius \cite{frobenius1896gruppencharaktere}
showed that $\mathbb{E}_{\left[x,y\right]}\left[\chi\right]=\frac{1}{\dim\chi}$
for every finite group $G$ and every irreducible character $\chi$
of $G$. Moreover, given two class functions $f_{1},f_{2}\colon G\to\mathbb{R}$
and an irreducible character $\chi$ of $G$, a simple application
of Schur's Lemma gives $\left\langle f_{1}\ast f_{2},\chi\right\rangle _{G}=\frac{\langle f_{1},\chi\rangle_{G}\langle f_{2},\chi\rangle_{G}}{\dim\chi}$.
If $w_{1}\in\F\left(x_{1},\ldots,x_{k}\right),w_{2}\in\F\left(x_{k+1},\ldots,x_{r}\right)$
are two words generated by disjoint sets of letters, then the $w_{1}w_{2}$-measure
on $G$ is the convolution of the $w_{1}$- and the $w_{2}$-measures,
and by the corollary of Schur's Lemma, $\mathbb{E}_{w_{1}w_{2}}\left[\chi\right]=\frac{\mathbb{E}_{w_{1}}\left[\chi\right]\cdot\mathbb{E}_{w_{2}}\left[\chi\right]}{\dim\chi}$.
On the other hand, $\pi(w_{1}w_{2})=\pi(w_{1})+\pi(w_{2})$ \cite[Lemma 6.8]{Puder2014}.
Hence, knowing the conjecture for two such words implies the claim
for their product. In particular, this implies the conjecture for
every product of disjoint commutators and powers, that is, for every
word of the form 
\[
w=\left[x_{1},y_{1}\right]\cdot[x_{2},y_{2}]\cdot\ldots\cdot[x_{r},y_{r}]\cdot z_{1}^{k_{1}}\cdot\ldots\cdot z_{m}^{k_{m}}\in\F\left(x_{1},\ldots,x_{r},y_{1},\ldots,y_{r},z_{1},\ldots,z_{m}\right),
\]
with $r,m\in\mathbb{Z}_{\ge0}$ and $k_{1},\ldots,k_{m}\in\mathbb{Z}$.
See Section \ref{subsec:Similar-phenomena-in} for generalizations
of Conjecture \ref{conj:irreducible-characters} for other families
of groups.

\subsection{Expansion of random Schreier graphs\label{subsec:Expansion-of-random}}

As an application of our results, we prove expansion properties of
random Schreier graphs of the symmetric group. If $G$ is a $d$-regular
graph on $n$ vertices, its adjacency matrix has eigenvalues 
\[
d=\mu_{1}\ge\mu_{2}\ge\ldots\ge\mu_{n}\ge-d,
\]
with $\mu_{1}=d$ considered as a trivial eigenvalue. Denote by $\mu\left(G\right)$
the largest absolute value of a non-trivial eigenvalue of the graph
$G$. Namely, $\mu\left(G\right)=\max\left(\mu_{2},-\mu_{n}\right)$.
An expander graph is a sparse graph with high connectivity. One standard
way to measure expansion is with $\mu\left(G\right)$ -- smaller
$\mu\left(G\right)$ implies better expansion (see \cite{hoory2006expander}
for a survey). Here we study random \emph{Schreier graphs} of the
groups $S_{N}$.
\begin{defn}
Given an action of a group $G$ on a set $X$, and a tuple $g_{1},\ldots,g_{r}\in G$,
the corresponding Schreier graph is the $2r$-regular graph with vertex
set $X$ and an edge $x\sim g_{i}\left(x\right)$ for every $x\in X$
and $i\in\left[r\right]$. Note that we allow multiple edges as well
as loops.
\end{defn}

The group $S_{N}$ acts naturally on the set of $s$-tuples of distinct
elements in $[N]\defi\left\{ 1,\ldots,N\right\} $. Choosing uniformly
at random a tuple of permutations $\sigma_{1},\ldots,\sigma_{r}\in S_{N}$,
consider the (random) Schreier graph corresponding to this action.
Denoting $d=2r$, this is a random $d$-regular graph on $\left(N\right)_{s}\defi N\cdot\ldots\cdot(N-s+1)$
vertices. The fact that for a fixed $s$, this family of random $d$-regular
graphs has a uniform spectral gap with high probability is known since
the work \cite{friedman1998action}. Theorem 2.1 therein states that
for every $\varepsilon>0$,
\[
\mu\left(G\right)\le\left(1+\varepsilon\right)d\left(\frac{2\sqrt{d-1}}{d}\right)^{1/\left(s+1\right)}
\]
asymptotically almost surely (namely, with probability tending to
1 as $N\to\infty$; a.a.s.~in short).

For $s=1,2$ much stronger bounds are known. Friedman \cite{Fri08}
famously proved a conjecture of Alon and showed that for $s=1$, a
random $d$-regular graph in this model is nearly Ramanujan in the
strong sense that for every $\varepsilon>0$, a.a.s.~$\mu(G)<2\sqrt{d-1}+\varepsilon$.
More recently, following Bordenave's new proof of Friedman's theorem
\cite{bordenave2020new}, Bordenave and Collins \cite{bordenave2019eigenvalues}
proved the same result for Schreier graphs on pairs of elements, namely,
for $s=2$. It is conjectured that the same result holds for any fixed
value of $s$ --- see, for instance, \cite[Conjecture 1.6]{Rivin19}\footnote{It is plausible that the method of proof in \cite{bordenave2019eigenvalues}
can be used to prove this conjecture in full.}. This conjecture, and progress towards it, may serve as steps towards
answering an even harder question: whether or not random Cayley graphs
of $S_{N}$ (namely, Cayley graphs with respect to a random set of
elements of some fixed size) are a.a.s.~nearly Ramanujan. In fact,
it is not even known whether these random Cayley graphs are uniformly
expanders. See \cite{breuillard2018expansion} for a recent survey.

In \cite{Puder2015}, using a different approach, the special case
of the action of $S_{N}$ on $[N]$ was studied. It was proved that
a.a.s.~$\mu(G)<2\sqrt{d-1}+0.84.$ The same approach was later improved
in \cite{friedman2020nonbacktracking} to give a.a.s.~

\begin{equation}
\mu(G)<2\sqrt{d-1}\cdot\exp\left(\frac{2}{e^{2}\left(d-1\right)}\right)<2\sqrt{d-1}+\frac{0.6}{\sqrt{d-1}}.\label{eq:Friedman-Puder bound}
\end{equation}
Here we generalize this method and prove the following bound for all
values of $s$:
\begin{thm}
\label{thm:Schreier-Graphs-Near-Ramanujan}Fix $s,r\in\mathbb{Z}_{\ge1}$
and let $d=2r$. Let $G$ be a random $d$-regular Schreier graph
depicting the action of $r$ random permutations on $s$-tuples of
distinct elements from $\left[N\right]$. Then a.a.s.~as $N\to\infty$,

\begin{equation}
\mu\left(G\right)<2\sqrt{d-1}\cdot\exp\left(\frac{2s^{2}}{e^{2}\left(d-1\right)}\right).\label{eq:schreier graphs bound}
\end{equation}
For fixed $s$ and growing $d$, this bound is 
\[
\mu\left(G\right)<2\sqrt{d-1}+\frac{4s^{2}}{e^{2}\sqrt{d-1}}+O\left(\frac{s^{4}}{\left(d-1\right)^{3/2}}\right).
\]
In particular, for every fixed $s$ and every $\varepsilon>0$, if
$d$ is large enough, \eqref{eq:schreier graphs bound} gives that
a.a.s.~$\mu\left(G\right)<2\sqrt{d-1}+\varepsilon$.
\end{thm}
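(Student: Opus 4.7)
The plan is to adapt the non-backtracking trace method of \cite{Puder2015, friedman2020nonbacktracking} from the $s=1$ action of $S_N$ on $[N]$ to the $s$-tuple action, using Theorem \ref{thm:Word-Measure-Character-Bound} (equivalently Corollary \ref{cor:bound-for-irreducible-representations}) to control all the relevant stable characters simultaneously. Let $G=G_N$ denote the random Schreier graph on the $(N)_s$-vertex set. The starting identity is the trace formula expressing the number of closed non-backtracking walks of length $L$ in $G$ as
\[
\mathrm{NBW}_L(G) \;=\; \sum_{w} (\xi_1)_s\bigl(w(\sigma_1,\ldots,\sigma_r)\bigr),
\]
where $w$ runs over cyclically reduced words of length $L$ in $\F$ and $(\xi_1)_s := \xi_1(\xi_1-1)\cdots(\xi_1-s+1)$ is the permutation character of the action on $s$-tuples (the number of $s$-tuples of distinct fixed points). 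As an element of $\A$, $(\xi_1)_s$ decomposes in the stable irreducible basis as $\sum_{|\lambda|\le s}m_\lambda\chi^\lambda$, with multiplicities $m_\lambda$ depending only on $s$.

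Taking expectations and using this decomposition, $\mathbb{E}[\mathrm{NBW}_L(G)] = \sum_w\sum_{|\lambda|\le s} m_\lambda\,\mathbb{E}_w[\chi^\lambda]$. The term $\lambda=\emptyset$ supplies the deterministic contribution corresponding to the trivial eigenvalue $d$. For $\lambda\ne\emptyset$ and non-power $w$, Corollary \ref{cor:bound-for-irreducible-representations} gives $\mathbb{E}_w[\chi^\lambda]=O_\lambda(N^{-\pi(w)})$ uniformly in $w$; primitive $w$ contribute $0$; and power words $w=u^k$ are handled via the identity $\chi^\lambda(\tau^k)=f_{\lambda,k}(\xi_1(\tau),\xi_2(\tau),\ldots)\in\A$ combined with a reapplication of Corollary \ref{cor:general sym function} to the non-power $u$.

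Grouping the $w$-sum by primitivity rank and using the counts of cyclically reduced words of length $L$ of prescribed primitivity rank already established in \cite{friedman2020nonbacktracking}, one obtains (after summing over $|\lambda|\le s$ weighted by $m_\lambda$) a bound
\[
\mathbb{E}[\mathrm{NBW}_L(G)] - (\text{trivial part}) \;\le\; C(s)\cdot L\cdot (2r-1)^{L/2},
\]
with $C(s)=\exp\!\bigl(2s^2/(e^2(2r-1))\bigr)\cdot(1+o_N(1))$ emerging from the sharp primitivity-rank optimization. Choosing $L=L(N)\to\infty$ slowly and applying Markov's inequality bounds the spectral radius of the non-backtracking operator of $G$ asymptotically almost surely; converting to the ordinary operator via the Ihara--Bass correspondence (or the direct argument of \cite{friedman2020nonbacktracking}) and letting $L\to\infty$ yields \eqref{eq:schreier graphs bound}.

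The main obstacle is recovering the sharp $s^2$ constant in the exponent, rather than a cruder bound of the form $\exp(C_s/(2r-1))$ with unspecified $C_s$. This hinges on (i) uniform-in-$\lambda$ control of the implicit constants in Corollary \ref{cor:bound-for-irreducible-representations} over $|\lambda|\le s$, which amounts to sharp bounds on the expansion coefficients of $\chi^\lambda$ in the monomial basis $\{\qak\}$ of $\A$, and (ii) a careful matching of these coefficients against the multiplicities $m_\lambda$ and the fine asymptotics of cyclically reduced word counts from \cite{friedman2020nonbacktracking}. The conceptual payoff of Theorem \ref{thm:Word-Measure-Character-Bound} is that it supplies a \emph{single} $N^{-\pi(w)}$ decay rate valid simultaneously across all non-trivial stable irreducibles, which is precisely what lets the $s=1$ Friedman--Puder argument extend uniformly to every $s$.
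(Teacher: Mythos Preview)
Your overall architecture—non-backtracking trace method plus the word-measure bounds of this paper—is correct and matches the paper. But several concrete steps are either wrong or missing, and your diagnosis of where the constant $s^2$ comes from is off.

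\textbf{Misapplication of Corollary \ref{cor:bound-for-irreducible-representations}.} You invoke $\mathbb{E}_w[\chi^\lambda]=O(N^{-\pi(w)})$ for all $\lambda\ne\emptyset$, but that corollary explicitly excludes $\chi=\xi_1-1$, i.e.\ $\lambda=(1)$. For that component one only has $\mathbb{E}_w[\xi_1-1]=|\crit(w)|\,N^{1-\pi(w)}+O(N^{-\pi(w)})$, and since $(\xi_1)_s$ contains $\xi_1-1$ with multiplicity $s$, this $N^{1-\pi(w)}$ term is the \emph{dominant} contribution to $\mathbb{E}_w[\chi_s]-1$. The paper does not decompose into irreducibles at all; it applies Corollary \ref{cor:general sym function} directly to $\chi_s$, obtaining $\mathbb{E}_w[\chi_s]-1=s\cdot|\crit(w)|\,N^{1-\pi(w)}+O(N^{-\pi(w)})$. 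Your detour through irreducibles gains nothing and introduces this error.

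\textbf{The source of the $s^2$.} The exponent $2s^2/(e^2(d-1))$ does \emph{not} emerge from the word-measure estimates or from any coefficient-matching in the $\A$-basis. It comes from the number of vertices: the graph has $(N)_s\sim N^s$ vertices, hence up to $2(N)_s$ non-real eigenvalues of $B_G$ on the circle $|\nu|=\sqrt{d-1}$, each contributing at least $-\sqrt{d-1}^{\,t}$ to the real part of $\mathrm{tr}(B_G^{\,t})$. This forces the extra term $2(N)_s\sqrt{d-1}^{\,t}\approx[N^{s/t}\sqrt{d-1}]^t$ in the bound for $\mathbb{E}[\mathrm{Re}\,\nu_2^{\,t}]$. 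After optimizing $t$ so that $N^{1/t}\to e^{2/(e\sqrt{d-1})}$, this gives $\nu(G)\lesssim\sqrt{d-1}\,e^{2s/(e\sqrt{d-1})}$; the $s$ becomes $s^2$ only in the final passage from $\nu$ to $\mu$ via $e^x+e^{-x}<2e^{x^2/2}$. Your displayed bound $\mathbb{E}[\mathrm{NBW}_L]-(\text{trivial})\le C(s)\cdot L\cdot(2r-1)^{L/2}$ with $C(s)$ bounded in $N$ cannot be right: the left side must carry an $N^s$-sized contribution from these eigenvalues on the circle.

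\textbf{Uniformity in $w$.} You write ``$O_\lambda(N^{-\pi(w)})$ uniformly in $w$'', but the implicit constants in Corollary \ref{cor:general sym function} depend on $w$. Since you are summing over exponentially many words of length $t$ with $t\to\infty$, you need an explicit dependence on $t$. The paper supplies this as Proposition \ref{prop:char-uniform-bound} (with supporting Lemmas \ref{lem:L-uniform-bound} and \ref{lem:uniform bound for qak}), proved by tracking the coefficients in the rational expression for $L^B_\eta$; this is a genuine piece of work, not a formality, and your sketch does not address it.
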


\begin{rem}
The bound \eqref{eq:schreier graphs bound} is achieved by optimizing
our method for large values of $r$ (and $d$) and fixed $s$. For
specific, small values of r, the method gives better bounds. For example,
for $r=2$ (so $d=4$) and $s=1$, \eqref{eq:schreier graphs bound}
gives a bound of $\approx3.735$, while the method can actually yield
a bound of $\approx3.596$ (compare with $2\sqrt{3}\approx3.464$).
\end{rem}

\begin{rem}
Conjecture \ref{conj:irreducible-characters}, if true, yields that
the bound \eqref{eq:Friedman-Puder bound} holds as is also for the
Schreier graphs in Theorem \ref{thm:Schreier-Graphs-Near-Ramanujan},
namely, a bound which is independent of $s$. See Remark \ref{rem:how conjecture implies strong expansion, details}
for more details.
\end{rem}

\subsection{Overview of the paper}

\subsubsection*{Outline of the proof of Theorem \ref{thm:Word-Measure-Character-Bound}}

Let $1\ne w\in\F$ be a non-power. Consider the function $\mathbb{E}_{w}\left[\qak\right]\colon\mathbb{Z}_{\ge1}\to\mathbb{Q}$
from Theorem \ref{thm:Word-Measure-Character-Bound}. As this function
is invariant under replacing $w$ with a conjugate (automorphic image,
in fact), we may assume that $w$ is cyclically reduced. A natural
approach to study this function is to consider $\alpha_{1}+\ldots+\alpha_{k}$
cycles describing powers of $w$: $\alpha_{1}$ copies of $w$, $\alpha_{2}$
copies of $w^{2}$ and so on. This graph is denoted $\Gamma_{\ak}^{w}$
(see Example \ref{exa:main thm in terms of Phi}) and illustrated
in Figure \ref{fig:example of multi core graph}. Then $\mathbb{E}_{w}\left[\qak\right]\left(N\right)$
counts the average number of labelings of the vertices of $\Gamma_{\ak}^{w}$
which agree with the independent uniformly random permutations represented
by the $r$ generators of $\F$ ($x$ and $y$ in Figure \ref{fig:example of multi core graph}).
The fact that $\mathbb{E}_{w}\left[\qak\right]\left(N\right)$ is
given by a function in $\mathbb{Q}\left(N\right)$ (Corollary \ref{cor:Phi, L, R, C all rational})
follows by a simple argument (see also \cite[Section 5]{Puder2014}
for a more straightforward explanation of the technique).

It follows from (the arguments in) \cite{nica1994number,Linial2010}
that $\mathbb{E}_{w}\left[\qak\right]=\mathbb{E}_{\mathrm{unif}}\left[\qak\right]+O\left(\frac{1}{N}\right)$.
Our goal is to give a more precise estimate of the $O\left(\frac{1}{N}\right)$
term. First, we imitate the proof in \cite{PP15} of Theorem \ref{thm:PP15}.
This part starts with generalizing the function $\mathbb{E}_{w}\left[\qak\right]$
as follows. Consider the graph-moprhism $\eta_{\ak}^{w}\colon\Gamma_{\ak}^{w}\to X_{B}$
where $X_{B}$ is the bouquet (see Figure \ref{fig:example of multi core graph}):
$\mathbb{E}_{w}\left[\qak\right]\left(N\right)$ is then also equal
to the average number of lifts of this morphism to a random $N$-covering
of $X_{B}$ (Proposition \ref{prop:geometric meaning for Phi}). The
same average can be defined to any morphism $\eta$ between finite
graphs: and this is the essence of the map $\Phi_{\eta}$ in Definition
\ref{def:Phi}. In particular, $\Phi_{\eta_{\ak}^{w}}=\mathbb{E}_{w}\left[\qak\right]$.
The graphs we consider here have directed edges labeled by a fixed
basis $B$ of $\F$ and each connected component is a Stallings core
graph: we call such graphs \emph{multi core graphs} -- see Section
\ref{sec:Multi-core-graphs} for the precise definition. They correspond
to multisets of conjugacy classes of f.g.~subgroups of $\F$.

As in \cite{PP15}, we introduce Möbius inversions of the function
$\Phi$ (Definitions \ref{def:B-surjective Mob inversions} and \ref{def:alg-decompositions}).
While the left inversion $L^{B}$ is quite natural (corresponds to
\emph{injective }lifts of the morphism rather than arbitrary lifts
in $\Phi$ -- see Proposition \ref{prop:rational-function-for-L}),
the other two inversions $R^{B}$ and $C^{B}$ are more mysterious.
However, the crux of introducing these Möbius inversions is that their
analysis proves some non-trivial cancellations in the computation
of $\Phi$, culminating in Theorem \ref{thm:Phi-approximation}. The
flow of ideas in this part is very similar to \cite{PP15}, and the
reader is advised to read the overview there \cite[Section 2]{PP15}. 

Explaining the content of Theorem \ref{thm:Phi-approximation} requires
first to describe the notion of \emph{algebraic} morphism. For two
free groups $H\le J$, we say that $J$ is an algebraic extension
of $H$ if there is no intermediate subgroup of $J$ which is a proper
free factor of $J$. This notion was coined in \cite{miasnikov2007algebraic}
and it gives a notion of algebraicity of morphisms between \emph{connected}
core graphs. In Section \ref{sec:Free-and-algebraic} we generalize
this notion to morphisms between general multi core graphs. In Sections
\ref{sec:Free-and-algebraic} and \ref{sec:-Surjective-morphisms}
we also introduce pullbacks in the category of multi core graphs,
consider $B$-surjective morphisms and define norms of morphisms --
all these are required to prove Theorem \ref{thm:Phi-approximation}.
Many of the definitions around the category of multi core graphs are
not obvious and we think this part of the paper may be of independent
interest. 

Theorem \ref{thm:Phi-approximation} considers an arbitrary morphism
$\eta\colon\Gamma\to\Delta$ of multi core graphs. It follows from
Theorem \ref{thm:properties of algebraic morphisms}\eqref{enu:alg is surjective}
that there are finitely many decompositions $\Gamma\stackrel{_{\eta_{1}}}{\longrightarrow}\Sigma\stackrel{_{\eta_{2}}}{\longrightarrow}\Delta$
of $\eta$ with $\eta_{1}$ algebraic. We let $\chimax\left(\eta\right)$
denote the maximal Euler characteristic $\chi\left(\Sigma\right)$
of $\Sigma$ in such a decomposition with $\eta_{1}$ algebraic and
non-isomorphism, and $\crit\left(\eta\right)$ denote the set of such
decompositions with $\chi\left(\Sigma\right)$ maximal -- see Definition
\ref{def:chi-max}. Then Theorem \ref{thm:Phi-approximation} states
that
\begin{equation}
\Phi_{\eta}\left(N\right)=N^{\chi\left(\Gamma\right)}+\left|\crit(\eta)\right|\cdot N^{\chimax\left(\eta\right)}+O\left(N^{\chimax(\eta)-1}\right).\label{eq:content of naive Thm about Phi}
\end{equation}
When $\eta$ is the morphism from $\Gamma_{1}^{w}$, the core graph
of $\left\langle w\right\rangle $, to the bouquet $X_{B}$, \eqref{eq:content of naive Thm about Phi}
is precisely Theorem \ref{thm:PP15} -- the main result of \cite{PP15}.
However, when applied to the morphism $\eta_{\ak}^{w}\colon\Gamma_{\ak}^{w}\to X_{B}$
with $\sum i\alpha_{i}\ge2$, \eqref{eq:content of naive Thm about Phi}
does not yield anything new: it only recovers earlier results from
\cite{nica1994number,Linial2010}. Indeed, in these cases there is
an algebraic morphism $\Gamma_{\ak}^{w}\to\Gamma_{1}^{w}$, so $\chimax\left(\eta_{\ak}^{w}\right)=0$,
and \eqref{eq:content of naive Thm about Phi} only gives information
about the free term of the Laurent expansion of $\mathbb{E}_{w}\left[\qak\right]$. 

Section \ref{sec:Proof-of-Theorem}, using arguments from combinatorial
and geometric group theory, strengthens \eqref{eq:content of naive Thm about Phi}
for the morphisms $\eta_{\ak}^{w}$ and completes the proof of Theorem
\ref{thm:Word-Measure-Character-Bound}. First, we handle separately
algebraic morphisms from $\Gamma_{\ak}^{w}$ with codomain of Euler
characteristic $0$, and define $\chiak\left(w\right)$ to be the
maximal \emph{negative} Euler characteristic of (the codomain of)
an algebraic morphism from $\Gamma_{\ak}^{w}$, and $\crit_{\ak}\left(w\right)$
accordingly (see Definition \ref{def:chi-max for multiset of cycles}).
Theorem \ref{thm:character-first-bound}, which follows readily from
our analysis of the algebraic Möbius inversions of $\Phi$, gives
the following estimate for $\mathbb{E}_{w}\left[\qak\right]$: 
\[
\Phi_{\eta_{\ak}^{w}}\left(N\right)=\uexp\left[\qak\right]+\left|\critak\left(w\right)\right|\cdot N^{\chiak(w)}+O\left(N^{\chiak(w)-1}\right).
\]
It remains to show that $\chiak\left(w\right)=1-\pi\left(w\right)$
and that $\left|\crit_{\ak}\left(w\right)\right|=\left\langle \qak,\xi_{1}-1\right\rangle \cdot\left|\crit\left(w\right)\right|$.
The former equality is the content of the short Proposition \ref{prop:joint-primitivity-rank}.
The latter equality is the content of Section \ref{subsec:crit ak =00003D crit times C}.
It is quite straightforward to see that every critical subgroup of
$w$ corresponds to $\left\langle \qak,\xi_{1}-1\right\rangle $ distinct
critical morphisms in $\crit_{\ak}\left(w\right)$. The hard part
is to show that these are the \emph{only} elements of $\crit_{\ak}\left(w\right)$.
To this end, we use a ``dependence'' theorem of Louder \cite{louder2013scott}
-- Theorem \ref{thm:Louder} below -- concerning free quotients
of certain graphs of groups, from which we conclude that an algebraic
morphism from $\Gamma_{\ak}$ which, roughly, \emph{does not} factor
non-trivially through $\Gamma_{1}^{w}$ must be of Euler characteristic
strictly smaller then $1-\pi\left(w\right)$. In fact, this type of
dependence theorems of Louder (see also Louder and Wilton \cite{louder2018negative})
fits so well into our proof here, that it suggests a deeper connection
between these theorems and Conjecture \ref{conj:irreducible-characters}.
See also Section \ref{subsec:Some-remarks} for more points of intersection
between this paper and works of Louder and Wilton.

\subsubsection*{Paper organization}

We end the introduction in Section \ref{subsec:Similar-phenomena-in},
which describes some fascinating evidence that the phenomena we prove
and the phenomena we conjecture regarding the symmetric group are,
in fact, more universal. We stress that Section \ref{subsec:Similar-phenomena-in}
is completely orthogonal to the remaining sections and the reader
interested solely in our proven results may safely skip it. 

Sections \ref{sec:From-words-to-subgroups} through \ref{sec:-Surjective-morphisms}
and Appendix \ref{sec:norm-of-morphism-proof} are devoted to the
study of multi core graphs. We begin with the equivalent category
of multisets of conjugacy classes of finitely generated subgroups
of $\F$, introduced in Section \ref{sec:From-words-to-subgroups}.
Section \ref{sec:Multi-core-graphs} introduces the geometric counterpart
of the latter: multi core graphs and their morphisms, and also the
above-mentioned function $\Phi_{\eta}$. Section \ref{sec:Free-and-algebraic}
defines free and algebraic morphisms of multi core graphs, as well
as pullbacks (also known as fiber products). Section \ref{sec:-Surjective-morphisms}
and Appendix \ref{sec:norm-of-morphism-proof} deal with surjective
morphisms and with the norms of morphisms, generalizing analogous
concepts from \cite{Puder2014}. Section \ref{sec:M=0000F6bius-inversions}
introduces the Möbius inversions of the function $\Phi_{\eta}$ and
proves the above mentioned Theorem \ref{thm:Phi-approximation} --
a naive analogue of Theorem \ref{thm:PP15}. In Section \ref{sec:Proof-of-Theorem}
we strengthen Theorem \ref{thm:Phi-approximation} and prove our main
result, Theorem \ref{thm:Word-Measure-Character-Bound}.

Finally, Section \ref{sec:Expanders} contains the proof of Theorem
\ref{thm:Schreier-Graphs-Near-Ramanujan} about random Schreier graphs
of $S_{N}$, in Appendix \ref{sec:conjugacy-seperability} we use
our results in order to obtain a new and simple proof of the well-known
conjugacy separability of free groups, and Appendix \ref{sec:class-functions}
develops more formally the ring $\A$ of class functions introduced
above. We end with a glossary of the main notation used along the
paper.

\subsection{Similar phenomena in other families of groups\label{subsec:Similar-phenomena-in}}

Some of the phenomena discussed above regarding word measures on the
symmetric group have parallels, at least partially, in other families
of groups. The mere fact that for every natural family of class functions
$f$ and every word $w\in\F$, the expectation $\mathbb{E}_{w}\left[f\right]$
is a rational function in the running parameter (usually $N$) of
the family of groups, is true not only for symmetric groups \cite{nica1994number,Linial2010},
but also for unitary groups\footnote{Given a compact group $G$, the $w$-measure on $G$ is the push-forward,
via the word map $w\colon G^{r}\to G$, of the Haar measure on $G^{r}$.} \cite{Radulescu06,MSS07}, for orthogonal and compact symplectic
groups \cite{MP-On}, for natural families of class functions of $\mathrm{GL}_{N}\left(\mathbb{F}_{q}\right)$,
where $\mathbb{F}_{q}$ is a fixed finite field \cite{West21}, and
for generalized symmetric groups \cite{MP-surface-words,Ordo20,shomroni2021wreath}.

However, it seems there are deeper universal phenomena which are common
to all these families of groups. In each of the above-mentioned families,
there are also natural families of irreducible characters defined
analogously to those in $S_{N}$: these are the characters of \emph{stable
}families of irreducible representations in the sense of \cite{church2013representation}.
We elaborate a bit below in the sequel of this subsection. It seems
that the primitivity rank of a word plays a role in all the above
mentioned families of groups. More precisely, we conjecture the following
generalization of Conjecture \ref{conj:irreducible-characters}:
\begin{conjecture}
\label{conj:universal}Let $G=\left\{ G\left(N\right)\right\} _{N}$
be a natural family of groups as those mentioned above, and let $\chi=\left\{ \chi_{N}\right\} _{N\ge N_{0}}$
be a stable irreducible character with $\chi_{N}\in\widehat{G\left(N\right)}$.
Then for any word $w\in\F$, as $N\to\infty$,
\[
\mathbb{E}_{w}\left[\chi\right]=O\left(\left(\dim\chi\right)^{1-\pi\left(w\right)}\right).
\]
Here the implied constant may depend on $w$, on $G$ and on $\chi$.
\end{conjecture}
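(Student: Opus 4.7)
The plan is to attack Conjecture \ref{conj:universal} family-by-family, mirroring the strategy developed for $S_{N}$ in this paper, and then upgrading the resulting $N$-dependent bound into a $\dim\chi$-dependent one. For a target family $G=\{G(N)\}$, the first task is to identify a commutative algebra $\A_{G}$ of ``stable'' class functions generated by natural analogues of the cycle-counters $\z_{k}$, so that every stable irreducible character of $G(N)$ belongs to $\A_{G}$ for $N$ large. For the classical compact groups the generators should be the traces of powers $U\mapsto\mathrm{tr}(U^{k})$ (together with their conjugates in the orthogonal and symplectic cases); for $\mathrm{GL}_{N}(\mathbb{F}_{q})$ one takes the stable class functions introduced in \cite{West21}; for generalized symmetric groups one uses the obvious wreath-product version of the $\z_{k}$'s.

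The second step is, for each monomial $M$ in these generators, to give a combinatorial or geometric interpretation of $\mathbb{E}_{w}[M](N)$ as a weighted count of maps out of a canonical object $\Gamma_{M}^{w}$ attached to $w$ and $M$. For the classical compact groups this should come from Weingarten calculus on Brauer or pair-partition diagrams, with ribbon graphs and Euler characteristics of surfaces replacing graphs and their Euler characteristics; for $\mathrm{GL}_{N}(\mathbb{F}_{q})$ from Deligne--Lusztig theory combined with the geometry of \cite{West21}; and for wreath products essentially from the multi core graphs of Section \ref{sec:Multi-core-graphs} equipped with additional cyclic labels. Once such a geometric setup is in place, the free and algebraic morphism framework of Sections \ref{sec:Free-and-algebraic}--\ref{sec:-Surjective-morphisms}, the M\"obius inversions of Section \ref{sec:M=0000F6bius-inversions}, and the culminating argument of Section \ref{sec:Proof-of-Theorem} should transfer essentially verbatim; the decisive ingredient, Louder's dependence theorem \cite{louder2013scott}, is a purely free-group statement and requires no adaptation. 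This would yield, in each family, the analogues of Theorem \ref{thm:Word-Measure-Character-Bound} and Corollary \ref{cor:bound-for-irreducible-representations}.

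The main obstacle is the gap between such a result and the conjectured bound, whose base is $\dim\chi$ rather than $N$. The methods above can at best produce an exponent $1-\pi(w)$ on a \emph{fixed} power of $N$, while $\dim\chi$ is a polynomial in $N$ whose degree depends on the Young diagram (or its analogue) indexing $\chi$ and can be arbitrarily large. This gap is already wide open for $S_{N}$ (Conjecture \ref{conj:irreducible-characters}): the only general tools presently available are Frobenius' formula, which handles the case $\pi(w)=2$ via the commutator in every finite group, and the convolution identity from Schur's lemma recalled on page \pageref{simple commutator}, which bootstraps to disjoint products of commutators and powers. A speculative route for the general case, uniform across all families in the conjecture, is to match each critical algebraic decomposition produced by our analysis with a representation-theoretic object --- an intertwiner, or a character sheaf in the $\mathrm{GL}_{N}(\mathbb{F}_{q})$ setting --- whose contribution to $\mathbb{E}_{w}[\chi]$ is forced by orthogonality to scale as $1/\dim\chi$ for every algebraic step. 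Making such a correspondence precise is, at present, the central bottleneck, and is likely to require a genuinely new idea beyond the combinatorial framework of this paper.
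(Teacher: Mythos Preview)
The statement you are addressing is a \emph{conjecture}, not a theorem: the paper does not prove it and does not claim to. What the paper offers in Section~\ref{subsec:Similar-phenomena-in} is a survey of partial evidence---the trivial cases $w=1$ and $\pi(w)=1$, Frobenius' formula for $w=[x,y]$, the convolution argument for disjoint products of commutators and powers, and the family-by-family results from \cite{MP-Un,MP-On,MP-surface-words,West21} that verify the conjecture for specific low-dimensional characters (typically the standard one). Even the special case $G(N)=S_{N}$, which is Conjecture~\ref{conj:irreducible-characters}, is left open in the paper.

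Your proposal is not a proof but a research plan, and you say so yourself: you correctly identify that transporting the machinery of Sections~\ref{sec:Multi-core-graphs}--\ref{sec:Proof-of-Theorem} to other families would at best yield an analogue of Corollary~\ref{cor:bound-for-irreducible-representations}, i.e.\ an exponent $1-\pi(w)$ on a fixed power of $N$, and that upgrading the base from $N$ to $\dim\chi$ is the genuine obstacle, already unresolved for $S_{N}$. That assessment matches the paper's. So there is no proof to compare against, and your proposal does not close the gap---nor does it pretend to. If this was meant to be submitted as a proof of the conjecture, it is not one; if it was meant as a sketch of a possible approach together with an honest account of where it stalls, it is accurate.
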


As stated, this conjecture may sound a bit vague, but it has a very
concrete meaning in each of the above mentioned families of groups.
Before elaborating on what this means for each of these families,
we mention that the conjecture is trivial for $w=1$, and is true
for proper powers, namely, $\mathbb{E}_{w}\left[\chi_{N}\right]=O\left(1\right)$,
in all cases studied in the works mentioned above. The conjecture
is also true for $w=\left[x,y\right]$ by \cite{frobenius1896gruppencharaktere},
and thus also for any word of the form $w=\left[x_{1},y_{1}\right]\cdot[x_{2},y_{2}]\cdot\ldots\cdot[x_{r},y_{r}]\cdot z_{1}^{k_{1}}\cdot\ldots\cdot z_{m}^{k_{m}}$,
as explained on page \pageref{simple commutator}. In fact, if true,
Conjecture \ref{conj:universal} may be seen as a generalization of
Frobenius' result about the commutator word $\left[x,y\right]$.

\subsubsection*{Unitary groups}

Consider the unitary groups $U\left(N\right)$. By analogy to $\xi_{k}$
from \eqref{eq:xi k}, define $\zeta_{k}\colon U\left(N\right)\to\mathbb{C}$
by $\zeta_{k}\left(B\right)=\mathrm{tr}\left(B^{k}\right)$, only
here $k\in\mathbb{Z}$ may also be negative, and define $\A^{U}=\mathbb{Q}\left[\ldots,\zeta_{-2},\zeta_{-1},\zeta_{1},\zeta_{2},\ldots\right]$.
In the case of $U\left(N\right)$, the natural families of irreducible
characters referred to in Conjecture \ref{conj:universal} are those
corresponding to elements in $\A^{U}$. In terms of highest weight
vectors\footnote{For the theory of highest weight vectors see, e.g., \cite[Chapters 24,25]{bump2004lie}.},
one starts with an arbitrary highest weight vector of length $N_{0}$
and adds $N-N_{0}$ zeros to obtain a character of $U\left(N\right)$
for all $N\ge N_{0}$.

The expected value of monomials in the $\zeta_{k}$'s under word measures
is the main object of study in \cite{MP-Un}, where these values are
given a topological interpretation in terms of surfaces and mapping
class groups. In particular, the defining character of $U\left(N\right)$
is $\zeta_{1}$, and it satisfies \cite[Corollary 1.8]{MP-Un}
\begin{equation}
\mathbb{E}_{w}\left[\zeta_{1}\right]=\mathbb{E}_{w}\left[\mathrm{tr}\left(B\right)\right]=O\left(N^{1-2\cdot\mathrm{cl}\left(w\right)}\right),\label{eq:Trw in U(N)}
\end{equation}
where $\mathrm{cl}\left(w\right)$ is the commutator length of $w$:
\[
\mathrm{cl}\left(w\right)\defi\min\left\{ g\,\middle|\,w=\left[u_{1},v_{1}\right]\cdots\left[u_{g},v_{g}\right]~\mathrm{with}~u_{i},v_{i}\in\F\right\} .
\]
(If $w\notin\left[\F,\F\right]$, we say that $\mathrm{cl}\left(w\right)=\infty$.)
Note that if $w=\left[u_{1},v_{1}\right]\cdots\left[u_{g},v_{g}\right]$,
then $w$ is non-primitive in the subgroup $\left\langle u_{1},v_{1},\ldots,u_{g},v_{g}\right\rangle $,
hence $\pi\left(w\right)\le2g$. Thus 
\[
\pi\left(w\right)\le2\cdot\mathrm{cl}\left(w\right),
\]
and \eqref{eq:Trw in U(N)} yields that Conjecture \ref{conj:universal}
holds for the irreducible character $\zeta_{1}$.

Moreover, there is a nice relation between general \emph{polynomial}
characters of $U\left(N\right)$ and an important invariant of words
called \emph{stable commutator length}, which is defined as 
\[
\mathrm{scl}\left(w\right)\defi\lim_{m\to\infty}\frac{\mathrm{cl}\left(w^{m}\right)}{m}.
\]
Indeed, from \cite[Theorem 1.7]{MP-Un} it follows that for every
$w\in\F$, $\ell>0$ and $j_{1},\ldots,j_{\ell}\in\mathbb{Z}$,
\begin{equation}
\mathbb{E}_{w}\left[\zeta_{j_{1}}\cdots\zeta_{j_{\ell}}\right]=\tr_{w^{j_{1}},\ldots,w^{j_{\ell}}}\left(N\right)=O\left(N^{\chi_{\max}\left(w^{j_{1}},\ldots,w^{j_{\ell}}\right)}\right),\label{eq:U(n) main thm}
\end{equation}
where $\chi_{\max}\left(w^{j_{1}},\ldots,w^{j_{\ell}}\right)$ is
the maximal Euler characteristic of a surface admissible for $w^{j_{1}},\ldots,w^{j_{\ell}}$
\cite[Definition 1.2]{MP-Un}. This is all very much related to Calegari's
works on stable commutator length. First, \cite[Lemma 2.6]{CALRATIONAL}
yields that if a surface $\Sigma$ is admissible for $w^{j_{1}},\ldots,w^{j_{\ell}}$,
then 
\begin{equation}
\scl\left(w\right)\le\frac{-\chi\left(\Sigma\right)}{2\left|j_{1}+\ldots+j_{\ell}\right|},\label{eq:scl as upper bound}
\end{equation}
so $\chi\left(\Sigma\right)\le-2\cdot\scl\left(w\right)\cdot\left|j_{1}+\ldots+j_{\ell}\right|$,
which, combined with \eqref{eq:U(n) main thm}, gives 
\begin{equation}
\mathbb{E}_{w}\left[\zeta_{j_{1}}\cdots\zeta_{j_{\ell}}\right]=O\left(N^{-2\cdot\scl\left(w\right)\cdot\left|j_{1}+\ldots+j_{\ell}\right|}\right).\label{eq:scl bound for monomials}
\end{equation}
Now consider the subring $\A_{\mathrm{poly}}^{U}\defi\mathbb{Q}\left[\zeta_{1},\zeta_{2},\ldots\right]$
of $\A^{U}$ generated by traces of positive powers of the matrices
in $U\left(N\right)$. The irreducible characters corresponding to
elements of $\A_{\mathrm{poly}}^{U}$ are families of characters of
\emph{polynomial} irreducible representations of $U\left(N\right)$.
In the language of highest weight vectors, these are irreducible characters
with non-negative weights. By the representation theory of $U\left(N\right)$,
every such character corresponds to some partition $\mu$ (these are
the positive weights). Let $\eta_{\mu}\in\A_{\mathrm{poly}}^{U}$
be the family of polynomial irreducible characters corresponding to
the partition $\mu$. There is a simple formula expressing $\eta_{\mu}$
as a linear combination of the monomials in $\A_{\mathrm{poly}}^{U}$.
For a partition $\lambda=\left(1^{\alpha_{1}}2^{\alpha_{2}}\ldots k^{\alpha_{k}}\right)$,
define the monomial $\zeta_{\lambda}\defi\zeta_{1}^{\alpha_{1}}\cdots\zeta_{k}^{\alpha_{k}}$.
In addition, let
\[
z_{\lambda}\defi\prod_{r}r^{\alpha_{r}}\cdot\alpha_{r}!.
\]
Note that $\frac{1}{z_{\lambda}}$ is the probability that a random
permutation in $S_{\left|\lambda\right|}$ has cycle structure $\lambda$.
Finally, given two partitions $\lambda,\rho$ of $m$, denote the
value of $\chi^{\rho}$ (the irreducible character of $S_{m}$ corresponding
to $\rho$) on a permutation with cycle structure $\lambda$ by $\chi^{\rho}\left(\lambda\right)$.
The formula for the polynomial character $\eta_{\mu}$ is 
\begin{equation}
\eta_{\mu}=\sum_{\lambda\vdash\left|\mu\right|}\frac{1}{z_{\lambda}}\chi^{\mu}\left(\lambda\right)\zeta_{\lambda}\label{eq:formula for schur poly}
\end{equation}
(this is basically a special case of \cite[Corollary 7.17.5]{Stanley1999enumerative2}).
For example, $\eta_{1,1,1}=\frac{1}{6}\zeta_{1}^{~3}-\frac{1}{2}\zeta_{2}\zeta_{1}+\frac{1}{3}\zeta_{3}$.
In particular, \eqref{eq:formula for schur poly} yields that $\dim\eta_{\mu}$
is a polynomial in $N$ of degree $\left|\mu\right|$. We conclude
from \eqref{eq:scl bound for monomials} that for every family $\eta_{\mu}$
of \emph{polynomial} irreducible characters,
\begin{equation}
\mathbb{E}_{w}\left[\eta_{\mu}\right]=O\left(N^{-2\cdot\scl\left(w\right)\cdot\left|\mu\right|}\right)=O\left(\left(\dim\eta_{\mu}\right)^{-2\cdot\scl\left(w\right)}\right).\label{eq:bound for poly chars of U(n)}
\end{equation}

More strikingly, in the same paper \cite{CALRATIONAL}, Calegari also
proves that every word $w$ admits \emph{extremal surfaces}: these
are surfaces admissible for $w^{j_{1}},\ldots,w^{j_{\ell}}$ for some
$\ell>0$ and $j_{1},\ldots,j_{\ell}>0$, such that there is equality
in \eqref{eq:scl as upper bound}. In particular, $\scl\left(w\right)$
is rational for every $w$, which is the main result of \cite{CALRATIONAL}.
As explained in \cite[Section 5.1]{MP-Un}, for such values of $j_{1},\ldots,j_{\ell}>0$
admitting extremal surfaces, \eqref{eq:U(n) main thm} becomes 
\[
\mathbb{E}_{w}\left[\zeta_{j_{1}},\ldots,\zeta_{j_{\ell}}\right]=\#\left\{ \mathrm{extremal~surfaces}\right\} \cdot N^{-2\cdot\scl\left(w\right)\left(j_{1}+\ldots+j_{\ell}\right)}\left(1+O\left(N^{-2}\right)\right).
\]
Now consider $\eta_{k}$, the irreducible polynomial character of
$U\left(N\right)$ corresponding to the partition $\left(k\right)$.
In this case, $\chi^{\left(k\right)}$ is the trivial character of
$S_{k}$, and so \eqref{eq:formula for schur poly} becomes
\[
\eta_{k}=\sum_{\lambda\vdash k}\frac{1}{z_{\lambda}}\zeta_{\lambda}.
\]
Because the coefficients here are all positive, the positive contribution
of extremal surfaces to $\mathbb{E}_{w}\left[\eta_{k}\right]$ cannot
be balanced out. So, if $w$ admits extremal surfaces with $j_{1}+\ldots+j_{\ell}=k$,
then\footnote{We use the notation $f=\Theta\left(g\right)$ if these are two functions
of $N\in\mathbb{Z}_{\ge1}$ satisfying $f=O\left(g\right)$ and $g=O\left(f\right)$.} 
\begin{equation}
\mathbb{E}_{w}\left[\eta_{k}\right]=\Theta\left(\left(\dim\eta_{k}\right)^{-2\cdot\scl\left(w\right)}\right).\label{eq:sym power of std}
\end{equation}
From \eqref{eq:bound for poly chars of U(n)} and \eqref{eq:sym power of std}
we conclude that in the case of families of \emph{polynomial }irreducible
characters of $U\left(N\right)$, Conjecture \ref{conj:universal}
is equivalent to the following one.
\begin{conjecture}
\label{conj:scl and pi}For any $w\in\F$,
\[
\pi\left(w\right)\le2\cdot\scl\left(w\right)+1.
\]
\end{conjecture}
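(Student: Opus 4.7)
The plan is to argue geometrically via Calegari's extremal surfaces, mirroring how the inequality $\pi(w)\le 2\cdot\mathrm{cl}(w)$ follows from expressing $w$ as a product of commutators. By Calegari's rationality theorem every $w\in[\F,\F]$ admits an extremal admissible surface, that is, a connected oriented surface $\Sigma$ with boundary together with a map $f\colon\Sigma\to K(\F,1)$ whose boundary components wrap around $w$ with total degree $j\defi j_{1}+\ldots+j_{\ell}>0$, and such that $-\chi(\Sigma)=2\cdot\scl(w)\cdot j$. The image $H\defi\mathrm{Im}(f_{*}\colon\pi_{1}(\Sigma)\to\F)$ is a finitely generated subgroup of $\F$. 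Since $\pi_{1}(\Sigma)$ is free of rank $1-\chi(\Sigma)=1+2j\cdot\scl(w)$, we get the a priori bound $\rk H\le 1+2j\cdot\scl(w)$, together with the fact that $H$ contains a conjugate of each $w^{j_{i}}$.

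The first step is to check the non-primitivity condition. Because some positive power $w^{m}$ lies in a conjugate of $H$ (using $\gcd(j_{i})\mid m$), the subgroup $H'\defi\langle H^{g},w\rangle$ for an appropriate conjugator $g$ contains both $w$ and $w^{m}$ with $m\ge 2$, forcing $w$ to be non-primitive in $H'$; moreover $\rk H'\le\rk H+1$ only if $w\notin H^{g}$, whereas if $w\in H^{g}$ already, we just set $H'=H^{g}$. Either way $\rk H'\le\rk H+1$, and the witness subgroup $H'$ contains $w$ non-primitively.

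The main obstacle, and the reason this remains a conjecture, is the factor of $j$: the naive bound $\rk H\le 1+2j\cdot\scl(w)$ is far weaker than the desired $\pi(w)\le 2\cdot\scl(w)+1$. One would need to show that the subgroup $H$ actually has much smaller rank than $\pi_{1}(\Sigma)$, exploiting the cyclic symmetry of $w^{j_{i}}$ along each boundary component. Concretely, the plan is to construct a quotient complex $\overline{\Sigma}$ of $\Sigma$ by identifying, for each boundary of degree $j_{i}$, points lying in the same fibre of the $j_{i}$-fold cyclic cover of the boundary circle $\partial_{i}\to S^{1}_{w}$ (the circle representing $w$ in $K(\F,1)$). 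The map $f$ descends to $\overline{\Sigma}$ and so $H$ is the image of $\pi_{1}(\overline{\Sigma})$. The hope is that the Euler characteristic of $\overline{\Sigma}$ (computed with some care because the identifications create non-manifold points) matches $-2\cdot\scl(w)$ rather than $-2j\cdot\scl(w)$, yielding the desired rank bound. Equivalently, one may fold $\Sigma$ using Stallings-type folds compatible with the boundary labelling and argue that folds can only decrease rank.

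The hard part is making this quotient/folding argument actually produce a free group (or at least a space whose $\pi_{1}$-image in $\F$ is free of the correct rank) and ensuring that extremality of $\Sigma$ is compatible with, rather than obstructed by, these identifications. Indirect evidence that this should work comes from the Louder and Louder--Wilton dependence theorems invoked in Section~\ref{sec:Proof-of-Theorem}, which govern exactly how free quotients of graphs of groups interact with primitivity rank; a sufficiently strong dependence statement for the graph-of-groups decomposition of $\pi_{1}(\overline{\Sigma})$ would close the gap. Failing that, a direct combinatorial attack via multi core graphs (comparing $\chiak(w)$ for large $\ak$ to $-2\cdot\scl(w)\cdot(\alpha_{1}+2\alpha_{2}+\ldots+k\alpha_{k})$) might establish the same inequality by taking $N\to\infty$ limits of the character bounds from Corollary~\ref{cor:bound-for-irreducible-representations} and comparing with \eqref{eq:scl bound for monomials}.
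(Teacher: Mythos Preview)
The statement you are addressing is Conjecture~\ref{conj:scl and pi}, not a theorem: the paper does not prove it, and explicitly presents it as open (noting only numerical verification and that Heuer arrived at the same conjecture independently). So there is no proof in the paper to compare your proposal against.

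Your write-up is consistent with this: you yourself flag that ``the main obstacle, and the reason this remains a conjecture, is the factor of $j$,'' and what follows is a heuristic programme rather than an argument. That is an honest assessment. A couple of smaller points on the part you do attempt: your ``first step'' establishing non-primitivity is incomplete. If $w\in H^{g}$ already, you assert $w$ is non-primitive in $H^{g}$ without justification; extremality of $\Sigma$ does not by itself rule out $w$ being primitive in the image of $\pi_{1}(\Sigma)$. And in the other case, adding $w$ to $H^{g}$ may raise the rank by one, so your eventual bound would be $\rk H'\le 2+2j\cdot\scl(w)$, not $1+2j\cdot\scl(w)$ --- still dominated by the factor-of-$j$ problem, but worth getting right. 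The quotient/folding idea you sketch for killing the factor of $j$ is in the right spirit, but as you recognise, making it rigorous is exactly the missing content of the conjecture.
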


This conjecture was verified numerically for various words -- see,
for instance, \cite[Proposition 4.4]{cashen2020short}. In fact, Heuer
arrived to the exact same conjecture independently \cite[Conjecture 6.3.2]{heuer2019constructions},
based entirely on computer experiments!

We stress that \cite{MP-Un} does not provide such sharp bounds for
general, non-polynomial, characters of $U\left(N\right)$. For example,
the irreducible character with highest weight vector $\left(1,0,\ldots,0,-1\right)$
is of dimension $N^{2}-1$ and is equal to $\zeta_{1}\zeta_{-1}-1$.
One can infer from the analysis of admissible surfaces of Euler characteristic
zero that for non-powers, $\mathbb{E}_{w}\left[\xi_{1}\xi_{-1}-1\right]=O\left(N^{-2}\right)$.
Yet Conjecture \ref{conj:universal} says, in this case, that it should
be of order $O\left(N^{2\left(1-\pi\left(w\right)\right)}\right)$,
which is open when $\pi\left(w\right)\ge3$.

\subsubsection*{Orthogonal and symplectic groups}

In the case of the orthogonal group $\mathrm{O}\left(N\right)$ and
compact symplectic group $\mathrm{Sp}\left(N\right)$, the defining
standard representation ($N$-dimensional in the case of $\mathrm{O}\left(N\right)$,
$2N$-dimensional for $\mathrm{Sp}\left(N\right)$) has real trace,
and for a matrix $B$ in the defining representation, $\mathrm{tr}\left(B^{-k}\right)=\mathrm{tr}\left(B^{k}\right)$.
So here the ring of class functions is $\mathbb{Q}\left[\zeta_{1},\zeta_{2},\ldots\right]$
with $\zeta_{k}\left(B\right)\defi\mathrm{tr}\left(B^{K}\right)$.
The paper \cite{MP-On} studies monomials in the $\zeta_{k}$'s and
describes their expected value under word measures in terms of, again,
surfaces and mapping class groups. There is one case where the general
result there translates into a concrete algebraic bound: the standard
character $\zeta_{1}$. Corollary 1.10 in \cite{MP-On} states that
for both $\mathrm{O}\left(N\right)$ and $\mathrm{Sp}\left(N\right)$,
\[
\mathbb{E}_{w}\left[\zeta_{1}\right]=O\left(N^{1-\min\left(\mathrm{sql}\left(w\right),2\cdot\mathrm{cl}\left(w\right)\right)}\right),
\]
where 
\[
\mathrm{sql}\left(w\right)\defi\min\left\{ g\,\middle|\,w=u_{1}^{~2}\cdots u_{g}^{2}~\mathrm{with}~u_{i}\in\F\right\} .
\]
As argued above, this shows that Conjecture \ref{conj:universal}
holds for this character. We do not have significant evidence towards
conjecture \ref{conj:universal} in the case of other characters.

\subsubsection*{Generalized symmetric group}

Consider either the groups $\left\{ C_{m}\wr S_{N}\right\} _{N}$
where $C_{m}$ is a fixed cyclic group of order $m\ge2$, or $\left\{ S^{1}\wr S_{N}\right\} _{N}$
where $S^{1}=\nicefrac{\mathbb{R}}{\mathbb{Z}}$. One can define here
too natural families of irreducible characters. The standard character
$\zeta_{1}$, that of the standard $N$-dimensional representation,
is irreducible. In \cite[Theorem 1.11]{MP-surface-words}, it is shown
that 
\[
\mathbb{E}_{w}\left[\zeta_{1}\right]=\begin{cases}
D_{~w}^{m}\cdot N^{\chi_{m}\left(w\right)}+O\left(N^{\chi_{m}\left(w\right)-1}\right) & \mathrm{if}~G\left(N\right)=C_{m}\wr S_{N}\\
D_{~w}^{\infty}\cdot N^{\chi_{\infty}\left(w\right)}+O\left(N^{\chi_{\infty}\left(w\right)-1}\right) & \mathrm{if}~G\left(N\right)=S^{1}\wr S_{N}.
\end{cases}
\]
Here, $\chi_{m}\left(w\right)$ is the maximal Euler characteristic\footnote{$\chi\left(H\right)=1-\rk H$.}
of a subgroup $H\le\F$ such that\linebreak{}
$w\in\ker\left(H\twoheadrightarrow C_{m}^{~\rk H}\right)$, and $D_{~w}^{m}$
is the number of such subgroups of maximal Euler characteristic. Similarly,
$\chi_{\infty}\left(w\right)$ is the maximal Euler characteristic
of a subgroup $H\le\F$ such that $w\in\left[H,H\right]$, and $D_{~w}^{\infty}$
is the number of such subgroups of maximal Euler characteristic. If
$w\in\ker\left(H\twoheadrightarrow C_{m}^{~\rk H}\right)$ or $w\in\left[H,H\right]$,
then $w$ is a non-primitive element of $H$. Thus, in all these cases
$\mathbb{E}_{w}\left[\zeta_{1}\right]=O\left(N^{1-\pi\left(w\right)}\right)$
and, again, Conjecture \ref{conj:universal} holds. More evidence
towards Conjecture \ref{conj:universal} in these families of groups
is found in \cite{Ordo20,shomroni2021wreath}.

\subsubsection*{Matrix Groups over finite fields}

Finally, fix a finite field $\mathbb{F}_{q}$ and consider a family
of groups such as $\left\{ \mathrm{GL}_{N}\left(\mathbb{F}_{q}\right)\right\} _{N}$.
The ring of class functions corresponding to this family of groups
can be constructed as follows. For every positive integer $k$ and
$A\in\mathrm{GL}_{k}\left(\mathbb{F}_{q}\right)$, define $\xi_{A}\colon\mathrm{GL}_{N}\left(\mathbb{F}_{q}\right)\to\mathbb{Z}_{\ge0}$
by
\[
\xi_{A}\left(B\right)=\#\left\{ M\in\mathrm{Mat}_{N\times k}\left(\mathbb{F}_{q}\right)\,\middle|\,BM=MA\right\} .
\]
This is indeed a class function. For example, if $A=\left(1\right)\in\mathrm{GL}_{1}\left(\mathbb{F}_{q}\right)$,
then $\xi_{A}\left(B\right)$ counts the number of fixed point in
the action of $B$ on $\mathbb{F}_{q}^{~N}$. If $A\sim A'$ are conjugates
in $\mathrm{GL}_{k}\left(\mathbb{F}_{q}\right)$, then $\xi_{A}=\xi_{A'}$.
These class functions are analogous to the monomials $\qak$ defined
on symmetric groups, and they linearly span the ring of class functions
for this family of groups: 
\[
\A\defi\mathrm{span}_{\mathbb{C}}\left(1,\left\{ \xi_{A}\right\} _{k\in\mathbb{Z}_{\ge1},A\in\mathrm{GL}_{k}\left(\mathbb{F}_{q}\right)}\right).
\]
Consider elements of $\A$ corresponding to irreducible characters
of $\left\{ \mathrm{GL}_{N}\left(\mathbb{F}_{q}\right)\right\} _{N}$
(for every large enough $N$). Such families of characters are the
ones Conjecture \ref{conj:universal} relates to in this case. As
an example, one such family of irreducible characters is given by
the permutation-representation given by the action of $\mathrm{GL}_{N}\left(\mathbb{F}_{q}\right)$
on the projective space $\mathbb{P}^{N-1}\left(\mathbb{F}_{q}\right)$
minus the trivial representation. This is a $\frac{q^{N}-q}{q-1}$-dimensional
representation. As an element of $\A$, it is given by $\chi=\left(\frac{1}{q-1}\sum_{A\in\mathrm{GL}_{1}\left(\mathbb{F}_{q}\right)\cong\mathbb{F}_{q}^{*}}\xi_{A}\right)-2$.
Conjecture \ref{conj:universal} says that in this case one should
have $\mathbb{E}_{w}\left[\chi\right]=O(\left(q^{N}\right)^{1-\pi\left(w\right)})$.
In \cite{West21} it is shown that $\mathbb{E}_{w}\left[f\right]$
is equal to a rational expression in $q^{N}$ for every $f\in\A$,
and partial evidence is given towards Conjecture \ref{conj:universal}
in the case of $\left\{ \mathrm{GL}_{N}\left(\mathbb{F}_{q}\right)\right\} _{N}$. 

\subsection*{Acknowledgments}

We thank the anonymous referees for valuable comments. This project
has received funding from the European Research Council (ERC) under
the European Union’s Horizon 2020 research and innovation programme
(grant agreement No 850956), and from the Israel Science Foundation:
ISF grant 1071/16.

\section{From words to subgroups\label{sec:From-words-to-subgroups}}

We now consider a few generalizations of our object of study that
will be crucial for the remainder of the paper. The quantities we
wish to study are of the form 
\[
\mathbb{E}_{w}\left[\qak\right]=\mathbb{E}_{\sigma_{1},\ldots,\sigma_{r}\in S_{N}}\left[\#\mathrm{fix}\left(w\left(\sigma_{1},\ldots,\sigma_{r}\right)\right)^{\alpha_{1}}\cdot\ldots\cdot\#\mathrm{fix}\left(w^{k}\left(\sigma_{1},\ldots,\sigma_{r}\right)\right)^{\alpha_{k}}\right].
\]
Assume that $w$ is written in the ordered basis $B=\left\{ b_{1},\ldots,b_{r}\right\} $
of $\F$. Choosing a uniformly random $r$-tuple of permutations from
$S_{N}$ is the same as choosing at random a homomorphism $\varphi\colon\F\to S_{N}$,
as $\varphi\left(b_{1}\right),\ldots,\varphi\left(b_{r}\right)$ is
a uniformly random $r$-tuple of permutations. Replacing the letters
of $w$ by the permutations $\varphi\left(b_{1}\right),\ldots,\varphi\left(b_{r}\right)$,
we obtain the permutation $\varphi\left(w\right)$. Hence,
\begin{equation}
\mathbb{E}_{w}\left[\qak\right]=\mathbb{E}_{\varphi\in\mathrm{Hom}\left(\F,S_{N}\right)}\left[\#\mathrm{fix}\left(\varphi\left(w\right)\right)^{\alpha_{1}}\cdot\ldots\cdot\#\mathrm{fix}\left(\varphi\left(w^{k}\right)\right)^{\alpha_{k}}\right].\label{eq:original object}
\end{equation}
Following \cite{PP15}, the first step in our analysis is to generalize
the function we study. This generalization is crucial for the next
steps. The most straightforward generalization is to consider quantities
of the form

\begin{equation}
\mathbb{E}_{\varphi\in\mathrm{Hom}\left(\F,S_{N}\right)}\left[\#\mathrm{fix}\left(\varphi\left(w_{1}\right)\right)\cdot\ldots\cdot\#\mathrm{fix}\left(\varphi\left(w_{\ell}\right)\right)\right],\label{eq:arbitrary words}
\end{equation}
for arbitrary words $w_{1},\ldots,w_{\ell}\in\F$. Next, we generalize
from fixed points of a word to \emph{common }fixed points of several
words, or, equivalently, to common fixed points of subgroups: note
that given a finite set of words $w_{1},\ldots,w_{t}\in\F$, an element
$i\in[N]$ is a \emph{common} fixed point of all the permutations
$\varphi\left(w_{1}\right),\ldots,\varphi\left(w_{t}\right)$ if and
only if it is a common fixed point of all the permutations in the
subgroup $\varphi\left(H\right)\le S_{N}$ where $H=\langle w_{1},\ldots,w_{t}\rangle\leq\F$.
For $H\le\F$ we denote by $\#\mathrm{fix}\left(\varphi\left(H\right)\right)$
the number of common fixed points of $\varphi\left(H\right)$. We
extend the function we wish to study to quantities of the form

\begin{equation}
\mathbb{E}_{\varphi\in\mathrm{Hom}\left(\F,S_{N}\right)}\left[\#\mathrm{fix}\left(\varphi\left(H_{1}\right)\right)\cdot\ldots\cdot\#\mathrm{fix}\left(\varphi\left(H_{\ell}\right)\right)\right],\label{eq:studying common fixed points of subgroups}
\end{equation}
where $H_{1},\ldots,H_{\ell}\leq\F$ are f.g.~(finitely generated)
subgroups of $\F$.

If $H,H'\le\F$ are conjugate subgroups then $\#\mathrm{fix}\left(\varphi\left(H\right)\right)=\#\mathrm{fix}\left(\varphi\left(H'\right)\right)$.
Therefore, \eqref{eq:studying common fixed points of subgroups} depends,
in fact, on a \emph{multiset of conjugacy classes of non-trivial f.g.~subgroups}
of $\F$. We shall work in the category of these objects, which we
denote $\mocc\left(\F\right)$.

Finally, assume that there are two multisets of non-trivial f.g.~subgroups
$H_{1},\ldots,H_{\ell}\le\F$ and $J_{1},\ldots,J_{m}\le\F$, and
that there is a map $f\colon\left[\ell\right]\to\left[m\right]$,
such that $H_{i}\le J_{f\left(i\right)}$ for all $1\le i\le\ell$.
Let $\left\{ \varphi_{j}\colon J_{j}\to S_{N}\right\} _{j=1}^{m}$
be independent, uniformly random homomorphisms. Our final generalization
of the object of study is to
\begin{equation}
\mathbb{E}_{\left\{ \varphi_{j}\in\Hom\left(J_{j},S_{N}\right)\right\} _{j=1}^{m}}\left[\#\mathrm{fix}\left(\varphi_{f\left(1\right)}\left(H_{1}\right)\right)\cdot\ldots\cdot\#\mathrm{fix}\left(\varphi_{f\left(\ell\right)}\left(H_{\ell}\right)\right)\right].\label{eq:final generalization of object of study}
\end{equation}

In the following section we will use the following formal definition
of morphisms in the category $\mocc\left(\F\right)$, which arises
naturally from the above-mentioned generalizations of our object of
study:
\begin{defn}
\label{def:morphism in MOCC}Let $\H=\left\{ H_{1}^{\F},\ldots,H_{\ell}^{\F}\right\} $
and $\J=\left\{ J_{1}^{\F},\ldots,J_{m}^{\F}\right\} $ be two elements
of $\mocc\left(\F\right)$. A morphism $\eta\colon\H\to\J$ consists
of a map $f\colon\left[\ell\right]\to\left[m\right]$ and a choice
of representatives $\overline{H_{1}}\in H_{1}^{\F},\ldots,\overline{H_{\ell}}\in H_{\ell}^{\F},\overline{J_{1}}\in J_{1}^{\F},\ldots,\overline{J_{m}}\in J_{m}^{\F}$
so that $\overline{H_{i}}\le\overline{J_{f\left(i\right)}}$ for all
$i\in\left[\ell\right]$.
\end{defn}

Given $f\colon\left[\ell\right]\to\left[m\right]$, two different
choices of representatives as in Definition \ref{def:morphism in MOCC}
may yield equivalent morphisms. We defer the exact definition of this
equivalence to the next section, where we give a geometric description
of the category $\mocc\left(\F\right)$ in terms of multi core graphs.
\begin{rem}
We made several non-obvious choices in our definitions in the current
section of the category $\mocc\left(\F\right)$, and in the equivalent
categories $\mcg_{B}\left(\F\right)$ defined in the next section.
For example, one could consider multi core graphs with $k$ ordered
basepoints for some fixed $k$. Even in the category of multi core
graphs without basepoints, we made the non-obvious choices of excluding
the trivial subgroup of $\F$, and not demanding in Definition \ref{def:morphism in MOCC}
that the map $f\colon\left[\ell\right]\to\left[m\right]$ be surjective.
There are good arguments for not making these choices. For example,
restricting to surjective maps $\left[\ell\right]\to\left[m\right]$
would simplify the statement of Proposition \ref{prop:properties of free morphisms}\eqref{enu:EC of free morphisms}
as well as of Definition \ref{def:basis independent norm}, and imply
that the norm $\left\Vert \eta\right\Vert $ of a morphism $\eta$
(see Definition \ref{def:basis independent norm}) is zero if and
only if it is an isomorphism. However, pullbacks exist as simply and
neatly as stated on page \pageref{pullback} only if the image of
a morphism may avoid some of the components in its codomain and if
the trivial group is excluded. In addition, non-surjective morphisms
allow us to have the empty set as an element in $\mocc\left(\F\right)$
with a unique morphism to any other element. Avoiding trivial subgroups
also means that the Euler characteristic of multi core graphs (see
Definition \ref{def:chi, c, rank} below) is always non-positive,
which is convenient. Notice that the affect of adding a trivial subgroup
to the multiset in \eqref{eq:studying common fixed points of subgroups}
would be a multiplication of the expectation by $N$.
\end{rem}

\section{Multi core graphs\label{sec:Multi-core-graphs}}

We use core graphs, and more generally \emph{multi} core graphs, as
a geometric picture of the multisets of subgroups considered above.

\subsection{Core graphs\label{subsec:Core-graphs}}

Let $B=\left\{ b_{1},\ldots,b_{r}\right\} $ be a basis of $\F$,
and consider the bouquet $X_{B}$ of $r$ circles with distinct labels
from $B$ and arbitrary orientations and with wedge point $o$. Then
$\pi_{1}\left(X_{B},o\right)$ is naturally identified with $\F$.
The notion of ($B$-labeled) core graphs, introduced in \cite{stallings1983topology},
refers to finite\footnote{One may include also infinite core graphs in the definition, but these
are not needed in the current paper.}, connected graphs with every vertex having degree at least two (so
no leaves and no isolated vertices), that come with a graph morphism
to $X_{B}$ which is an \emph{immersion}, namely, locally injective.
In other words, this is a finite connected graph with at least one
edge and no leaves, with edges that are directed and labeled by the
elements of $B$, such that for every vertex $v$ and every $b\in B$,
there is at most one incoming $b$-edge and at most one outgoing $b$-edge
at $v$. We stress that multiple edges between two vertices and loops
at vertices are allowed.

There is a natural one-to-one correspondence between finite $B$-labeled
core graphs and conjugacy classes of non-trivial f.g.~subgroups of
$\F$. Indeed, given a core graph $\Gamma$ as above, pick an arbitrary
vertex $v$ and consider the ``labeled fundamental group'' $\piol\left(\Gamma,v\right)$:
closed paths in a graph with oriented and $B$-labeled edges correspond
to words in the elements of $B$. In other words, if $p\colon\Gamma\to X_{B}$
is the immersion, then $\piol\left(\Gamma,v\right)$ is the subgroup
$p_{*}\left(\pi_{1}\left(\Gamma,v\right)\right)$ of $\pi_{1}\left(X_{B},o\right)=\F$.
The conjugacy class of $\piol\left(\Gamma,v\right)$ is independent
of the choice of $v$ and is the conjugacy class corresponding to
$\Gamma$. We denote it by $\piol\left(\Gamma\right)$.

Conversely, if $H\le\F$ is a non-trivial f.g.~subgroup, the conjugacy
class $H^{\F}$ corresponds to a finite core graph, denoted $\Gamma_{B}\left(H^{\F}\right)$,
which can be obtained in different manners. For example, let $\Upsilon$
be the topological covering space of $X_{B}$ corresponding to $H^{\F}$,
which is equal in this case to the Schreier graph depicting the action
of $\F$ on the right cosets of $H$ with respect to the generators
$B$. Then $\Gamma_{B}\left(H^{\F}\right)$ is obtained from $\Upsilon$
by 'pruning all hanging trees', or, equivalently, as the union of
all non-backtracking cycles in $\Upsilon$. One can also construct
$\Gamma_{B}\left(H^{\F}\right)$ from any finite generating set of
$H$ using ``Stallings foldings'' -- see \cite{stallings1983topology,kapovich2002stallings,Puder2014,PP15}
for more details about foldings and about core graphs in general.

\subsection{Multi core graphs and their morphisms\label{subsec:Multi-core-graphs}}

Here, we consider core graphs which are not necessarily connected:
\begin{defn}
\label{def:multi-core-graphs}Let $B$ be a basis of $\F$. A $B$-labeled
\emph{multi core graph }is a disjoint union of finitely many core
graphs. In other words, this is a finite graph, not necessarily connected,
with no leaves and no isolated vertices, and which comes with an immersion
to $X_{B}$. We denote the set of $B$-labeled multi core graphs by
$\mcg_{B}\left(\F\right)$.
\end{defn}

Because a connected core graph corresponds to a conjugacy class of
non-trivial f.g.~subgroups of $\F$, a multi core graph corresponds
to a multiset of such objects. Therefore, every basis $B$ of $\F$
gives rise to a one-to-one correspondence
\begin{equation}
\begin{gathered}\mcg_{B}\left(\F\right)=\\
\left\{ \begin{gathered}B\mathrm{-labeled~}\\
\mathrm{multi~core~graphs}
\end{gathered}
\right\} 
\end{gathered}
~~~\longleftrightarrow~~~\begin{gathered}\mocc\left(\F\right)=\\
\left\{ \begin{gathered}\mathrm{finite~multisets~of~conjugacy~classes}\\
\mathrm{of~non\textnormal{-}trivial~f.g.~subgroups~of}~\F
\end{gathered}
\right\} 
\end{gathered}
.\label{eq:1-to-1 correspondence of categories}
\end{equation}
For a multi core graph $\Gamma\in\mcg_{B}\left(\F\right)$ we let
$\piol\left(\Gamma\right)$ denote the corresponding multiset in $\mocc\left(\F\right)$,
and for a multiset $\H\in\mocc\left(\F\right)$ we let $\Gamma_{B}\left(\H\right)$
denote the corresponding multi core graph.
\begin{defn}
\label{def:morphism of multi CG}A morphism $\eta\colon\Gamma\to\Delta$
between $B$-labeled multi core graphs is a graph-morphism which commutes
with the immersions $p,q$ to $X_{B}$.
\[
\xymatrix{\Gamma\ar[rd]^{p}\ar[rr]^{\eta} &  & \Delta\ar[ld]_{q}\\
 & X_{B}
}
\]
\end{defn}

In particular, the morphism $\eta$ is itself an immersion, and it
preserves the orientations and labels of the edges. To get a description
of $\eta$ in terms of subgroups à la Definition \ref{def:morphism in MOCC},
assume that $\Gamma$ consists of $\ell$ components $\Gamma_{1},\ldots,\Gamma_{\ell}$
and that $\Delta$ consists of $m$ components $\Delta_{1},\ldots,\Delta_{m}$.
Let $f\colon\left[\ell\right]\to\left[m\right]$ be the induced map
on connected components, so $\eta\left(\Gamma_{i}\right)\subseteq\Delta_{f\left(i\right)}$.
For every $i\in\left[\ell\right]$, pick an arbitrary vertex $v_{i}\in\Gamma_{i}$
and let $H_{i}=\piol\left(\Gamma_{i},v_{i}\right)$. As $\eta$ is
an immersion, it induces \emph{injective} maps at the level of fundamental
groups: indeed, any non-backtracking cycle in $\Gamma$ is mapped
to a non-backtracking cycle in $\Delta$. Therefore, $\eta$ can be
thought of as the embedding, for all $i\in\left[\ell\right]$, 
\begin{equation}
H_{i}\hookrightarrow\pi_{1}\left(\Delta_{f\left(i\right)},\eta\left(v_{i}\right)\right).\label{eq:injection of pi1}
\end{equation}
We still need to conjugate the images in \eqref{eq:injection of pi1}
so that they all sit in the same subgroups in the conjugacy class
of subgroups of $\Delta_{j}$. Formally, pick an arbitrary vertex
$p_{k}\in\Delta_{k}$ for all $k\in\left[m\right]$ and let $J_{k}=\pi_{1}\left(\Delta_{k},p_{k}\right)$.
For every $i\in\left[\ell\right]$, let $u_{i}\in\F$ satisfy $u_{i}\left[\pi_{1}\left(\Delta_{f\left(i\right)},\eta\left(v_{i}\right)\right)\right]u_{i}^{-1}=J_{f\left(i\right)}$.
So now $u_{i}H_{i}u_{i}^{-1}\le J_{f\left(i\right)}$, and we get
a morphism as in Definition \ref{def:morphism in MOCC}.

Conversely, every embedding of subgroups $H\hookrightarrow J$ of
$\F$ gives rise to a morphism of core graphs $\Gamma_{B}\left(H\right)\to\Gamma_{B}\left(J\right)$.
Indeed, if one considers the entire covering space $\Upsilon_{H}$
of $X_{B}$ corresponding to $H$, there is certainly a morphism to
$\Upsilon_{J}$, the one corresponding to $J$. Because every non-backtracking
closed path in $\Upsilon_{H}$ is mapped to a non-backtracking closed
path in $\Upsilon_{J}$ (being non-backtracking is a local property),
we see that the image of $\Gamma_{B}\left(H\right)$ is contained
in $\Gamma_{B}\left(J\right)$. Thus any morphism in $\mocc\left(\F\right)$
as in Definition \ref{def:morphism in MOCC}, gives rise to a morphism
of the corresponding multi core graphs. We say that two morphisms
in $\mocc\left(\F\right)$ are identical if they induce the same morphism
of multi core graphs, up to a post-composition by an isomorphism of
the codomain. This equivalence of morphisms in $\mocc\left(\F\right)$
can also be defined in completely algebraic terms\footnote{This equivalence is the generalization of the fact that if $H\le J$
then so does $jHj^{-1}\le J$ for all $j\in J$ and the corresponding
morphism of core graphs is identical. We do not elaborate further
because we anyway use here only the ``geometric'' description of
this equivalence, which is more straightforward. }, and, in particular, it does not depend on the basis $B$.

Throughout the text we use the following three important invariants
of multi core graphs.
\begin{defn}
\label{def:chi, c, rank}Let $\Gamma\in\mcg_{B}\left(\F\right)$ be
a multi core graph and $\mathcal{H}=\piol\left(\Gamma\right)=\left\{ H_{1}^{\F},\ldots,H_{\ell}^{\F}\right\} $
the corresponding multiset in $\mocc\left(\F\right)$. We denote by
$\rk\mathcal{H}=\rk\Gamma$ the sum of ranks of $H_{1},\ldots,H_{\ell}$,
by $\chi\left(\Gamma\right)=\chi\left({\cal H}\right)\defi\#V\left(\Gamma\right)-\#E\left(\Gamma\right)$
the Euler characteristic of $\Gamma$, and by $c\left(\Gamma\right)=c\left({\cal H}\right)$
the number of connected components of $\Gamma$ (which is $\ell$
in the current notation). These three quantities are related by $\rk\Gamma+\chi\left(\Gamma\right)=c\left(\Gamma\right)$.
Note that as we excluded the trivial subgroup, $\chi\left(\Gamma\right)\le0$
for all $\Gamma\in\mcg_{B}\left(\F\right)$.
\end{defn}

We are now able to define another form of the function $\Phi$ as
in \eqref{eq:final generalization of object of study}, which depends
on a morphism of multi core graphs:
\begin{defn}
\label{def:Phi}Let $\eta\colon\Gamma\to\Delta$ be a morphism of
multi core graphs. Assume that $\piol\left(\Gamma\right)=\left\{ H_{1}^{\F},\ldots,H_{\ell}^{\F}\right\} $
and $\piol\left(\Delta\right)=\left\{ J_{1}^{\F},\ldots,J_{m}^{\F}\right\} $.
As above, let $f\colon\left[\ell\right]\to\left[m\right]$ correspond
to $\eta$, and assume that $u_{i}H_{i}u_{i}^{-1}\le J_{f\left(i\right)}$
for some $u_{i}\in\F$ for all $i\in\left[\ell\right]$ be the embedding
corresponding to $\eta$. Let $\left\{ \varphi_{k}\colon J_{k}\to S_{N}\right\} _{k=1}^{m}$
be independent, uniformly random homomorphisms. Define\vspace{-20bp}
\end{defn}

\[
\Phi_{\eta}\left(N\right)\defi\mathbb{E}_{\left\{ \varphi_{k}\in\Hom\left(J_{k},S_{N}\right)\right\} _{k=1}^{m}}\left[\#\mathrm{fix}\left(\varphi_{f\left(1\right)}\left(u_{1}H_{1}u_{1}^{-1}\right)\right)\cdot\ldots\cdot\#\mathrm{fix}\left(\varphi_{f\left(\ell\right)}\left(u_{\ell}H_{\ell}u_{\ell}^{-1}\right)\right)\right].
\]

\begin{example}
\label{exa:Phi for id}For $\id\colon\Gamma\to\Gamma$, we have $\Phi_{\id}\left(N\right)=N^{\chi\left(\Gamma\right)}$.
Indeed, the value of $\Phi$ is multiplicative with respect to the
different connected components of the codomain. In a component $\Gamma_{0}$
of rank $k$, the probability that $k$ independent permutations fix
some $i\in\left[N\right]$ is $\frac{1}{N^{k}}$, so the expected
number of common fixed points is $N^{1-k}=N^{\chi\left(\Gamma_{0}\right)}$.
\end{example}

\begin{example}
\label{exa:main thm in terms of Phi}To illustrate, we present the
geometric picture, in terms of multi core graphs, of the object of
study this paper began with -- $\mathbb{E}_{w}\left[\qak\right]$.
Here there is a multiset $\H\in\mocc\left(\F\right)$ of size $\alpha_{1}+\ldots+\alpha_{k}$,
with $\alpha_{1}$ copies of $\left\langle w\right\rangle ^{\F}$,
$\alpha_{2}$ copies of $\left\langle w^{2}\right\rangle ^{\F}$,
and so on. The corresponding multi core graph is denoted $\Gamma_{\ak}^{w}\defi\Gamma_{B}\left(\H\right)$.
It consists of $\alpha_{1}$ disjoint copies of a cycle of length
$\left|w\right|_{c}$ depicting $w$, together with $\alpha_{2}$
disjoint copies of a cycle of length $\left|w^{2}\right|_{c}$ depicting
$w^{2}$, and so on, where $\left|w\right|_{c}$ denotes the length
of the cyclic reduction of $w$. The second multiset $\J\in\mocc\left(\F\right)$
is the singleton $\left\{ \F^{\F}\right\} =\left\{ \left\{ \F\right\} \right\} $,
corresponding to the (multi) core graph $X_{B}$. There is only one
possible morphism between the two -- the immersion from Definition
\ref{def:multi-core-graphs} -- and we denote it by $\eta_{\ak}^{w}\colon\Gamma\to X_{B}$.
This is illustrated in Figure \ref{fig:example of multi core graph}.
In this case we have
\[
\Phi_{\eta_{\ak}^{w}}=\mathbb{E}_{w}\left[\qak\right].
\]
\begin{figure}
\includegraphics[viewport=0bp 0bp 774bp 387bp,clip,scale=0.6]{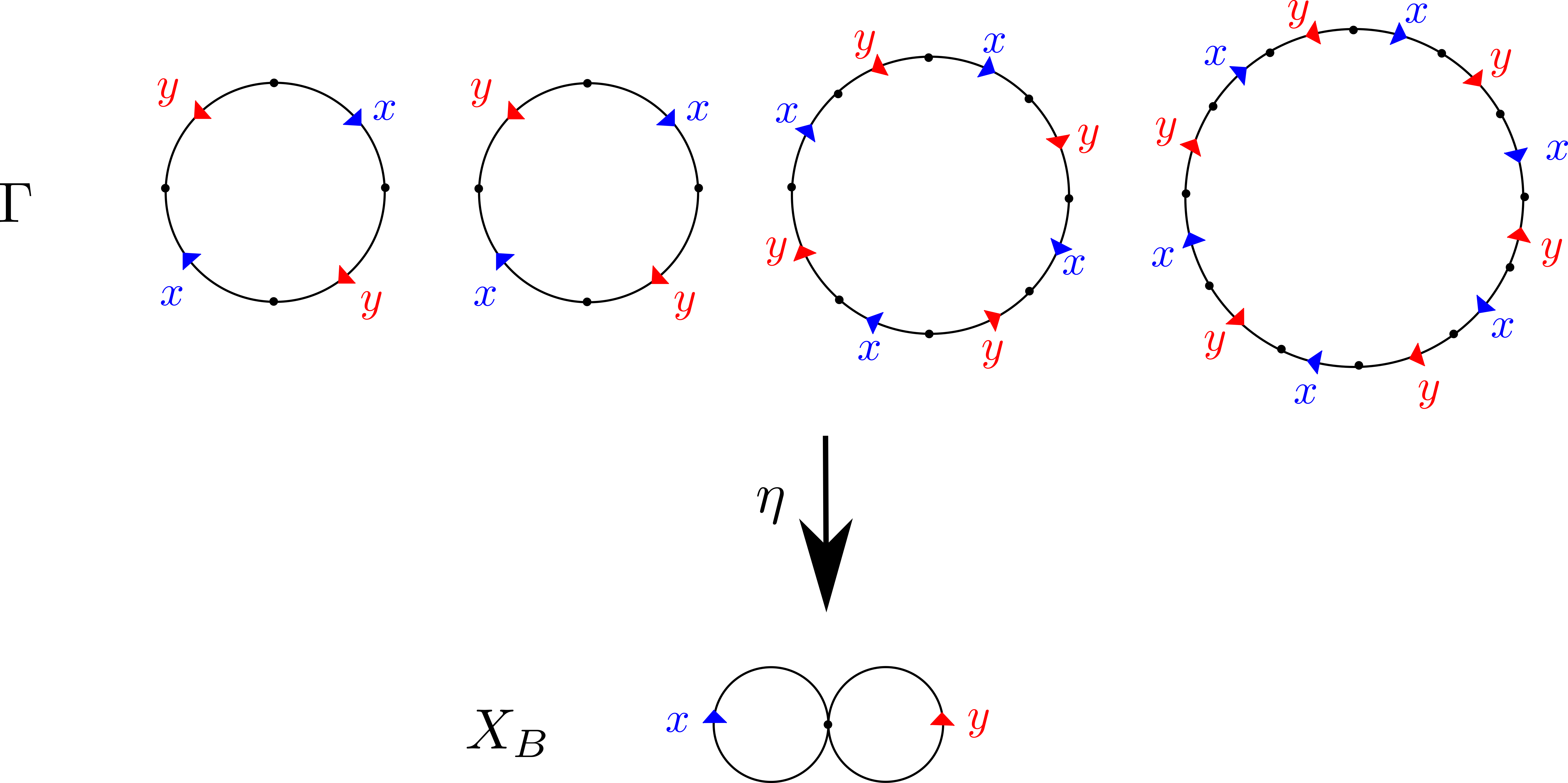}\caption{\label{fig:example of multi core graph}Let $\protect\F_{2}$ have
basis $B=\left\{ x,y\right\} $, and let $w=yxyxy^{-2}\in\protect\F_{2}$.
The multi core graph in the top part of the figure is $\Gamma=\Gamma_{B}\left(\protect\H\right)$
where $\protect\H=\left\{ \left\langle w\right\rangle ^{\protect\F_{2}},\left\langle w\right\rangle ^{\protect\F_{2}},\left\langle w^{2}\right\rangle ^{\protect\F_{2}},\left\langle w^{3}\right\rangle ^{\protect\F_{2}}\right\} $.
It is denoted $\Gamma_{2,1,1}^{w}$ in the notation from Example \ref{exa:main thm in terms of Phi}.
The bottom part shows the bouquet $X_{B}$. There is a single morphism
of multi core graphs between these two, and we denote it by $\eta_{2,1,1}^{w}$.
We have $\Phi_{\eta_{2,1,1}^{w}}=\mathbb{E}_{w}\left[\xi_{1}^{~2}\xi_{2}\xi_{3}\right]$,
where $\Phi_{\eta_{2,1,1}^{w}}$ is defined in Definition \ref{def:Phi}.}
\end{figure}
\end{example}

As explained in \cite[Section 6]{PP15} for the simpler case analyzed
there, $\Phi_{\eta}\left(N\right)$ can also be given the following
completely geometric interpretation. Let $\widehat{\Delta_{N}}$ be
a random $N$-sheeted covering space of $\Delta$, defined as follows.
Its vertex-set is $V\left(\Delta\right)\times\left[N\right]$. For
every directed edge $e=\left(u,v\right)\in E\left(\Delta\right)$,
choose uniformly at random a permutation $\sigma_{e}\in S_{N}$, and
introduce in $\widehat{\Delta_{N}}$ a directed edge $(u,i)\to(v,\sigma_{e}(i))$
with the same label as $e$ for every $i\in[N]$. This is indeed an
$N$-sheeted covering of $\Delta$ with the projection $\left(u,i\right)\mapsto u$
and $\left(\left(u,i\right),\left(v,\sigma_{e}\left(i\right)\right)\right)\to e$.
\begin{prop}
\label{prop:geometric meaning for Phi}Let $\eta\colon\Gamma\to\Delta$
be a morphism of multi core graphs. Then $\Phi_{\eta}\left(N\right)$
is equal to the average number of lifts of $\eta$ to the random $N$-covering
$\widehat{\Delta_{N}}$.
\[
\xymatrix{ & \widehat{\Delta_{N}}\ar@{->>}[d]^{p}\\
\Gamma\ar[r]_{\eta}\ar@{-->}[ur]^{\#} & \Delta
}
\]
\end{prop}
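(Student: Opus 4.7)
\medskip

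\noindent\textbf{Proof plan.} My plan is to identify the random $N$-covering $\widehat{\Delta_N}$ with the package of monodromy homomorphisms $\{\varphi_k\colon J_k\to S_N\}_{k=1}^{m}$, and then use the standard covering-space lifting criterion componentwise. Let me first set up the dictionary between coverings and homomorphisms. Fix the basepoints $p_k\in\Delta_k$ and a spanning tree of each $\Delta_k$, thereby identifying $J_k=\pi_1(\Delta_k,p_k)$ with a free group whose basis consists of the non-tree edges of $\Delta_k$ (with tree paths to $p_k$ prepended/appended). For the random $N$-covering $\widehat{\Delta_N}$, the monodromy around a closed loop based at $p_k$ acts on the fiber $\{p_k\}\times[N]\cong[N]$ by composing the chosen $\sigma_e$'s (or their inverses) along the loop. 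Because we can absorb the random permutations on the tree edges into a change of trivialization of the fibers over the vertices of $\Delta_k$, and because this change of trivialization does not alter the joint distribution (uniformity is preserved under the induced rebasing), the induced monodromy $\varphi_k\colon J_k\to S_N$ is uniformly distributed in $\mathrm{Hom}(J_k,S_N)$. Moreover, edges belonging to different connected components are assigned independent permutations, so the $\varphi_k$ are mutually independent.

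Next I count lifts componentwise. A lift $\widetilde{\eta}\colon\Gamma\to\widehat{\Delta_N}$ of $\eta$ is the same data as a lift of each restriction $\eta|_{\Gamma_i}\colon\Gamma_i\to\Delta_{f(i)}$ to $\widehat{(\Delta_{f(i)})_N}$, and these lifts can be chosen independently across $i\in[\ell]$. Since $\eta$ is an immersion, so is $\eta|_{\Gamma_i}$, and a lift of the based map $(\Gamma_i,v_i)\to(\Delta_{f(i)},\eta(v_i))$ to a covering is uniquely determined by the image of $v_i$ in the fiber over $\eta(v_i)$, by path-lifting. The standard lifting criterion for covering spaces then says: the lift sending $v_i\mapsto (\eta(v_i),j)$ exists (equivalently, extends to a well-defined graph morphism) if and only if $j\in[N]$ is fixed by the monodromy action of the subgroup $\eta_*\pi_1(\Gamma_i,v_i)\le\pi_1(\Delta_{f(i)},\eta(v_i))$ on the fiber $\{\eta(v_i)\}\times[N]\cong[N]$.

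Now I translate this fixed-point count to $[N]$ with the canonical trivialization over $p_{f(i)}$. The element $u_i\in\F$ defines a path in $\Delta_{f(i)}$ from $\eta(v_i)$ to $p_{f(i)}$, and the induced isomorphism $\pi_1(\Delta_{f(i)},\eta(v_i))\xrightarrow{\sim} J_{f(i)}$ sends the embedded copy of $H_i$ to $u_iH_iu_i^{-1}\le J_{f(i)}$. Under the corresponding bijection of fibers, the monodromy action of $\eta_*\pi_1(\Gamma_i,v_i)$ on the fiber over $\eta(v_i)$ is conjugate to the action of $\varphi_{f(i)}(u_iH_iu_i^{-1})$ on $[N]$. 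Consequently, the number of lifts of $\eta|_{\Gamma_i}$ equals $\#\mathrm{fix}\bigl(\varphi_{f(i)}(u_iH_iu_i^{-1})\bigr)$.

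Putting everything together, the total number of lifts of $\eta$ to a given $\widehat{\Delta_N}$ is
\[
\prod_{i=1}^{\ell}\#\mathrm{fix}\bigl(\varphi_{f(i)}(u_iH_iu_i^{-1})\bigr),
\]
and taking expectation over the independent uniform $\varphi_k$'s gives exactly $\Phi_\eta(N)$ by Definition \ref{def:Phi}. The only real point requiring care is verifying that the random assignment of permutations to edges yields the uniform distribution on $\mathrm{Hom}(J_k,S_N)$ via monodromy; the rest is a routine application of covering-space theory for graphs. As a sanity check, the identity morphism $\id\colon\Gamma\to\Gamma$ on a connected rank-$k$ core graph has $\Phi_{\id}(N)=N^{\chi(\Gamma)}=N^{1-k}$ (Example \ref{exa:Phi for id}), matching the expected number of sections of the random $N$-covering of $\Gamma$, namely $N^{\#V(\Gamma)}/N^{\#E(\Gamma)}$.
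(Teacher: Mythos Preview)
Your proof is correct and follows essentially the same approach that underlies the paper's argument: reduce to each connected component of $\Delta$, identify the random $N$-covering with a uniform monodromy homomorphism $\varphi_k\colon J_k\to S_N$, and count lifts via the covering-space lifting criterion as common fixed points of $\varphi_{f(i)}(u_iH_iu_i^{-1})$. The paper's own proof simply invokes multiplicativity over components of $\Delta$ and then cites \cite[Lemma~6.2]{PP15} for the connected case, whereas you have unpacked that lemma in full detail; the mathematical content is the same.
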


\begin{proof}
By multiplicativity of $\Phi_{\eta}$, it suffices to prove the claim
assuming that $\Delta$ is connected. Then the proposition practically
reduces to \cite[Lemma 6.2]{PP15}.
\end{proof}

\section{Free and algebraic morphisms\label{sec:Free-and-algebraic}}

A subgroup $H$ of a free group $\F$ is called a \emph{free factor}
of $\F$, and $\F$ a \emph{free extension} of $H$, denoted $H\ff\F$,
if it is generated by some subset of a basis of $\F$. Equivalently,
this means that there is another subgroup $K\leq\F$, such that $\F=H\ast K$.
The useful notion of an algebraic extensions of free groups is defined
as follows (see \cite{miasnikov2007algebraic} for a survey):
\begin{defn}
Let $H$ be a subgroup of the free group $\F$. Then $\F$ is an \emph{algebraic
extension }of $H$, denoted $H\alg\F$, if there is no intermediate
proper free factor of $\F$. Namely, if whenever $H\leq J\ff F$,
we have $J=\F$.
\end{defn}

Given a morphism of \emph{connected }core graphs, we may say it is
free (algebraic) if the induced map in the level of fundamental groups
gives a free (algebraic, respectively) extension of groups. A crucial
ingredient of our argument is to find the right generalizations of
these notions to morphisms of multi core graphs. We start with free
morphisms.

\subsection{Free morphisms\label{subsec:Free-morphisms}}
\begin{defn}
\label{def:free morphisms}If $H_{1},\ldots,H_{\ell}$ are subgroups
of the free group $J$, we say that $J$ is a \emph{free extension}
of the multiset $\left\{ H_{1},\ldots,H_{\ell}\right\} $, denoted
$\left\{ H_{1},\ldots,H_{\ell}\right\} \ff J$, if $J$ decomposes
as a free product 
\[
J=\left(\ast_{i=1}^{\ell}j_{i}H_{i}j_{i}^{-1}\right)*K
\]
for some conjugate subgroup $j_{i}H_{i}j_{i}^{-1}$ of $H_{i}$ (so
$j_{i}\in J$) and some subgroup $K\le J$.

Now let $\eta:\Gamma\to\Delta$ be a morphism of multi core graphs
with $\Delta$ \uline{connected}. As explained in Section \ref{subsec:Multi-core-graphs},
one can pick an arbitrary subgroup $J$ in the single conjugacy class
in $\piol\left(\Delta\right)$, and for every component $\Gamma_{1},\ldots,\Gamma_{\ell}$
of $\Gamma$, a suitable subgroup $H_{i}$ so that $H_{i}\le J$.
Say that $\eta$ is a \emph{free} morphism if $\left\{ H_{1},\ldots,H_{\ell}\right\} \ff J$.
Finally, say that a general morphism $\eta:\Gamma\to\Delta$ of multi
core graphs is \emph{free }if $\eta\Big|_{\eta^{-1}\left(\Delta'\right)}\colon\eta^{-1}\left(\Delta'\right)\to\Delta'$
is free for every connected component $\Delta'$ of $\Delta$.
\end{defn}

The definition of a free morphism does not depend on any of the choices
made: not on the choice of $J$ and not on the choice of the $H_{i}$'s.
The following theorem states some properties of free morphisms. In
particular, it shows that the set of multi core graphs together with
free morphisms form a valid category. By an injective morphism we
mean a morphism which is both edge-injective and vertex-injective. 
\begin{prop}
\label{prop:properties of free morphisms}
\begin{enumerate}
\item \label{enu:Every-injective-morphism is free}Every \emph{injective}
morphism of multi core graphs is free. In particular, the identity
morphism is free.
\item \label{enu:free is transitive}The composition of two free morphisms
is free.
\item \label{enu:EC of free morphisms}If $\eta\colon\Gamma\to\Delta$ is
a free morphism of multi core graphs, then $\chi\left(\Delta\right)\le\chi\left(\Gamma\right)$,
with equality if and only if $\left(i\right)$ $\eta$ induces an
isomorphism between $\Gamma$ and the connected components of $\Delta$
meeting $\mathrm{Im}\left(\eta\right)$, and $\left(ii\right)$ the
remaining connected components of $\Delta$ are cycles.
\item \label{enu:Phi can ignore free extensions}If $\xymatrix{\bullet\ar[r]_{\varphi}\ar@/^{0.5pc}/[rr]^{\eta} & \bullet\ar[r]_{\psi} & \bullet}
$ is a composition of morphisms with $\psi$ free, then $\Phi_{\varphi}=\Phi_{\eta}$.
\end{enumerate}
\end{prop}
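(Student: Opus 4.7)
The plan is to treat the four parts in order, using throughout the spanning-tree / free-product dictionary for multi core graphs. For part (1), consider an injective morphism $\eta\colon\Gamma\to\Delta$ and restrict to a connected component $\Delta'$ of $\Delta$. The preimage components $\Gamma_1,\ldots,\Gamma_k$ have pairwise disjoint images in $\Delta'$, so one can pick a spanning tree inside each $\eta(\Gamma_i)$ and extend them through the complement to a spanning tree $T$ of $\Delta'$. Every non-tree edge inside $\eta(\Gamma_i)$ contributes a basis element of a conjugate of $\piol(\Gamma_i)$ inside $\piol(\Delta')$, while the remaining non-tree edges generate a complementary free factor $K$; this is precisely the free decomposition required. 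The identity is the case $k=1$, $K$ trivial. For part (2), restrict to a component of the final codomain, expand the free decomposition provided by the second morphism, then expand each appearing factor further using the free decomposition provided by the first morphism, and invoke associativity of the free product.

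The core of the argument is part (3). Write $\Delta=\bigsqcup_{j=1}^{m}\Delta_j$, and let $\ell_j$ be the number of components of $\Gamma$ mapping into $\Delta_j$, so $\ell:=\sum_j\ell_j=c(\Gamma)$. Freeness over each $\Delta_j$ yields $\rk\Delta_j=\sum_i\rk\Gamma_i^{(j)}+\rk K_j$ for a complementary free factor $K_j$; summing over $j$ and using $\chi=c-\rk$ gives
\[
\chi(\Delta)-\chi(\Gamma)=(m-\ell)-\sum_j\rk K_j.
\]
Split the indices into $S_0=\{j:\ell_j=0\}$ and $S_+=\{j:\ell_j\ge 1\}$. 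For $j\in S_0$ one has $\rk K_j=\rk\Delta_j\ge 1$, where the inequality uses the exclusion of trivial subgroups in $\mocc(\F)$, and $j$ contributes $+1$ to $m-\ell$. For $j\in S_+$, the contribution to $m-\ell$ is $-(\ell_j-1)\le 0$, and $\rk K_j\ge 0$. A direct calculation gives $\chi(\Delta)\le\chi(\Gamma)$, with equality iff every $j\in S_+$ satisfies $\ell_j=1$ and $\rk K_j=0$ (so the unique preimage component maps isomorphically onto $\Delta_j$) and every $j\in S_0$ satisfies $\rk\Delta_j=1$ (so $\Delta_j$ is a cycle), matching conditions (i) and (ii).

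For part (4), fix a connected component of the codomain of $\psi$, let $J$ denote a choice of its fundamental group, and write the free decomposition
\[
J = j_1 H_1 j_1^{-1} * \cdots * j_m H_m j_m^{-1} * K
\]
provided by $\psi$, where the $H_i$'s are fundamental groups of the components of the intermediate multi core graph mapping to this codomain component. The universal property of the free product implies that a uniformly random homomorphism $J\to S_N$ is equivalent to an independent tuple of uniformly random homomorphisms on the individual free factors. The subgroups coming from the domain of $\varphi$ that land in this component are contained, after conjugation, in the corresponding $H_i$'s, so the common fixed-point counts in the definition of $\Phi_\eta$ depend only on the restrictions to the factors $j_i H_i j_i^{-1}$. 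Matching these restrictions to the random homomorphisms sampled on the intermediate components in the definition of $\Phi_\varphi$, and taking the product over all codomain components of $\psi$, gives $\Phi_\eta=\Phi_\varphi$.

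The main obstacle is the equality analysis in part (3), where the careful bookkeeping over the two sets $S_0$ and $S_+$ is required and the standing non-triviality convention has to be invoked to force the ``unused'' components of $\Delta$ into cycles; the remaining parts are essentially formal once the spanning-tree picture and the universal property of the free product are in hand.
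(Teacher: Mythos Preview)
Your proof is correct and follows essentially the same approach as the paper: both arguments rest on the rank additivity of free products, with part~(3) reducing to the per-component inequality $\sum_i \rk H_i \le \rk J$ and its equality analysis, and part~(4) on the fact that a uniform homomorphism from a free product restricts to independent uniform homomorphisms on the factors. Your treatment is more explicit than the paper's terse references---in particular your $S_0/S_+$ bookkeeping in part~(3) spells out exactly the case split the paper compresses into the line ``equality if and only if $\ell=1$ and $H_1=J$, or $\ell=0$ and $\rk J=1$''---but the underlying ideas are the same.
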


\begin{proof}
Item \ref{enu:Every-injective-morphism is free} is a generalization
of the fact that if $\left(\Gamma_{1},v_{1}\right)\hookrightarrow\left(\Gamma_{2},v_{2}\right)$
is an embedding of connected, pointed graphs, then $\pi_{1}\left(\Gamma_{1},v_{1}\right)\ff\pi_{1}\left(\Gamma_{2},v_{2}\right)$
-- the proof in the case that several connected components in the
domain are mapped to a single component in the codomain is straightforward
(the simple idea of the proof also appears in the proof of Lemma \ref{lem:decomposition of a graph is free }
below). Item \ref{enu:free is transitive} is a straightforward generalization
of the transitivity of free extensions in free groups. Item \ref{enu:EC of free morphisms}
follows from the fact that if $\left\{ H_{1},\ldots,H_{\ell}\right\} \ff J$,
then $\sum_{i=1}^{\ell}\rk H_{i}\le\rk J$, and so 
\[
\chi\left(\Gamma_{B}\left(\left\{ H_{1}^{\F},\ldots,H_{\ell}^{\F}\right\} \right)\right)=\ell-\sum_{i=1}^{\ell}\rk H_{i}\ge1-\rk J=\chi\left(\Gamma_{B}\left(J^{\F}\right)\right),
\]
with equality if and only if $\ell=1$ and $H_{1}=J$, or $\ell=0$
and $\rk J=1$. Finally, item \ref{enu:Phi can ignore free extensions}
is a straightforward generalization of the corresponding claim for
connected core graphs -- see, e.g., \cite[Remark 5.2]{PP15}. 
\end{proof}
We end this subsection with two lemmas concerning free morphisms that
we need in Section \ref{subsec:Algebraic-morphisms}. Lemma \ref{lem:decomposition of a graph is free }
generalizes the fact that injective morphisms are free. Lemma \ref{lem:pullback of free is free}
generalizes the fact that if $H\le\F$ and $K\ff\F$ are all free
groups, then $H\cap K\ff H$ (e.g., \cite[Claim 3.9]{PP15}).
\begin{lem}
\label{lem:decomposition of a graph is free }Let $\Gamma$ be a multi
core graph. Let ${\cal P}$ be a partition of a subset of the edge-set
of $\Gamma$. For every block $\beta\in{\cal P}$, consider the multi
core graph $\Sigma_{\beta}$ obtained by deleting from $\Gamma$ the
edges outside $\beta$, and then recursively pruning all leaves and
deleting isolated vertices. Let $\Sigma=\bigsqcup_{\beta\in{\cal P}}\Sigma_{\beta}$
be the multi core graph obtained as the disjoint union of the $\Sigma_{\beta}$'s.
The embeddings $\Sigma_{\beta}\hookrightarrow\Gamma$ give rise to
a morphism $\eta\colon\Sigma\to\Gamma$. Then $\eta$ is free.
\end{lem}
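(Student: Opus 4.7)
The plan is to factor $\eta \colon \Sigma \to \Gamma$ through its image subgraph in $\Gamma$ and apply freeness separately to the two pieces. Since components of the codomain can be handled individually (Definition~\ref{def:free morphisms}), I may assume $\Gamma$ is connected. Let $\Sigma' \subseteq \Gamma$ be the subgraph with edge set $\bigsqcup_{\beta \in \mathcal{P}} E(\Sigma_\beta) \subseteq E(\Gamma)$ (a disjoint union because distinct blocks have disjoint edge sets) together with all endpoints of these edges. The subgraph $\Sigma'$ is a multi core graph: it has no isolated vertices by construction, and no leaves because every vertex of $\Sigma'$ already has degree at least $2$ inside some single $\Sigma_\beta$. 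Thus $\eta$ factors as $\Sigma \stackrel{q}{\longrightarrow} \Sigma' \stackrel{\iota}{\hookrightarrow} \Gamma$, and the injection $\iota$ is free by Proposition~\ref{prop:properties of free morphisms}(\ref{enu:Every-injective-morphism is free}).

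The main task is to show $q$ is free. Because blocks are disjoint, $q$ is a bijection on edges, so $\Sigma'$ is obtained from $\Sigma$ purely by identifying pairs of vertices that share the same image in $\Gamma$. I would perform these identifications one pair at a time, producing a sequence of labeled graphs interpolating between $\Sigma$ and $\Sigma'$; the intermediate graphs may fail to immerse into $X_B$, but they are still graphs with well-defined fundamental groups. By Seifert--van~Kampen, each elementary identification does one of two things: if the two vertices lie in distinct components, those components wedge together and the new fundamental group of the merged component is the free product of the two old ones; if they lie in the same component, the fundamental group of that component picks up a single new free generator. In either case, the fundamental group of each connected component $C_1, \dots, C_\ell$ of the original $\Sigma$ remains a free factor of the new ambient $\pi_1$ at every step, up to conjugation by a path in the intermediate graph. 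Composing all such steps and conjugating everything to a common basepoint $v_0 \in \Sigma'$ yields a decomposition
\[
\pi_1(\Sigma', v_0) \;=\; \bigl(\ast_{i=1}^{\ell}\; j_i\,\pi_1(C_i, v_i)\,j_i^{-1}\bigr) \ast K
\]
of the form demanded by Definition~\ref{def:free morphisms}, so $q$ is free. Transitivity (Proposition~\ref{prop:properties of free morphisms}(\ref{enu:free is transitive})) then gives that $\eta = \iota \circ q$ is free.

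The main obstacle is the bookkeeping of basepoints and conjugations through the sequence of vertex identifications. I would not attempt to build the decomposition by extending a spanning tree of $\Sigma$ to one of $\Sigma'$: a simple example, where three disjoint double edges in $\Sigma$ glue to a graph whose vertices lie on a triangle in $\Sigma'$, shows that the union of spanning trees of the components of $\Sigma$ need not be a sub-forest of $\Sigma'$, and the free factor $K$ in the final decomposition may genuinely mix with the edges of $\Sigma$ rather than coming solely from edges of $\Gamma$ outside all blocks.
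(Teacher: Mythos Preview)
Your proof is correct but follows a different route from the paper. You factor $\eta$ through its image $\Sigma' \subseteq \Gamma$, dispose of the injection $\Sigma' \hookrightarrow \Gamma$ via Proposition~\ref{prop:properties of free morphisms}(\ref{enu:Every-injective-morphism is free}), and handle the edge-bijective piece $q \colon \Sigma \to \Sigma'$ by iterated vertex identifications and Seifert--van~Kampen. The paper instead gives a direct spanning-tree argument in $\Gamma$: fix a spanning tree $T$ of $\Gamma$, note that $T \cap \Lambda$ is a forest for each component $\Lambda$ of $\Sigma$, extend it to a spanning tree $T_\Lambda$ of $\Lambda$, and exhibit an explicit (subset of a) basis of $\pi_1(\Gamma)$ containing a basis of each conjugate $u_\Lambda\, \pi_1(\Lambda)\, u_\Lambda^{-1}$. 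So while you rightly observe that spanning trees of the $\Sigma_\beta$'s need not assemble into a forest in $\Sigma'$, the paper sidesteps this by going in the opposite direction --- restricting a tree of $\Gamma$ down to each component rather than extending trees upward. Your approach trades the explicit basis for a cleaner inductive picture; the paper's buys a one-shot construction without tracking basepoints through a chain of gluings. Two minor remarks: for the freeness of $q$ you should reduce to connected components of $\Sigma'$ (its codomain) rather than of $\Gamma$; and your worry that intermediate gluings might fail to immerse into $X_B$ is actually unfounded here, since distinct blocks contribute disjoint edge sets at any common vertex of $\Gamma$.
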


\begin{proof}
Because freeness of morphisms is tested in every connected component
of the codomain separately, we may assume $\Gamma$ is connected.
Fix a basepoint $\otimes$ and a spanning tree $T$ in $\Gamma$.
Let $J=\piol\left(\Gamma,\otimes\right)$. An arbitrary orientation
of the edges of $\Gamma$ outside $T$ standardly gives rise to a
basis of $J$. We shall construct a similar basis which shows the
freeness of $\eta$.

Let $\Lambda$ be an arbitrary connected component of $\Sigma$, embedded
in $\Gamma$. Note that $T\cap\Lambda$ is a forest inside $\Lambda$,
which can be extended to a spanning tree $T_{\Lambda}$ of $\Lambda$.
Fix $T_{\Lambda}$ for every $\Lambda$. For every edge $e\in\left(\bigcup_{\Lambda}T_{\Lambda}\right)\setminus T$,
orient $e$ arbitrarily, and let $j_{e}=u_{1}\overrightarrow{e}u_{2}\in J$,
where $u_{1}\in\F$ corresponds to the path through $T$ from $\otimes$
to the tail of $e$ and $u_{2}\in\F$ to the path through $T$ from
the head of $e$ to $\otimes$. Let $C_{o}$ be the set of all such
elements of $J$ obtained from all edges in $\left(\bigcup_{\Lambda}T_{\Lambda}\right)\setminus T$.

For all $\Lambda$, let $\otimes_{\Lambda}$ be a fixed basepoint
of $\Lambda$ and $u_{\Lambda}\in\F$ be the path from $\otimes$
to $\otimes_{\Lambda}$ through $T$. Construct a basis $C'_{\Lambda}$
for $H_{\Lambda}\defi\piol\left(\Lambda,\otimes_{\Lambda}\right)$
using $T_{\Lambda}$, and note that $C_{\Lambda}\defi u_{\Lambda}C'_{\Lambda}u_{\Lambda}^{-1}\defi\left\{ u_{\Lambda}cu_{\Lambda}^{-1}\,\middle|\,c\in C'_{\Lambda}\right\} $
is a basis for $u_{\Lambda}H_{\Lambda}u_{\Lambda}^{-1}$, which is
a subgroup of $J$. Now $C_{o}\cup\bigcup_{\Lambda}C_{\Lambda}$ is
a basis of $J$ which shows that indeed
\[
\left\{ H_{\Lambda}\right\} _{\Lambda}\ff J,
\]
namely, $\eta$ is free.
\end{proof}
There is a natural notion of \emph{pullback} \label{pullback}in the
equivalent categories $\mocc\left(\F\right)$ and $\mcg_{B}\left(\F\right)$.
It is very similar to the well-established notion of pullback in the
case of connected core graphs (e.g., \cite[Sections 1.3 and 5.5]{stallings1983topology}).
If $\eta_{1}\colon\Gamma_{1}\to\Delta$ and $\eta_{2}\colon\Gamma_{2}\to\Delta$
are morphisms, the pullback is the multi core graph $\Sigma$ defined
as follows: begin with the graph $\Sigma'$ with vertex-set 
\[
\left\{ \left(u_{1},u_{2}\right)\,\middle|\,u_{i}~\mathrm{a~vertex~of}~\Gamma_{i},~\eta_{1}\left(u_{1}\right)=\eta_{2}\left(u_{2}\right)\right\} ,
\]
and edge-set defined analogously, where the edge $\left(e_{1},e_{2}\right)$
begins at the pair of tails and ends at the pair of heads. Then, recursively
remove all leaves and isolated vertices to obtain $\Sigma$. There
are natural morphisms $\sigma_{i}\colon\Sigma\to\Gamma_{i}$, $i=1,2$,
defined as the projection to the $i$-th coordinate. This pullback
satisfies the universal property of pullbacks: for every pair of morphisms
$\gamma_{1}\colon\Lambda\to\Gamma_{1}$, $\gamma_{2}\colon\Lambda\to\Gamma_{2}$
such that $\eta_{1}\circ\gamma_{1}=\eta_{2}\circ\gamma_{2}$, there
is a unique $\overline{\gamma}\colon\Lambda\to\Sigma$ such that the
following diagram commutes:

\[
\xymatrix{\Lambda\ar@{.>}[rd]^{\exists!\overline{\gamma}}\ar@/^{2pc}/[rrd]^{\gamma_{1}}\ar@/_{2pc}/[rdd]_{\gamma_{2}}\\
 & \Sigma\ar[r]^{\sigma_{1}}\ar[d]_{\sigma_{2}} & \Gamma_{1}\ar[d]^{\eta_{1}}\\
 & \Gamma_{2}\ar[r]_{\eta_{2}} & \Delta
}
\]
In algebraic terms, the connected component of $\left(u_{1},u_{2}\right)$
in the pullback $\Sigma$ corresponds to the conjugacy class in $\piol\left(\Delta,\eta_{1}\left(u_{1}\right)\right)$
of $\piol\left(\Gamma_{1},u_{1}\right)\cap\piol\left(\Gamma_{2},u_{2}\right)$.
In total, for every connected components $G_{1}$ of $\Gamma_{1}$
and $G_{2}$ of $\Gamma_{2}$ which are mapped to same component $D$
of $\Delta$, let $J$ be a representative of the conjugacy class
$\piol\left(D\right)$, and let $H_{i}$ be a representative of $\piol\left(G_{i}\right)$
such that $H_{1},H_{2}\le J$ agree with the morphisms $\eta_{1},\eta_{2}$.
Then there is one connected component in the pullback $\Sigma$ for
every non-trivial conjugacy class of subgroups of $J$ in the set
$\left\{ H_{1}\cap jH_{2}j^{-1}\right\} _{j\in J}$. Most importantly,
the pullback construction can be completely defined in the category
$\mocc\left(\F\right)$, and is thus basis-independent\footnote{Note that it is very much possible that the pullback $\Sigma$ be
empty, and recall that the empty multiset is an element of $\mocc\left(\F\right)$.}.
\begin{lem}
\label{lem:pullback of free is free}In the above notation, if $\eta_{1}\colon\Gamma_{1}\to\Delta$
and $\eta_{2}\colon\Gamma_{2}\to\Delta$ are morphisms \emph{with
$\eta_{2}$ free and $\left(\Sigma,\sigma_{1},\sigma_{2}\right)$
is the pullback, then $\sigma_{1}\colon\Sigma\to\Gamma_{1}$ is also
free.}
\begin{equation}
\xymatrix{\Sigma\ar[r]_{*}^{\sigma_{1}}\ar[d]_{\sigma_{2}} & \Gamma_{1}\ar[d]^{\eta_{1}}\\
\Gamma_{2}\ar[r]_{\eta_{2}}^{*} & \Delta
}
\label{eq:freeness diagram}
\end{equation}
\end{lem}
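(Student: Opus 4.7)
The plan is to reduce to the algebraic statement and then invoke Kurosh's subgroup theorem. Since freeness of a morphism is tested on each connected component of the codomain separately (Definition \ref{def:free morphisms}), it suffices to fix a component $G_1$ of $\Gamma_1$ and show that $\sigma_1\big|_{\sigma_1^{-1}(G_1)}\colon \sigma_1^{-1}(G_1)\to G_1$ is free. Let $D$ be the component of $\Delta$ containing $\eta_1(G_1)$. Choose a basepoint and representatives so that $H_1 \defi \piol(G_1)$ sits inside $J \defi \piol(D)$ in the way prescribed by $\eta_1$.

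Next, unpack the freeness of $\eta_2$ over the component $D$. Let $G_2^{(1)},\ldots,G_2^{(\ell)}$ be the connected components of $\Gamma_2$ mapped by $\eta_2$ into $D$, and let $K_i$ be a representative of $\piol(G_2^{(i)})$ compatible with $\eta_2$. By definition of a free morphism, there exist $j_1,\ldots,j_\ell\in J$ and a subgroup $M\le J$ with
\[
J \;=\; \bigl(\ast_{i=1}^{\ell} j_i K_i j_i^{-1}\bigr)\ast M.
\]
Now apply Kurosh's subgroup theorem to $H_1\le J$ with respect to this free product decomposition: $H_1$ decomposes as
\[
H_1 \;=\; \Bigl(\ast_{i,\,g}\, H_1\cap g(j_i K_i j_i^{-1})g^{-1}\Bigr)\ast F,
\]
where for each $i$ the element $g$ ranges over a set of representatives of the double cosets $H_1\backslash J / (j_i K_i j_i^{-1})$, and $F$ is a free group. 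Factors in which the intersection is trivial may be absorbed into the free part $F$, so after discarding them we obtain a free decomposition of $H_1$ whose non-trivial factors are exactly the non-trivial conjugacy classes (in $H_1$) of intersections of $H_1$ with conjugates of the $j_i K_i j_i^{-1}$.

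Finally, match this with the pullback. By the algebraic description of the pullback given on page \pageref{pullback}, the connected components of $\Sigma$ that $\sigma_1$ sends into $G_1$ correspond bijectively to precisely these non-trivial $H_1$-conjugacy classes of subgroups $H_1\cap g(j_i K_i j_i^{-1})g^{-1}$, with $\piol$ of each such component equal to the corresponding intersection. Hence the Kurosh decomposition above is exactly a free decomposition of $H_1$ as a free product of the $\piol$ of the components of $\sigma_1^{-1}(G_1)$ (after conjugation by suitable elements of $H_1$) together with a free complement, which is the definition of $\sigma_1\big|_{\sigma_1^{-1}(G_1)}$ being free. The main subtlety I expect is only bookkeeping: correctly identifying double-coset representatives on the algebraic side with connected components of the pullback on the geometric side, and verifying that the trivial intersections which Kurosh produces but the pullback discards can be safely absorbed into the Kurosh free factor $F$ without disturbing the freeness.
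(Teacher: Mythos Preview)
Your argument is correct. Reducing to a single component $G_1$ of $\Gamma_1$, writing $J=\bigl(\ast_i j_iK_ij_i^{-1}\bigr)\ast M$ from the freeness of $\eta_2$, and applying Kurosh to $H_1\le J$ does give exactly the free decomposition of $H_1$ required by Definition~\ref{def:free morphisms}, once you absorb the $H_1\cap hMh^{-1}$ factors (free, since $M$ is free) and the trivial intersections into the free complement~$F$. Your identification of the surviving Kurosh factors with the components of $\sigma_1^{-1}(G_1)$ via double cosets $H_1\backslash J/(j_iK_ij_i^{-1})$ is the standard covering-space dictionary and matches the algebraic description of the pullback given on page~\pageref{pullback}.

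The paper takes a different route. Rather than invoking Kurosh, it exploits the basis-independence of the pullback: since $\eta_2$ is free, one may choose a basis $Q$ of $J$ in which each component of $\Gamma_2$ is a bouquet on a disjoint subset of~$Q$. In that basis the pullback $\Sigma$ is literally obtained from $\Gamma_1$ by partitioning its edges according to these subsets and pruning, so $\sigma_1$ is free by Lemma~\ref{lem:decomposition of a graph is free }. Your approach is shorter and more algebraic, leaning on a classical structure theorem; the paper's approach is self-contained within its own framework and keeps the argument geometric, which is consistent with how the surrounding sections are written. In effect, Lemma~\ref{lem:decomposition of a graph is free } plays the role of the special case of Kurosh that is actually needed here.
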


\begin{proof}
Note that in the diagram \eqref{eq:freeness diagram}, there is no
interaction between components of $\Gamma_{1},\Gamma_{2}$ and $\Sigma$
which are mapped to different components of $\Delta$, and recall
that freeness is tested independently in every connected component
of the codomain. Thus, we may assume that $\Delta$ is connected.
Because the pullback can be constructed in pure algebraic terms, we
may assume $J=\piol\left(\Delta,v\right)$ is the ambient free group
($v$ is some arbitrary vertex in $\Delta$), and pick a basis $Q$
of $J$ which extends a basis for the components of $\Gamma_{2}$.
Namely, every component of $\Gamma_{2}$ corresponds to the conjugacy
class of the subgroup generated by some subset of $Q$, and the different
subsets are disjoint. The geometric picture in this basis is that
$\Delta$ and every component of $\Gamma_{2}$ are bouquets (a single
vertex with several loops).

But now, for every component of $\Gamma_{2}$, let $\beta\subseteq Q$
be the corresponding subset of basis elements. This gives rise to
a partition of a subset of the edge-set of $\Gamma_{1}$, with one
block consisting of all edges colored by elements from $\beta$, for
every component of $\Gamma_{2}$. It is easy to see that the pullback
$\Sigma$ is identical to the construction described in Lemma \ref{lem:decomposition of a graph is free }
with respect to this partition of the edges of $\Gamma_{2}$, and
thus $\sigma_{1}$ is free.
\end{proof}

\subsection{Algebraic morphisms\label{subsec:Algebraic-morphisms}}

We turn to defining our generalization of the notion of algebraic
extensions.
\begin{defn}
\label{def:alg morphism}Let $\eta\colon\Gamma\to\Delta$ be a morphism
of multi core graphs. We say that $\eta$ is \emph{algebraic} if whenever
$\Gamma\stackrel{_{\eta_{1}}}{\longrightarrow}\Sigma\stackrel{_{\eta_{2}}}{\longrightarrow}\Delta$
is a decomposition of $\eta$ with $\eta_{2}$ free, we have that
$\eta_{2}$ is an isomorphism.
\end{defn}

Because the definition of a free morphism is basis-independent, so
is the definition of an algebraic morphism. The following theorem
lists some important properties of algebraic morphisms. In particular,
it shows that the set of multi core graphs together with algebraic
morphisms form a valid category.
\begin{thm}
\label{thm:properties of algebraic morphisms}
\begin{enumerate}
\item \label{enu:alg is surjective}Every algebraic morphism of multi core
graphs is surjective.
\item \label{enu:The-identity-morphism-is-alg}The identity morphism is
algebraic.
\item \label{enu:transitivity of algebraic morphisms}The composition of
two algebraic morphisms is algebraic. 
\end{enumerate}
\end{thm}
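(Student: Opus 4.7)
The plan is to handle the three parts in order, with a common theme: convert each given decomposition of the morphism at hand into one whose second factor can be seen to be free via the two main constructions already developed in Sections \ref{subsec:Free-morphisms} and \ref{subsec:Algebraic-morphisms}, namely the image-subgraph factorization and the pullback.

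For part \eqref{enu:alg is surjective}, let $\eta\colon\Gamma\to\Delta$ be algebraic. Since $\eta$ is an immersion and every vertex of $\Gamma$ has degree at least two, the same holds of every vertex in the image $\eta(\Gamma)\subseteq\Delta$, so $\Sigma\defi\eta(\Gamma)$ is itself a multi core graph. The factorization $\Gamma\to\Sigma\hookrightarrow\Delta$ exhibits $\eta$ as a composition whose second factor is injective, hence free by Proposition \ref{prop:properties of free morphisms}\eqref{enu:Every-injective-morphism is free}. Algebraicity forces this inclusion to be an isomorphism, so $\eta(\Gamma)=\Delta$ and $\eta$ is surjective.

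For part \eqref{enu:The-identity-morphism-is-alg}, suppose $\id_\Gamma=\eta_2\circ\eta_1$ with $\eta_1\colon\Gamma\to\Sigma$ and with $\eta_2\colon\Sigma\to\Gamma$ free. Since the composition is the identity on both $V(\Gamma)$ and $E(\Gamma)$, the morphism $\eta_1$ is injective (and hence itself free, by Proposition \ref{prop:properties of free morphisms}\eqref{enu:Every-injective-morphism is free}), while $\eta_2$ is surjective. Applying the Euler characteristic inequality of Proposition \ref{prop:properties of free morphisms}\eqref{enu:EC of free morphisms} separately to $\eta_1$ and to $\eta_2$ yields $\chi(\Gamma)\le\chi(\Sigma)\le\chi(\Gamma)$, forcing equality. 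The equality case of that proposition, applied to the free surjective morphism $\eta_2$ (so every component of $\Gamma$ meets the image), forces $\eta_2$ to be an isomorphism.

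For part \eqref{enu:transitivity of algebraic morphisms}, assume $\eta_1\colon\Gamma\to\Sigma$ and $\eta_2\colon\Sigma\to\Delta$ are algebraic and that $\eta_2\circ\eta_1=\beta\circ\alpha$ for some $\alpha\colon\Gamma\to\Omega$ and some free $\beta\colon\Omega\to\Delta$; we must show $\beta$ is an isomorphism. Form the pullback $(\Pi,\pi,\pi')$ of $\eta_2$ and $\beta$; by Lemma \ref{lem:pullback of free is free}, freeness of $\beta$ implies freeness of the projection $\pi\colon\Pi\to\Sigma$. The pair $(\eta_1,\alpha)$ satisfies the compatibility $\eta_2\circ\eta_1=\beta\circ\alpha$, so the universal property of the pullback produces $\bar{\gamma}\colon\Gamma\to\Pi$ with $\pi\circ\bar{\gamma}=\eta_1$ and $\pi'\circ\bar{\gamma}=\alpha$. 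Then $\eta_1=\pi\circ\bar{\gamma}$ is a decomposition of the algebraic $\eta_1$ whose second factor $\pi$ is free, so $\pi$ must be an isomorphism. Setting $\sigma\defi\pi'\circ\pi^{-1}\colon\Sigma\to\Omega$, one checks $\beta\circ\sigma=\eta_2\circ\pi\circ\pi^{-1}=\eta_2$, so $\eta_2=\beta\circ\sigma$ decomposes the algebraic $\eta_2$ with $\beta$ free, whence $\beta$ is an isomorphism. The main subtlety lies in this last part, and is handled precisely by the existence of pullbacks in this category together with Lemma \ref{lem:pullback of free is free}.
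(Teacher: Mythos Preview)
Your proof is correct and follows essentially the same approach as the paper's: the image factorization plus Proposition \ref{prop:properties of free morphisms}\eqref{enu:Every-injective-morphism is free} for part \eqref{enu:alg is surjective}, the Euler-characteristic squeeze from Proposition \ref{prop:properties of free morphisms}\eqref{enu:EC of free morphisms} for part \eqref{enu:The-identity-morphism-is-alg}, and the pullback together with Lemma \ref{lem:pullback of free is free} for part \eqref{enu:transitivity of algebraic morphisms}. The only cosmetic difference in part \eqref{enu:transitivity of algebraic morphisms} is that, after the pullback projection $\pi$ is shown to be an isomorphism, the paper transports $\eta_2$ to the pullback via $\pi$ and decomposes it there, whereas you transport the other projection back to $\Sigma$ via $\pi^{-1}$; the two are equivalent.
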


\begin{proof}
Every $\eta\colon\Gamma\to\Delta$ decomposes as $\Gamma\stackrel{_{\eta_{1}}}{\twoheadrightarrow}\Sigma\stackrel{_{\eta_{2}}}{\hookrightarrow}\Delta$,
where $\Sigma$ is the image of $\eta$ and $\eta_{2}$ is its embedding
in $\Delta$. Note that $\Sigma$ may contain multiple components
which are embedded in the same component of $\Delta$. By Proposition
\ref{prop:properties of free morphisms}\eqref{enu:Every-injective-morphism is free},
$\eta_{2}$ is free. Thus $\eta$ cannot be algebraic unless $\eta_{2}$
is an isomorphism, namely, $\eta$ is surjective. This shows item
\ref{enu:alg is surjective}.

For item \ref{enu:The-identity-morphism-is-alg}, note that any decomposition
of an identity morphism $\mathrm{id}\colon\Gamma\to\Gamma$ is through
an injective and surjective morphisms: $\Gamma\stackrel{\eta_{1}}{\hookrightarrow}\Sigma\stackrel{\eta_{2}}{\twoheadrightarrow}\Gamma$.
Then $\eta_{1}$ is free by Proposition \ref{prop:properties of free morphisms}\eqref{enu:Every-injective-morphism is free}.
If we assume that $\eta_{2}$ is also free, we obtain, by Proposition
\ref{prop:properties of free morphisms}\eqref{enu:EC of free morphisms},
that $\chi\left(\Sigma\right)=\chi\left(\Gamma\right)$, and that
both $\eta_{1}$ and $\eta_{2}$ are isomorphisms.

Finally, assume that $\Lambda\stackrel{\eta_{1}}{\to}\Gamma\stackrel{\eta_{2}}{\to}\Delta$
is a chain of two algebraic morphisms. Assume that there is a decomposition
of $\eta_{2}\circ\eta_{1}$ as $\Lambda\stackrel{\gamma_{1}}{\to}\Gamma'\stackrel{\gamma_{2}}{\to}\Delta$,
with $\gamma_{2}$ free. Let $\Sigma$ be the pullback of $\eta_{2}$
and $\gamma_{2}$ and $\overline{\gamma}\colon\Lambda\to\Sigma$ the
unique morphism so that Diagram \eqref{eq:proof-of-alg-is-transitive-diagram}
commutes. By Lemma \ref{lem:pullback of free is free}, $\sigma_{1}$
is free. In the notation of Diagram \eqref{eq:proof-of-alg-is-transitive-diagram},
we obtain that $\Lambda\stackrel{\overline{\gamma}}{\to}\Sigma\stackrel[*]{\sigma_{1}}{\to}\Gamma$
is a decomposition of the algebraic $\eta_{1}$, hence $\sigma_{1}$
is an isomorphism.
\begin{equation}
\xymatrix{\Lambda\ar[rd]^{\overline{\gamma}}\ar@/^{2pc}/[rrd]^{\eta_{1}}\ar@/_{2pc}/[rdd]_{\gamma_{1}}\\
 & \Sigma\ar[r]_{*}^{\sigma_{1}}\ar[d]_{\sigma_{2}} & \Gamma\ar[d]^{\eta_{2}}\\
 & \Gamma'\ar[r]_{\gamma_{2}}^{*} & \Delta
}
\label{eq:proof-of-alg-is-transitive-diagram}
\end{equation}
But now $\Sigma\stackrel{\eta_{2}\circ\sigma_{1}}{\to}\Delta$ is
algebraic (because $\eta_{2}$ is), and so in its decomposition $\Sigma\stackrel{\sigma_{2}}{\to}\Gamma'\stackrel[*]{\gamma_{2}}{\to}\Delta$,
$\gamma_{2}$ must be an isomorphism too. This proves that $\eta_{2}\circ\eta_{1}$
is algebraic. 
\end{proof}
\begin{rem}
\label{rem:relation between alg and surj}It is easy to come up with
surjective morphisms in $\mcg_{B}\left(\F\right)$ that are not algebraic:
consider, for instance, the morphism from $\xymatrix@1@C=15pt{\bullet\ar@/^{0.2pc}/[r]^{x} & \bullet\ar@/^{0.2pc}/[l]^{y}}$
to $\vphantom{\Big|}\xymatrix@1{\bullet\ar@(dl,ul)[]^{x}\ar@(dr,ur)[]_{y}}$.
Theorem \ref{thm:properties of algebraic morphisms}\eqref{enu:alg is surjective}
says that if $\eta\colon\H\to\J$ is an algebraic morphism in $\mocc\left(\F\right)$,
then it is surjective in \emph{$\mcg_{B}\left(\F\right)$ }for\emph{
any }basis $B$ of $\F$. It is a subtle matter to understand if this
has some converse -- see \cite{miasnikov2007algebraic,parzanchevski2014stallings,kolodner2021algebraic,ventura2021onto}.
\end{rem}

\subsection{The algebraic-free decomposition of morphisms\label{subsec:The-algebraic-free-decomposition}}
\begin{thm}
\label{thm:algebraic-free-decomposition}Let $\eta:\Gamma\to\Delta$
be a morphism of multi core graphs. Then there is a decomposition
\[
\xymatrix{\Gamma\ar[rr]_{\mathrm{algebraic}}^{\varphi}\ar@/^{2pc}/[rrrr]^{\eta} &  & \Sigma\ar[rr]_{\text{free}}^{\mathrm{\psi}} &  & \Delta}
\]
$\eta=\psi\circ\varphi$ such that $\varphi$ is algebraic and $\psi$
is a free. This decomposition is unique in the sense that if $\xymatrix{\Gamma\ar[r]_{\mathrm{alg}}^{\varphi'} & \Sigma'\ar[r]_{*}^{\mathrm{\psi'}} & \Delta}
$ is another such decomposition of $\eta$, then there is an isomorphism
of $\Sigma$ and $\Sigma'$ which commutes with the two decompositions
of $\eta$.

Moreover, this decomposition is universal in the following sense:
for every other decomposition $\xymatrix{\Gamma\ar[r]^{\varphi'} & \Sigma'\ar[r]^{\mathrm{\psi'}} & \Delta}
$ of $\eta$,
\begin{enumerate}
\item \label{enu:alg-free decomposition is universal for alg morphisms}if
$\varphi'$ is algebraic, $\varphi$ factors through $\varphi'$ (namely,
$\exists\theta$ with $\varphi=\theta\circ\varphi'$), and
\item if $\psi'$ is free, $\psi$ factors through $\psi'$ (namely, $\exists\theta'$
with $\psi=\psi'\circ\theta'$). 
\[
\xymatrix{ & \Sigma'\ar@/^{1pc}/[rrrd]\ar@{.>}[rd]^{\exists\theta}\\
\Gamma\ar[rr]_{\mathrm{alg}}^{\varphi}\ar@/^{1pc}/[ru]_{\mathrm{alg}}^{\varphi'}\ar@/_{1pc}/[rrrd] &  & \Sigma\ar[rr]_{*}^{\mathrm{\psi}}\ar@{.>}[rd]^{\exists\theta'} &  & \Delta\\
 &  &  & \Sigma''\ar@/_{1pc}/[ru]_{*}^{\psi'}
}
\]
\end{enumerate}
\end{thm}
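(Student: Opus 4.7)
The plan is to prove existence by an extremal choice of intermediate object, and to prove uniqueness together with both universal properties uniformly via pullbacks in $\mcg_{B}(\F)$.

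For existence, consider the non-empty set of decompositions $\Gamma\stackrel{\varphi'}{\to}\Sigma'\stackrel{\psi'}{\to}\Delta$ of $\eta$ with $\psi'$ free (the trivial one with $\Sigma'=\Delta$ belongs to it). By Proposition \ref{prop:properties of free morphisms}\eqref{enu:EC of free morphisms}, every such $\Sigma'$ satisfies $\chi(\Delta)\le\chi(\Sigma')\le 0$, so the maximum $M$ of $\chi(\Sigma')$ is attained. Among decompositions realizing $\chi(\Sigma')=M$, pick one $(\Sigma,\varphi,\psi)$ with the minimum number of edges in $\Sigma$. I claim $\varphi$ is algebraic. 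Otherwise, $\varphi$ factors as $\Gamma\stackrel{\alpha}{\to}\Sigma''\stackrel{\beta}{\to}\Sigma$ with $\beta$ free and not an isomorphism; by Proposition \ref{prop:properties of free morphisms}\eqref{enu:free is transitive}, $\psi\circ\beta$ is free, so $(\Sigma'',\alpha,\psi\circ\beta)$ is again in our set, and Proposition \ref{prop:properties of free morphisms}\eqref{enu:EC of free morphisms} gives $\chi(\Sigma'')\ge M$. Strict inequality contradicts the maximality of $M$; equality, by the same proposition, forces $\Sigma$ to consist of a copy of $\Sigma''$ plus further cycle components, so $\Sigma''$ has strictly fewer edges, contradicting the minimality of the edge count.

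For uniqueness and the two universal properties, the common device is to pull back $\psi$ against the second morphism $\psi'$ of a competing decomposition $\Gamma\stackrel{\varphi'}{\to}\Sigma'\stackrel{\psi'}{\to}\Delta$, obtaining $(P,\pi,\pi')$ together with a unique $\gamma:\Gamma\to P$ satisfying $\pi\circ\gamma=\varphi$ and $\pi'\circ\gamma=\varphi'$. Lemma \ref{lem:pullback of free is free} turns freeness of $\psi$ into freeness of $\pi'$, and symmetrically freeness of $\psi'$ into freeness of $\pi$. If $(\Sigma',\varphi',\psi')$ is itself algebraic-free, both $\pi$ and $\pi'$ are free, and the decompositions $\Gamma\to P\to\Sigma$ of the algebraic $\varphi$ and $\Gamma\to P\to\Sigma'$ of the algebraic $\varphi'$ force both projections to be isomorphisms; then $\pi'\circ\pi^{-1}$ is the compatible iso $\Sigma\cong\Sigma'$. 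When only $\varphi'$ is algebraic, $\pi'$ is free and algebraicity of $\varphi'$ forces $\pi'$ to be an isomorphism, so $\theta=\pi\circ(\pi')^{-1}:\Sigma'\to\Sigma$ satisfies $\varphi=\theta\circ\varphi'$. When only $\psi'$ is free, $\pi$ is free and algebraicity of $\varphi$ forces $\pi$ to be an isomorphism, so $\theta'=\pi'\circ\pi^{-1}:\Sigma\to\Sigma'$ satisfies $\psi=\psi'\circ\theta'$.

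The main obstacle is the existence step, specifically the equality case of Proposition \ref{prop:properties of free morphisms}\eqref{enu:EC of free morphisms}, which permits a free morphism to preserve the Euler characteristic by adding extra cycle components in the codomain. This is why maximizing $\chi(\Sigma')$ alone does not suffice and the secondary minimization of the edge count is essential in order to eliminate these ``trailing cycles.'' Once existence is established, the remaining statements are formal consequences of the pullback construction, Lemma \ref{lem:pullback of free is free}, and the defining property of algebraic morphisms.
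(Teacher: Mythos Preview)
Your proof is correct and follows essentially the same pullback strategy as the paper for uniqueness and the universal properties. The one genuine difference is in the existence step: the paper restricts from the outset to decompositions with $\varphi$ \emph{surjective} and then maximizes $\chi(\Sigma)$; surjectivity of $\varphi$ then kills the trailing-cycles case of Proposition~\ref{prop:properties of free morphisms}\eqref{enu:EC of free morphisms} directly, since a free non-isomorphism $\beta:\Sigma''\to\Sigma$ with $\chi(\Sigma'')=\chi(\Sigma)$ would have to miss a cycle component of $\Sigma$, contradicting surjectivity of $\varphi=\beta\circ\alpha$. Your double extremization (maximize $\chi$, then minimize edge count) achieves the same end by a different device and is equally valid.

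For universal property (2), your argument is actually cleaner than the paper's: you use directly that $\pi$ is free and $\varphi=\pi\circ\gamma$ is algebraic to conclude $\pi$ is an isomorphism. The paper instead passes to the image $\overline{\Lambda}$ of $\gamma$ in the pullback and invokes the surjectivity of $\varphi$ together with the maximality of $\chi(\Sigma)$ to force $\overline{\sigma}$ to be an isomorphism --- a more roundabout route that relies on the specific construction of $\Sigma$ rather than only on the algebraicity of $\varphi$.
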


Note that by definitions, in the first case $\theta$ must be algebraic,
and in the second case $\theta'$ must be free.
\begin{proof}
Assume we work in the category $\mcg_{B}\left(\F\right)$. Consider
all decompositions of $\eta$ as\linebreak{}
$\xymatrix{\Gamma\ar[r]^{\varphi} & \Sigma\ar[r]_{*}^{\mathrm{\psi}} & \Delta}
$ with $\psi$ free and $\varphi$ surjective (in the terminology of
Section \ref{sec:-Surjective-morphisms}, $\varphi$ is $B$-surjective).
There is at least one such decomposition: the decomposition of $\eta$
to a surjective and an injective morphisms. Euler characteristics
of multi core graphs are non-positive, and so among such decompositions,
we may pick one with $\chi\left(\Sigma\right)$ \emph{maximal}. We
fix this triple of $\Sigma,\varphi,\psi$. By Proposition \ref{prop:properties of free morphisms}\eqref{enu:EC of free morphisms},
$\varphi$ is algebraic. This proves the existence of the algebraic-free
decomposition.

Assume that $\xymatrix{\Gamma\ar[r]_{\mathrm{alg}}^{\varphi'} & \Sigma'\ar[r]_{*}^{\mathrm{\psi'}} & \Delta}
$ is another such decomposition. Let $\Lambda$ be the pullback of
$\psi$ and $\psi'$ as in the following diagram. By Lemma \ref{lem:pullback of free is free},
$\sigma$ and $\sigma'$ are free morphisms. But $\varphi$ and $\varphi'$
are algebraic, and so $\sigma$ and $\sigma'$ must be isomorphisms.
This proves the uniqueness of the algebraic-free decomposition.
\begin{equation}
\xymatrix{ &  & \Sigma'\ar@/^{1pc}/[rrd]_{\psi'}^{*}\\
\Gamma\ar@/_{1pc}/[rrd]_{\mathrm{alg}}^{\varphi}\ar@/^{1pc}/[rru]_{\varphi'}^{\mathrm{alg}}\ar@{.>}[r]^{\theta} & \Lambda\ar[ru]_{\sigma'}\ar[rd]^{\sigma} &  &  & \Delta\\
 &  & \Sigma\ar@/_{1pc}/[rru]_{*}^{\mathrm{\psi}}
}
\label{eq:diagram of proof of decomposition}
\end{equation}
Now let $\xymatrix{\Gamma\ar[r]_{\mathrm{alg}}^{\varphi'} & \Sigma'\ar[r]^{\mathrm{\psi'}} & \Delta}
$ be another decomposition of $\eta$ with $\varphi'$ algebraic, but
$\psi'$ not necessarily free. Let again $\left(\Lambda,\sigma,\sigma'\right)$
be the pullback of $\psi$ and $\psi'$ as in Diagram \eqref{eq:diagram of proof of decomposition}.
As $\psi$ is free, so is $\sigma'$, by Lemma \ref{lem:pullback of free is free}.
But $\varphi'$ is algebraic and so $\sigma'$ is an isomorphism,
and $\sigma\circ\left(\sigma'\right)^{-1}$ is the required morphism
$\Sigma'\to\Sigma$.

Finally, if $\xymatrix{\Gamma\ar[r]^{\varphi'} & \Sigma'\ar[r]_{*}^{\mathrm{\psi'}} & \Delta}
$ is another decomposition of $\eta$ with $\psi'$ free, but $\varphi'$
not necessarily algebraic, then by Lemma \ref{lem:pullback of free is free}
again, $\sigma$ (and $\sigma'$) are free. Let $\overline{\Lambda}$
be the image of $\theta$ in $\Lambda$ and $\overline{\sigma}$ the
restriction of $\sigma$ to $\overline{\Lambda}$. The embedding $\overline{\Lambda}\hookrightarrow\Lambda$
is free by Proposition \ref{prop:properties of free morphisms}\eqref{enu:Every-injective-morphism is free},
and so $\overline{\sigma}$, which is the composition of this embedding
with the free morphism $\sigma$, is free as well by \ref{prop:properties of free morphisms}\eqref{enu:free is transitive}.
As $\varphi$ is surjective, so is $\overline{\sigma}$. By the maximality
of $\chi\left(\Sigma\right)$ and Proposition \ref{prop:properties of free morphisms}\eqref{enu:EC of free morphisms},
$\overline{\sigma}$ must be an isomorphism, and so $\sigma'\circ\overline{\sigma}^{-1}$
is the sought-after morphism from $\Sigma$ to $\Sigma'$.
\end{proof}

\section{$B$-Surjective morphisms and norms of morphisms\label{sec:-Surjective-morphisms}}

Theorem \ref{thm:properties of algebraic morphisms}\eqref{enu:alg is surjective}
already manifested the importance of surjective morphisms of core
graphs. Yet, surjective morphisms play an even bigger role in this
work, and in the current section we develop some concepts that will
be useful in the following sections. Surjective morphisms of multi
core graphs is the analogue of the partial order ``$B$-cover''
defined in \cite[Definition 3.3]{PP15}. There, core graphs are connected
and have basepoints which must be mapped to basepoints by morphisms,
and so there is at most one morphism between two given subgroups $H,J\le\F$,
which exists if and only if $H\le J$. Thus, one can define a partial
order on subgroups of $\F$, which holds whenever $H\le J$ and the
corresponding morphism between core graphs is surjective. In contrast,
in the current paper, because multi core graphs are not necessarily
connected and do not have basepoints, there may be several different
morphisms between any two of them. Thus, being surjective is a property
of a morphism, and not of a pair of multi core graphs. As illustrated
in Remark \ref{rem:relation between alg and surj}, the property of
being surjective depends on the basis $B$. If one considers elements
in the category $\mocc\left(\F\right)$, one can say that a morphism
$\eta\colon\H\to\J$ is \emph{$B$-surjective }if the corresponding
morphism in $\mcg_{B}\left(\F\right)$ is surjective.

Let $\eta:\Gamma\to\Delta$ be a surjective morphism of multi core
graphs. Note that $\eta$ determines a partition $P$ of $V(\Gamma)$:
two vertices $v_{1},v_{2}$ are equivalent if and only if $\eta(v_{1})=\eta(v_{2})$.
This partition uniquely determines $\Delta$ and $\eta$: the vertices
of $\Delta$ correspond to the blocks of the partitions, and there
is a $b$-edge from block $\beta_{1}$ to block $\beta_{2}$ if and
only if there is a $b$-edge in $\Gamma$ from some vertex in $\beta_{1}$
to some vertex in $\beta_{2}$. It is clear how $\eta$ is defined.

However, not every partition of the vertex set of $\Gamma$ defines
a morphism, as the graph resulting from the above procedure may have
multiple edges of the same directed label incident to some vertex.
Yet, given such a partition $P$ of $V(\Gamma)$, we may define the
``generated'' multi core graph $\Delta$ and surjective morphism
$\eta:\Gamma\to\Delta$ via Stallings foldings, as follows. We start
the procedure with the $B$-labeled directed graph formed as above
by gluing together the vertices of $\Gamma$ according to the blocks
of $P$ and drawing edges between the blocks as above. Given two edges
of the same label and the same head, one may identify their tails
(if different) and the two edges. Similarly, given two edges of the
same label and same tail-vertex, one may identify their heads and
the two edges. Applying these identifications iteratively, a finite
number of times, yields a multi core graph $\Delta$, which is independent
of the choices made throughout the folding process\footnote{In our situation we never introduce leaves nor isolated vertices in
the process.} (see \cite{stallings1983topology}). This procedure yields a new
partition of the vertex set of $\Gamma$, which is coarser than $P$.
There is also only one reasonable way to define the morphism $\Gamma\to\Delta$,
by mapping every vertex to the block in the new partition it belongs
to, and every edge to the equally-labeled and equally-directed edge
between the corresponding blocks.
\begin{defn}
\label{def:norm of partition, surjective morphism}Let $P$ be a partition
of a finite set $X$. We define the norm of the partition as 
\[
||P||\defi\sum_{\beta\in P}\left(\left|\beta\right|-1\right).
\]
This is the minimal number of identifications of pairs of elements
of $X$ required in order to generate the partition.

Given a $B$-surjective morphism of multi core graphs $\eta\colon\Gamma\to\Delta$,
we define its \emph{$B$-norm}, denoted $\left\Vert \eta\right\Vert _{B}$,
to be the smallest norm of a partition generating it:
\begin{equation}
\left\Vert \eta\right\Vert _{B}\defi\min\left\{ \left\Vert P\right\Vert \,\middle|\,P~\mathrm{is~a~partition~of}~V\left(\Gamma\right)~\mathrm{generating}~\eta\right\} .\label{eq:B-norm of a morphism}
\end{equation}
One can also think of the $B$-norm as follows. Define a merging-step
of a multi core graph to be the gluing of two vertices of this graph
followed by folding. Then, $\left\Vert \eta\right\Vert _{B}$ is equal
to the smallest number of merging-steps which lead from $\Gamma$
to $\Delta$ to create $\eta$.
\end{defn}

We use here the notation $\left\Vert \cdot\right\Vert _{B}$ so that
we can use this notation also when $\eta$ is a ($B$-surjective)
morphism of two multisets of conjugacy classes of subgroups, in which
case the basis is not pre-determined. Because folding steps cannot
decrease the Euler characteristic of a graph and the gluing of two
vertices decreases the Euler characteristic by one, we obtain 
\begin{equation}
\left\Vert \eta\right\Vert _{B}\ge\chi\left(\Gamma\right)-\chi\left(\Delta\right).\label{eq:lower bound on combinatorial norm}
\end{equation}

There is also an ``algebraic'', basis-independent version of a norm
of morphisms in the categories $\mocc\left(\F\right)$ and $\mcg_{B}\left(\F\right)$.
We describe it gradually in the following lines.
\begin{defn}
\label{def:basis independent norm}Given a multiset $\H=\{H_{1}^{\F},\ldots,H_{\ell}^{\F}\}$
of conjugacy classes of subgroups of the free group $\F$ and a subgroup
$J\le\F$ such that $H_{i}\le J$ for all $i=1,\ldots,\ell$, let
$\eta\colon\H\to\left\{ J^{\F}\right\} $ be the corresponding morphism
in the category $\mocc\left(\F\right)$. Consider the two following
types of morphisms which we call \emph{immediate morphisms}:
\begin{enumerate}
\item Adding a generator from $J$ to one of the classes, namely, let $\H'$
be identical to $\H$ except for that some $H_{i}^{\F}$ is replaced
by $\left\langle H_{i},j\right\rangle ^{\F}$ for some $j\in J$.
The corresponding morphism $\varphi\colon\H\to\H'$ maps $H_{i}$
to $\left\langle H_{i},j\right\rangle $ and every other $H_{k}$
to itself. 
\item Merging together two of the classes, namely, let $\H'$ be identical
to $\H$ except for that for some $i\ne k$, $H_{i}^{\F}$ and $H_{k}^{\F}$
are replaced by $\left\langle jH_{i}j^{-1},H_{k}\right\rangle ^{\F}$
for some $j\in J$. The corresponding morphism $\varphi\colon\H\to\H'$
maps $jH_{i}j^{-1}$ and $H_{k}$ to $\left\langle jH_{i}j^{-1},H_{k}\right\rangle $
and every other $H_{t}$ to itself. 
\end{enumerate}
Note that in both cases $\eta$ factors through $\varphi$ by a unique
morphism $\eta'\colon\H'\to\J.$ Define the \emph{norm} of $\eta$,
denoted $\left\Vert \eta\right\Vert $, to be the smallest number
of immediate morphisms whose compositions gives $\eta$. If $\H=\emptyset$
is the empty multiset, we set\footnote{This convention makes sense in light of Lemma \ref{lem:distance-lower-bound}.}
$\left\Vert \eta\right\Vert =-\chi\left(J\right)=\rk J-1$.\\
Now let $\eta:\Gamma\to\Delta$ be a morphism of $B$-labeled multi
core graphs with $\Delta$ \uline{connected}. Define $\left\Vert \eta\right\Vert _{\mathrm{}}$
to be the norm of the corresponding morphism $\piol\left(\Gamma\right)\to\piol\left(\Delta\right)$.
Finally, for the general case where $\Delta$ is not necessarily
connected, let $\Delta_{1},\ldots,\Delta_{m}$ be the connected components
of $\Delta$, and for $k\in\left[m\right]$, let $\eta_{k}$ denote
the morphism $\eta^{-1}\left(\Delta_{k}\right)\to\Delta_{k}$ obtained
by restricting $\eta$ to $\eta^{-1}\left(\Delta_{k}\right)$. Define
$\left\Vert \eta\right\Vert \defi\sum_{k=1}^{m}\left\Vert \eta_{k}\right\Vert _{\mathrm{}}$.
\end{defn}

Note that $\left\Vert \eta\right\Vert $ is a non-negative integer,
but may be zero even if $\eta$ is not an isomorphism. Indeed, $\left\Vert \eta\right\Vert =0$
whenever $\eta\colon\Gamma\to\Delta$ induces an isomorphism between
$\Gamma$ and the connected components of $\Delta$ meeting the image
of $\eta$ and the remaining connected components of $\Delta$ correspond
to cyclic subgroups. One can give an equivalent definition for free
morphisms using the norm:
\begin{lem}
\label{lem:distance-lower-bound}Let $\eta\colon\Gamma\to\Delta$
be a morphism of multi core graphs. Then $\left\Vert \eta\right\Vert \geq\chi\left(\Gamma\right)-\chi\left(\Delta\right)$,
with equality if and only if $\eta$ is free.
\end{lem}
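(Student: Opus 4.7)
The plan is to first reduce to the case where $\Delta$ is connected. This reduction is painless because both sides of the inequality and freeness itself are defined componentwise over $\Delta$: $\|\eta\| = \sum_k \|\eta_k\|$ by definition, $\chi(\Gamma) - \chi(\Delta) = \sum_k (\chi(\eta^{-1}(\Delta_k)) - \chi(\Delta_k))$ by additivity of Euler characteristic on disjoint unions, and $\eta$ is free iff each $\eta_k$ is. So assume $\piol(\Delta) = \{J^{\F}\}$ and $\piol(\Gamma) = \{H_1^{\F},\ldots,H_{\ell}^{\F}\}$ with $H_i \le J$.

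The inequality comes from tracking how each of the two types of immediate morphisms affects $\chi = c - \rk$ (where $c$ is the number of classes and $\rk$ is the sum of ranks). An add-generator step leaves $c$ fixed and changes the rank by $0$ or $+1$ since $\rk\langle H_i, j\rangle \in \{\rk H_i, \rk H_i+1\}$, so $\chi$ decreases by at most $1$. A merge step decreases $c$ by exactly $1$ and changes the total rank by a non-positive amount since $\rk\langle jH_i j^{-1}, H_k\rangle \le \rk H_i + \rk H_k$, so $\chi$ again decreases by at most $1$. Summing over a minimal factorization yields $\|\eta\| \ge \chi(\Gamma) - \chi(\Delta)$.

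For the equality case, the easy direction is the construction when $\eta$ is free: writing $J = (\ast_{i=1}^{\ell} j_i H_i j_i^{-1}) \ast K$, I perform $\ell - 1$ successive merges (each legitimately a free product by the given decomposition) followed by $\rk K$ add-generator steps along a basis of $K$. The total count is $\ell - 1 + \rk K = \chi(\Gamma) - \chi(\Delta)$, matching the lower bound. The degenerate case $\ell = 0$ is covered by the convention $\|\eta\| = \rk J - 1$ which coincides with $-\chi(J)$.

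The main obstacle — and the only place where something non-obvious is used — is the converse: if equality holds, then every step in a minimal factorization drops $\chi$ by exactly $1$, forcing $\rk\langle G, j\rangle = \rk G + 1$ at each add-generator step and $\rk\langle jG_1 j^{-1}, G_2\rangle = \rk G_1 + \rk G_2$ at each merge step. To upgrade these rank identities into actual free product decompositions I invoke \emph{Hopfianity} of finitely generated free groups: the natural surjections $G \ast \langle j \rangle \twoheadrightarrow \langle G,j\rangle$ and $G_1 \ast G_2 \twoheadrightarrow \langle jG_1 j^{-1}, G_2\rangle$ are between free groups of equal rank, hence isomorphisms. An induction on the length of the factorization then shows that at every intermediate stage the multiset consists of subgroups, each of which is a free product of conjugates of a subset of $\{H_1,\ldots,H_\ell\}$ together with a free factor of added generators, with each $H_i$ appearing in exactly one member of the multiset. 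Applying this invariant to the terminal multiset $\{J^\F\}$ gives $J = (\ast_{i=1}^{\ell} c_i H_i c_i^{-1}) \ast K$ for some $c_i \in J$ and free $K \le J$, which is precisely the freeness of $\eta$.
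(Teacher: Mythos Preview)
Your proof is correct and follows essentially the same route as the paper: both track how each immediate morphism changes $\chi$ by at most one, and both identify the equality case with each step being a genuine free product. You are simply more explicit where the paper is terse---you spell out the Hopfianity argument that turns rank equality into a free-product decomposition, you explicitly construct the length-$(\chi(\Gamma)-\chi(\Delta))$ factorization when $\eta$ is free, and your closing induction unfolds what the paper packages as ``by transitivity of free morphisms'' (Proposition~\ref{prop:properties of free morphisms}\eqref{enu:free is transitive}).
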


\begin{proof}
Any component $D$ of $\Delta$ not meeting $\eta\left(\Gamma\right)$
is trivially a free extension of its preimage, and contributes $-\chi\left(D\right)$
to $\left\Vert \eta\right\Vert $. Thus we may assume every component
of $\Delta$ meets $\eta\left(\Gamma\right)$. Now, the two types
of identifications from Definition \ref{def:basis independent norm}
cannot decrease the Euler characteristic of the element of $\mocc\left(\F\right)$
by more than one. Hence $\left\Vert \eta\right\Vert \ge\chi\left(\Gamma\right)-\chi\left(\Delta\right)$
with equality if and only if there is a set of identifications each
decreasing the Euler characteristic by exactly one. The first identification
decreases the Euler characteristic by one if and only if $\langle H_{i},j\rangle=H_{i}\ast\langle j\rangle$.
The second identification decreases the Euler characteristic by one
if and only if $\left\langle jH_{i}j^{-1},H_{k}\right\rangle =jH_{i}j^{-1}\ast H_{k}$.
In both cases, the corresponding morphism describing the step is free.
By transitivity of free morphisms, the claim follows.
\end{proof}
It is easy to see that a combinatorial merging-step as above where
we glue together two vertices from the same component, corresponds
to a step of the first kind from Definition \ref{def:basis independent norm},
and two vertices from different components to a step of the second
kind. Therefore, if $\eta\colon\Gamma\to\Delta$ is a surjective morphism
in $\mcg_{B}\left(\F\right)$ then 
\begin{equation}
\left\Vert \eta\right\Vert \le\left\Vert \eta\right\Vert _{B}.\label{eq:ineq of norms for surj morphism}
\end{equation}
An extension of the arguments from \cite[Section 3]{Puder2014} leads
to the following stronger statement:
\begin{thm}
\label{thm:alg distance  =00003D comb distance}Let $\eta\colon\Gamma\to\Delta$
be a morphism of $B$-labeled multi core graphs. Let $\Sigma=\mathrm{Image}\left(\eta\right)$
denote the image of $\eta$ in $\Delta$, and let $\xymatrix{\Gamma\ar@{->>}[r]^{\overline{\eta}} & \Sigma\ar@{^{(}->}[r]^{\iota} & \Delta}
$ be the decomposition of $\eta$ to a surjective and an injective
morphisms. Then 
\[
\left\Vert \eta\right\Vert =\left\Vert \overline{\eta}\right\Vert _{B}+\left[\chi\left(\Sigma\right)-\chi\left(\Delta\right)\right].
\]
In particular, if $\eta$ is $B$-surjective, then
\[
\left\Vert \eta\right\Vert =\left\Vert \eta\right\Vert _{B}.
\]
\end{thm}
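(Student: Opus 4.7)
The plan is to prove matching inequalities $\|\eta\|\le\|\overline\eta\|_B+[\chi(\Sigma)-\chi(\Delta)]$ and $\|\eta\|\ge\|\overline\eta\|_B+[\chi(\Sigma)-\chi(\Delta)]$. For the upper bound, I would first observe that $\iota\colon\Sigma\hookrightarrow\Delta$ is injective and hence free by Proposition~\ref{prop:properties of free morphisms}\eqref{enu:Every-injective-morphism is free}, so Lemma~\ref{lem:distance-lower-bound} gives $\|\iota\|=\chi(\Sigma)-\chi(\Delta)$. Combined with inequality~\eqref{eq:ineq of norms for surj morphism}, namely $\|\overline\eta\|\le\|\overline\eta\|_B$, and with the subadditivity $\|\eta\|\le\|\overline\eta\|+\|\iota\|$ obtained by concatenating minimal decompositions, this establishes the upper bound. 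Since the norm decomposes additively over connected components of the codomain, I would reduce the remaining argument to the case where $\Delta$ is connected.

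For the lower bound, fix an optimal decomposition of $\eta$ into $n=\|\eta\|$ immediate morphisms $\piol(\Gamma)=\H_0\xrightarrow{\alpha_1}\H_1\to\cdots\xrightarrow{\alpha_n}\H_n=\piol(\Delta)$ in $\mocc(\F)$. For each $i$, let $\Sigma_i$ denote the image multiset of $\H_i$ under the remaining composition $\H_i\to\piol(\Delta)$, so that $\Sigma_0=\piol(\Sigma)$, $\Sigma_n=\piol(\Delta)$, and the induced chain $\Sigma_0\to\Sigma_1\to\cdots\to\Sigma_n$ composes to $\iota$. Each $\alpha_i$ induces on $\Sigma_{i-1}\to\Sigma_i$ either a nontrivial immediate morphism (call $\alpha_i$ \emph{image-growing}) or the identity (call it \emph{image-fixing}), according to whether the algebraic operation changes the image multiset. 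Since the image-growing $\alpha_i$'s form a decomposition of $\iota$, their count is at least $\|\iota\|=\chi(\Sigma)-\chi(\Delta)$.

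To complete the lower bound, I would show $\#\{\text{image-fixing }\alpha_i\}\ge\|\overline\eta\|_B$ by constructing inductively a sequence of $B$-surjective morphisms $\Gamma\twoheadrightarrow\Gamma'_i$, with $\Gamma'_0=\Gamma$ and $\Gamma'_n\cong\Sigma$, such that $\Gamma'_{i-1}\to\Gamma'_i$ is the identity when $\alpha_i$ is image-growing and a single merging-step in $\mcg_B(\F)$ when $\alpha_i$ is image-fixing. The composition then realizes $\overline\eta$ as a chain of merging-steps whose length equals $\#\{\text{image-fixing }\alpha_i\}$, thereby bounding $\|\overline\eta\|_B$ from above by this count. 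The main obstacle is the inductive step at an image-fixing $\alpha_i$: one must translate the algebraic operation of adjoining a generator already present in $\Sigma_{i-1}$ (or merging two classes whose images in $\piol(\Delta)$ already coincide) into a \emph{unique} vertex-identification on $\Gamma'_{i-1}$ that produces $\Gamma'_i$ via one merging-step, while preserving $B$-surjectivity of $\Gamma\twoheadrightarrow\Gamma'_i$. I would handle this via a pullback argument invoking Lemma~\ref{lem:pullback of free is free}, in the spirit of the analysis of \cite[Section~3]{Puder2014}, now extended from connected core graphs to the multi core graph setting.
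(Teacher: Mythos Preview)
Your upper bound is correct. The lower bound, however, has a real gap: the dichotomy into ``image-growing'' and ``image-fixing'' immediate morphisms does not yield the separate inequalities you need. The assertion that an image-growing $\alpha_i$ induces on $\Sigma_{i-1}\to\Sigma_i$ a single immediate morphism is false in general. Take $\Delta=X_B$ for $B=\{x,y,z\}$, $\Gamma=\Gamma_B(\langle x\rangle^{\F})$, and the optimal length-$2$ decomposition $\{\langle x\rangle\}\to\{\langle x,yz\rangle\}\to\{\F_3\}$. Here $\Sigma=\Sigma_0$ is the $x$-loop, $\Sigma_1=\Sigma_2=X_B$. The first step is image-growing, but the induced morphism on images is the embedding of the $x$-loop into $X_B$, which has norm $\chi(\Sigma_0)-\chi(\Sigma_1)=0-(-2)=2$, not $1$. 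Thus there is exactly one image-growing step while $\|\iota\|=\chi(\Sigma)-\chi(\Delta)=2$, and your inequality~(a) fails. In the same example your construction for~(b) also breaks: the single image-fixing step is $\{\langle x,yz\rangle\}\to\{\F_3\}$, but $\Gamma'_1=\Gamma$ is a one-vertex graph, admitting no merging-step whatsoever, so you cannot even produce $\Gamma'_2$. The total inequality $n\ge\|\iota\|+\|\overline\eta\|_B$ happens to hold here ($2\ge 2+0$), but your two separate bounds do not.

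The paper's proof avoids this by a rearrangement argument rather than a classification-and-count argument. It does not fix an arbitrary optimal decomposition; instead, among all length-$\|\eta\|$ decompositions it selects one for which the sequence $(h_1,\ldots,h_{\|\eta\|})$, with $h_i$ the number of edges of the codomain of $\beta_i$ missed by its image, is lexicographically minimal. A local swap then shows one cannot have $h_i>0$ followed by $h_{i+1}=0$, so the sequence splits as $k$ steps with $h_i=0$ (genuine merging-steps, composing to $\overline\eta$) followed by $\|\eta\|-k$ $B$-injective steps (composing to $\iota$). This directly realizes the surjective--injective factorization and gives the equality. Your approach could conceivably be salvaged by first choosing the optimal decomposition judiciously, but that choice is precisely what the lexicographic-minimum argument provides.
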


Theorem \ref{thm:alg distance  =00003D comb distance} has some nice
corollaries we mention next. However, the theorem and its corollaries
are not needed for proving our main results from Section \ref{sec:Introduction},
and so we defer its proof to Appendix \ref{sec:norm-of-morphism-proof}.
The first corollary is immediate from Theorem \ref{thm:alg distance  =00003D comb distance}
together with Lemma \ref{lem:distance-lower-bound} and Proposition
\ref{prop:properties of free morphisms}\eqref{enu:Every-injective-morphism is free}.
\begin{cor}
Let $\eta\colon\Gamma\to\Delta$ be a morphism of multi core graphs.
Then, in the notation of Theorem \ref{thm:alg distance  =00003D comb distance},
\[
\eta~\mathrm{is~free}~~\iff~~\left\Vert \eta\right\Vert =\chi\left(\Gamma\right)-\chi\left(\Delta\right)~~\iff~~\left\Vert \overline{\eta}\right\Vert _{B}=\chi\left(\Gamma\right)-\chi\left(\Sigma\right)~~\iff~~\overline{\eta}~\mathrm{is~free}.
\]
\end{cor}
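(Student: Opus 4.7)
The plan is to establish the four equivalences as a short chain, invoking only Lemma \ref{lem:distance-lower-bound}, Theorem \ref{thm:alg distance  =00003D comb distance}, and Proposition \ref{prop:properties of free morphisms}\eqref{enu:Every-injective-morphism is free}. Since all of the real work has been done in those three results, this really is a matter of checking that the Euler-characteristic offset $\chi(\Sigma)-\chi(\Delta)$ contributed by the injective tail $\iota\colon\Sigma\hookrightarrow\Delta$ lines up cleanly with the shift between $\|\eta\|$ and $\|\overline{\eta}\|_B$.

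First I would apply Lemma \ref{lem:distance-lower-bound} directly to $\eta$ to get
\[
\eta \text{ is free} \iff \|\eta\| = \chi(\Gamma) - \chi(\Delta),
\]
which is the first equivalence. For the second, I would substitute the identity from Theorem \ref{thm:alg distance  =00003D comb distance}, namely $\|\eta\| = \|\overline{\eta}\|_B + \chi(\Sigma) - \chi(\Delta)$, into the right-hand side. Cancelling the $\chi(\Delta)$ terms, the condition $\|\eta\| = \chi(\Gamma) - \chi(\Delta)$ becomes exactly $\|\overline{\eta}\|_B = \chi(\Gamma) - \chi(\Sigma)$.

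For the final equivalence, the key observation is that $\overline{\eta}$ is $B$-surjective by construction, so the second clause of Theorem \ref{thm:alg distance  =00003D comb distance} gives $\|\overline{\eta}\| = \|\overline{\eta}\|_B$. Now Lemma \ref{lem:distance-lower-bound} applied to $\overline{\eta}$ (whose codomain is $\Sigma$) gives $\overline{\eta}$ is free $\iff$ $\|\overline{\eta}\| = \chi(\Gamma) - \chi(\Sigma)$, and combined with the previous equality this reads $\|\overline{\eta}\|_B = \chi(\Gamma)-\chi(\Sigma)$, closing the chain.

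There is no real obstacle here; the only point worth a sanity check is that the two endpoints of the chain are independently consistent. This is exactly what Proposition \ref{prop:properties of free morphisms}\eqref{enu:Every-injective-morphism is free} provides: the injection $\iota\colon\Sigma\hookrightarrow\Delta$ is automatically free, so by transitivity of free morphisms (Proposition \ref{prop:properties of free morphisms}\eqref{enu:free is transitive}) the factorization $\eta = \iota\circ\overline{\eta}$ forces freeness of $\eta$ and of $\overline{\eta}$ to coincide \emph{a priori}, matching the conclusion obtained via the norm computation.
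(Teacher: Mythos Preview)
Your proof is correct and uses exactly the same ingredients the paper cites (Theorem \ref{thm:alg distance  =00003D comb distance}, Lemma \ref{lem:distance-lower-bound}, and Proposition \ref{prop:properties of free morphisms}\eqref{enu:Every-injective-morphism is free}); the chain of equivalences via the norm identity is precisely the intended argument.

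One small inaccuracy in your final paragraph: transitivity of free morphisms only gives one direction of the sanity check, namely $\overline{\eta}$ free $\Rightarrow$ $\eta$ free (since $\iota$ is free and $\eta=\iota\circ\overline{\eta}$). The converse, that $\eta$ free forces $\overline{\eta}$ free, does \emph{not} follow from transitivity alone; you would need something like cancellation of free factors. Since you correctly label this as a redundant consistency check and your norm-based chain already establishes all four equivalences independently, this does not affect the validity of the proof --- but the claim that transitivity makes the endpoints coincide ``a priori'' in both directions should be dropped or weakened.
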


The second corollary concerns computability:
\begin{cor}
\label{cor:norm, freeness, algebraicity are computable}Given a morphism
$\eta\colon\Gamma\to\Delta$ of multi core graphs, there is an algorithm
to compute its norm $\left\Vert \eta\right\Vert $, and to determine
whether it is free and whether it is algebraic.
\end{cor}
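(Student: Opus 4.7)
The plan is to reduce every question to a finite enumeration together with Euler-characteristic arithmetic. First, the image $\Sigma = \mathrm{Image}(\eta)\subseteq\Delta$ is easily computed directly from the vertex and edge sets of $\Gamma$ and the morphism data, yielding the canonical decomposition $\Gamma\stackrel{\overline{\eta}}{\twoheadrightarrow}\Sigma\stackrel{\iota}{\hookrightarrow}\Delta$ with $\overline{\eta}$ surjective and $\iota$ injective. By Theorem \ref{thm:alg distance  =00003D comb distance},
\[
\left\Vert \eta\right\Vert = \left\Vert \overline{\eta}\right\Vert _{B} + \chi(\Sigma) - \chi(\Delta),
\]
so computing $\left\Vert \eta\right\Vert$ reduces to computing $\left\Vert \overline{\eta}\right\Vert_{B}$ (the two Euler characteristics being trivial to read off from vertex and edge counts).

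To compute $\left\Vert \overline{\eta}\right\Vert_{B}$, I would use Definition \ref{def:norm of partition, surjective morphism} directly: it equals the minimum of $\left\Vert P\right\Vert$ over partitions $P$ of $V(\Gamma)$ that generate $\overline{\eta}$. Since $V(\Gamma)$ is finite there are only finitely many such partitions (at most the Bell number $B_{|V(\Gamma)|}$); for each $P$ one forms the corresponding quotient of $\Gamma$ and applies Stallings foldings until none remain. Folding terminates in finitely many steps since each folding step strictly decreases $\#V + \#E$. The resulting surjection to a multi core graph can be tested for agreement with $\overline{\eta}$ by a direct combinatorial comparison (identifying labels, orientations, and the vertex map). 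The minimum $\left\Vert P\right\Vert$ over partitions $P$ passing this test is $\left\Vert \overline{\eta}\right\Vert_{B}$, and hence $\left\Vert \eta\right\Vert$ is now known.

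Freeness of $\eta$ is then immediate: by Lemma \ref{lem:distance-lower-bound}, $\eta$ is free if and only if $\left\Vert \eta\right\Vert = \chi(\Gamma) - \chi(\Delta)$, and both quantities are now computable.

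For algebraicity, I would invoke Theorem \ref{thm:algebraic-free-decomposition}: $\eta$ is algebraic if and only if, in its (unique) algebraic-free factorization $\Gamma\stackrel{\varphi}{\to}\Sigma'\stackrel{\psi}{\to}\Delta$, the free morphism $\psi$ is an isomorphism. Following the existence proof of Theorem \ref{thm:algebraic-free-decomposition}, this factorization is obtained by enumerating all decompositions $\eta = \psi\varphi$ with $\varphi\colon\Gamma\twoheadrightarrow\Sigma'$ surjective and $\psi\colon\Sigma'\to\Delta$ free, and selecting one of maximal $\chi(\Sigma')$. Each such surjective $\varphi$ is again encoded by a partition of $V(\Gamma)$ followed by folding, so there are finitely many candidates; for each candidate the morphism $\psi$ is uniquely determined by $\varphi$ (since $\varphi$ is surjective, any morphism of multi core graphs extending it is forced on all vertices), and the freeness of $\psi$ is decidable by the previous paragraph. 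After the maximal-$\chi(\Sigma')$ decomposition is isolated, one checks directly whether $\psi$ is an isomorphism of multi core graphs, which is a purely combinatorial vertex-and-edge check. The only subtle point, which is the main (minor) obstacle, is to verify that the finite partition-based enumeration truly exhausts all possible factorizations of the relevant type; this holds because every surjective morphism from $\Gamma$ in $\mcg_{B}(\F)$ arises as a partition of $V(\Gamma)$ followed by folding, and the freeness of $\psi$ is already a decidable property in this search.
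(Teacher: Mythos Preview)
Your proposal is correct and follows essentially the same route as the paper: reduce $\|\eta\|$ to $\|\overline{\eta}\|_B$ via Theorem~\ref{thm:alg distance  =00003D comb distance}, observe that the $B$-norm is computable by finite enumeration of partitions, and decide freeness via Lemma~\ref{lem:distance-lower-bound}. For algebraicity, the paper argues slightly more directly---it simply enumerates decompositions $\Gamma\stackrel{\eta_1}{\to}\Sigma\stackrel{\eta_2}{\to}\Delta$ with $\eta_1$ surjective (noting one may always replace $\Sigma$ by $\eta_1(\Gamma)$ without losing freeness of $\eta_2$) and checks whether any has $\eta_2$ free and non-isomorphism---whereas you route through the constructive proof of Theorem~\ref{thm:algebraic-free-decomposition} to locate the algebraic-free factorization and then test whether its free part is an isomorphism; the underlying enumeration and freeness tests are the same.
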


\begin{proof}
By Theorem \ref{thm:alg distance  =00003D comb distance}, it is enough
to compute $\left\Vert \overline{\eta}\right\Vert _{B}$ to obtain
$\left\Vert \eta\right\Vert $, and the $B$-norm of a morphism is
obviously computable. By Lemma \ref{lem:distance-lower-bound}, it
is straightforward to determine whether $\eta$ is free given $\left\Vert \eta\right\Vert $.
Finally, if $\Gamma\stackrel{_{\eta_{1}}}{\longrightarrow}\Sigma\stackrel{_{\eta_{2}}}{\longrightarrow}\Delta$
is a decomposition of $\eta$ with $\eta_{2}$ free, we may assume
without loss of generality that $\eta_{1}$ is surjective (otherwise,
replace $\Sigma$ with $\eta_{1}\left(\Gamma\right)$, as $\eta_{2}\circ\left(\eta_{1}\left(\Sigma\right)\hookrightarrow\Sigma\right)$
is free as well). Because there are only finitely many surjective
morphisms from $\Gamma$, let alone decompositions $\Gamma\stackrel{_{\eta_{1}}}{\longrightarrow}\Sigma\stackrel{_{\eta_{2}}}{\longrightarrow}\Delta$
of $\eta$ with $\eta_{1}$ surjective, we may go through all of them
(such that $\eta_{2}$ is not an isomorphism) and test whether $\eta_{2}$
is free. Then $\eta$ is algebraic if and only if there is no such
decomposition with $\eta_{1}$ surjective and $\eta_{2}$ free.
\end{proof}
We end this section with an upper bound on the $B$-norm of morphisms,
to be used in Section \ref{sec:Expanders}.
\begin{prop}
\label{prop:upper bound on B-norm}The $B$-norm of a $B$-surjective
morphism $\eta\colon\Gamma\to\Delta$ satisfies 
\[
\left\Vert \eta\right\Vert _{B}\le\left[c\left(\Gamma\right)-c\left(\Delta\right)\right]+\rk\left(\Delta\right)=c\left(\Gamma\right)-\chi\left(\Delta\right).
\]
\end{prop}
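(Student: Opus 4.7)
The plan is to bound $\left\Vert \eta\right\Vert _{B}$ by passing through the algebraic norm $\left\Vert \eta\right\Vert $ of \autoref{def:basis independent norm} and then invoking \autoref{thm:alg distance  =00003D comb distance}, which asserts that $\left\Vert \eta\right\Vert _{B}=\left\Vert \eta\right\Vert $ for every $B$-surjective morphism. Thus it suffices to exhibit a chain of at most $c(\Gamma)-\chi(\Delta)$ immediate morphisms from $\piol(\Gamma)$ to $\piol(\Delta)$.

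First I would split $\Delta$ into its connected components $\Delta_{1},\ldots,\Delta_{c(\Delta)}$ and restrict $\eta$ to $\eta_{k}\colon\eta^{-1}(\Delta_{k})\to\Delta_{k}$. Since the algebraic norm is defined as a sum over connected components of the codomain, it is enough to bound each $\left\Vert \eta_{k}\right\Vert $ separately. Writing $J_{k}=\piol(\Delta_{k})$ and, after suitable conjugation, letting $H_{k,1},\ldots,H_{k,n_{k}}\le J_{k}$ be representatives of the conjugacy classes corresponding to the $n_{k}$ components of $\eta^{-1}(\Delta_{k})$, the hypothesis of $B$-surjectivity forces every vertex of $\Delta_{k}$ to lie in the image of $\eta$, so $n_{k}\ge 1$, and $\sum_{k}n_{k}=c(\Gamma)$.

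Next, for each $k$ I build the following explicit chain. Perform $n_{k}-1$ ``merging classes'' immediate morphisms (second type in \autoref{def:basis independent norm}) to consolidate $H_{k,1},\ldots,H_{k,n_{k}}$ into a single subgroup $K_{k}\le J_{k}$; then fix any basis $q_{1},\ldots,q_{\rk(J_{k})}$ of $J_{k}$ and append $\rk(J_{k})=\rk(\Delta_{k})$ ``add a generator'' immediate morphisms (first type), each feeding in one $q_{i}$. The resulting subgroup $\langle K_{k},q_{1},\ldots,q_{\rk(J_{k})}\rangle$ automatically contains $\langle q_{1},\ldots,q_{\rk(J_{k})}\rangle =J_{k}$, and is contained in $J_{k}$, so equals $J_{k}$. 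This gives $\left\Vert \eta_{k}\right\Vert \le(n_{k}-1)+\rk(\Delta_{k})$, and summing over $k$ yields $\left\Vert \eta\right\Vert \le c(\Gamma)-c(\Delta)+\rk(\Delta)=c(\Gamma)-\chi(\Delta)$, which \autoref{thm:alg distance  =00003D comb distance} converts to the desired bound on the $B$-norm.

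The argument is essentially mechanical once \autoref{thm:alg distance  =00003D comb distance} is granted, and the only real content is the trivial observation that any subgroup of $J_{k}$ can be enlarged to $J_{k}$ by adjoining a basis of $J_{k}$. The main obstacle to be mindful of is avoiding circularity with \autoref{thm:alg distance  =00003D comb distance}, whose proof is deferred to the appendix; if one prefers a proof that does not invoke it, the same chain can be realized directly as $c(\Gamma)-\chi(\Delta)$ combinatorial $B$-merging-steps --- each class-merge by gluing a vertex of one preimage component to a vertex in another (such a pair exists in each $\Delta_{k}$ because the intersection graph on the images of the preimage components in $\Delta_{k}$ is connected, else $\Delta_{k}$ would decompose as a disjoint union), and each generator-add by a single merging-step gluing a basepoint to the endpoint of the lift of the new basis element. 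Verifying geometrically that these moves match the algebraic steps is the delicate part of the alternative route.
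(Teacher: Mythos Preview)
Your argument via the algebraic norm is correct and there is no circularity: the appendix proof of \autoref{thm:alg distance  =00003D comb distance} uses only \autoref{def:basis independent norm}, \autoref{lem:distance-lower-bound}, \eqref{eq:ineq of norms for surj morphism} and \autoref{prop:properties of free morphisms}\eqref{enu:Every-injective-morphism is free}, never the present proposition. Your chain of $(n_k-1)+\rk(\Delta_k)$ immediate morphisms is a valid realisation of $\eta_k$, so the bound on $\left\Vert\eta\right\Vert$ follows, and \autoref{thm:alg distance  =00003D comb distance} finishes.

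The paper, however, proves this proposition by a direct combinatorial induction that does not go through $\left\Vert\eta\right\Vert$ at all. Reducing to $\Delta$ connected by additivity, it inducts on $c(\Gamma)$: the base case $c(\Gamma)=1$ is exactly \cite[Lemma 3.3]{Puder2014}, which gives $\left\Vert\eta\right\Vert_B\le\rk(\Delta)$; and when $c(\Gamma)\ge 2$, surjectivity forces some vertex of $\Delta$ to lie in the image of two distinct components of $\Gamma$, so a single merging-step drops $c(\Gamma)$ by one. This avoids invoking the heavier \autoref{thm:alg distance  =00003D comb distance} entirely, at the cost of importing the connected-domain bound from \cite{Puder2014}. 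Your route is cleaner conceptually and uses only material internal to this paper, but leans on a theorem whose proof the paper explicitly defers as ``not needed for the main results''; the paper's route keeps the proposition elementary and self-contained up to one external citation. Your sketched alternative at the end is essentially the paper's argument, though your description of the generator-adding merging-steps is vague --- lifting a basis element of $J_k$ along an immersion need not stay inside $\Gamma'$, which is precisely what \cite[Lemma 3.3]{Puder2014} has to handle.
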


\begin{proof}
Clearly, the $B$-norm is additive if we consider the different components
of $\Delta$, and so is the bound we give. So it is enough to prove
that when $\Delta$ is connected, $\left\Vert \eta\right\Vert _{B}\le c\left(\Gamma\right)-1+\rk\left(\Delta\right)$.
We prove this by induction on $c\left(\Gamma\right)$. If $c\left(\Gamma\right)=1$,
we are in the situation of \cite[Lemma 3.3]{Puder2014} which says
that $\left\Vert \eta\right\Vert _{B}\le\rk\left(\Delta\right)$.
If $c=c\left(\Gamma\right)\ge2$, then because $\eta$ is onto, there
must be a vertex in $\Delta$ which is in the image of at least two
different components of $\Gamma$. Merge such two vertices in two
different components of $\Gamma$ to obtain a graph $\Gamma'$ with
$c-1$ components and $\eta'\colon\Gamma'\to\Delta$. By induction,
\[
\left\Vert \eta\right\Vert \le1+\left\Vert \eta'\right\Vert \le1+\left(c-1-1\right)+\rk\left(\Delta\right)=c-1+\rk\left(\Delta\right).
\]
\end{proof}

\section{Möbius inversions and the leading terms of $\Phi$\label{sec:M=0000F6bius-inversions}}

Recall Definition \ref{def:Phi} of $\Phi_{\eta}\left(N\right)$,
and that our goal is to estimate $\Phi_{\eta}\left(N\right)$ for
certain morphisms of multi core graphs as in Example \ref{exa:main thm in terms of Phi}.
The main result of this section is Theorem \ref{thm:Phi-approximation}.
\begin{defn}
\label{def:chi-max}Let $\eta:\Gamma\to\Delta$ be a morphism of multi
core graphs. Denote 
\begin{equation}
\chimax\left(\eta\right)\defi\max\left\{ \chi\left(\Sigma\right)\,\middle|\,\begin{gathered}\Gamma\stackrel{_{\eta_{1}}}{\longrightarrow}\Sigma\stackrel{_{\eta_{2}}}{\longrightarrow}\Delta~\mathrm{is~a~decomposition~of~\eta}\\
\mathrm{with}~\eta_{1}~\mathrm{algebraic~and~non}\textnormal{-}\mathrm{isomorphism}
\end{gathered}
\right\} .\label{eq:def of chi-max}
\end{equation}
Every decomposition as in \eqref{eq:def of chi-max} and with $\chi\left(\Sigma\right)=\chimax\left(\eta\right)$
maximal, is called critical. Let $\crit(\eta)$ denote the set of
critical decompositions of $\eta$ up to equivalence as in Theorem
\ref{thm:algebraic-free-decomposition} (or as in Definition \ref{def:set of decompositions}
below).
\end{defn}

If $\Gamma$ is connected, $H\le\F$ a representative of the conjugacy
class $\piol\left(\Gamma\right)$, and $\eta\colon\Gamma\to X_{B}$,
then $\chimax\left(\eta\right)$ is equal to $1-\pi\left(H\right)$,
where $\pi\left(H\right)$ is the primitivity rank of $H$ (\cite[Definition 1.7]{PP15}).
Because algebraic morphisms are surjective (Theorem \ref{thm:properties of algebraic morphisms}\eqref{enu:alg is surjective})
and there are finitely many surjective morphisms with domain $\Gamma$,
$\crit\left(\eta\right)$ is always a finite set.
\begin{thm}
\label{thm:Phi-approximation}Let $\eta\colon\Gamma\to\Delta$ be
a morphism of multi core graphs. Then

\[
\Phi_{\eta}\left(N\right)=N^{\chi\left(\Gamma\right)}+\left|\crit(\eta)\right|\cdot N^{\chimax\left(\eta\right)}+O\left(N^{\chimax(\eta)-1}\right).
\]
\end{thm}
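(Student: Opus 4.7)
The plan is to adapt the strategy used for Theorem \ref{thm:PP15} in \cite{PP15}, now working in the full category of multi core graphs. The starting point is Proposition \ref{prop:geometric meaning for Phi}, which identifies $\Phi_{\eta}(N)$ with the expected number of lifts of $\eta$ to the random cover $\widehat{\Delta_{N}}$. Every lift $\widetilde{\eta}$ factors through its image as $\Gamma \twoheadrightarrow \mathrm{Image}(\widetilde{\eta}) \hookrightarrow \widehat{\Delta_{N}}$, and grouping lifts by this image gives
\[
\Phi_{\eta}(N) \;=\; \sum_{[\eta_{1}\colon \Gamma\twoheadrightarrow\Sigma]} L_{\eta_{2}}(N),
\]
where the sum runs over equivalence classes of $B$-surjective morphisms $\eta_{1}$ factoring $\eta$ as $\eta_{2}\circ\eta_{1}$, and $L_{\eta_{2}}(N)$ denotes the expected number of \emph{injective} lifts of $\eta_{2}$.

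A direct combinatorial count, analogous to \cite[Section 6]{PP15}, shows that $L_{\eta_{2}}(N)$ is a rational function of $N$ with $L_{\eta_{2}}(N)=N^{\chi(\Sigma)}\bigl(1+O(1/N)\bigr)$. Indeed, an injective lift of $\eta_{2}\colon\Sigma\to\Delta$ amounts to an injection $V(\Sigma)\hookrightarrow V(\Delta)\times[N]$ over $\eta_{2}$ (contributing falling-factorial factors of total order $N^{|V(\Sigma)|}$), combined with the event that each of the $|E(\Sigma)|$ labelled edge-constraints is satisfied by the relevant uniform permutation (contributing a factor of order $N^{-|E(\Sigma)|}$). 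The net leading order is $N^{|V(\Sigma)|-|E(\Sigma)|}=N^{\chi(\Sigma)}$, and the subleading terms form a $1+O(1/N)$ correction.

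The heart of the proof is then to reorganise the sum using the algebraic-free decomposition (Theorem \ref{thm:algebraic-free-decomposition}): every $B$-surjective $\eta_{1}\colon\Gamma\twoheadrightarrow\Sigma$ factors uniquely as $\Gamma\xrightarrow{\varphi}\Sigma_{0}\xrightarrow{\psi}\Sigma$ with $\varphi$ algebraic and $\psi$ free. Grouping by the algebraic factor $\varphi$ turns the sum into a double sum, the inner one over free extensions $\psi$ above the fixed $\Sigma_{0}$. Defining the Möbius inversions $L^{B}$, $R^{B}$, $C^{B}$ of $\Phi$ sketched in the overview (with $L^{B}$ corresponding to injective lifts, $R^{B}$ and $C^{B}$ to $B$-surjective and algebraic decompositions respectively), I would apply alternating-sign cancellations which are compatible with free morphisms via Proposition \ref{prop:properties of free morphisms}(4) and with pullbacks via Lemma \ref{lem:pullback of free is free}. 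The inner sum collapses to a single contribution of order $N^{\chi(\Sigma_{0})}$ per algebraic equivalence class $[\varphi]$, giving
\[
\Phi_{\eta}(N) \;=\; \sum_{[\varphi]\text{ algebraic factor of }\eta} N^{\chi(\Sigma_{0})}\bigl(1+O(1/N)\bigr).
\]
Because algebraic morphisms are surjective (Theorem \ref{thm:properties of algebraic morphisms}(1)), this is a finite sum.

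Finally, I separate the algebraic factors by the value of $\chi(\Sigma_{0})$. The identity factor $\varphi=\id_{\Gamma}$ contributes $N^{\chi(\Gamma)}$; by definition of $\chimax(\eta)$ every non-identity algebraic factor has $\chi(\Sigma_{0})\le\chimax(\eta)$, and the $|\crit(\eta)|$ critical ones attain equality, yielding the middle term $|\crit(\eta)|\cdot N^{\chimax(\eta)}$; everything else is absorbed into $O(N^{\chimax(\eta)-1})$. The main obstacle will be step three: carrying out the Möbius telescoping rigorously to verify that exactly the coefficient $1$ survives in front of each $N^{\chi(\Sigma_{0})}$, with no parasitic constants coming from the non-connectedness of multi core graphs or from multiple free extensions sharing the same image. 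This is where the structural results of Sections \ref{sec:Free-and-algebraic} and \ref{sec:-Surjective-morphisms} — particularly the universality in Theorem \ref{thm:algebraic-free-decomposition} and the stability of freeness under pullback (Lemma \ref{lem:pullback of free is free}) — will carry the burden of the argument, paralleling \cite[Section 7]{PP15}.
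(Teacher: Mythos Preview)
Your overall strategy matches the paper's: express $\Phi_\eta$ via $L^B$, invoke the algebraic--free decomposition, and reorganise by the algebraic factor. The gap is in the displayed formula
\[
\Phi_{\eta}(N) \;=\; \sum_{[\varphi]\text{ algebraic}} N^{\chi(\Sigma_{0})}\bigl(1+O(1/N)\bigr).
\]
Taken at face value, the $(1+O(1/N))$ attached to the identity factor $\varphi=\id_{\Gamma}$ contributes an error of order $N^{\chi(\Gamma)-1}$, and this is \emph{not} absorbed into $O\bigl(N^{\chimax(\eta)-1}\bigr)$ whenever $\chimax(\eta)\le\chi(\Gamma)-2$. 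That situation is generic: already for $\eta=\eta_{1}^{w}$ with $\pi(w)\ge 3$ one has $\chi(\Gamma)=0$ and $\chimax(\eta)=1-\pi(w)\le -2$, so your argument would only yield $\Phi_\eta=1+O(N^{-1})$ rather than $1+|\crit(w)|N^{1-\pi(w)}+O(N^{-\pi(w)})$. The problem is not ``exactly the coefficient~$1$'' as you suspect---that coefficient is immediate from the $\psi=\id$ term---but rather controlling the \emph{order} of the correction.

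What the paper does differently is pass to the two-sided inversion $C^{B}$ and its algebraic counterpart $C^{\mathrm{alg}}$, and then prove the bound $C^{B}_{\gamma}(N)=O\bigl(N^{\chi(\mathrm{dom}\,\gamma)-\|\gamma\|_{B}}\bigr)$ (Theorem~\ref{thm:C-upper-bound}). This bound comes from the explicit Laurent expansion of $L^{B}$ in Proposition~\ref{prop:L-cover} together with a term-by-term attribution: each monomial $N^{\chi(\Gamma)-\sum j_i}$ is assigned to the decomposition generated by its underlying permutations, and $\sum j_i\ge\|\gamma\|_B$ by definition of the norm. Combined with Lemma~\ref{lem:distance-lower-bound} (a non-free morphism has norm strictly exceeding $\chi(\mathrm{dom})-\chi(\mathrm{codom})$), Proposition~\ref{prop:C-algebraic-basis-formula} yields $C^{\mathrm{alg}}_{\gamma}=O\bigl(N^{\chi(\mathrm{codom}\,\gamma)-1}\bigr)$ for non-isomorphisms. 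The codomain, not the domain, appears here, and that is exactly what pins every error term below $N^{\chimax(\eta)-1}$ in the triple sum $\Phi_\eta=\sum_{\algdecompt(\eta)}C^{\mathrm{alg}}_{\eta_2}$. Neither Proposition~\ref{prop:properties of free morphisms}\eqref{enu:Phi can ignore free extensions} nor Lemma~\ref{lem:pullback of free is free} gives you this; the missing ingredient in your plan is the norm $\|\cdot\|_{B}$ of Section~\ref{sec:-Surjective-morphisms} and its role in Theorem~\ref{thm:C-upper-bound}.
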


Theorem \ref{thm:Phi-approximation} is proven at the end of this
Section \ref{sec:M=0000F6bius-inversions}. When $\Gamma$ and $\Delta$
are connected, Theorem \ref{thm:Phi-approximation} reduces to \cite[Theorem 1.8]{PP15}
which is written in purely algebraic terms. As in the argument in
\cite{PP15}, the remaining ingredient of the proof is the definition
of certain Möbius inversions of the function $\Phi$ and the study
of their properties. The current section is devoted to an extension
of the arguments of \cite{PP15} to multi core graphs. In Section
\ref{subsec:Basis-dependent-M=0000F6bius}, we study Möbius inversions
based on decompositions of $B$-surjective morphisms. While an extension
of the Möbius inversions in \cite{PP15}, this is a bit unusual as
Möbius inversions are usually defined in terms of posets (partially
ordered sets) -- and see Remark \ref{rem:partial order on Decomp}.
In Section \ref{subsec:Algebraic-M=0000F6bius-Inversion} we introduce
Möbius inversions in decompositions in the category of algebraic morphisms,
which has no analogue in \cite{PP15}.
\begin{rem}
Note that the number $\chimax\left(\eta\right)$ and the set $\crit\left(\eta\right)$
from Definition \ref{def:chi-max} are algorithmically computable.
Indeed, every algebraic morphism is surjective (Proposition \ref{prop:C-algebraic-basis-formula}\eqref{enu:alg is surjective})
and so it is enough to go over the finitely many decompositions $\Gamma\stackrel{_{\eta_{1}}}{\longrightarrow}\Sigma\stackrel{_{\eta_{2}}}{\longrightarrow}\Delta$
of $\eta$ with $\eta_{1}$ surjective. By Corollary \ref{cor:norm, freeness, algebraicity are computable},
it is possible to determine whether $\eta_{1}$ is algebraic. Given
the finite set of such decompositions with $\eta_{1}$ algebraic,
it is straightforward to compute $\chimax\left(\eta\right)$ and $\crit\left(\eta\right)$.
See also Remark \ref{rem:a1...ak -pure algebraic morphisms} for some
more information about algebraic morphisms from $\Gamma_{\ak}^{w}$.
\end{rem}

\subsection{Basis dependent Möbius inversions\label{subsec:Basis-dependent-M=0000F6bius}}
\begin{defn}
\label{def:set of decompositions}Let $\eta\colon\Gamma\twoheadrightarrow\Delta$
be a $B$-surjective morphism. Denote by $\decomp\left(\eta\right)$
the set of decompositions of $\eta$ into two surjective morphisms
$\xymatrix{\Gamma\ar@{->>}[r]^{\eta_{1}} & \Sigma\ar@{->>}[r]^{\eta_{2}} & \Delta}
$, where the latter decomposition is considered identical to $\xymatrix{\Gamma\ar@{->>}[r]^{\eta'_{1}} & \Sigma'\ar@{->>}[r]^{\eta'_{2}} & \Delta}
$ if there is an isomorphism $\Sigma\cong\Sigma'$ which commutes with
both decompositions. 
\[
\xymatrix{\Gamma\ar@{->>}[r]^{\eta{}_{1}}\ar@{->>}[rd]_{\eta'{}_{1}} & \Sigma\ar@{<->}[d]_{\cong}^{\theta}\ar@{->>}[rd]^{\eta_{2}}\\
 & \Sigma'\ar@{->>}[r]_{\eta'_{2}} & \Delta
}
\]
Similarly, let $\decompt\left(\eta\right)$ denote the set of decompositions
$\xymatrix{\Gamma\ar@{->>}[r]^{\eta_{1}} & \Sigma_{1}\ar@{->>}[r]^{\eta_{2}} & \Sigma_{2}\ar@{->>}[r]^{\eta_{3}} & \Delta}
$ of $\eta$ into three surjective morphisms. Again, two such decompositions
are considered equivalent (and therefore the same element in $\decompt\left(\eta\right)$)
if there are isomorphisms $\Sigma_{i}\cong\Sigma'_{i}$, $i=1,2$,
which commute with the decompositions.
\end{defn}

Note that $\decomp\left(\eta\right)$ and $\decompt\left(\eta\right)$
are finite sets, as the multi core graph $\Gamma$ is finite. In another
point of view, $\decomp\left(\eta\right)$ can be thought of as the
set of all partitions of $V\left(\Gamma\right)$, the vertex-set of
$\Gamma$, which give rise (without folding) to valid multi core graphs,
and which are finer than (or equal to) the partition induced by $\eta$.
We remark that two distinct decompositions of $\eta$ may have isomorphic
$\Sigma$'s, as the morphisms $\Gamma\twoheadrightarrow\Sigma$ could
be distinct. Moreover, two distinct decompositions may be equivalent
up to an automorphism of $\Gamma$, as the following example illustrates.
\begin{example}
Let $H\leq\F$ be any non-trivial f.g.~subgroup. Consider the multi
core graphs $\Gamma_{n}$ consisting of $n$ disjoint copies of $\Gamma_{B}\left(H\right)$.
Denote by $\eta_{n}$ the unique morphism $\Gamma_{n}\twoheadrightarrow X_{B}$.
There are at least $\binom{n}{2}$ distinct decompositions in $\decomp(\eta_{n})$
with intermediate multi core graph $\Gamma_{n-1}$, corresponding
to a choice of a pair of components of $\Gamma_{n}$ which are identified
to a single component in $\Gamma_{n-1}$ (there may be more if $H\le uHu^{-1}$,
or equivalently $H=uHu^{-1}$, for some $u\in\F\setminus H$). All
of these decompositions are related by an automorphism of $\Gamma_{n}$,
permuting the connected components of $\Gamma_{n}$, but they are
distinct elements of $\decomp(\eta_{n})$.
\end{example}

We now define three different Möbius inversions of the function $\Phi$
which are defined on surjective morphisms in $\mcg_{B}\left(\F\right)$.
The following definition may seem a bit puzzling at first glance,
but as we explain right afterwords, it is indeed a valuable definition
of the functions $L^{B}$, $R^{B}$ and $C^{B}$. Recall the function
$\Phi$ from Definition \ref{def:Phi} and Proposition \ref{prop:geometric meaning for Phi},
which is defined on every morphism of multi core graphs.
\begin{defn}
\label{def:B-surjective Mob inversions}We define the \emph{left inversion}
of $\Phi$ on $B$-surjective morphisms, denoted $L^{B}$, by the
following equation that holds for every $B$-surjective morphism $\eta$,

\begin{equation}
\Phi_{\eta}=\sum_{\left(\eta_{1},\eta_{2}\right)\in\decomp\left(\eta\right)}L_{\eta_{2}}^{B}.\label{eq:def of L^B}
\end{equation}
Similarly, we define the right Möbius inversion $R^{B}$ by the following
equation holding for every $B$-surjective morphism $\eta$ 
\begin{equation}
\Phi_{\eta}=\sum_{\left(\eta_{1},\eta_{2}\right)\in\decomp\left(\eta\right)}R_{\eta_{1}}^{B}.\label{eq:def of R^B}
\end{equation}
Finally, the following equation for all $B$-surjective $\eta$ defines
the two-sided inversion $C^{B}$ of $\Phi$:
\begin{equation}
\Phi_{\eta}=\sum_{\left(\eta_{1},\eta_{2},\eta_{3}\right)\in\decompt\left(\eta\right)}C_{\eta_{2}}^{B}.\label{eq:def of C^B}
\end{equation}
\end{defn}

Indeed, \eqref{eq:def of L^B} well defines a map $L_{\eta}^{B}\colon\mathbb{Z}_{\ge1}\to\mathbb{Q}$
for every $B$-surjective morphism $\eta$ by induction on the size
of $\decomp\left(\eta\right)$. The base case is $L_{\mathrm{id}}^{B}\left(N\right)=\Phi_{\mathrm{id}}\left(N\right)=N^{\chi\left(\Gamma\right)}$.
For a general $B$-surjective $\eta$, note that $\left(\id,\eta\right)\in\decomp\left(\eta\right)$
and so 
\begin{equation}
L_{\eta}^{B}=\Phi_{\eta}-\sum_{\left(\eta_{1},\eta_{2}\right)\in\decomp\left(\eta\right)\setminus\left\{ \left(\mathrm{id},\eta\right)\right\} }L_{\eta_{2}}^{B}.\label{eq:L^B as a difference}
\end{equation}
For every element $\left(\eta_{1},\eta_{2}\right)\in\decomp\left(\eta\right)$
other than $\left(\mathrm{id},\eta\right)$, $\eta_{1}$ is not an
isomorphism, and so $\left|\decomp\left(\eta_{2}\right)\right|<\left|\decomp\left(\eta\right)\right|$.
Thus, the summation on the right hand side of \eqref{eq:L^B as a difference}
is on morphisms with a strictly smaller set of decompositions, and
the terms are well-defined by the induction hypothesis. 

A similar argument shows that \eqref{eq:def of R^B} and \eqref{eq:def of C^B}
well define $R^{B}$ and $C^{B}$, respectively. Note that $C^{B}$
is the right inversion of $L^{B}$ and the left inversion of $R^{B}$:
\begin{equation}
L_{\eta}^{B}=\sum_{\left(\eta_{1},\eta_{2}\right)\in\decomp\left(\eta\right)}C_{\eta_{1}}^{B}~~~~~~~~~R_{\eta}^{B}=\sum_{\left(\eta_{1},\eta_{2}\right)\in\decomp\left(\eta\right)}C_{\eta_{2}}^{B}\label{eq:C as left/right inversions of R/L}
\end{equation}

\begin{rem}
\label{rem:partial order on Decomp}One could define a partial order
on $\decomp\left(\eta\right)$ by setting $\left(\eta_{1},\eta_{2}\right)\le\left(\eta'_{1},\eta'_{2}\right)$
whenever there is a (necessarily surjective) morphism $\theta\colon\Sigma\to\Sigma'$
which makes the following diagram commute. 
\[
\xymatrix{\Gamma\ar@{->>}[r]^{\eta{}_{1}}\ar@{->>}[rd]_{\eta'{}_{1}} & \Sigma\ar@{->>}[d]^{\theta}\ar@{->>}[rd]^{\eta_{2}}\\
 & \Sigma'\ar@{->>}[r]_{\eta'_{2}} & \Delta
}
\]
Using this partial order, one could define the maps $L^{B},R^{B},C^{B}$
as Möbius inversions of a map defined on pairs of comparable elements
in a locally-finite \emph{poset}. This is the ordinary manner of defining
Möbius inversions. We chose a different language here which seems
more elegant.
\end{rem}

We turn to the study of $\Phi$ using the three Möbius inversions
$L^{B},R^{B},C^{B}$. Recall the geometric interpretation of $\Phi$
in Proposition \ref{prop:geometric meaning for Phi}. This gives rise
to a similar interpretation for $L^{B}$. Recall that $\left(N\right)_{s}=N\left(N-1\right)\ldots\left(N-s+1\right)$.
\begin{prop}
\label{prop:rational-function-for-L}Let $\eta\colon\Gamma\to\Delta$
be a $B$-surjective morphism. In the notation of Proposition \ref{prop:geometric meaning for Phi},
$L_{\eta}^{B}(N)$ is equal to the average number of \emph{injective}
lifts $\hat{\eta}\colon\Gamma\hookrightarrow\widehat{\Delta_{N}}$
of $\eta$. Moreover, for every large enough $N$, 
\begin{equation}
L_{\eta}^{B}(N)=\frac{\prod_{v\in V\left(\Delta\right)}\left(N\right)_{\left|\eta^{-1}\left(v\right)\right|}}{\prod_{e\in E\left(\Delta\right)}\left(N\right)_{\left|\eta^{-1}\left(e\right)\right|}}.\label{eq:rational exp for L}
\end{equation}
\end{prop}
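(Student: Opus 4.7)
The plan is to prove the two assertions in tandem, since the explicit formula will drop out once the geometric interpretation of $L^B_\eta$ is in place. First I would establish a bijection between lifts $\hat\eta\colon\Gamma\to\widehat{\Delta_N}$ of $\eta$ and triples consisting of a decomposition $(\eta_1,\eta_2)\in\decomp(\eta)$ together with an \emph{injective} lift $\widetilde{\eta_2}\colon\Sigma\hookrightarrow\widehat{\Delta_N}$ of $\eta_2$. Given $\hat\eta$, declare $v\sim v'$ when $\hat\eta(v)=\hat\eta(v')$; because $\widehat{\Delta_N}$ is itself a multi core graph (it covers $\Delta$, hence immerses into $X_B$), the quotient of $\Gamma$ by this equivalence is a valid multi core graph $\Sigma$ with no folding required, and the induced map $\Sigma\to\widehat{\Delta_N}$ is automatically an injective lift of the surjective morphism $\eta_2\colon\Sigma\twoheadrightarrow\Delta$ obtained by post-composing with the covering projection. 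Conversely, any such triple reconstructs $\hat\eta=\widetilde{\eta_2}\circ\eta_1$.

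Averaging this bijection over the random covering $\widehat{\Delta_N}$ and invoking Proposition~\ref{prop:geometric meaning for Phi} gives
\[
\Phi_\eta(N)\;=\;\sum_{(\eta_1,\eta_2)\in\decomp(\eta)}M_{\eta_2}(N),
\]
where $M_\tau(N)$ denotes the expected number of injective lifts of a $B$-surjective morphism $\tau$. Comparing with the defining recursion~\eqref{eq:def of L^B} and arguing by induction on $|\decomp(\eta)|$, using the fact that any $(\eta_1,\eta_2)\in\decomp(\eta)$ with $\eta_1\neq\id$ satisfies $|\decomp(\eta_2)|<|\decomp(\eta)|$ (pre-composition with $\eta_1$ embeds $\decomp(\eta_2)$ into $\decomp(\eta)$ and misses $(\id,\eta)$), one concludes $M_\eta=L^B_\eta$. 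The base case $\eta=\id$ is immediate since every lift of $\id$ is automatically injective and $\Phi_{\id}(N)=N^{\chi(\Gamma)}$ by Example~\ref{exa:Phi for id}.

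For the explicit formula \eqref{eq:rational exp for L}, I would build an injective lift in two stages. First, for each $v\in V(\Delta)$ choose an injection from $\eta^{-1}(v)$ into the $N$-element fiber above $v$; this contributes the factor $\prod_{v\in V(\Delta)}(N)_{|\eta^{-1}(v)|}$. Once these injections are fixed, for each $e\in E(\Delta)$ the event that the random permutation $\sigma_e$ realizes the required lifts of the preimage edges translates into $|\eta^{-1}(e)|$ constraints of the form ``$\sigma_e(i_j)=k_j$'' with $i_j$'s pairwise distinct and $k_j$'s pairwise distinct---distinctness using that $\eta$ is an immersion and that vertex-injectivity of $\hat\eta$ was already imposed at both endpoints of $e$. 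A uniformly random $\sigma_e\in S_N$ meets such a system with probability $1/(N)_{|\eta^{-1}(e)|}$, and by mutual independence of the $\sigma_e$'s we multiply over edges and sum over vertex-injections to obtain~\eqref{eq:rational exp for L}; the identity holds as soon as $N\geq\max_{v}|\eta^{-1}(v)|$.

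The main obstacle is verifying the bijection in the first step, specifically that the vertex-partition induced by a lift really yields a multi core graph with no need for folding---this rests on the covering property $\widehat{\Delta_N}\to\Delta$ and the immersion property of $\eta$---and that the count is consistent with the equivalence built into the definition of $\decomp(\eta)$, which follows because isomorphic intermediate graphs $\Sigma$ carry equinumerous injective-lift sets under any relabeling of sheets. Everything else reduces to elementary counting of random permutations.
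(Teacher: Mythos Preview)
Your proof is correct and follows essentially the same approach as the paper: factor each lift through its image to obtain the decomposition $\Phi_\eta=\sum_{(\eta_1,\eta_2)\in\decomp(\eta)}M_{\eta_2}$ with $M$ counting injective lifts, then identify $M$ with $L^B$ via the defining recursion, and finally compute $M_\eta$ directly by choosing vertex-injections and conditioning on the edge permutations. Your presentation is slightly more explicit than the paper's in making the induction on $|\decomp(\eta)|$ and the well-definedness of the quotient $\Sigma$ visible, but the substance is identical.
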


\begin{proof}
By Proposition \ref{prop:geometric meaning for Phi}, $\Phi_{\eta}\left(N\right)$
is equal to the average number of lifts $\hat{\eta}\colon\Gamma\to\widehat{\Delta_{N}}$
of $\eta$. Every such lift can be written as the composition of a
surjection and an embedding.
\[
\xymatrix{ &  & \widehat{\Delta_{N}}\ar@{->>}[dd]^{p}\\
 & Im\hat{\eta}\ar@{^{(}->}[ru]^{\iota}\\
\Gamma\ar@{->>}[rr]_{\eta}\ar@/^{3pc}/[uurr]^{\hat{\eta}}\ar@{->>}[ru]^{\overline{\hat{\eta}}} &  & \Delta
}
\]
Note that the image $Im\hat{\eta}$ is a multi core graph. Because
$\left(p\circ\iota\right)\circ\overline{\hat{\eta}}$ is a decomposition
of the surjective $\eta$, the morphism $\left(p\circ\iota\right)$
is surjective too, and so $\left(\overline{\hat{\eta}},p\circ\iota\right)\in\decomp\left(\eta\right)$.
There is thus a one-to-one correspondence between the lifts $\hat{\eta}$
of $\eta$, and the union over all decompositions $\left(\eta_{1},\eta_{2}\right)\in\decomp\left(\eta\right)$
of injective lifts of $\eta_{2}$. Therefore,

\[
\Phi_{\eta}(N)=\mathbb{E}\left[\#\;\mathrm{lifts}\;\mathrm{of}\;\eta\right]=\sum_{\left(\eta_{1},\eta_{2}\right)\in\decomp\left(\eta\right)}\mathbb{E}\left[\mathrm{\#\;injective}\;\mathrm{lifts}\,\mathrm{of}\,\eta_{2}\right],
\]
and we conclude that, indeed, $L_{\eta_{2}}^{B}(N)$ is equal to the
number of injective lifts of $\eta_{2}$ to $\widehat{\Delta_{N}}$.

It remains to prove the right hand side of \eqref{eq:rational exp for L}
gives the average number of injective lifts of $\eta\colon\Gamma\to\Delta$.
First we embed the vertices of $\Gamma$ in $\widehat{\Delta_{N}}$.
For every $v\in V\left(\Delta\right)$, the fiber $\eta^{-1}\left(v\right)$
should be embedded in the fiber $p^{-1}\left(v\right)$ which is of
size $N$, and there are $(N)_{|\eta^{-1}(v)|}$ choices for such
an embedding. Second, for every edge $e\in E(\Delta)$, we obtain
$\eta^{-1}(e)$ restrictions on the permutation $\sigma_{e}$. Such
a set of conditions occurs with probability $\frac{\left(N-\left|\eta^{-1}\left(e\right)\right|\right)!}{N!}=\frac{1}{\left(N\right)_{\left|\eta^{-1}\left(e\right)\right|}}$.
This implies the claim.
\end{proof}
\begin{cor}
\label{cor:Phi, L, R, C all rational}If $\eta\colon\Gamma\to\Delta$
is a $B$-surjective morphism, then $\Phi_{\eta}$, $L_{\eta}^{B}$,
$R_{\eta}^{B}$ and $C_{\eta}^{B}$ are all rational functions in
$N$ for every large enough $N$.
\end{cor}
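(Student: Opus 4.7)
The strategy is to bootstrap all four statements from the explicit rational formula for $L^{B}$ already given in Proposition \ref{prop:rational-function-for-L}. Indeed, \eqref{eq:rational exp for L} exhibits $L_{\eta}^{B}(N)$ as a ratio of products of falling factorials in $N$, each of which is a polynomial in $N$. This expression is valid for $N \geq \max_{v \in V(\Delta)}|\eta^{-1}(v)|$, so $L_{\eta}^{B}$ is a rational function of $N$ for large enough $N$.

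Next I would deduce rationality of $\Phi_{\eta}$ directly from its defining equation \eqref{eq:def of L^B}: since $\decomp(\eta)$ is a finite set and each $L_{\eta_{2}}^{B}$ in the sum is rational, $\Phi_{\eta}$ is a finite $\mathbb{Z}$-linear combination of rational functions, hence rational (for $N$ exceeding the maximum fiber size taken over all $\eta_{2}$ appearing, which is finite).

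For $R_{\eta}^{B}$ and $C_{\eta}^{B}$, I would proceed by strong induction on $|\decomp(\eta)|$ in parallel with the recursive construction of these functions. The base case is $\eta = \mathrm{id}$, where $R_{\mathrm{id}}^{B} = C_{\mathrm{id}}^{B} = \Phi_{\mathrm{id}} = N^{\chi(\Gamma)}$, trivially rational. For the inductive step, I rewrite the defining equations by isolating the ``largest'' term exactly as was done in \eqref{eq:L^B as a difference}:
\[
R_{\eta}^{B} \;=\; \Phi_{\eta} \;-\; \sum_{(\eta_{1},\eta_{2})\in\decomp(\eta)\setminus\{(\eta,\mathrm{id})\}} R_{\eta_{1}}^{B}, \qquad C_{\eta}^{B} \;=\; L_{\eta}^{B} \;-\; \sum_{(\eta_{1},\eta_{2})\in\decomp(\eta)\setminus\{(\mathrm{id},\eta)\}} C_{\eta_{1}}^{B},
\]
using \eqref{eq:C as left/right inversions of R/L} for the latter. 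In every non-trivial summand, $\eta_{2}$ (resp.\ the trailing morphism) is not an isomorphism, so $|\decomp(\eta_{1})| < |\decomp(\eta)|$, and the induction hypothesis provides rationality.

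There is no real obstacle: the only piece of bookkeeping is to choose a uniform threshold $N_{0}$ beyond which all the rational expressions involved are valid. Since each recursion involves only finitely many morphisms (subdecompositions of $\eta$), and each of those involves only finitely many further morphisms in its own Möbius inversion, the whole tree of morphisms appearing is finite; taking $N_{0}$ to be the maximum fiber size over all of them suffices.
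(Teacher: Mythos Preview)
Your approach is correct and essentially the same as the paper's: both rest on the explicit rational formula \eqref{eq:rational exp for L} for $L^{B}$ and then observe that $\Phi$, $R^{B}$, $C^{B}$ are finite integer combinations of such values. The paper dispatches this in one line (``the other three functions are equal to finite sums of $L_{\psi}^{B}$''), whereas you unwind the Möbius recursion explicitly by induction on $|\decomp(\eta)|$; both are fine.

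One slip to fix: in your displayed recursion for $C_{\eta}^{B}$ you excluded the wrong decomposition. From $L_{\eta}^{B}=\sum_{(\eta_{1},\eta_{2})\in\decomp(\eta)}C_{\eta_{1}}^{B}$, the term carrying $C_{\eta}^{B}$ is $(\eta_{1},\eta_{2})=(\eta,\mathrm{id})$, not $(\mathrm{id},\eta)$; the latter contributes $C_{\mathrm{id}}^{B}=N^{\chi(\Gamma)}$. Your prose (``the trailing morphism is not an isomorphism, so $|\decomp(\eta_{1})|<|\decomp(\eta)|$'') matches the correct exclusion $(\eta,\mathrm{id})$, so this is just a typo in the formula, but as written the displayed equation is false and the induction would not close.
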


\begin{proof}
For $L_{\eta}^{B}$ this follows directly from Proposition \ref{prop:rational-function-for-L}.
The other three functions are equal to finite sums of $L_{\psi}^{B}$
with certain $B$-surjective morphisms $\psi$.
\end{proof}
We now develop an alternate expression for the right hand side of
\eqref{eq:rational exp for L}, in order to obtain an expression for
the double sided Möbius inversion $C_{\eta}^{B}$. 
\begin{defn}
\label{def:norm-of-perm, number-of-preserving-perms}Let $X$ be a
finite set. Define the norm $||\sigma||$ of a permutation $\sigma\in Sym(X)$
as the minimal length of a product of transpositions which gives $\sigma$.
Equivalently, $\left\Vert \sigma\right\Vert =\sum_{c}\left[\mathrm{len}\left(c\right)-1\right]$,
the sum being on the cycles of $\sigma$. Also, $\left\Vert \sigma\right\Vert $
is equal to the norm (as in Definition \ref{def:norm of partition, surjective morphism})
of the partition of $X$ induced by the cycles of $\sigma$.

If, in addition, $Y$ is also a set and $\varphi:X\to Y$ some map,
let 
\[
\left[X\right]_{j}^{\varphi}\defi\left|\left\{ \sigma\in Sym(X)\,\middle|\,\varphi\circ\sigma=\varphi,||\sigma||=j\right\} \right|
\]
denote the number of $\varphi$-preserving permutations of $X$ of
norm $j$. Note that this number depends only on the partition induced
by $\varphi$ on $X$. 
\end{defn}

\begin{prop}
\label{prop:L-cover}Let $\eta\colon\Gamma\to\Delta$ be a $B$-surjective
morphism. Then 
\begin{equation}
L_{\eta}^{B}\left(N\right)=\sum_{t\geq0}\sum_{\substack{j_{0}\geq0\\
j_{1},...,j_{t}\geq1
}
}(-1)^{t+\sum_{i=0}^{t}j_{i}}\left[V(\Gamma)\right]_{j_{0}}^{\eta}\cdot\left[E(\Gamma)\right]_{j_{1}}^{\eta}\cdot...\cdot\left[E(\Gamma)\right]_{j_{t}}^{\eta}N^{\chi(\Gamma)-\sum j_{i}}\label{eq:Laurent for L}
\end{equation}
\end{prop}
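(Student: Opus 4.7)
The plan is to take the closed-form expression \eqref{eq:rational exp for L} from Proposition \ref{prop:rational-function-for-L} and expand both the numerator and denominator as Laurent series in $N^{-1}$, using a combinatorial interpretation of the falling factorial in terms of norms of permutations.

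The key identity is
\[
\left(N\right)_{s}=\sum_{\sigma\in\mathrm{Sym}(X)}\left(-1\right)^{\left\Vert \sigma\right\Vert }N^{s-\left\Vert \sigma\right\Vert }
\]
for any finite set $X$ of size $s$, which follows by induction on $s$ from the recurrence $(N)_{s}=(N-s+1)(N)_{s-1}$ (the coefficients being the signed Stirling numbers of the first kind). Applying this to each factor $\left(N\right)_{\left|\eta^{-1}(v)\right|}$ of the numerator and noting that a collection of permutations of the disjoint sets $\left\{ \eta^{-1}(v)\right\} _{v\in V(\Delta)}$ assembles into a single $\eta$-preserving permutation of $V(\Gamma)$ whose norm equals the sum of the local norms, I would obtain
\[
\prod_{v\in V(\Delta)}\left(N\right)_{\left|\eta^{-1}(v)\right|}=\sum_{j_{0}\geq0}\left(-1\right)^{j_{0}}\left[V(\Gamma)\right]_{j_{0}}^{\eta}\,N^{\left|V(\Gamma)\right|-j_{0}}.
\]
Applying the analogous expansion to the denominator, and isolating the unique norm-zero (identity) term which gives $\left[E(\Gamma)\right]_{0}^{\eta}=1$, yields
\[
\prod_{e\in E(\Delta)}\left(N\right)_{\left|\eta^{-1}(e)\right|}=N^{\left|E(\Gamma)\right|}\left(1+\sum_{j\geq1}(-1)^{j}\left[E(\Gamma)\right]_{j}^{\eta}N^{-j}\right).
\]

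Since the inner sum is a polynomial in $N^{-1}$ without constant term (only finitely many $j$ contribute, because $\left[E(\Gamma)\right]_{j}^{\eta}=0$ for $j$ larger than $\left|E(\Gamma)\right|$), inverting via the geometric series $(1+x)^{-1}=\sum_{t\geq0}(-1)^{t}x^{t}$ is justified for all $N$ large enough, and gives
\[
\frac{1}{\prod_{e\in E(\Delta)}\left(N\right)_{\left|\eta^{-1}(e)\right|}}=\frac{1}{N^{\left|E(\Gamma)\right|}}\sum_{t\geq0}\sum_{j_{1},\ldots,j_{t}\geq1}\left(-1\right)^{t+\sum_{i=1}^{t}j_{i}}\prod_{i=1}^{t}\left[E(\Gamma)\right]_{j_{i}}^{\eta}\,N^{-\sum_{i=1}^{t}j_{i}}.
\]
Multiplying this by the numerator expansion and using $\chi(\Gamma)=\left|V(\Gamma)\right|-\left|E(\Gamma)\right|$ produces \eqref{eq:Laurent for L} after the sign combination $(-1)^{j_{0}}\cdot(-1)^{t+\sum_{i=1}^{t}j_{i}}=(-1)^{t+\sum_{i=0}^{t}j_{i}}$.

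The only genuine obstacle is bookkeeping: tracking the signs and the indices, and verifying that the geometric series is a legitimate analytic (not merely formal) expansion of $L_{\eta}^{B}$. The latter holds because for $N$ large enough, the argument $x=\sum_{j\geq1}(-1)^{j}\left[E(\Gamma)\right]_{j}^{\eta}N^{-j}$ of the geometric series has absolute value strictly less than one.
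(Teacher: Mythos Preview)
Your proof is correct and follows essentially the same approach as the paper: expand the falling factorials in \eqref{eq:rational exp for L} using the signed Stirling identity, assemble the fiberwise permutations into $\eta$-preserving permutations of $V(\Gamma)$ and $E(\Gamma)$, and invert the denominator via the geometric series. Your treatment is in fact slightly more careful than the paper's sketch, since you explicitly justify the analytic (rather than merely formal) validity of the geometric-series inversion.
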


\begin{proof}
This is the same as \cite[Section 7.1]{PP15} -- we repeat here briefly
a sketch of the argument. We use the identity $\left(N\right)_{k}=N^{k}\sum_{j=0}^{k}\left(-1\right)^{j}\left[k\right]_{j}N^{-j}$,
where $\left[k\right]_{j}$ denotes the number of permutations in
$S_{k}$ with norm $j$. Multiplying this identity for the sets $\eta^{-1}(v)$,
we obtain 
\[
\prod_{v\in V\left(\Delta\right)}\left(N\right)_{\left|\eta^{-1}\left(v\right)\right|}=N^{\left|V\left(\Gamma\right)\right|}\sum_{j=0}^{\left|V\left(\Gamma\right)\right|}\left(-1\right)^{j}\left[V\left(\Gamma\right)\right]_{j}^{\eta}N^{-j},
\]
since an $\eta$-preserving permutation decomposes uniquely as a product
of permutations in the $\eta$-fibers. Similarly, 
\[
\prod_{e\in E\left(\Delta\right)}\left(N\right)_{\left|\eta^{-1}\left(e\right)\right|}=N^{\left|E\left(\Gamma\right)\right|}\sum_{j=0}^{\left|E\left(\Gamma\right)\right|}\left(-1\right)^{j}\left[E\left(\Gamma\right)\right]_{j}^{\eta}N^{-j}.
\]
Combined with \eqref{eq:rational exp for L}, we get 
\[
L_{\eta}^{B}(N)=N^{\chi\left(\Gamma\right)}\frac{\sum_{j=0}^{\left|V\left(\Gamma\right)\right|}\left(-1\right)^{j}\left[V\left(\Gamma\right)\right]_{j}^{\eta}N^{-j}}{\sum_{j=0}^{\left|E\left(\Gamma\right)\right|}\left(-1\right)^{j}\left[E\left(\Gamma\right)\right]_{j}^{\eta}N^{-j}}.
\]
Using the fact that
\[
\frac{1}{1+\sum_{i\geq1}a_{i}N^{-i}}=\sum_{t\geq0}\left(-\sum a_{i}N^{-i}\right)^{t}=\sum_{t\geq0}\left(-1\right)^{t}\sum_{j_{1},...,j_{t}\geq1}a_{j_{1}}\cdot...\cdot a_{j_{t}}N^{-\sum j_{i}},
\]
the claim follows.
\end{proof}
We use this expression in order to obtain a combinatorial interpretation
for $C_{\eta}^{B}(N)$, which then implies the following Theorem. 
\begin{thm}
\label{thm:C-upper-bound}Let $\eta\colon\Gamma\to\Delta$ be a $B$-surjective
morphism. Then 
\[
C_{\eta}^{B}(N)=O\left(N^{\chi(\Gamma)-\left\Vert \eta\right\Vert _{B}}\right).
\]
\end{thm}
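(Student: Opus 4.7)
The plan is to give a combinatorial interpretation of $C_{\eta}^{B}$ analogous to the one for $L_{\eta}^{B}$ in Proposition~\ref{prop:L-cover}, and then read off the bound from the $N$-degree of each summand. Recall from \eqref{eq:Laurent for L} that $L_{\eta}^{B}(N)$ is a signed sum over tuples $(\sigma_{0},\sigma_{1},\ldots,\sigma_{t})$ of $\eta$-preserving permutations, with $\sigma_{0}\in\mathrm{Sym}(V(\Gamma))$ and $\sigma_{1},\ldots,\sigma_{t}\in\mathrm{Sym}(E(\Gamma))$, the tuple contributing $(-1)^{t+\sum_{i}\|\sigma_{i}\|}N^{\chi(\Gamma)-\sum_{i}\|\sigma_{i}\|}$. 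To each such tuple I would associate its \emph{generated morphism} $\widetilde{\eta}\colon\Gamma\twoheadrightarrow\Sigma$, defined as the quotient of $\Gamma$ by the smallest equivalence relation on $V(\Gamma)\sqcup E(\Gamma)$ containing all orbits of $\sigma_{0},\sigma_{1},\ldots,\sigma_{t}$ and closed under Stallings folding (identified edges force their endpoints to be identified, and identified vertices force equally-labeled outgoing or incoming edges to be identified). Since each $\sigma_{i}$ is $\eta$-preserving, $\eta$ factors as $\eta=\eta_{2}\circ\widetilde{\eta}$, so $(\widetilde{\eta},\eta_{2})\in\decomp(\eta)$.

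For every $B$-surjective morphism $\xi\colon\Gamma\twoheadrightarrow\Sigma$, let $A_{\xi}(N)$ denote the signed sum of the contributions of all tuples whose generated morphism is exactly $\xi$; crucially, this quantity depends only on $\xi$ and not on any $\eta$ factoring through it. Partitioning the tuples in Proposition~\ref{prop:L-cover} by their generated morphism then yields, for every $B$-surjective $\eta$,
\[
L_{\eta}^{B}\;=\;\sum_{(\eta_{1},\eta_{2})\in\decomp(\eta)}A_{\eta_{1}}.
\]
Comparing with the defining identity \eqref{eq:def of L^B} and inducting on $|\decomp(\eta)|$ --- with base case $\eta=\mathrm{id}$, where $A_{\mathrm{id}}(N)=N^{\chi(\Gamma)}=L_{\mathrm{id}}^{B}=C_{\mathrm{id}}^{B}$ --- forces $A_{\eta}=C_{\eta}^{B}$ for every $B$-surjective $\eta$.

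To conclude, I would bound the $N$-degree of each summand of $A_{\eta}$. Given a tuple $(\sigma_{0},\ldots,\sigma_{t})$ generating exactly $\eta$, write each $\sigma_{i}$ as a product of $\|\sigma_{i}\|$ transpositions and process them sequentially, each by one merging-step in the sense of Definition~\ref{def:norm of partition, surjective morphism}: a vertex-transposition $(u,u')$ is realised by gluing $u$ and $u'$ and folding; an edge-transposition $(e,e')$ with $e=(u,v)$, $e'=(u',v')$ (necessarily equally labeled, since both lie in the same $\eta$-fiber) is realised by gluing $u$ with $u'$ and folding, which automatically identifies $v$ with $v'$ and $e$ with $e'$. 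This realises the generated morphism $\eta$ using at most $\sum_{i}\|\sigma_{i}\|$ merging-steps, so $\|\eta\|_{B}\le\sum_{i}\|\sigma_{i}\|$ for every contributing tuple; hence every monomial in $A_{\eta}=C_{\eta}^{B}$ has $N$-degree at most $\chi(\Gamma)-\|\eta\|_{B}$, as required. The main obstacle is the first paragraph --- checking carefully that $A_{\xi}$ genuinely depends only on $\xi$ and that the partition identity holds so the induction closes --- but this follows the pattern of \cite[Section~7.1]{PP15} essentially verbatim, once connected core graphs are replaced by multi core graphs throughout.
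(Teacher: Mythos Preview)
Your proposal is correct and follows essentially the same approach as the paper's proof: both partition the summands in \eqref{eq:Laurent for L} according to the $B$-surjective morphism generated (via gluing and folding) by the tuple of permutations, argue that the resulting partial sums depend only on the generated morphism and hence coincide with $C^{B}$ as the right inversion of $L^{B}$, and then bound each term using $\sum_{i}\|\sigma_{i}\|\ge\|\eta\|_{B}$. One small correction: the identity you should compare against is \eqref{eq:C as left/right inversions of R/L} (which exhibits $C^{B}$ as the right inversion of $L^{B}$), not \eqref{eq:def of L^B}.
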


\begin{proof}
We give a sketch of the analysis carried out in more detail in \cite[Section 7.1]{PP15}.
Recall that $C^{B}$ is the right Möbius inversion of $L^{B}$. Our
starting point is the expression \eqref{eq:Laurent for L} for $L_{\eta}^{B}$.
A permutation of the vertex set $V\left(\Gamma\right)$ induces a
partition of the vertex set. Identifying the blocks of this partition
and folding, gives rise to a $B$-surjective morphism $\eta_{1}:\Gamma\to\Sigma$.
If the permutation is $\eta$-preserving, $\eta_{1}$ defines a partition
which refines the partition of $\eta$, and then there is a $B$-surjective
morphism $\eta_{2}\colon\Sigma\to\Delta$ with $\eta=\eta_{2}\circ\eta_{1}$.

Similarly, an $\eta$-preserving permutation of the edge-set $E\left(\Gamma\right)$
induces a natural $B$-surjective morphism which is the first half
of a decomposition of $\eta$. This can be seen by gluing every two
edges in the same cycle of the permutation, and then folding. This
gluing is equivalent to gluing together the origins of the two edges,
or equivalently their termini, since the permutation is $\eta$-preserving
and in particular preserves edge labels and directions.

Given both a vertex permutation and a sequence of edge permutations
of $\Gamma$, we may glue along all of these permutations, and then
fold in order to obtain a $B$-surjective morphism $\eta_{1}:\Gamma\to\Sigma$
corresponding to a partition refining the $\eta$-partition of $V\left(\Gamma\right)$.
This implies that every term in \eqref{eq:Laurent for L} can be attributed
to an element of $\decomp(\eta)$. By collecting all the terms from
\eqref{eq:Laurent for L} corresponding to every $\left(\eta_{1},\eta_{2}\right)\in\decomp\left(\eta\right)$
and denoting their sum by $\tilde{C}_{\eta,\eta_{1}}^{B}(N)$, we
obtain an expression 
\begin{align*}
L_{\eta}^{B}(N) & =\sum_{\left(\eta_{1},\eta_{2}\right)\in\decomp(\eta)}\tilde{C}_{\eta,\eta_{1}}^{B}(N).
\end{align*}
We will prove that $\tilde{C}_{\eta,\eta_{1}}^{B}(N)$ depends only
on $\eta_{1}$ (and not on $\eta$), implying that $C_{\eta,\eta_{1}}^{B}$
is the right Möbius inversion of $L^{B}$ as in \eqref{eq:C as left/right inversions of R/L},
and therefore equal to the central inversion $C_{\eta_{1}}^{B}$.
On the other hand, the expression $\tilde{C}_{\eta,\eta_{1}}^{B}(N)$
was obtained as a signed sum of expressions of the form $N^{\chi(\Gamma)-\sum j_{i}}$
where $\sum j_{i}\geq\left\Vert \eta_{1}\right\Vert _{B}$, since
this number of identifications yields $\eta_{1}$. This will imply
the claim.

It remains to prove that $\tilde{C}_{\eta,\eta_{1}}^{B}(N)$ is indeed
independent of $\eta$. This $\tilde{C}_{\eta,\eta_{1}}^{B}(N)$ was
obtained as a sum over sequences of $\eta$-preserving vertex and
edge permutations generating $\eta_{1}$ (with Stallings foldings).
Note that such a sequence of permutations is then also $\eta_{1}$-preserving.
This implies that we can equivalently describe this set of permutations
as sequences of $\eta_{1}$-preserving permutations generating $\eta_{1}$
after folding. Hence, the expression depends only on $\eta_{1}$.
\end{proof}

\subsection{Algebraic Möbius Inversion\label{subsec:Algebraic-M=0000F6bius-Inversion}}

We also work with Möbius inversion based on algebraic morphisms. This
has no direct parallel in \cite{PP15}.
\begin{defn}
\label{def:alg-decompositions}For an algebraic morphism $\eta\colon\Gamma\to\Delta$
in $\mcg_{B}\left(\F\right)$ or, equivalently, in $\mocc\left(\F\right)$,
denote by $\algdecomp(\eta)$ and $\algdecompt\left(\eta\right)$
the set of decompositions of $\eta$ into two (three, respectively)
algebraic morphisms, with the same identifications as in Definition
\ref{def:set of decompositions}. We also define the algebraic left,
right and central Möbius inversions of $\Phi$ (restricted to algebraic
morphisms), denoted $L_{\eta}^{\mathrm{alg}}(N),R_{\eta}^{\mathrm{alg}}(N),C_{\eta}^{\mathrm{alg}}(N)$,
respectively, by analogy with Definition \ref{def:B-surjective Mob inversions}.
For instance, $L^{\mathrm{alg}}$ is defined by
\[
\Phi_{\eta}=\sum_{\left(\eta_{1},\eta_{2}\right)\in\algdecomp\left(\eta\right)}L_{\eta_{2}}^{\mathrm{alg}}.
\]
\end{defn}

Recall, by Theorem \ref{thm:properties of algebraic morphisms}\eqref{enu:alg is surjective},
that if $\eta$ is algebraic, then $\algdecomp\left(\eta\right)\subseteq\decomp\left(\eta\right)$.
\begin{prop}
\label{prop:R-basis-independence}The right Möbius inversion $R^{B}$
is supported on algebraic morphisms, and on those it is equal to $R^{\mathrm{alg}}$
(in particular, it is independent of the basis $B$).
\end{prop}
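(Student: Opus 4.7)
The plan is to use the fact that $R^{B}$ is characterized uniquely by its defining equation \eqref{eq:def of R^B}: it suffices to exhibit a function $\tilde{R}$ on $B$-surjective morphisms satisfying \eqref{eq:def of R^B} and having the claimed form. Define $\tilde{R}_{\eta}:=R^{\mathrm{alg}}_{\eta}$ when $\eta$ is algebraic, and $\tilde{R}_{\eta}:=0$ otherwise. This is well-posed because every algebraic morphism is $B$-surjective (Theorem \ref{thm:properties of algebraic morphisms}\eqref{enu:alg is surjective}), so $\tilde{R}_{\eta_{1}}$ makes sense for the first factor of any $(\eta_{1},\eta_{2})\in\decomp(\eta)$.

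Fix a $B$-surjective $\eta\colon\Gamma\to\Delta$ and apply Theorem \ref{thm:algebraic-free-decomposition} to write $\eta=\psi\circ\varphi$ with $\varphi\colon\Gamma\to\Sigma_{0}$ algebraic and $\psi\colon\Sigma_{0}\to\Delta$ free; surjectivity of $\eta$ forces $\psi$ to be surjective too. The core of the argument will be a natural bijection
\[
\Theta\colon \{(\eta_{1},\eta_{2})\in\decomp(\eta)\mid \eta_{1}\text{ algebraic}\}\ \longleftrightarrow\ \algdecomp(\varphi).
\]
For the forward direction, given such an $(\eta_{1},\eta_{2})$ with $\eta_{1}\colon\Gamma\to\Sigma$ algebraic, take the algebraic-free decomposition $\eta_{2}=\psi'\circ\varphi'$. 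Then $\eta=\psi'\circ(\varphi'\circ\eta_{1})$ is itself an algebraic-free decomposition of $\eta$ (using transitivity of algebraic morphisms, Theorem \ref{thm:properties of algebraic morphisms}\eqref{enu:transitivity of algebraic morphisms}), so the uniqueness part of Theorem \ref{thm:algebraic-free-decomposition} supplies a canonical isomorphism identifying $\psi'$ with $\psi$ and $\varphi'\circ\eta_{1}$ with $\varphi$; under this identification, $(\eta_{1},\varphi')$ becomes an element of $\algdecomp(\varphi)$. The inverse map sends $(\varphi_{1},\varphi_{2})\in\algdecomp(\varphi)$ to $(\varphi_{1},\psi\circ\varphi_{2})\in\decomp(\eta)$, whose first component is algebraic and whose second is surjective.

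Granting the bijection, the sum in \eqref{eq:def of R^B} for $\eta$ collapses to
\[
\sum_{(\eta_{1},\eta_{2})\in\decomp(\eta)}\tilde{R}_{\eta_{1}}\;=\;\sum_{(\varphi_{1},\varphi_{2})\in\algdecomp(\varphi)}R^{\mathrm{alg}}_{\varphi_{1}}\;=\;\Phi_{\varphi}\;=\;\Phi_{\eta},
\]
where the middle equality is the defining equation for $R^{\mathrm{alg}}$ at the algebraic morphism $\varphi$, and the final equality invokes Proposition \ref{prop:properties of free morphisms}\eqref{enu:Phi can ignore free extensions} using freeness of $\psi$. Hence $\tilde{R}$ solves \eqref{eq:def of R^B}, and uniqueness of the Möbius inversion yields $R^{B}=\tilde{R}$. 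Basis-independence on algebraic morphisms then comes for free, since $R^{\mathrm{alg}}$ is defined purely in terms of $\algdecomp$, which lives in the basis-free category $\mocc(\F)$.

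The main obstacle will be verifying that $\Theta$ respects the equivalence relations on decompositions laid out in Definition \ref{def:set of decompositions}, which amounts to checking that independent representative choices (for the algebraic-free decomposition of $\eta_{2}$, and for the fixed decomposition $\eta=\psi\circ\varphi$) lead to equivalent elements of $\algdecomp(\varphi)$, and conversely. This reduces to a diagram chase powered by the universality statement at the end of Theorem \ref{thm:algebraic-free-decomposition}, so the ingredients are all in place, but the bookkeeping is the one genuine subtlety.
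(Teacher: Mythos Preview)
Your proof is correct and rests on the same substantive idea as the paper's: the algebraic--free decomposition of Theorem~\ref{thm:algebraic-free-decomposition} sets up a bijection between those $(\eta_{1},\eta_{2})\in\decomp(\eta)$ with $\eta_{1}$ algebraic and the elements of $\algdecomp(\varphi)$, after which Proposition~\ref{prop:properties of free morphisms}\eqref{enu:Phi can ignore free extensions} closes the loop. The packaging differs slightly. The paper argues by induction on $\left|\decomp(\eta)\right|$, peeling off the top term of \eqref{eq:def of R^B} and invoking the induction hypothesis on the smaller $\eta_{1}$'s; it then handles the algebraic and non-algebraic cases for $\eta$ separately, and the bijection appears only implicitly (as ``there is a unique $\overline{\eta_{2}}$'') in the non-algebraic case. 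Your version is non-inductive and uniform: you posit the candidate $\tilde{R}$ at the outset, make the bijection $\Theta$ explicit, and verify \eqref{eq:def of R^B} in one stroke, relying on the already-established uniqueness of the Möbius inversion. Your approach is arguably a bit cleaner conceptually; the paper's avoids having to chase the equivalence-class bookkeeping you flag at the end, since induction lets it work one morphism at a time.
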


\begin{proof}
We prove all claims together by induction on the size of $\decomp(\eta)$.
Note that the base case is $\id\colon\Gamma\to\Gamma$, which is algebraic,
and $R_{\id}^{B}(N)=R_{\id}^{\mathrm{alg}}\left(N\right)=\Phi_{\id}(N)=N^{\chi(\Gamma)}$,
which is basis independent. For the general case,
\begin{eqnarray}
R_{\eta}^{B}(N) & = & \Phi_{\eta}(N)-\sum_{\left(\eta_{1},\eta_{2}\right)\in\decomp(\eta):~\eta_{2}~\mathrm{non-isomorphism}}R_{\eta_{1}}^{B}(N)\nonumber \\
 & = & \Phi_{\eta}(N)-\sum_{\left(\eta_{1},\eta_{2}\right)\in\decomp(\eta):~\eta_{1}\ \mathrm{algebraic,\ \eta_{2}~\mathrm{non}-\mathrm{isomorphism}}}R_{\eta_{1}}^{\mathrm{alg}}(N),\label{eq:induction for R}
\end{eqnarray}
where the second equality is by the induction hypothesis. If $\eta$
is algebraic, the pairs $\left(\eta_{1},\eta_{2}\right)$ in the summation
in right hand side of \eqref{eq:induction for R} are exactly those
in $\algdecomp\left(\eta\right)\setminus\left\{ \left(\eta,\id\right)\right\} $
and so the entire summation is equal to $R_{\eta}^{\mathrm{alg}}\left(N\right)$.

Finally, assume that $\eta$ is not algebraic. Consider the unique
decomposition $\xymatrix{\Gamma\ar[r]_{\mathrm{alg}}^{\varphi} & \Sigma\ar[r]_{*}^{\mathrm{\psi}} & \Delta}
$ of $\eta$ into an algebraic morphism $\varphi$ and a free one $\psi$,
as in Theorem \ref{thm:algebraic-free-decomposition}. As $\eta$
is $B$-surjective, so is $\psi$. By the same Theorem \ref{thm:algebraic-free-decomposition},
for every $\left(\eta_{1},\eta_{2}\right)\in\decomp(\eta)$ with $\eta_{1}$
algebraic, there is (a unique) $\overline{\eta_{2}}$ so that $\left(\eta_{1},\overline{\eta_{2}}\right)\in\algdecomp\left(\varphi\right)$.
Thus from \eqref{eq:induction for R} we derive
\begin{eqnarray*}
R_{\eta}^{B}(N) & = & \Phi_{\eta}(N)-\sum_{(\eta_{1},\overline{\eta_{2}})\in\algdecomp\left(\varphi\right)}R_{\eta_{1}}^{\mathrm{alg}}(N)\\
 & \stackrel{\mathrm{Proposition}~\ref{prop:properties of free morphisms}\eqref{enu:Phi can ignore free extensions}}{=} & \Phi_{\varphi}(N)-\sum_{(\eta_{1},\overline{\eta_{2}})\in\algdecomp\left(\varphi\right)}R_{\eta_{1}}^{\mathrm{alg}}(N)=0.
\end{eqnarray*}
\end{proof}
\begin{prop}
\label{prop:C-algebraic-basis-formula}Let $\eta$ be an algebraic
morphism. Then for every basis $B$,
\[
C_{\eta}^{\mathrm{alg}}(N)=\sum_{(\eta_{1},\eta_{2})\in\decomp(\eta):\ \eta_{1}\ \mathrm{is~free}}C_{\eta_{2}}^{B}(N).
\]
\end{prop}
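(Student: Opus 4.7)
The plan is to prove the identity by induction on $|\algdecomp(\eta)|$. The base case $\eta=\id$ is immediate: all three inversions $\Phi_{\id}$, $C^{B}_{\id}$, $C^{\mathrm{alg}}_{\id}$ equal $N^{\chi(\Gamma)}$, and the unique element $(\id,\id)\in\decomp(\id)$ has $\id$ free, so both sides coincide.

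For the inductive step, I would start from Proposition \ref{prop:R-basis-independence}, which gives $R^{\mathrm{alg}}_{\eta}=R^{B}_{\eta}$, and expand both sides using the definitions of the right Möbius inversions:
\[
\sum_{(\varphi_{1},\varphi_{2})\in\algdecomp(\eta)}C^{\mathrm{alg}}_{\varphi_{2}}\;=\;\sum_{(\eta_{1},\eta_{2})\in\decomp(\eta)}C^{B}_{\eta_{2}}.
\]
The heart of the argument is a bijection that sorts $\decomp(\eta)$ using the algebraic-free factorization of the first morphism. Given $(\eta_{1},\eta_{2})\in\decomp(\eta)$, apply Theorem \ref{thm:algebraic-free-decomposition} to write $\eta_{1}=\psi\circ\varphi_{1}$ with $\varphi_{1}$ algebraic and $\psi$ free, and set $\varphi_{2}:=\eta_{2}\circ\psi$. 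A key small lemma is that $\varphi_{2}$ is algebraic: otherwise, factoring $\varphi_{2}=\psi''\circ\varphi''$ algebraic-then-free with $\psi''$ a non-isomorphism would produce an algebraic-free decomposition $\eta=\psi''\circ(\varphi''\circ\varphi_{1})$ of the algebraic morphism $\eta$ with non-trivial free part, contradicting the uniqueness clause of Theorem \ref{thm:algebraic-free-decomposition}. Hence $(\varphi_{1},\varphi_{2})\in\algdecomp(\eta)$ and $(\psi,\eta_{2})\in\decomp(\varphi_{2})$ with $\psi$ free. Conversely, any $(\varphi_{1},\varphi_{2})\in\algdecomp(\eta)$ together with $(\psi,\tau_{2})\in\decomp(\varphi_{2})$ with $\psi$ free gives back $(\psi\circ\varphi_{1},\tau_{2})\in\decomp(\eta)$ whose canonical algebraic-free decomposition recovers $\varphi_{1}$ and $\psi$ by uniqueness. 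Furthermore $\eta_{1}=\psi\circ\varphi_{1}$ is free precisely when $\varphi_{1}$ is an isomorphism.

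Using this bijection and isolating the $\varphi_{1}=\id$ contribution, the right-hand side rewrites as
\[
R^{B}_{\eta}\;=\;f(\eta)\;+\!\!\sum_{\substack{(\varphi_{1},\varphi_{2})\in\algdecomp(\eta)\\ \varphi_{1}\text{ non-iso}}}\!\!f(\varphi_{2}),\qquad\text{where }\;f(\eta'):=\!\!\sum_{\substack{(\tau_{1},\tau_{2})\in\decomp(\eta')\\\tau_{1}\text{ free}}}\!\!C^{B}_{\tau_{2}}.
\]
On the left-hand side, isolating the $(\id,\eta)$-term of $\algdecomp(\eta)$ gives $C^{\mathrm{alg}}_{\eta}+\sum_{\varphi_{1}\text{ non-iso}}C^{\mathrm{alg}}_{\varphi_{2}}$. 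Each $\varphi_{2}$ with $\varphi_{1}$ non-iso has $|\algdecomp(\varphi_{2})|<|\algdecomp(\eta)|$, since the injection $\algdecomp(\varphi_{2})\hookrightarrow\algdecomp(\eta)$ sending $(\tau_{1},\tau_{2})\mapsto(\tau_{1}\circ\varphi_{1},\tau_{2})$ misses $(\id,\eta)$. The inductive hypothesis therefore yields $C^{\mathrm{alg}}_{\varphi_{2}}=f(\varphi_{2})$ for each such $\varphi_{2}$, the sums cancel, and we conclude $C^{\mathrm{alg}}_{\eta}=f(\eta)$.

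The main subtle point will be to verify that the proposed bijection respects the equivalence of decompositions from Definition \ref{def:set of decompositions}; this reduces to the uniqueness-up-to-isomorphism of the algebraic-free decomposition applied both to $\eta_{1}$ and to the composition $\eta_{2}\circ\psi$. Beyond this bookkeeping, the argument requires no new ideas beyond Proposition \ref{prop:R-basis-independence} and Theorem \ref{thm:algebraic-free-decomposition}.
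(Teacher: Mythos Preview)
Your proof is correct and follows essentially the same route as the paper. Both arguments start from $R_{\eta}^{\mathrm{alg}}=R_{\eta}^{B}$ (Proposition~\ref{prop:R-basis-independence}), expand $R_{\eta}^{B}=\sum_{(\eta_{1},\eta_{2})\in\decomp(\eta)}C_{\eta_{2}}^{B}$, and then sort the sum by applying the algebraic--free decomposition (Theorem~\ref{thm:algebraic-free-decomposition}) to $\eta_{1}$; the key observation that $\varphi_{2}=\eta_{2}\circ\mathrm{free}(\eta_{1})$ is algebraic is made in both, and the conclusion $C_{\eta}^{\mathrm{alg}}=f(\eta)$ is then forced by uniqueness of the right M\"obius inversion---which you spell out as an explicit induction on $|\algdecomp(\eta)|$, while the paper simply invokes ``by definition of the M\"obius inversion.''
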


\begin{proof}
Denote the right hand side of the above equality by $F_{\eta}(N)$
for the course of the proof. For a morphism $\gamma$ denote by $\mathrm{alg}\left(\gamma\right)$
and $\mathrm{free}\left(\gamma\right)$ the morphisms in the unique
decomposition of $\gamma$ into algebraic and free morphisms given
by Theorem \ref{thm:algebraic-free-decomposition}. Let $\eta$ be
algebraic. For every $\left(\eta_{1},\eta_{2}\right)\in\decomp(\eta)$,
consider the decomposition $\xymatrix{\bullet\ar[r]_{\mathrm{alg}}^{\mathrm{alg}\left(\eta_{1}\right)} & \bullet\ar[r]_{*}^{\mathrm{free}\left(\eta_{1}\right)} & \bullet\ar[r]^{\eta_{2}} & \bullet}
$ of $\eta$. Because $\eta$ is algebraic, so is $\beta\defi\eta_{2}\circ\mathrm{free}\left(\eta_{1}\right)$.
Thus,
\begin{eqnarray*}
R_{\eta}^{\mathrm{alg}}(N) & \stackrel{\mathrm{Prop.}~\ref{prop:R-basis-independence}}{=} & R_{\eta}^{B}(N)=\sum_{\left(\eta_{1},\eta_{2}\right)\in\decomp(\eta)}C_{\eta_{2}}^{B}(N)\\
 & = & \sum_{\left(\alpha,\beta\right)\in\algdecomp\left(\eta\right)}\left[\sum_{\substack{\left(\eta_{1},\eta_{2}\right)\in\decomp(\eta):\\
\eta_{2}\circ\mathrm{free}\left(\eta_{1}\right)=\beta
}
}C_{\eta_{2}}^{B}(N)\right]\\
 & = & \sum_{\left(\alpha,\beta\right)\in\algdecomp\left(\eta\right)}\left[\sum_{\substack{\left(\eta'_{1},\eta_{2}\right)\in\decomp(\beta):\\
\eta'_{1}~\mathrm{is~free}
}
}C_{\eta_{2}}^{B}(N)\right]=\sum_{\left(\alpha,\beta\right)\in\algdecomp\left(\eta\right)}F_{\beta}\left(N\right).
\end{eqnarray*}
This implies the claim, by definition of the Möbius inversion.
\end{proof}
\begin{cor}
\label{cor:C-algebraic-upper-bound}Let $\eta:\Gamma\to\Delta$ be
an algebraic morphism. Then
\[
C_{\eta}^{\mathrm{alg}}\left(N\right)=\begin{cases}
N^{\chi\left(\Gamma\right)} & \mathrm{if}~\eta~\mathrm{is~an~isomorphism,}\\
O\left(N^{\chi\left(\Gamma\right)-\left\Vert \eta\right\Vert }\right)\le O\left(N^{\chi\left(\Delta\right)-1}\right) & \mathrm{otherwise}.
\end{cases}
\]
\end{cor}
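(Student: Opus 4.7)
The plan is to split into the two cases in the statement. If $\eta$ is an isomorphism, then every decomposition in $\algdecompt(\eta)$ consists of three algebraic (hence surjective) morphisms whose composition is bijective; therefore each factor is itself an isomorphism, and all such triples are equivalent in the sense of Definition \ref{def:set of decompositions}. The defining relation for $C^{\mathrm{alg}}$ (by analogy with \eqref{eq:def of C^B}) then collapses to $\Phi_{\eta} = C_{\eta}^{\mathrm{alg}}$, giving $C_{\eta}^{\mathrm{alg}}(N) = \Phi_{\eta}(N) = N^{\chi(\Gamma)}$ by the connected-component calculation of Example \ref{exa:Phi for id}.

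Now suppose $\eta$ is algebraic but not an isomorphism. Proposition \ref{prop:C-algebraic-basis-formula} expresses
\[
C_{\eta}^{\mathrm{alg}}(N) \;=\; \sum_{\substack{(\eta_{1},\eta_{2})\in\decomp(\eta)\\ \eta_{1}\text{ free}}} C_{\eta_{2}}^{B}(N),
\]
which is a finite sum, so it suffices to bound each term. Fix such a decomposition $\Gamma\stackrel{\eta_{1}}{\twoheadrightarrow}\Sigma\stackrel{\eta_{2}}{\twoheadrightarrow}\Delta$. Theorem \ref{thm:C-upper-bound} gives $C_{\eta_{2}}^{B}(N) = O\left(N^{\chi(\Sigma)-\left\Vert \eta_{2}\right\Vert _{B}}\right)$. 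The norm $\left\Vert \cdot\right\Vert $ is subadditive under composition (concatenating sequences of immediate morphisms realizing $\eta_{1}$ and $\eta_{2}$ gives a sequence realizing $\eta$), so $\left\Vert \eta\right\Vert \le\left\Vert \eta_{1}\right\Vert +\left\Vert \eta_{2}\right\Vert $. Since $\eta_{1}$ is free, Lemma \ref{lem:distance-lower-bound} yields the equality $\left\Vert \eta_{1}\right\Vert =\chi(\Gamma)-\chi(\Sigma)$, and inequality \eqref{eq:ineq of norms for surj morphism} gives $\left\Vert \eta_{2}\right\Vert \le\left\Vert \eta_{2}\right\Vert _{B}$. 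Stringing these together,
\[
\chi(\Sigma) - \left\Vert \eta_{2}\right\Vert _{B} \;\le\; \chi(\Sigma) - \left\Vert \eta_{2}\right\Vert \;\le\; \chi(\Sigma) - \bigl(\left\Vert \eta\right\Vert -\left\Vert \eta_{1}\right\Vert \bigr) \;=\; \chi(\Gamma) - \left\Vert \eta\right\Vert,
\]
which delivers the first bound $C_{\eta}^{\mathrm{alg}}(N) = O\left(N^{\chi(\Gamma)-\left\Vert \eta\right\Vert }\right)$.

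It remains to upgrade this to $\chi(\Gamma)-\left\Vert \eta\right\Vert \le\chi(\Delta)-1$. The key observation is that an algebraic morphism which is also free must be an isomorphism: indeed, the trivial decomposition $\eta = \eta\circ\id$ has $\eta$ as its ``free'' part, and Definition \ref{def:alg morphism} then forces $\eta$ itself to be an isomorphism. Since in the present case $\eta$ is algebraic but not an isomorphism, $\eta$ is not free, and therefore the inequality in Lemma \ref{lem:distance-lower-bound} is strict: $\left\Vert \eta\right\Vert \ge\chi(\Gamma)-\chi(\Delta)+1$, equivalently $\chi(\Gamma)-\left\Vert \eta\right\Vert \le\chi(\Delta)-1$. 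No part of the argument is particularly delicate — the genuine work is all packaged inside Theorem \ref{thm:C-upper-bound} and Proposition \ref{prop:C-algebraic-basis-formula}; the main thing to keep track of is the chain of (in)equalities relating $\left\Vert \eta\right\Vert $, $\left\Vert \eta_{1}\right\Vert $ and $\left\Vert \eta_{2}\right\Vert _{B}$ in a decomposition with $\eta_{1}$ free.
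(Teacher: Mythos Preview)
Your proof is correct and follows essentially the same approach as the paper's: both invoke Proposition~\ref{prop:C-algebraic-basis-formula} to express $C_{\eta}^{\mathrm{alg}}$ as a finite sum of $C_{\eta_{2}}^{B}$ over free-$\eta_{1}$ decompositions, apply Theorem~\ref{thm:C-upper-bound} to each term, and then chain Lemma~\ref{lem:distance-lower-bound}, inequality~\eqref{eq:ineq of norms for surj morphism}, and subadditivity of $\left\Vert \cdot\right\Vert $ to reach the exponent $\chi(\Gamma)-\left\Vert \eta\right\Vert $. Your explicit justification that an algebraic non-isomorphism cannot be free (via the trivial decomposition $\eta=\eta\circ\id$) makes transparent a step the paper states more tersely (``as $\eta$ is not free''), but the logic is identical.
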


\begin{proof}
If $\eta$ is an isomorphism, then $C_{\eta}^{\mathrm{alg}}\left(N\right)=C_{\id}^{\mathrm{alg}}\left(N\right)=\Phi_{\id}\left(N\right)=N^{\chi\left(\Gamma\right)}$,
as noted in Example \ref{exa:Phi for id}. Otherwise, 
\begin{eqnarray}
C_{\eta}^{\mathrm{alg}}(N) & \stackrel{\mathrm{Prop.}~\ref{prop:C-algebraic-basis-formula}}{=} & \sum_{\left(\eta_{1},\eta_{2}\right)\in\decomp(\eta),\ \eta_{1}\ \mathrm{free}}C_{\eta_{2}}^{B}(N)\nonumber \\
 & \stackrel{\mathrm{Thm.}~\ref{thm:C-upper-bound}}{=} & \sum_{\left(\eta_{1},\eta_{2}\right)\in\decomp(\eta),\ \eta_{1}\ \mathrm{free}}O\left(N^{\chi\left(\mathrm{Im}\left(\eta_{1}\right)\right)-\left\Vert \eta_{2}\right\Vert _{B}}\right)\nonumber \\
 & \stackrel{\mathrm{Lemma}~\ref{lem:distance-lower-bound}}{=} & \sum_{\left(\eta_{1},\eta_{2}\right)\in\decomp(\eta),\ \eta_{1}\ \mathrm{free}}O\left(N^{\chi\left(\Gamma\right)-\left\Vert \eta_{1}\right\Vert -\left\Vert \eta_{2}\right\Vert _{B}}\right)\nonumber \\
 & \stackrel{\eqref{eq:ineq of norms for surj morphism}}{=} & \sum_{\left(\eta_{1},\eta_{2}\right)\in\decomp(\eta),\ \eta_{1}\ \mathrm{free}}O\left(N^{\chi\left(\Gamma\right)-\left\Vert \eta_{1}\right\Vert -\left\Vert \eta_{2}\right\Vert }\right).\label{eq:upper bound on C-alg inside proof}
\end{eqnarray}
Finally, by the very definition of the basis-independent norm of morphisms
in Definition \ref{def:basis independent norm}, it is clear that
$\left\Vert \eta\right\Vert \le\left\Vert \eta_{1}\right\Vert +\left\Vert \eta_{2}\right\Vert $
for every $\left(\eta_{1},\eta_{2}\right)\in\decomp(\eta)$. Hence
every term in \eqref{eq:upper bound on C-alg inside proof} is at
most $O\left(N^{\chi\left(\Gamma\right)-\left\Vert \eta\right\Vert }\right)$,
which is at most $O\left(N^{\chi\left(\Delta\right)-1}\right)$ by
Lemma \ref{lem:distance-lower-bound} as $\eta$ is not free. This
completes the proof as the summation in \eqref{eq:upper bound on C-alg inside proof}
is finite.
\end{proof}
We can now prove the main result of this section. Recall that $\eta\colon\Gamma\to\Delta$,
and we ought to show that 
\begin{equation}
\Phi_{\eta}\left(N\right)=N^{\chi\left(\Gamma\right)}+\left|\crit(\eta)\right|\cdot N^{\chimax\left(\eta\right)}+O\left(N^{\chimax(\eta)-1}\right).\label{eq:what we need to show in approx of Phi}
\end{equation}

\begin{proof}[Proof of Theorem \ref{thm:Phi-approximation}]
 We may assume without loss of generality that $\eta$ is algebraic.
Otherwise, the decomposition of $\eta$ into an algebraic morphism
$\varphi$ and a free morphism $\psi$ given by Theorem \ref{thm:algebraic-free-decomposition},
satisfies that $\Phi_{\eta}=\Phi_{\varphi}$ by Proposition \ref{prop:properties of free morphisms}\eqref{enu:Phi can ignore free extensions},
and that $\chimax\left(\eta\right)=\chimax\left(\varphi\right)$ and
$\left|\crit\left(\eta\right)\right|=\left|\crit\left(\varphi\right)\right|$
by Theorem \ref{thm:algebraic-free-decomposition}.

So assume that $\eta$ is algebraic. We have $\Phi_{\eta}(N)=\sum_{\left(\eta_{1},\eta_{2},\eta_{3}\right)\in\algdecompt\left(\eta\right)}C_{\eta_{2}}^{\mathrm{alg}}(N)$.
If $\eta_{2}=\id$ the contribution is $C_{\id\colon\mathrm{Im}\left(\eta_{1}\right)\to\mathrm{Im}\left(\eta_{1}\right)}^{\mathrm{alg}}\left(N\right)=N^{\chi\left(\mathrm{Im}\left(\eta_{1}\right)\right)}$,
and these contributions give rise to the first two terms in \eqref{eq:what we need to show in approx of Phi}
plus $O\left(N^{\chimax(\eta)-1}\right)$. In any other decomposition
$\left(\eta_{1},\eta_{2},\eta_{3}\right)\in\algdecompt\left(\eta\right)$,
Corollary \ref{cor:C-algebraic-upper-bound} yields that
\[
C_{\eta_{2}}^{\mathrm{alg}}\left(N\right)=O\left(N^{\chi\left(\mathrm{Im}\left(\eta_{2}\right)\right)-1}\right)=O\left(N^{\chi\left(\mathrm{Im}\left(\eta_{2}\circ\eta_{1}\right)\right)-1}\right)=O\left(N^{\chimax\left(\eta\right)-1}\right).
\]
\end{proof}
We end this section with the following full analysis of algebraic
and $B$-surjective morphisms in rank one free group.
\begin{lem}
\label{lem:Phi-rank-one}Let $\F_{1}\cong\mathbb{Z}$ with basis $B=\left\{ b\right\} $.
Let $\eta:\mathcal{H}\to\mathcal{J}$ be a morphism in $\mocc\left(\F_{1}\right)$
such that the image of $\eta$ meets every element of the multiset
$\J$. Then $\eta$ is algebraic, and for all large enough $N$,

\begin{equation}
C_{\eta}^{\mathrm{alg}}(N)=\begin{cases}
1 & \mathrm{if}~\eta=\id,\\
0 & \mathrm{otherwise}
\end{cases},\label{eq:C in F_1}
\end{equation}
$L_{\eta}^{\mathrm{alg}}(N)=1$ and $\Phi_{\eta}(N)=|\algdecomp(\eta)|=|\decomp(\eta)|$.
\end{lem}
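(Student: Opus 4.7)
The key observation is that in $\F_1$, every non-trivial finitely generated subgroup has the form $\langle b^n\rangle$, so every multi core graph in $\mcg_B(\F_1)$ is a disjoint union of cycles and has Euler characteristic $0$. The restriction of any morphism to a component $\Gamma_i$ of $\Gamma$ is a label- and direction-preserving map from a cycle into the target cycle $\Delta_{f(i)}$, and since each vertex of $\Gamma_i$ has a unique incident $b$-edge in each direction, this restriction is determined by the image of a single vertex and is a covering map onto the \emph{entire} component $\Delta_{f(i)}$. In particular, the hypothesis that the image of $\eta$ meets every component of $\Delta$ is equivalent to $\eta$ being $B$-surjective.

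The plan proceeds in three steps. First I would show that $\eta$ is algebraic, and indeed that \emph{every} $B$-surjective morphism in $\mcg_B(\F_1)$ is algebraic. If $\Gamma\to\Sigma\to\Delta$ is a decomposition of $\eta$ with the second morphism free, then Proposition~\ref{prop:properties of free morphisms}\eqref{enu:EC of free morphisms} gives $\chi(\Delta)\le\chi(\Sigma)$, and since Euler characteristics in $\F_1$ all vanish, equality holds. The equality case of that proposition forces the free morphism to induce an isomorphism between $\Sigma$ and the components of $\Delta$ met by its image; since $\eta$ meets every component of $\Delta$, so does the free morphism, hence it is an isomorphism and $\eta$ is algebraic. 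The same argument applied to the two morphisms in any element of $\decomp(\eta)$ yields $\decomp(\eta)=\algdecomp(\eta)$.

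Next I would compute $L_\eta^B(N)$ using the formula \eqref{eq:rational exp for L}. Within each component $\Delta_j$ every vertex has the same preimage size, say $k_j$, and by the same covering argument so does every edge; since $\Delta_j$ has equally many vertices and edges, its contributions to the numerator and denominator of \eqref{eq:rational exp for L} cancel, yielding $L_\eta^B(N)=1$ for all large $N$. Because $\decomp=\algdecomp$ for every morphism appearing, a short induction on $|\decomp(\eta)|$ using the defining recursions of $L^B$ and $L^{\mathrm{alg}}$ gives $L_\eta^{\mathrm{alg}}(N)=L_\eta^B(N)=1$.

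Finally, Möbius inversion extracts the remaining claims. From \eqref{eq:C as left/right inversions of R/L} one has $1=L_\eta^{\mathrm{alg}}(N)=\sum_{(\eta_1,\eta_2)\in\algdecomp(\eta)}C_{\eta_1}^{\mathrm{alg}}(N)$. The base case $\eta=\id$ immediately gives $C_{\id}^{\mathrm{alg}}=1$. For $\eta\ne\id$, every first component $\eta_1$ appearing in $\algdecomp(\eta)$ is itself algebraic (hence $B$-surjective), so the lemma applies to $\eta_1$ by induction on $|\algdecomp|$; the only non-zero contributions to the sum are then $\eta_1=\id$ (contributing $1$) and $\eta_1=\eta$ (contributing $C_\eta^{\mathrm{alg}}$), forcing $C_\eta^{\mathrm{alg}}=0$. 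Expanding $\Phi_\eta=\sum_{(\eta_1,\eta_2,\eta_3)\in\algdecompt(\eta)}C_{\eta_2}^{\mathrm{alg}}$ and using that $C_{\eta_2}^{\mathrm{alg}}$ is supported on $\eta_2=\id$, only triples with $\eta_2=\id$ survive, and these biject with $\algdecomp(\eta)$; thus $\Phi_\eta(N)=|\algdecomp(\eta)|=|\decomp(\eta)|$. The main obstacle is spotting the cancellation that yields $L_\eta^B=1$; everything else is Möbius-inversion bookkeeping.
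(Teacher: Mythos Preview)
Your argument is correct and follows essentially the same route as the paper: you use Proposition~\ref{prop:properties of free morphisms}\eqref{enu:EC of free morphisms} to show every such morphism is algebraic (hence $\decomp=\algdecomp$), compute $L^B_\eta=1$ via the explicit formula~\eqref{eq:rational exp for L} exploiting that the restriction to each component is a covering map, and deduce the value of $C^{\mathrm{alg}}$ by Möbius inversion and induction on the size of the decomposition set. The only cosmetic difference is that the paper reads off $\Phi_\eta=|\algdecomp(\eta)|$ directly from $\Phi_\eta=\sum L^{\mathrm{alg}}_{\eta_2}$ with each summand equal to~$1$, whereas you go through $\algdecompt$ and the support of $C^{\mathrm{alg}}$; both are immediate once $L^{\mathrm{alg}}\equiv 1$ is known.
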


\begin{proof}
Recall that the elements in the multisets in $\mocc\left(\F_{1}\right)$
are conjugacy classes of non-trivial subgroups. Every non-trivial
subgroup of $\F_{1}$ is of rank $1$. Hence, by Proposition \ref{prop:properties of free morphisms}\eqref{enu:EC of free morphisms},
there are no free morphisms in $\mocc\left(\F_{1}\right)$ in which
the image meets every component of the codomain, except for isomorphisms.
The definition of algebraic morphisms now implies that every morphism
in $\mocc\left(\F_{1}\right)$ with image meeting every component
of the codomain is algebraic. As every algebraic morphism is $B$-surjective,
we obtain that $\decomp(\eta)=\algdecomp(\eta)$ and so $L_{\eta}^{\mathrm{alg}}(N)=L_{\eta}^{B}(N)$
and $C_{\eta}^{\mathrm{alg}}(N)=C_{\eta}^{B}(N)$.

Next we prove that $L_{\eta}^{B}(N)=1$. By Proposition \ref{prop:rational-function-for-L},
$L^{B}$ is multiplicative on the elements of $\J$ and it is thus
enough to prove that $L_{\eta}^{B}(N)=1$ when $\J$ is a singleton.
But then $\J=\{\left\langle b^{j}\right\rangle ^{\F_{1}}\}$ for some
$j$, and every component of $\H$ is $\langle b^{jm}\rangle^{\F_{1}}$
for some $m\in\mathbb{Z}_{\ge1}$. In particular, the morphism of
$B$-labeled multi core graphs is a topological covering map, and
every vertex and every edge in $\Gamma_{B}\left(\J\right)$ have fiber
of the same size. It now follows from \eqref{eq:rational exp for L}
that $L_{\eta}^{B}(N)=1$.

That $\Phi_{\eta}(N)=|\algdecomp(\eta)|=|\decomp(\eta)|$ follows
immediately, and \eqref{eq:C in F_1} follows by considering $C^{\mathrm{alg}}$
as the right Möbius inversion of $L^{\mathrm{alg}}$ and a simple
induction on $\left|\decomp\left(\eta\right)\right|$.
\end{proof}

\section{The proof of Theorem \ref{thm:Word-Measure-Character-Bound}\label{sec:Proof-of-Theorem}}

Throughout this section, we fix a non-power $1\ne w\in\F$. Recall
from Example \ref{exa:main thm in terms of Phi} that the function
$\mathbb{E}_{w}\left[\qak\right]$ in the center of Theorem \ref{thm:Word-Measure-Character-Bound}
is equal to $\Phi_{\eta_{\ak}^{w}}$, where $\eta_{\ak}^{w}\colon\Gamma_{\ak}^{w}\to X_{B}$.

When $k=1$ and $\alpha_{1}=1$, this is the special case from Theorem
\ref{thm:PP15}, that was proven in \cite{PP15}, and is immediate
from Theorem \ref{thm:Phi-approximation}. However, in any other case,
Theorem \ref{thm:Phi-approximation} as is does not teach us anything
new. Indeed, if $\alpha_{1}+2\alpha_{2}+\ldots+k\alpha_{k}\ge2$,
then there is $\left(\varphi_{1},\varphi_{2}\right)\in\decomp\left(\eta_{\ak}^{w}\right)$
with $\varphi_{1}$ algebraic (by Lemma \ref{lem:Phi-rank-one}) and
non-isomorphism, so that $\mathrm{Im}\left(\varphi_{1}\right)=\Gamma_{B}(\left\langle w\right\rangle ^{\F})$.
In particular, $\chimax\left(\eta_{\ak}^{w}\right)=0$, so Theorem
\ref{thm:Phi-approximation} says only that 
\[
\mathbb{E}_{w}\left[\qak\right]=\left[1+\crit\left(\eta_{\ak}^{w}\right)\right]+O\left(N^{-1}\right).
\]
This agrees with the statement of Theorem \ref{thm:Word-Measure-Character-Bound}:
as we explain below, $\left\langle \qak,1\right\rangle =1+\crit\left(\eta_{\ak}^{w}\right)$.
But this is only the easier part of this theorem (that also follows
from \cite{nica1994number,Linial2010}). To establish Theorem \ref{thm:Word-Measure-Character-Bound}
in full, we need some more machinery. We start with the following
twist on Definition \ref{def:chi-max} of $\chimax$ and of $\crit$,
which considers only \textbf{negative} Euler characteristics. Because
the codomain of $\eta_{\ak}^{w}$ is $X_{B}$, any morphism $\Gamma_{\ak}^{w}\to\Sigma$
is part of a decomposition of $\eta_{\ak}^{w}$.
\begin{defn}
\label{def:chi-max for multiset of cycles}For a non-power $1\ne w\in\F$
and $k\ge1$, $\ak\ge0$, let $\Gamma_{\ak}^{w}$ and $\eta_{\ak}^{w}$
denote the corresponding multi core graph and morphism as above, and
let
\[
\chiak\left(w\right)\defi\max\left\{ \chi\left(\Sigma\right)\,\middle|\,\Gamma_{\ak}^{w}\stackrel{_{\varphi}}{\longrightarrow}\Sigma~\mathrm{is~algebraic~with}~\chi\left(\Sigma\right)<0\right\} .
\]
Denote by $\critak\left(w\right)$ the set of algebraic morphisms
with domain $\Gamma_{\ak}^{w}$ and codomain of Euler characteristic
$\chiak\left(w\right)$, up to equivalence as in Theorem \ref{thm:algebraic-free-decomposition}.
\end{defn}

The proof of Theorem \ref{thm:Word-Measure-Character-Bound} consists
of $\left(i\right)$ the following theorem which is an analogue of
Theorem \ref{thm:Phi-approximation}, $\left(ii\right)$ showing that
$\chiak\left(w\right)=\chimax\left(\eta_{1}^{w}\right)=1-\pi\left(w\right)$
-- this is done in Section \ref{subsec:chi max ak is 1-pi}, and
$\left(iii\right)$ showing that $\left|\critak\left(w\right)\right|=\left\langle \qak,\xi_{1}-1\right\rangle \cdot\left|\crit\left(w\right)\right|$,
which is done in Section \ref{subsec:crit ak =00003D crit times C}.
\begin{thm}
\label{thm:character-first-bound}Let $1\ne w\in\F$ be a non-power.
Then 
\[
\mathbb{E}_{w}\left[\qak\right]=\uexp\left[\qak\right]+\left|\critak\left(w\right)\right|\cdot N^{\chiak(w)}+O\left(N^{\chiak(w)-1}\right).
\]

\end{thm}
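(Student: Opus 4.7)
The plan is to leverage the algebraic Möbius inversions of Section \ref{sec:M=0000F6bius-inversions} and identify the first two terms of the expansion of $\Phi_\eta(N)$ for $\eta = \eta_{\ak}^w$. First I pass to the algebraic part: by Theorem \ref{thm:algebraic-free-decomposition} write $\eta = \psi \circ \varphi$ with $\varphi \colon \Gamma_{\ak}^w \to \Sigma_0$ algebraic and $\psi$ free, so that $\Phi_\eta = \Phi_\varphi$ by Proposition \ref{prop:properties of free morphisms}\eqref{enu:Phi can ignore free extensions}. Any algebraic morphism $\alpha$ out of $\Gamma_{\ak}^w$ appears as the first factor of a decomposition of $\eta$ (the second factor being the tautological $B$-labeled immersion into $X_B$), and so by Theorem \ref{thm:algebraic-free-decomposition}\eqref{enu:alg-free decomposition is universal for alg morphisms} it factors $\varphi$; the resulting second factor is itself algebraic by the uniqueness of the algebraic-free decomposition of $\varphi$. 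Hence the pairs in $\algdecomp(\varphi)$ are in bijection with equivalence classes of algebraic morphisms out of $\Gamma_{\ak}^w$, and $|\critak(w)|$ is exactly the number of such pairs whose intermediate graph has Euler characteristic $\chiak(w)$. As in the proof of Theorem \ref{thm:Phi-approximation} I then expand
\[
\Phi_\varphi(N) \;=\; \sum_{(\eta_1,\eta_2,\eta_3) \in \algdecompt(\varphi)} C^{\mathrm{alg}}_{\eta_2}(N).
\]

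The main terms come from triples with $\eta_2 = \id$, which are in bijection with pairs in $\algdecomp(\varphi)$ and each contribute $C^{\mathrm{alg}}_{\id}(N) = N^{\chi(\Sigma)}$ by Corollary \ref{cor:C-algebraic-upper-bound}. Splitting this main sum according to $\chi(\Sigma)$ yields three pieces: a constant $A$ from the $\chi(\Sigma) = 0$ terms, the desired $|\critak(w)| \cdot N^{\chiak(w)}$ from the $\chi(\Sigma) = \chiak(w)$ terms (by the identification above and Definition \ref{def:chi-max for multiset of cycles}), and a tail of order $O(N^{\chiak(w) - 1})$ from the finitely many terms with $\chi(\Sigma) \le \chiak(w) - 1$.

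The main obstacle is to show the triples with $\eta_2 \ne \id$ contribute only $O(N^{\chiak(w) - 1})$ and to identify $A$ with $\uexp[\qak]$. For $\chi(\Sigma) < 0$, Corollary \ref{cor:C-algebraic-upper-bound} together with $\|\eta_2\| \ge 1$ (automatic for algebraic non-isomorphisms) gives $C^{\mathrm{alg}}_{\eta_2}(N) = O(N^{\chi(\Sigma) - 1}) \subseteq O(N^{\chiak(w) - 1})$. The delicate case is $\chi(\Sigma) = 0$: then $\Sigma$ is a disjoint union of cycles which, because $w$ is a non-power, must each depict some power of $w$ (any cyclic subgroup of $\F$ containing a power of the non-power $w$ is generated by a power of $w$). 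Algebraic surjectivity preserves this rank-one structure (an immersed cycle maps onto a cycle), so every algebraic decomposition of $\eta_2$ stays within $\mocc(\langle w\rangle) \cong \mocc(\F_1)$, and Lemma \ref{lem:Phi-rank-one} then yields $C^{\mathrm{alg}}_{\eta_2}(N) = 0$ for every non-isomorphism $\eta_2$ in this regime. Finally, applying the very same calculation to the primitive word $w = x$: $\Gamma_{\ak}^x$ carries only $x$-labels so every algebraic morphism out of it has rank-one codomain, hence $\Phi_{\eta_{\ak}^x}(N) = A_x$ is a constant by the above, and since $\mathbb{E}_x = \uexp$ identically we obtain $A_x = \uexp[\qak]$; the combinatorics of algebraic morphisms $\Gamma_{\ak}^w \to \Sigma$ with $\Sigma$ rank-one depends only on the containment lattice of cyclic subgroups generated by powers of $w$, which coincides with that for $x$ under the natural relabeling, so $A = A_x = \uexp[\qak]$, completing the proof.
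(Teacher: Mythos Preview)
Your proof is correct and follows essentially the same strategy as the paper's: reduce to the algebraic part $\varphi$, expand $\Phi_\varphi$ via $\algdecompt$, and split according to the Euler characteristic of the codomain of the middle morphism. The only difference is bookkeeping in the $\chi=0$ regime. The paper does not separate identity from non-identity there: it observes that the triples $(\beta_1,\beta_2,\beta_3)$ with $\chi(\mathrm{Im}(\beta_2))=0$ are \emph{exactly} the elements of $\algdecompt(\omega_1)$ (where $\omega_1\colon\Gamma_{\ak}^w\to\Gamma_1^w$), so their total contribution is $\sum C^{\mathrm{alg}}_{\beta_2}=\Phi_{\omega_1}(N)=\mathbb{E}_x[\qak]=\uexp[\qak]$ in one stroke. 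Your route---showing the non-identity $\chi=0$ terms vanish via Lemma~\ref{lem:Phi-rank-one}, then matching the identity count $A$ against the same count $A_x$ for the primitive word $x$---reaches the same conclusion, but the paper's packaging avoids having to argue separately that $C^{\mathrm{alg}}$ is intrinsic (independent of the ambient free group) and that the rank-one combinatorics for $w$ and $x$ coincide.
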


\begin{proof}
Recall that $\eta_{\ak}^{w}\colon\Gamma_{\ak}^{w}\to X_{B}$, and
that our goal is to prove the stated approximation of $\Phi_{\eta_{\ak}^{w}}\left(N\right)=\mathbb{E}_{w}\left[\qak\right]$.
The notation $\Gamma_{\ak}^{w}$ specializes to $\Gamma_{1}^{w}$
being the core graph $\Gamma_{B}(\left\langle w\right\rangle ^{\F})$
which is a cycle representing $\left\langle w\right\rangle ^{\F}$.
Let $\Gamma_{\ak}^{w}\stackrel{\varphi}{\to}\Sigma\stackrel{\psi}{\to}X_{B}$
be the unique decomposition of $\eta_{\ak}^{w}$ to an algebraic $\varphi$
and a free $\psi$ from Theorem \ref{thm:algebraic-free-decomposition}.
Because $\eta_{\ak}^{w}$ has a decomposition $\left(\omega_{1},\omega_{2}\right)$
with $\omega_{1}\colon\Gamma_{\ak}^{w}\to\Gamma_{1}^{w}$ algebraic
(by Lemma \ref{lem:Phi-rank-one}), and in particular with $\mathrm{Im}\left(\omega_{1}\right)$
connected, $\Sigma$ must be connected as well (by Theorem \ref{thm:algebraic-free-decomposition}\eqref{enu:alg-free decomposition is universal for alg morphisms}).
Let $J$ be the group in the conjugacy class $\piol\left(\Sigma\right)$
to which $\left\langle w\right\rangle $ is mapped as a subgroup.
We have again that $w$ is a non-power in $J$, and as in the proof
of Theorem \ref{thm:Phi-approximation}, the quantities $\chiak\left(w\right)$
and $\critak\left(w\right)$ are the same in $J$ as in $\F$. Thus
we may assume without loss of generality that $\Sigma=X_{B}$ and
$J=\F$, namely, that $\eta_{\ak}^{w}$ is algebraic.

Using the algebraic Möbius Inversions from Section \ref{subsec:Algebraic-M=0000F6bius-Inversion}
we have 
\begin{equation}
\Phi_{\eta_{\ak}^{w}}(N)=\sum_{\left(\beta_{1},\beta_{2},\beta_{3}\right)\in\algdecompt\left(\eta_{\ak}^{w}\right)}C_{\beta_{2}}^{alg}(N).\label{eq:Phi of eta ak as sum of C-alg}
\end{equation}
By definition of $\chiak\left(w\right)$, any summand in \eqref{eq:Phi of eta ak as sum of C-alg}
satisfies $\chi\left(\mathrm{Im}\left(\beta_{2}\right)\right)=0$
or $\chi\left(\mathrm{Im}\left(\beta_{2}\right)\right)\le\chiak\left(w\right)$.
Consider the following cases:

\paragraph*{Case I: $\chi\left(\mathrm{Im}\left(\beta_{2}\right)\right)=0$}

~~~~As $w$ is a non-power, the only cyclic subgroups of $\F$
containing $w^{m}$ are $\left\langle w^{d}\right\rangle $ with $d\mid m$.
This shows that $\left(\beta_{1},\beta_{2}\right)$ is part of a decomposition
of $\omega_{1}$, and everything takes place inside the ambient group
$\left\langle w\right\rangle \cong\mathbb{Z}$. Conversely, every
decomposition $\left(\beta_{1},\beta_{2},\beta_{3}\right)\in\algdecompt\left(\omega_{1}\right)$
satisfies $\chi\left(\mathrm{Im}\left(\beta_{2}\right)\right)=0$.
The entire category of algebraic morphisms inside $\mocc\left(\left\langle w\right\rangle \right)$
are identical to that inside $\mocc\left(\mathbb{Z}\right)$. And
so 
\[
\sum_{\left(\beta_{1},\beta_{2},\beta_{3}\right)\in\algdecompt\left(\eta_{\ak}^{w}\right):~\chi\left(\mathrm{Im}\left(\beta_{2}\right)\right)=0}C_{\beta_{2}}^{alg}(N)=\Phi_{\omega_{1}}\left(N\right)=\mathbb{E}_{x}\left[\qak\right]=\mathbb{E}_{\mathrm{unif}}\left[\qak\right].
\]

\paragraph*{Case II: $\chi\left(\mathrm{Im}\left(\beta_{2}\right)\right)=\protect\chiak\left(w\right)$
and $\beta_{2}$ is an isomorphism}

~~~~By Corollary \ref{cor:C-algebraic-upper-bound}, we have in
this case $C_{\beta_{2}}^{alg}(N)=N^{\chi\left(\mathrm{Im}\left(\beta_{1}\right)\right)}=N^{\chi\left(\mathrm{Im}\left(\beta_{2}\right)\right)}=N^{\chiak\left(w\right)}$.
There is exactly one such decomposition in $\algdecompt\left(\eta_{\ak}^{w}\right)$
for every $\beta_{1}\in\critak\left(w\right)$, and so the total contribution
of these summands in \eqref{eq:Phi of eta ak as sum of C-alg} is
$\left|\critak\left(w\right)\right|\cdot N^{\chiak\left(w\right)}$.

\paragraph*{All remaining terms in \eqref{eq:Phi of eta ak as sum of C-alg}:}

In every other case, either $\chi\left(\mathrm{Im}\left(\beta_{2}\right)\right)<\chiak\left(w\right)$
or $\chi\left(\mathrm{Im}\left(\beta_{2}\right)\right)=\chiak\left(w\right)$
but $\beta_{2}$ is not an isomorphism, and Corollary \ref{cor:C-algebraic-upper-bound}
yields that $C_{\beta_{2}}^{alg}(N)=O\left(N^{\chiak(w)-1}\right)$.
This completes the proof of the theorem.

\end{proof}
\begin{rem}
\label{rem:expectaion in uniform measure of qak}The analysis above
readily leads to a precise formula for $\mathbb{E}_{\mathrm{unif}}\left[\qak\right]=\mathbb{E}_{x}\left[\qak\right]$.
Denote by $\eta_{\qak}^{x}$ the morphism corresponding to the single-letter
word $x$. By Lemma \ref{lem:Phi-rank-one}, 
\[
\mathbb{E}_{x}\left[\qak\right]=\Phi_{\eta_{\ak}^{x}}\left(N\right)=\left|\algdecomp\left(\eta_{\ak}^{x}\right)\right|.
\]
Every such decomposition induces a partition on the components of
$\Gamma_{\ak}^{w}$ (by their image in the intermediate multi core
graph). If a block in the partition consists of cycles corresponding
to $\left\langle x^{k_{1}}\right\rangle ,\ldots,\left\langle x^{k_{m}}\right\rangle $,
the connected image corresponds to $\left\langle x^{d}\right\rangle $
for some $d\mid\gcd\left(k_{1},\ldots,k_{m}\right)$, and there are
$d^{m-1}$ non-equivalent morphisms to such a cycle. So if $S$ is
a multiset with $\alpha_{1}$ $1$'s, $\alpha_{2}$ $2$'s and so
on, then

\[
\mathbb{E}_{\mathrm{unif}}\left[\qak\right]=\sum_{{\cal P}\in\mathrm{Partitions}(S)}\left[\prod_{A\in P}\left[\sum_{d|gcd\left(\left\{ k\in A\right\} \right)}d^{|A|-1}\right]\right].
\]
\end{rem}

\subsection{Maximal Euler characteristic\label{subsec:chi max ak is 1-pi}}
\begin{lem}
\label{lem:distance-one-algebraic-morphisms}Let $J\leq\F$ be a f.g.~subgroup
and let $u\in\F$. If $\rk\left(\left\langle J,u\right\rangle \right)\leq\rk J$,
then $J\leq_{alg}\left\langle J,u\right\rangle $. 
\end{lem}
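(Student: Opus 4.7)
I would argue by contradiction: suppose $J \leq M \leq L := \langle J, u\rangle$ with $M \ff L$ and $M \neq L$. Writing $L = M * K$ with $K \neq 1$, the goal is to reach a contradiction with $\rk L \leq \rk J$. The first step is to observe that $K$ must be cyclic. Indeed, consider the quotient $q \colon L \twoheadrightarrow L/\langle\langle M\rangle\rangle_L \cong K$; since $J \leq M$ lies in the kernel, $K = q(L) = \langle q(u)\rangle$ is generated by a single element. Being a nontrivial free factor, $K$ is a nontrivial free group, hence $K \cong \mathbb{Z}$, and in particular $\rk L = \rk M + 1$.

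The second and crucial step is to upgrade the abstract decomposition $L = M * K$ to $L = M * \langle u\rangle$. Consider the homomorphism $\psi \colon M * \langle t\rangle \to L$ sending $m \mapsto m$ and $t \mapsto u$. It is surjective since $\langle M, u\rangle \supseteq \langle J, u\rangle = L$, and both source and target are free groups of rank $\rk M + 1$. By the Hopfian property of finitely generated free groups, any surjection between two such groups of equal rank is an isomorphism, so $\psi$ is an isomorphism and $L = M * \langle u\rangle$.

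Finally, consider the retraction $\rho \colon L = M * \langle u\rangle \twoheadrightarrow M$ that is the identity on $M$ and kills $u$. Since $J \leq M$, $\rho|_J = \mathrm{id}_J$, so on the one hand $\rho(L) = \rho(\langle J, u\rangle) = \langle J, 1\rangle = J$, and on the other hand $\rho(L) = M$; hence $M = J$. But then $L = J * \langle u\rangle$ gives $\rk L = \rk J + 1 > \rk J$, contradicting the hypothesis $\rk L \leq \rk J$. The main obstacle is the second step: identifying $u$ itself (rather than an abstract generator $k$) as the free factor complement of $M$ inside $L$. Without this promotion via Hopficity, the clean retraction $\rho$ killing $u$ would not be available, and the last paragraph would not go through.
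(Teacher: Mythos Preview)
Your proof is correct and follows essentially the same line as the paper's. The paper's argument is more compressed: instead of first proving the complement $K$ is cyclic via the quotient $L\twoheadrightarrow K$, it directly uses that a proper free factor has strictly smaller rank to get $\rk M+1\le\rk\langle J,u\rangle$, combines this with $\langle M,u\rangle=\langle J,u\rangle$ (hence $\rk\langle J,u\rangle\le\rk M+1$), and then invokes the same Hopficity/rank-counting step and the same retraction killing $u$ that you spell out explicitly.
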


\begin{proof}
This is \cite[Corollary 3.14]{miasnikov2007algebraic}, but we give
the short proof here for completeness. Assume by contradiction that
$J\leq L\overset{\ast}{\lneq}\langle J,u\rangle$. Then $\langle L,u\rangle=\langle J,u\rangle$
and $\rk L+1\leq\rk\langle J,u\rangle$ and hence $L\ast\langle u\rangle=\langle J,u\rangle$.
Since $J$ is a subgroup of $L$, and this is a free product, it follows
that $J=L$. Therefore, $J\ast\langle u\rangle=\langle J,u\rangle$,
which contradicts the rank inequality.
\end{proof}
\begin{prop}
\label{prop:joint-primitivity-rank}Let $w\in\F$ be a non-power as
above. Then for every $k\ge1$ and $\ak\ge0$ not all zeros, 
\[
\chiak(w)=\chimax(\eta_{1}^{w})=1-\pi(w).
\]
\end{prop}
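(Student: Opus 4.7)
The plan is to establish the two equalities $\chimax(\eta_{1}^{w}) = 1 - \pi(w)$ and $\chiak(w) = \chimax(\eta_{1}^{w})$ separately. The first equality follows essentially from the reasoning of \cite{PP15}: algebraic non-isomorphisms $\Gamma_{1}^{w} \to \Sigma$ correspond via $\piol$ to non-trivial algebraic extensions $\left\langle w\right\rangle \alg J = \piol(\Sigma)$. In any such $J \ne \left\langle w\right\rangle$, the element $w$ must be non-primitive in $J$---otherwise $\left\langle w\right\rangle$ would be a proper free factor of $J$ containing itself, contradicting algebraicity---so $\rk J \ge \pi(w)$ and $\chi(\Sigma) \le 1 - \pi(w)$. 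Equality is attained by taking $J = H$ to be a critical subgroup of $w$, which automatically satisfies $\left\langle w\right\rangle \alg H$ by the very minimality in the definition of $\pi(w)$: any intermediate $\left\langle w\right\rangle \le K \ff H$ would have $w$ non-primitive in $K$ (since primitivity of $w$ in $K$ would pass to $H$), hence $\rk K \ge \pi(w) = \rk H$ and so $K = H$.

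For the easy direction $\chimax(\eta_{1}^{w}) \le \chiak(w)$, I will take any algebraic non-isomorphism $\Gamma_{1}^{w} \to \Sigma$ (necessarily with $\chi(\Sigma) \le -1 < 0$) and compose with the algebraic morphism $\omega_{1} \colon \Gamma_{\ak}^{w} \to \Gamma_{1}^{w}$ provided by Lemma \ref{lem:Phi-rank-one}. Transitivity of algebraicity (Theorem \ref{thm:properties of algebraic morphisms}\eqref{enu:transitivity of algebraic morphisms}) then yields an algebraic $\Gamma_{\ak}^{w} \to \Sigma$ contributing the same value of $\chi(\Sigma)$ to $\chiak(w)$.

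The reverse inequality $\chiak(w) \le \chimax(\eta_{1}^{w})$ is the main obstacle. Given algebraic $\eta \colon \Gamma_{\ak}^{w} \to \Sigma$ with $\chi(\Sigma) < 0$, I plan to split on whether $\eta$ factors through $\omega_{1}$. In the factoring case $\eta = \alpha \circ \omega_{1}$, the uniqueness of the algebraic-free decomposition (Theorem \ref{thm:algebraic-free-decomposition}) combined with algebraicity of $\eta$ forces $\alpha \colon \Gamma_{1}^{w} \to \Sigma$ to be algebraic---indeed, writing the alg-free decomposition $\alpha = \alpha_{f} \circ \alpha_{a}$ gives an alg-free decomposition $\eta = \alpha_{f} \circ (\alpha_{a} \circ \omega_{1})$ of the algebraic morphism $\eta$, forcing $\alpha_{f}$ to be an isomorphism. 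Since $\chi(\Sigma) < 0 = \chi(\Gamma_{1}^{w})$, the morphism $\alpha$ is non-iso, so $\chi(\Sigma) \le \chimax(\eta_{1}^{w})$ at once. The non-factoring case is the real difficulty: here one must rule out algebraic morphisms that genuinely separate cycles collapsed by $\omega_{1}$ yet still attain $\chi(\Sigma) > 1 - \pi(w)$. I expect this case to require a component-wise analysis of $\Sigma$ together with a dependence-theorem argument in the style of Louder \cite{louder2013scott}, the very machinery that the introduction identifies as underlying the deeper structure of algebraic morphisms out of $\Gamma_{\ak}^{w}$ and that is developed in Section \ref{subsec:crit ak =00003D crit times C} for the parallel problem of counting $\left|\crit_{\ak}(w)\right|$.
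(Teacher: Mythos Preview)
Your argument for $\chimax(\eta_{1}^{w})=1-\pi(w)$ and for the easy inequality $\chimax(\eta_{1}^{w})\le\chiak(w)$ is correct and matches the paper. The gap is in the hard inequality $\chiak(w)\le\chimax(\eta_{1}^{w})$: you handle only the case where the given algebraic $\eta\colon\Gamma_{\ak}^{w}\to\Sigma$ factors through $\omega_{1}$, and then defer the non-factoring case to a Louder-style dependence theorem. But the paper does \emph{not} use Louder's theorem here---that tool is reserved for the finer Proposition~\ref{prop:critical-tuple}, which pins down the elements of $\critak(w)$ exactly. For the present proposition there is an elementary trick that forces every case into the factoring situation.

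The idea you are missing is the following. Restrict to a single component $\Sigma_{o}$ of $\Sigma$ with $\chi(\Sigma_{o})<0$ (this suffices since $\chi(\Sigma)\le\chi(\Sigma_{o})$), set $\piol(\Sigma_{o})=M^{\F}$, and write the components of $\Gamma_{\ak}^{w}$ over $\Sigma_{o}$ as $\langle w^{k_{i}}\rangle\le u_{i}Mu_{i}^{-1}$ with $u_{1}=1$. Now enlarge $M$ inside $\F$ one generator at a time to $J_{s+1}=\langle M,u_{2},\ldots,u_{s},w\rangle$. Each step is a \emph{non-free} extension---for the $u_{i+1}$-step because $w^{\mathrm{lcm}(k_{1},k_{i+1})}$ lies in both $J_{i}$ and $u_{i+1}J_{i}u_{i+1}^{-1}$, and for the final $w$-step because $w^{k_{1}}$ already lies in $J_{s}$---so rank does not increase, and by Lemma~\ref{lem:distance-one-algebraic-morphisms} each step is algebraic. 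Hence $M\alg J_{s+1}$ with $\rk J_{s+1}\le\rk M$. Since $w$ and all the $u_{i}$ now lie in $J_{s+1}$, the composite $\Gamma_{\ak}^{w}\to\Sigma_{o}\to\Gamma_{B}(J_{s+1}^{\,\F})$ \emph{does} factor through $\omega_{1}$, yielding (by the same argument you gave in the factoring case) an algebraic non-isomorphism $\Gamma_{1}^{w}\to\Gamma_{B}(J_{s+1}^{\,\F})$ and therefore $\pi(w)\le\rk J_{s+1}\le\rk M$. No case split on factoring, and no dependence theorem, is needed.
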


\begin{proof}
We start by proving that $\chiak(w)\geq\chimax(\eta_{1}^{w})$. Because
$w$ is not a power, $\pi\left(w\right)\ge2$, namely, $\chimax\left(\eta_{1}^{w}\right)<0$.
Let $\beta\colon\Gamma_{1}^{w}\to\Sigma$ be a critical morphism of
$\eta_{1}^{w}$ (as in Definition \ref{eq:def of chi-max}), so $\chi\left(\Sigma\right)=\chimax\left(\eta_{1}^{w}\right)<0$.
By Lemma \ref{lem:Phi-rank-one}, the natural morphism $\Gamma_{\ak}^{w}\stackrel{\omega_{1}}{\to}\Gamma_{1}^{w}$
is algebraic. Therefore, the composition $\Gamma_{\ak}^{w}\stackrel{\omega_{1}}{\to}\Gamma_{1}^{w}\stackrel{\beta}{\to}\Sigma$
is also algebraic, and so $\chiak\left(w\right)\ge\chi\left(\Sigma\right)=\chimax\left(\eta_{1}^{w}\right)$.

To prove the converse inequality, let $\Gamma_{\ak}^{w}\stackrel{_{\varphi}}{\longrightarrow}\Sigma$
be algebraic with $\chi(\Sigma)<0$. We need to prove that $\chi\left(\Sigma\right)\le\chimax\left(\eta_{1}^{w}\right)$.
Let us restrict our attention to a component $\Sigma_{o}$ of $\Sigma$
with negative Euler characteristic. This gives an algebraic extension
of the corresponding powers of $w$, i.e., the components of $\Gamma_{\ak}^{w}$
mapped to $\Sigma_{o}$, and it is enough to show that $\chi\left(\Sigma_{o}\right)\le\chimax\left(\eta_{1}^{w}\right)$.
So we assume without loss of generality that $\Sigma$ is connected.

Assume that $\piol\left(\Sigma\right)=M^{\F}$ for some f.g.~$M\le\F$.
Assume that $\Gamma_{\ak}^{w}$ has $s=\sum\alpha_{i}$ connected
components corresponding to $\langle w^{k_{1}}\rangle^{\F},\ldots,\langle w^{k_{s}}\rangle^{\F}$,
and that the morphism $\varphi$ maps $\left\langle w^{k_{i}}\right\rangle \to u_{i}Mu_{i}^{-1}$
for some $u_{i}\in\F$ with $u_{1}=1$ (conjugate $M$ if needed).
Showing that $\chi\left(\Sigma\right)\le\chimax\left(\eta_{1}^{w}\right)$
is equivalent to that $\rk M\ge\pi\left(w\right)$.

Consider the subgroups $J_{1}\le J_{2}\le\ldots\le J_{s+1}$ defined
by gradually adding $u_{2},\ldots,u_{s},w$ to $M$:
\[
J_{1}\defi\left\langle M\right\rangle ,~~~~J_{2}\defi\left\langle J_{1},u_{2}\right\rangle ,~~~~\ldots,~~~~J_{s}=\left\langle J_{s-1},u_{s}\right\rangle ,~~~~J_{s+1}=\left\langle J_{s},w\right\rangle .
\]
Note that the extensions $J_{i}\le J_{i+1}$ are not free. Indeed,
for $i=1,\ldots,s-1$, $u_{i+1}Mu_{i+1}^{~-1}$ and $M$ both contain
$w^{\mathrm{lcm}\left(k_{1},k_{i+1}\right)}$, so $J_{i+1}=\left\langle J_{i},u_{i+1}\right\rangle $
is not a free extension of $J_{i}$. For $i=s$, $J_{s+1}=\left\langle J_{s},w\right\rangle $,
but $w$ has powers contained in $J_{s}$, so once again this is not
a free extension. Hence $\rk J_{i+1}\le\rk J_{i}$ for $i=1,\ldots,s$
and so $\rk J_{s+1}\le\rk M$. By Lemma \ref{lem:distance-one-algebraic-morphisms},
these are all algebraic extensions, and by transitivity of algebraic
extensions, $M\alg J_{s+1}$, corresponding to an algebraic morphism
$\Sigma\stackrel{\psi}{\to}\Delta\defi\Gamma_{B}\left(J_{s+1}^{~~\F}\right)$.
Note that the composition $\Gamma_{\ak}^{w}\stackrel{\varphi}{\to}\Sigma\stackrel{\psi}{\to}\Delta$
maps the subgroups $\left\langle w^{k_{1}}\right\rangle ,\ldots,\left\langle w^{k_{s}}\right\rangle $
to $J_{s+1}$ itself (the latter contains, in particular, $w$). Hence
this composition factors through $\Gamma_{1}^{w}$, as in the following
diagram:
\[
\xymatrix{\Gamma_{\ak}^{w}\ar[rr]_{\mathrm{alg}}^{\varphi}\ar@{->>}[d]_{\omega_{1}}^{\mathrm{alg}} &  & \Sigma\ar[d]_{\mathrm{alg}}^{\psi}\\
\Gamma_{1}^{w}\ar[rr]_{\alpha} &  & \Delta
}
\]
Because $\psi\circ\varphi$ is algebraic, so is $\alpha$. As $M$
was not cyclic, $J_{s+1}$ is not cyclic, so $\chi\left(\Delta\right)<0$.
We deduce that $\left\langle w\right\rangle \lneq_{alg}J_{s+1}$.
Hence $\pi\left(w\right)\le\rk J_{s+1}\le\rk M$.
\end{proof}

\subsection{The set of critical morphisms\label{subsec:crit ak =00003D crit times C}}

The previous results already show that for a non-power $w$,
\begin{equation}
\mathbb{E}_{w}\left[\qak\right]=\left\langle \qak,1\right\rangle +\left|\critak\left(w\right)\right|\cdot N^{1-\pi\left(w\right)}+O\left(N^{-\pi\left(w\right)}\right).\label{eq:almost main thm}
\end{equation}
In order to prove Theorem \ref{thm:Word-Measure-Character-Bound},
it remains to show that all morphisms in $\critak\left(w\right)$
are obtained from $\crit\left(w\right)$, or, equivalently, from $\crit\left(\eta_{1}^{w}\right)$,
in the following straightforward way. If $\varphi\colon\Gamma_{\ak}^{w}\to\Sigma$
is a critical morphism in $\critak\left(w\right)$, then $\Sigma$
has one component $\Sigma_{o}$ with $\chi\left(\Sigma_{o}\right)=\chiak\left(w\right)=\chimax\left(\eta_{1}^{w}\right)=1-\pi\left(w\right)$,
and the remaining components are cycles corresponding to powers of
$w$. Let $\varphi_{o}\colon\Gamma_{o}\to\Sigma_{o}$ be the restriction
of $\varphi$ to the disjoint union $\Gamma_{o}$ of components of
$\Gamma_{\ak}^{w}$ mapped to $\Sigma_{o}$. Proposition \ref{prop:critical-tuple}
below shows that $\varphi_{o}$ can always be factored through a unique
$\beta\colon\Gamma_{1}^{w}\to\Sigma_{o}$ in $\crit\left(\eta_{1}^{w}\right)$. 

On the other hand, it is clear that the number of $\varphi\in\critak\left(w\right)$
corresponding to a given $\beta\in\crit\left(\eta_{1}^{w}\right)$
as above, depends only on $\ak$ and not on $w$ nor on $\beta$.
With notation as in Remark \ref{rem:expectaion in uniform measure of qak},
this number is equal to 
\[
\sum_{{\cal P}\in\mathrm{Partitions}(S)}\left[\sum_{A\in{\cal P}}\left[\prod_{B\in P\setminus\left\{ A\right\} }\left[\sum_{d|gcd\left(\left\{ k_{i}\in B\right\} \right)}d^{|B|-1}\right]\right]\right],
\]
and in Proposition \ref{prop:critical morphisms coming from Crit(w)}
below, we prove it is equal to $\left\langle \qak,\xi_{1}-1\right\rangle $.

Proposition \ref{prop:critical-tuple} crucially relies on the following
theorem of Louder, as stated in \cite[page 553]{louder2018negative}.
\begin{thm}
\label{thm:Louder}\cite{louder2013scott} Consider the following
graph of groups $\Delta$ in the shape of a star, with $\ell$ vertices
around a central vertex, and $m_{i}\ge1$ edges between the center
and the $i$-th peripheral vertex -- see Figure \ref{fig:louder}.
The center vertex-group is $\mathbb{Z}$ with generator denoted $w$.
The peripheral vertex-groups are free groups $H_{1},\ldots,H_{\ell}$
and all edge-groups are infinite cyclic. For every $i\in\left[\ell\right],j\in\left[m_{i}\right]$,
there is an element $v_{i,j}\in H_{i}$ and a positive integer $n_{i,j}$
so that the $j$-th edge between $H_{i}$ and $\left\langle w\right\rangle $
attaches $v_{i,j}$ to $w^{n_{i,j}}$. We assume further that
\begin{enumerate}
\item For all $i,j$, $v_{i,j}\ne1$ and is not a proper power.
\item For all $i$, $\left\langle v_{i,1}\right\rangle ^{H_{\text{i}}},\ldots,\left\langle v_{i,m_{i}}\right\rangle ^{H_{i}}$
are distinct conjugacy classes.
\item There exists no free splitting of any of the groups $H_{i}$, $H_{i}=H_{i}'\ast\langle v_{i,k}\rangle$,
such that all the remaining elements $v_{i,j},j\neq k$ are conjugate
into $H_{i}'$.
\item $\sum_{i,j}n_{i,j}\ge2$.
\end{enumerate}
Let $\pi_{1}\left(\Delta\right)$ denote the corresponding group.
Namely, 
\[
\pi_{1}\left(\Delta\right)=\left\langle H_{1},\ldots,H_{\ell},w,\left\{ t_{i,j}\right\} _{i\in\left[\ell\right],j\in\left[m_{i}\right]}\,\middle|\,t_{i,j}v_{i,j}t_{i,j}^{-1}=w^{n_{i,j}},\left\{ t_{i,1}=1\right\} _{i\in\left[\ell\right]}\right\rangle .
\]
Then, if $f\colon\pi_{1}\left(\Delta\right)\twoheadrightarrow J$
is a surjective homomorphism onto a free group $J$, and $f\Big|_{H_{i}}$
is injective for every $i$, then
\[
\rk J-1<\sum_{i=1}^{\ell}\left(\rk H_{i}-1\right).
\]
\end{thm}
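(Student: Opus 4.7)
The plan is to reformulate the claim as a strict Euler-characteristic increase under $f$, and then establish it by folding a graph-of-spaces model for $\Delta$. Since all edge groups and the central vertex group are infinite cyclic, $\chi(\Delta) = \sum_i \chi(H_i) = \sum_i(1-\rk H_i)$, so the desired inequality is equivalent to $\chi(J) > \chi(\Delta)$. Observe that $f\vert_{\langle w\rangle}$ is automatically injective: injectivity of $f\vert_{H_i}$ gives $f(w)^{n_{i,j}} = f(v_{i,j}) \neq 1$, hence $f(w) \neq 1$, and $J$ is torsion-free.

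Realize $\Delta$ as a graph of spaces $Z$: a circle $C$ for $\langle w\rangle$, a Stallings core graph $Y_i$ for each $H_i$, and a cylinder for each edge glued along the loops representing $v_{i,j}\subset Y_i$ and $w^{n_{i,j}}\subset C$. Then $\pi_1(Z)\cong\pi_1(\Delta)$ and $\chi(Z)=\chi(\Delta)$. Realize $f$ by a cellular map $\phi\colon Z\to R$, where $R$ is a rose with $\pi_1(R)=J$. Since $\phi$ is $\pi_1$-injective on $C$ and on each $Y_i$, one can homotope so that these restrictions are graph immersions into $R$.

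Next, perform Stallings folds on $\phi$ to produce an immersion $\bar\phi\colon\bar Z\to R$. Folding does not decrease $\chi$, and it strictly increases $\chi$ at any fold that identifies vertices or edges coming from distinct pieces ($C$ or different $Y_i$'s); folds internal to a single piece leave $\chi$ unchanged because those pieces are already core. Since $f$ is surjective and $\bar\phi$ is a $\pi_1$-injective map of graphs, one obtains $\chi(J)=\chi(R)\ge\chi(\bar Z)\ge\chi(Z)=\chi(\Delta)$. The theorem therefore reduces to the statement that at least one \emph{inter-piece} fold must occur. Hypotheses (1) and (2) ensure that the cylinder attachments at each $Y_i$ are proper and pairwise non-parallel, so no cylinder is collapsed; hypothesis (4) excludes the trivial case in which $Z$ deformation-retracts onto a single piece; and hypothesis (3) is the key algebraic condition preventing the cylinders at $Y_i$ from being ``peeled off'' into a free factor without interacting with the rest of the configuration. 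Together these force $\bar\phi$ to identify points across different pieces, yielding $\chi(J)>\chi(\Delta)$.

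The hard part will be making precise the final step: translating hypothesis (3) into the geometric assertion that no sequence of folds can avoid an inter-piece identification. The natural route, and the one taken in \cite{louder2013scott}, is a tower of resolutions controlled by a Scott-style complexity, where each refinement either strictly increases $\chi$ or reduces to a smaller instance still satisfying analogues of (1)–(4). The subtle point is that intermediate folds can introduce new free splittings of the refined vertex groups, so one must verify carefully that hypothesis (3) propagates down the tower. This inductive bookkeeping — ensuring irreducibility is preserved through the tower — is the technical heart of the argument, and is what forbids the equality case.
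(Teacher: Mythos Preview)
The paper does not prove this theorem: it is quoted from \cite{louder2013scott} (in the formulation given on \cite[page~553]{louder2018negative}) and used as a black box in the proof of Proposition~\ref{prop:critical-tuple}. There is therefore no proof in the present paper to compare your proposal against.

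As for your sketch itself, the Euler-characteristic reformulation $\chi(J)>\chi(\Delta)$ and the observation that $f\vert_{\langle w\rangle}$ is automatically injective are correct, and the overall strategy (build a model space, fold toward the rose, track $\chi$) is indeed the shape of Louder's argument. However, there is a genuine technical slip: the space $Z$ you construct is a $2$-complex---the cylinders are honest $2$-cells---not a graph, so ``perform Stallings folds on $\phi\colon Z\to R$'' is not a well-defined operation as written. Since $\pi_1(\Delta)$ is typically not free, $Z$ cannot be replaced by a homotopy-equivalent graph, and collapsing a cylinder would identify the loop $v_{i,j}\subset Y_i$ with $w^{n_{i,j}}\subset C$, which is not a fold and does not behave well with respect to $\chi$. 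Making the folding and $\chi$-monotonicity precise in this $2$-dimensional setting is exactly what Louder's machinery of resolutions/adjunction spaces accomplishes. Your final paragraph essentially concedes this by deferring to \cite{louder2013scott} for the inductive tower and the propagation of hypothesis~(3); that is precisely where the substance lies, so what you have written is an accurate roadmap to the proof rather than a proof.
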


\begin{figure}
\includegraphics[viewport=-359.181bp 0bp 193.9578bp 155.7198bp,clip,scale=0.5]{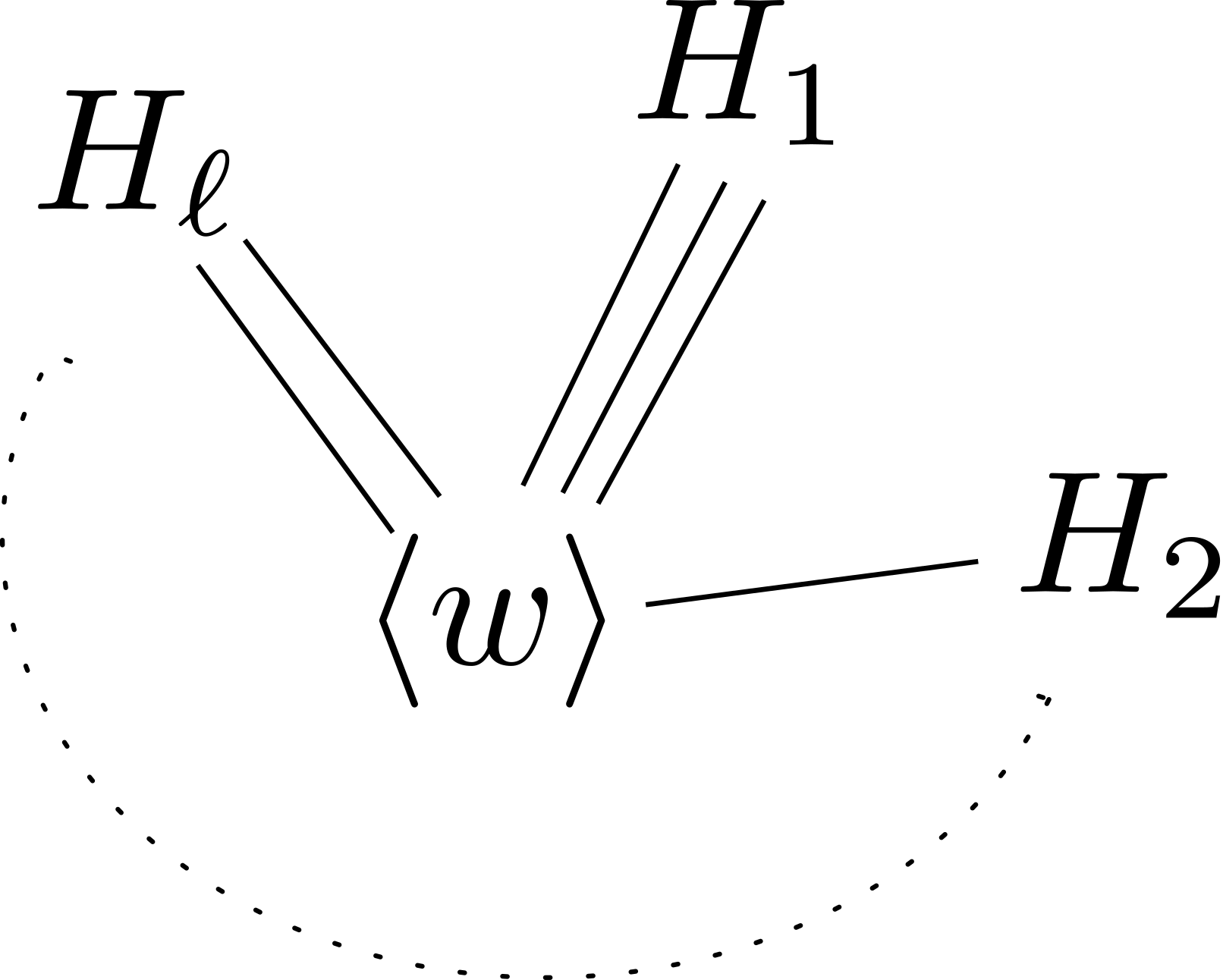}\caption{\label{fig:louder}The graph of groups in Louder's Theorem \ref{thm:Louder}.}
\end{figure}

\begin{prop}
\label{prop:critical-tuple}Let $1\ne w\in\F$ be a non-power. Assume,
as above, that $\varphi_{o}\colon\Gamma_{\ak}^{w}\to\Sigma_{o}$ is
a critical morphism in $\critak\left(w\right)$ with $\Sigma_{o}$
connected and $\chi\left(\Sigma_{o}\right)<0$ (and so $\chi\left(\Sigma_{o}\right)=\chiak\left(w\right)=\chimax\left(\eta_{1}^{w}\right)=1-\pi\left(w\right)$).
Then $\varphi_{o}$ factors through a unique $\beta\colon\Gamma_{1}^{w}\to\Sigma_{o}$
in $\crit\left(\eta_{1}^{w}\right)$.
\end{prop}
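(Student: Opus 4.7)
The plan is to apply Louder's dependence theorem (Theorem~\ref{thm:Louder}) and argue by contradiction: assume $\varphi_o$ does not factor through $\Gamma_1^w$. Choose a representative $M\le\F$ of the conjugacy class $\piol(\Sigma_o)$ with $u_1=1$, so that $w^{k_1}\in M$ and $\rk M=\pi(w)$. Group the components of $\Gamma_{\ak}^w$ according to the $M$-conjugacy class of the image of their fundamental group under $\varphi_o$; for each of the $m_1$ distinct classes, pick a primitive-root representative $v_j\in M$, an integer $n_j\ge 1$, and an element $t_j\in\F$ with $t_j v_j t_j^{-1}=w^{n_j}$ (normalized so $t_1=1$ and $v_1^{n_1}=w^{k_1}$). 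Form the star graph-of-groups $\Delta$ with a single peripheral vertex group $H_1=M$, central vertex group $\langle w\rangle\cong\mathbb{Z}$, and these $m_1$ edges. The natural homomorphism $f\colon\pi_1(\Delta)\to\F$ has image $J=\langle M,w,t_2,\ldots,t_{m_1}\rangle$, and iterated application of Lemma~\ref{lem:distance-one-algebraic-morphisms} (as in the proof of Proposition~\ref{prop:joint-primitivity-rank}) gives $M\alg J$.

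Verifying Louder's four hypotheses is the crux. Conditions (1) and (2) are built into the construction. Condition (4), $\sum_j n_j\ge 2$, holds except when $m_1=1$ and $n_1=1$; in the excepted case $v_1$ is $\F$-conjugate to $w$ itself, which already exhibits the desired factorization of $\varphi_o$ through $\Gamma_1^w$. Condition (3) — that no free splitting $M=M'*\langle v_k\rangle$ exists with all remaining $v_j$ ($j\ne k$) conjugate into $M'$ — is the main technical obstacle I expect to have to work through. Given such a splitting, the strategy is to construct an alternative algebraic morphism $\Gamma_{\ak}^w\to\Sigma_o'$ whose codomain has one component with fundamental group $M'$ (of rank $\pi(w)-1$, absorbing the $v_j$, $j\ne k$) and one cycle component corresponding to $\langle w^{n_k}\rangle$ (since $v_k$ is $\F$-conjugate to $w^{n_k}$). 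This $\Sigma_o'$ has $\chi(\Sigma_o')=2-\pi(w)$, strictly exceeding $\chi(\Sigma_o)=1-\pi(w)$; when $\pi(w)\ge 3$ this contradicts $\Sigma_o$ realizing $\chiak(w)$. The borderline case $\pi(w)=2$, where $\chi(\Sigma_o')=0$ is not negative and hence does not directly violate $\chiak$, will require a separate direct argument showing that the cyclic component in $\Sigma_o'$ already yields the sought factorization.

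With Louder's hypotheses in hand, Theorem~\ref{thm:Louder} delivers $\rk J<\rk M=\pi(w)$. Since $w\in J$, the definition of $\pi(w)$ forces $w$ to be primitive in $J$, so $J=\langle w\rangle*J'$ for some $J'\le J$. The malnormality of the factor $\langle w\rangle$ in this free product then constrains each $v_j\in M\le J$, which is $J$-conjugate to $w^{n_j}$ via $t_j\in J$, to lie in $\langle w\rangle$ with $t_j\in\langle w\rangle$. After the harmless re-conjugation absorbing these $\langle w\rangle$-factors, one reads off that $\langle w\rangle\le M$ and every $u_i$ lies in $M\cdot\langle w\rangle$ — which is precisely the statement that $\varphi_o$ factors through $\Gamma_1^w$, contradicting our standing assumption. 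Hence the desired $\beta\colon\Gamma_1^w\to\Sigma_o$ exists; uniqueness follows from $\omega_1\colon\Gamma_{\ak}^w\to\Gamma_1^w$ being an epimorphism (it is algebraic, hence surjective by Theorem~\ref{thm:properties of algebraic morphisms}(1)). Algebraicity of $\beta$ is inherited, as any free decomposition of $\beta$ would yield a free decomposition of the algebraic composite $\varphi_o=\beta\circ\omega_1$, and $\chi(\Sigma_o)=1-\pi(w)=\chimax(\eta_1^w)$ then places $\beta\in\crit(\eta_1^w)$.
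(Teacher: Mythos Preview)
Your overall strategy via Louder's theorem is the same as the paper's, but two steps contain genuine gaps.

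\textbf{Condition (3).} Your construction of an alternative codomain $\Sigma_o'$ neither verifies that the resulting morphism $\Gamma_{\ak}^w\to\Sigma_o'$ is \emph{algebraic} (so it says nothing about $\chiak(w)$), nor handles $\pi(w)=2$. The paper's argument is direct and uniform: a splitting $M=M'*\langle v_k\rangle$ with every $v_j$ ($j\ne k$) conjugate into $M'$ exhibits a \emph{free} morphism $\bigl\{(M')^{\F},\langle v_k\rangle^{\F}\bigr\}\to\{M^{\F}\}$ through which $\varphi_o$ factors. That is a non-isomorphism free tail for $\varphi_o$, contradicting its algebraicity outright---no appeal to $\chiak$ or case analysis on $\pi(w)$.

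\textbf{The malnormality step.} Your claim that malnormality of $\langle w\rangle$ in $\langle w\rangle*J'$ forces $v_j\in\langle w\rangle$ and $t_j\in\langle w\rangle$ is incorrect: malnormality only constrains conjugators between two elements \emph{both already lying in} $\langle w\rangle$, and you have no reason for $v_j\in M$ to lie there. The paper's contradiction avoids this. Your group $J$ in fact equals the paper's $J_{s+1}=\langle M,u_2,\ldots,u_s,w\rangle$ (each $u_i$ is expressible via $M$, $w$ and some $t_j$, and conversely). Since $M\alg J$ and $\varphi_o$ is algebraic, the composite $\Gamma_{\ak}^w\to\Sigma_o\to\Gamma_B(J^{\F})$ is algebraic; it factors through $\omega_1\colon\Gamma_{\ak}^w\to\Gamma_1^w$ because $w\in J$ and all $u_i\in J$. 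Hence $\langle w\rangle\lneq_{\mathrm{alg}}J$ (the extension is proper since $J\supseteq M$ is not cyclic), giving $\pi(w)\le\rk J$. This contradicts Louder's conclusion $\rk J<\rk M=\pi(w)$.

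As a stylistic comparison: the paper replaces your grouping by $M$-conjugacy class with a pullback reduction---it passes to the pullback of $\Sigma_o\to X_B$ and $\Gamma_1^w\to X_B$ and assumes without loss of generality that $(\Gamma_{\ak}^w,\varphi_o,\omega_1)$ \emph{is} this pullback with $\sum i\alpha_i\ge 2$. One then takes $v_i=u_i w^{k_i}u_i^{-1}$ and $n_i=k_i$ directly, and Louder's conditions (1), (2), (4) become immediate consequences of the pullback property, with no exceptional case $m_1=n_1=1$ to treat.
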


\begin{proof}
Because $w$ is not a power, there is a unique morphism $\omega_{1}\colon\Gamma_{\ak}^{w}\to\Gamma_{1}^{w}$,
and therefore at most one decomposition of $\varphi_{o}$ through
some $\beta\colon\Gamma_{1}^{w}\to\Sigma_{o}$ in $\crit\left(\eta_{1}^{w}\right)$.
It remains to show such a factorization exists.

We freely use notation from the proof of Proposition \ref{prop:joint-primitivity-rank}.
Recall the notion of pullback in the category $\mocc\left(\F\right)$
from page \pageref{pullback}. Let $\left(\Lambda,\sigma_{1},\sigma_{2}\right)$
be the pullback of $\psi_{1}\colon\Sigma_{o}\to X_{B}$ and of $\psi_{2}\colon\Gamma_{1}^{w}\to X_{B}$,
and let $\beta\colon\Gamma_{\ak}^{w}\to\Lambda$ be the unique morphism
making the following diagram commute.
\[
\xymatrix{\Gamma_{\ak}^{w}\ar@{.>}[rd]^{\beta}\ar@/^{2pc}/[rrd]^{\varphi_{0}}\ar@/_{2pc}/[rdd]_{\omega_{1}}\\
 & \Lambda\ar[r]^{\sigma_{1}}\ar[d]_{\sigma_{2}} & \Sigma_{o}\ar[d]^{\psi_{1}}\\
 & \Gamma_{1}^{w}\ar[r]_{\psi_{2}} & X_{B}
}
\]
As every pullback involving $\Gamma_{1}^{w}$, the multi core graph
$\Lambda$ is a union of cycles corresponding to powers of $w$. Our
goal is to show that the image of $\beta$ meets a sole component
of $\Lambda$ which is isomorphic to $\Gamma_{1}^{w}$, and so $\sigma_{1}$,
restricted to this component, is in $\crit\left(\eta_{1}^{w}\right)$.
Assume to the contrary that this is not the case, so the image of
$\beta$ consists of some $\Gamma_{\alpha_{1}',\ldots,\alpha_{k}'}$
with $\sum i\alpha'_{i}\ge2$. Without loss of generality, assume
the image is $\Gamma_{\ak}^{w}$ itself and $\sum i\alpha_{i}\ge2$,
and so $\sigma_{1}=\varphi_{o}$. In particular, we assume that $\left(\Gamma_{\ak}^{w},\varphi_{o},\omega_{1}\right)$
is itself the pullback.

Let $s=\sum\alpha_{i}$ and $k_{1},\ldots,k_{s}$ be the powers of
$w$ in the components of $\Gamma_{\ak}^{w}$. Let $M\le\F$ be a
f.g.~subgroup with $M^{\F}=\piol\left(\Sigma_{o}\right)$, such that
$\varphi_{o}$ is given by $w^{k_{1}},u_{2}w^{k_{2}}u_{2}^{-1},\ldots,u_{s}w^{k_{s}}u_{s}^{-1}\in M$,
for some $u_{2},\ldots,u_{s}\in\F$. In the notation of Louder's Theorem
\ref{thm:Louder} with $\ell=1$, consider the graph of groups $\Delta$
with two vertices: $H_{1}=M$ and $\left\langle w\right\rangle $,
and $m_{1}=s$ parallel edges with $\mathbb{Z}$ as edge-group between
them. We denote $v_{1,i}$ by $v_{i}$ and $n_{1,i}$ by $n_{i}$,
and set $v_{1}=w^{k_{1}},v_{2}=u_{2}w^{k_{2}}u_{2}^{-1},\ldots,v_{s}=u_{s}w^{k_{s}}u_{s}^{-1}$,
and $n_{i}=k_{i}$ for $i\in\left[s\right]$. We claim that these
choices satisfy the four assumptions in Theorem \ref{thm:Louder}.
Indeed:
\begin{itemize}
\item If some $v_{i}=t^{d}$ was a proper power in $M$ (so $d\ge2$), then
this equation is also valid in $\F$, so that $d\mid k_{i}$. But
then both $\omega_{1}$ and $\varphi_{o}$ factor through the morphism
$\Gamma_{\ak}^{w}\to\Delta$ which maps $\langle w^{k_{i}}\rangle$
to $\langle w^{k_{i}/d}\rangle$ (and leaves all other elements unchanged),
in contradiction to $\left(\Gamma_{\ak}^{w},\varphi_{o},\omega_{1}\right)$
being the pullback.
\item Assume that $\left\langle v_{i}\right\rangle $ and $\left\langle v_{j}\right\rangle $
are conjugate subgroups of $M$ for some $i\ne j$, say without loss
of generality that $v_{i}=mv_{j}m^{-1}$ (the other possibility being
$v_{i}=mv_{j}^{-1}m^{-1}$) with $m\in M$. Then both $\omega_{1}$
and $\varphi_{o}$ factor through the morphism $\Gamma_{\ak}^{w}\to\Delta$
which maps $\langle w^{k_{j}}\rangle$ isomorphically to $\langle w^{k_{j}}\rangle$,
and $\langle w^{k_{i}}\rangle$ to the same component, as 
\[
\left\langle w^{k_{i}}\right\rangle =\left\langle u_{i}^{-1}v_{i}u_{i}\right\rangle =\left\langle u_{i}^{-1}mv_{j}m^{-1}u_{i}\right\rangle =u_{i}^{-1}m\left\langle v_{j}\right\rangle m^{-1}u_{i}.
\]
This, again, contradicts our assumption that $\left(\Gamma_{\ak}^{w},\varphi_{o},\omega_{1}\right)$
is itself the pullback.
\item Assume there is a free splitting $M=M'*\left\langle v_{i}\right\rangle $
with all other $v_{j}$'s conjugate into $M'$. But then $\varphi_{o}$
factors through the free factor $\{M'^{\F},\langle v_{i}\rangle^{\F}\}\to\{M^{\F}\}$,
in contradiction to $\varphi_{o}$ being algebraic.
\item Finally, our assumption that $\sum i\alpha_{i}\ge2$ is equivalent
to $\sum n_{i}\ge2$.
\end{itemize}
Let $\pi_{1}\left(\Delta\right)$ be the fundamental group of this
graph of groups, namely,
\[
\pi_{1}\left(\Delta\right)=\left\langle M,w,t_{1},\ldots,t_{s}\,\middle|\,t_{i}v_{i}t_{i}^{-1}=w^{k_{i}},t_{1}=1\right\rangle .
\]
Consider, as in the proof of Proposition \ref{prop:joint-primitivity-rank},
the extension $J_{s+1}=\left\langle M,u_{2},\ldots,u_{s},w\right\rangle $
of $M$ inside $\F$. The same argument as in that proof applies to
show that $J_{s+1}$ is an algebraic extension of $M$. In particular,
the composition of $\varphi_{o}$ with this algebraic extension $M\alg J_{s+1}$
gives an algebraic morphism from $\Gamma_{\ak}^{w}$ to $\Gamma_{B}(J_{s+1}^{~~~\F})$,
which also factors through $\omega_{1}\colon\Gamma_{\ak}^{w}\to\Gamma_{1}^{w}$.
Thus $J_{s+1}$ is a proper algebraic extension of $w$, and hence
$\rk J_{s+1}\ge\pi\left(w\right)$.

On the other hand, $J_{s+1}$ is a free quotient of $\pi_{1}\left(\Delta\right)$
which extends an embedding of $M$: this is obtained by mapping $w$
to itself, $t_{1}\mapsto1$ and $t_{i}\mapsto u_{i}^{-1}$ for $i\ge2$.
By Louder's Theorem \ref{thm:Louder}, we have $\rk J_{s+1}<\rk M$,
and so $\rk M>\rk J_{s+1}\ge\pi\left(w\right)$. This contradicts
our assumption that $\rk M=\pi\left(w\right)$.
\end{proof}
\begin{prop}
\label{prop:critical morphisms coming from Crit(w)}Let $1\ne w\in\F$
be a non-power. For every $H\in\crit\left(w\right)$, there are\linebreak{}
$\left\langle \qak,\xi_{1}-1\right\rangle $ distinct critical morphisms
in $\critak\left(w\right)$ mapping a non-empty subset of the powers
of $w$ to $w$ and then to $H$, and the remaining powers of $w$
to cyclic subgroups of $\left\langle w\right\rangle $.
\end{prop}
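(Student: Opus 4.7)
The plan is to count the critical morphisms $\varphi\colon\Gamma_{\ak}^{w}\to\Sigma$ in $\critak(w)$ that correspond to a fixed $H\in\crit(w)$, and then check the count against $\langle\qak,\z_{1}-1\rangle$ by expanding the inner product via Remark \ref{rem:expectaion in uniform measure of qak}. By Proposition \ref{prop:critical-tuple}, such a $\varphi$ has a unique connected component $\Sigma_{o}$ of $\Sigma$ with $\chi(\Sigma_{o})=1-\pi(w)$ and $\piol(\Sigma_{o})=H^{\F}$, and the restriction $\varphi_{o}$ of $\varphi$ to the preimage of $\Sigma_{o}$ factors uniquely through the morphism $\omega_{1}\colon\Gamma_{\ak}^{w}\to\Gamma_{1}^{w}$ and a fixed $\beta\in\crit(\eta_{1}^{w})$ with image $\Sigma_{o}$. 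The remaining components of $\Sigma$ must be cycles inside $\langle w\rangle$ (as in Case~I of the proof of Theorem \ref{thm:character-first-bound}), so specifying $\varphi$ is equivalent to choosing which components of $\Gamma_{\ak}^{w}$ are sent to $\Sigma_{o}$ and how the remaining components are organized into cycles.

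Concretely, with $S$ the multiset of powers $\{k_{1},\ldots,k_{s}\}$ indexing the $s=\sum\alpha_{i}$ components of $\Gamma_{\ak}^{w}$, first I would choose a nonempty subset $A\subseteq S$ of components to be mapped through $\omega_{1}\circ\beta$ to $\Sigma_{o}$ (nonempty since algebraic morphisms are surjective by Theorem \ref{thm:properties of algebraic morphisms}\eqref{enu:alg is surjective}, and the morphism on these components is then uniquely determined because $w$ is a non-power). Next, for the remaining components indexed by $S\setminus A$, I would apply the formula from Remark \ref{rem:expectaion in uniform measure of qak}, which counts morphisms from a disjoint union of power cycles to a union of cycles: each partition $\mathcal{Q}$ of $S\setminus A$ contributes $\prod_{B\in\mathcal{Q}}\sum_{d\mid\gcd\{k_{i}:i\in B\}}d^{|B|-1}$. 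Summing yields the count
\[
\sum_{\emptyset\ne A\subseteq S}\sum_{\mathcal{Q}\in\mathrm{Partitions}(S\setminus A)}\prod_{B\in\mathcal{Q}}\left(\sum_{d\mid\gcd\{k_{i}:i\in B\}}d^{|B|-1}\right).
\]

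It then remains to show this expression equals $\langle\qak,\z_{1}-1\rangle=\uexp[\z_{1}\cdot\qak]-\uexp[\qak]$. For this I would apply Remark \ref{rem:expectaion in uniform measure of qak} to both terms: the second is the sum over partitions $\mathcal{Q}$ of $S$ of the product above (the $A=\emptyset$ case), while the first is the analogous sum over partitions of $S^{+}=S\cup\{\ast\}$, where $\ast$ corresponds to the extra $\z_{1}$ factor and carries value $k_{\ast}=1$. The key observation is that in any partition $\mathcal{P}'$ of $S^{+}$, the unique block $A'$ containing $\ast$ contributes $\sum_{d\mid 1}d^{|A'|-1}=1$; hence, writing $A=A'\setminus\{\ast\}$ and $\mathcal{Q}=\mathcal{P}'\setminus\{A'\}$ sets up a bijection between partitions of $S^{+}$ and pairs $(A,\mathcal{Q})$ with $A\subseteq S$ (possibly empty) and $\mathcal{Q}$ a partition of $S\setminus A$. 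Subtracting the $A=\emptyset$ contribution recovers exactly the count above.

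The main obstacle, I expect, is justifying step (1)—the uniqueness of the morphism on components sent to $\Sigma_{o}$—cleanly: this rests on Proposition \ref{prop:critical-tuple} together with the fact that $w$ being a non-power forces a unique morphism $\langle w^{k_{i}}\rangle\to\langle w\rangle$, and hence via $\beta$ a unique map to $\Sigma_{o}$. All remaining steps are combinatorial bookkeeping with the formula from Remark \ref{rem:expectaion in uniform measure of qak}.
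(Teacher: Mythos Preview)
Your argument is correct and is precisely the ``direct argument'' the paper alludes to but deliberately avoids (see the first sentence of the paper's proof). The paper instead takes an indirect route: it introduces the coefficient $\kappa(f)$ in the expansion $\mathbb{E}_{w}[f]=\langle f,1\rangle+\kappa(f)\cdot\frac{|\crit(w)|}{N^{\pi(w)-1}}+O(N^{-\pi(w)})$, specializes to the commutator word $w=[x,y]$ (for which $\pi=2$ and $|\crit|=1$), and uses Frobenius's identity $\mathbb{E}_{[x,y]}[\chi]=\frac{1}{\dim\chi}$ together with the decomposition of $f$ into stable irreducible characters to read off $\kappa(f)=\langle f,\z_{1}-1\rangle$. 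Your route is more elementary and self-contained: it avoids the Frobenius formula and the basis of stable irreducibles entirely, relying only on the partition formula from Remark~\ref{rem:expectaion in uniform measure of qak} and the bijection induced by the extra $\z_{1}$ factor forcing $\gcd=1$ on its block. The paper's route is slicker and more conceptual, yours more hands-on; both are short. One minor slip: you write $\omega_{1}\circ\beta$ where you mean $\beta\circ\omega_{1}$.
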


As we already know from Proposition \ref{prop:critical-tuple} that
every morphism in $\critak\left(w\right)$ corresponds to exactly
one $H\in\crit\left(w\right)$, Proposition \ref{prop:critical morphisms coming from Crit(w)}
yields that $\left|\critak\left(w\right)\right|=\left\langle \qak,\xi_{1}-1\right\rangle \cdot\left|\crit\left(w\right)\right|$.
\begin{proof}
One can give a direct argument, but we like the following one better.
In the notation of Section \ref{sec:Introduction}, let $\kappa\left(f\right)$
denote the constant corresponding to the class function $f\in\A$
in the equality 
\[
\mathbb{E}_{w}\left[f\right]=\left\langle f,1\right\rangle +\kappa\left(f\right)\cdot\frac{\left|\crit\left(w\right)\right|}{N^{\pi\left(w\right)-1}}+O\left(\frac{1}{N^{\pi(w)}}\right).
\]
We know such equality holds with some $\kappa\left(f\right)\in\mathbb{Q}$
because we already know by \eqref{eq:almost main thm} that such constants
exist for every $f=\qak$.

By a theorem of Frobenius \cite{frobenius1896gruppencharaktere},
already mentioned on page \pageref{simple commutator}, for any irreducible
character $\chi$ of any finite group $G$, $\mathbb{E}_{\left[x,y\right]}\left[\chi\right]=\frac{1}{\dim\chi}$.
By Proposition \ref{prop:irreps as linear basis of A}, every class
function $f\in\A$ is of the form 
\[
f=\sum_{\chi\in\symirr}\langle f,\chi\rangle\chi
\]
with finitely many non-vanishing terms. Reverse-engineering Theorem
\ref{thm:PP15} for $w=\left[x,y\right]$ gives $\pi\left(\left[x,y\right]\right)=2$
and $|\crit\left(\left[x,y\right]\right)|=1$. Hence,
\[
\sum_{\chi\in\symirr}\frac{\langle f,\chi\rangle}{\dim\chi}=\mathbb{E}_{\left[x,y\right]}\left[f\right]=\langle f,1\rangle+\frac{\kappa\left(f\right)}{N}+O\left(\frac{1}{N^{2}}\right).
\]
As explained on page \pageref{family of irreps has degree poly in N},
for every $\chi\in\symirr$, the dimension $\dim\chi$ is a polynomial
function of $N$, which has degree $\ge2$ if if $\chi\neq1,\z_{1}-1$.
Subtracting from the left hand side the summands corresponding to
$\chi=1$ and to $\chi=\xi_{1}-1$ leaves $O\left(N^{-2}\right)$.
Thus

\[
O\left(\frac{1}{N^{2}}\right)=\frac{\kappa\left(f\right)}{N}-\frac{\left\langle f,\z_{1}-1\right\rangle }{N-1}=\frac{\kappa\left(f\right)-\left\langle f,\z_{1}-1\right\rangle }{N}+O\left(\frac{1}{N^{2}}\right).
\]
Thus $\kappa\left(f\right)=\langle f,\z_{1}-1\rangle$.
\end{proof}
This completes the proof of our main result, Theorem \ref{thm:Word-Measure-Character-Bound}.

\subsubsection{Some remarks\label{subsec:Some-remarks}}
\begin{rem}
\label{rem:a1...ak -pure algebraic morphisms}Proposition \ref{prop:critical morphisms coming from Crit(w)}
teaches us that if $\sum i\alpha_{i}\ge2$, then every algebraic morphism
from $\Gamma_{\ak}^{w}$ with codomain of EC (Euler characteristic)
$1-\pi\left(w\right)$, actually originates from an algebraic morphism
from $\Gamma_{1}^{w}$. Similarly, there may be algebraic morphisms
from $\Gamma_{\ak}^{w}$ originating from algebraic morphisms from
a different $\Gamma_{\beta_{1},\ldots,\beta_{\ell}}^{w}$ if there
is a morphism from a subset of the connected components of $\Gamma_{\ak}^{w}$
to $\Gamma_{\beta_{1},\ldots,\beta_{\ell}}^{w}$. Note that such $\beta_{1},\ldots,\beta_{\ell}$
satisfy $\sum j\beta_{j}<\sum i\alpha_{i}$. It is natural to consider
algebraic morphisms from $\Gamma_{\ak}^{w}$ which \emph{cannot} be
constructed in this way. Namely, these are algebraic morphisms not
having any connected component of EC 0 in their codomain and which
\emph{do not} factor through a (non-identity) morphism from $\Gamma_{\ak}^{w}$
with codomain of EC $0$. For the sake of the current Section \ref{subsec:Some-remarks},
call such algebraic morphisms $\left(w;\ak\right)$-pure. 

Let $\eta\colon\Gamma_{\ak}^{w}\to\Sigma$ be an algebraic morphism.
Every connected component $\Sigma_{o}$ of $\Sigma$ with $\eta|_{\eta^{-1}\left(\Sigma\right)}$
not an isomorphism, has the property that every edge of $\Sigma_{o}$
is covered by at least two edges from $\Gamma_{\ak}^{w}$ --- this
is an immediate generalization of \cite[Lemma 4.1]{Puder2015}. This
shows that $\left(w;\ak\right)$-pure morphisms are \emph{reducible},
in the sense of \cite{louder2017stackings}. Thus, \cite[Theorem 1.2]{louder2017stackings}
says that every $\left(w;\ak\right)$-pure morphism has codomain of
EC at most $-\sum i\alpha_{i}$. In light of Conjecture \ref{conj:irreducible-characters}
and its connection to algebraic morphisms as in \eqref{eq:Phi of eta ak as sum of C-alg},
it is plausible to conjecture that a tight bound here should be $\left(1-\pi\left(w\right)\right)\cdot\sum i\alpha_{i}$
(so \cite[Theorem 1.2]{louder2017stackings} yields this conjecture
when $\pi\left(w\right)=2$). In fact, this conjecture about $\left(w;\ak\right)$-pure
morphisms is precisely \cite[Conjecture 1.5]{louder2021uniform}.
\end{rem}

\begin{rem}
Louder's Theorem \ref{thm:Louder} is strengthened in \cite[Theorem 1.11]{louder2018negative}
to the fact that under the same assumptions (except for $\sum n_{i,j}\ge2$
which can be discarded), the following inequality holds:
\end{rem}

\[
\rk J-1\le\sum_{i}\left(\rk H_{i}-1\right)-\left(\left(\sum_{i,j}n_{i,j}\right)-1\right).
\]
This implies that algebraic morphisms $\Gamma_{\ak}^{w}\to\Sigma$
such that $\Sigma$ is connected with $\chi\left(\Sigma\right)<0$,
and where $\Gamma_{\ak}^{w}$ is itself the pullback of $\Sigma\to X_{B}$
and $\Gamma_{1}^{w}\to X_{B}$, satisfy that $\rk\Sigma\ge\pi\left(w\right)-1+\sum_{i}i\alpha_{i}$.
This may be relevant to strengthening Theorem \ref{thm:Word-Measure-Character-Bound}
towards Conjecture \ref{conj:irreducible-characters}.

\section{Expansion of random Schreier graphs: the proof of Theorem \ref{thm:Schreier-Graphs-Near-Ramanujan}\label{sec:Expanders}}

Fix $s\in\mathbb{Z}_{\ge1}$ and assume throughout that $N\ge s$.
Also, fix a basis $B$ of $\F=\F_{r}$. Let $\sigma_{1},\ldots,\sigma_{r}\in S_{N}$
be independent, uniformly random permutations, and let $G=G\left(\sigma_{1},\ldots,\sigma_{r}\right)$
be the $d=2r$-regular Schreier graph depicting the action of $S_{N}$
on $\left(\left[N\right]\right)_{s}$, the set of $s$-tuples of distinct
elements in $\left[N\right]$, with respect to $\sigma_{1},\ldots,\sigma_{r}$.
This is a graph with $\left(N\right)_{s}=N\left(N-1\right)\cdots\left(N-s+1\right)$
vertices. In this section we prove Theorem \ref{thm:Schreier-Graphs-Near-Ramanujan},
stating that the random graph $G$ is a.a.s.~an expander with a spectral
bound as given in \eqref{eq:schreier graphs bound}. Namely, the largest
absolute value of a non-trivial eigenvalue of $A_{G}$, the adjacency
matrix of $G$, satisfies a.a.s.~$\mu\left(G\right)\le2\sqrt{d-1}\cdot\exp\left(\frac{2s^{2}}{e^{2}\left(d-1\right)}\right)$.

We follow the strategy laid out in \cite{Puder2015} and its addendum
\cite{friedman2020nonbacktracking}. The strategy is based on the
trace method together with the results from \cite{PP15}. As in \cite{friedman2020nonbacktracking},
instead of analyzing directly the regular adjacency operator, we analyze
the non-backtracking spectrum and only at the end of the argument
deduce a bound on $\mu\left(G\right)$. 

Denote by $\E$ the set of oriented edges of $G$, namely, each edge
of $G$ appears twice in this set, once with every possible orientation,
so $\left|\E\right|=\left(N\right)_{s}\cdot d$. For $e\in\E$, we
denote by $\overline{e}$ the same edge with the reverse orientation,
and by $h\left(e\right)$ and $t\left(e\right)$ the head and tail
of $e$, respectively. The \emph{Hashimoto} or \emph{non-backtracking}
matrix $B=B_{G}$ is a $\left|\E\right|\times\left|\E\right|$ $0$-$1$
matrix with rows and columns indexed by the elements of $\E$. The
$e,f$ entry is defined by
\[
B_{e,f}=\begin{cases}
1 & \mathrm{if}~t\left(e\right)=h\left(f\right)~\mathrm{and}~f\ne\overline{e},\\
0 & \mathrm{otherwise.}
\end{cases}
\]

The Ihara-Bass formula gives a dictionary between the spectrum of
$B_{G}$ and that of the adjacency matrix $A_{G}$. Every eigenvalue
$\lambda\in\spec\left(A_{G}\right)$ with $\left|\lambda\right|\ge2\sqrt{d-1}$
gives rise to two \emph{real} eigenvalues of $B_{G}$ in $\left[-\left(d-1\right),-1\right]\cup\left[1,d-1\right]$,
while every eigenvalue with $\left|\lambda\right|<2\sqrt{d-1}$ corresponds
to two non-real eigenvalues lying on the circle of radius $\sqrt{d-1}$
around $0$ in $\mathbb{C}$. In both cases, the two eigenvalues of
$B_{G}$ are given by $\frac{\lambda\pm\sqrt{\lambda^{2}-4\left(d-1\right)}}{2}\in\spec\left(B_{G}\right)$.
In particular, the trivial eigenvalue $d\in\spec\left(A_{G}\right)$
corresponds to $1,d-1\in\spec\left(B_{G}\right)$, which are considered
to be trivial eigenvalues of $B_{G}$. In addition, there are $\left(d-2\right)\cdot\left(N\right)_{s}$
additional $\pm1$ eigenvalues. For more details see \cite[Section 2]{friedman2020nonbacktracking}
and the references therein. 

Order the eigenvalues of $B_{G}$ by their absolute value to get
\begin{equation}
d-1=\left|\nu_{1}\right|\ge\left|\nu_{2}\right|\ge\ldots\ge\left|\nu_{2\left(N\right)_{s}}\right|=\left|\nu_{2\left(N\right)_{s}+1}\right|=\ldots=\left|\nu_{d\left(N\right)_{s}}\right|=1.\label{eq:evalues of B}
\end{equation}
We let $\nu\left(G\right)\defi\left|\nu_{2}\right|$ denote the largest
absolute value of a non-trivial eigenvalue. If $\left(N\right)_{s}\ge2$
then $\nu\left(G\right)\in\left[\sqrt{d-1},d-1\right]$. Notice that
if $\nu\left(G\right)>\sqrt{d-1}$, in which case $\nu_{2}$ is real,
then
\begin{equation}
\mu\left(G\right)=\nu\left(G\right)+\frac{d-1}{\nu\left(G\right)}.\label{eq:nu vs. mu of G}
\end{equation}

Our immediate goal is to bound $\nu\left(G\right)$ from above. The
trace of the $t$-th power $B_{G}^{~t}$ of the Hashimoto matrix $B_{G}$
is equal to the number of \emph{cyclically non--backtracking closed
walks} of length $t$ in $G$. As every edge in $G$ is directed and
corresponds to one of $\sigma_{1},\ldots,\sigma_{r}$, a cyclically
non-backtracking closed oriented path of length $t$ corresponds to
a cyclically reduced word of length $t$ in $\left\{ \sigma_{1}^{\pm1},\ldots,\sigma_{r}^{\pm1}\right\} $.
In other words, every such path corresponds to $w\left(\sigma_{1},\ldots,\sigma_{r}\right)$
where $w\in\F=\F_{r}$ is cyclically reduced and of length $t$. Denote
the set of cyclically reduced words of length $t$ in $\F_{r}$ by
$\cyr_{t}\left(\F_{r}\right)$. The number of closed paths corresponding
to a given such $w$ is equal to the number of $s$-tuples in $\left(\left[N\right]\right)_{s}$
fixed by $w\left(\sigma_{1},\ldots,\sigma_{r}\right)\in S_{N}$. Denote
by $\chi_{s}$ the (reducible) character corresponding to this permutation-representation
of $S_{N}$. Then the number of closed paths in $G$ corresponding
to $w$ is $\chi_{s}\left(w\left(\sigma_{1},\ldots,\sigma_{r}\right)\right)$.
We have:
\begin{equation}
\sum_{i=1}^{d\cdot\left(N\right)_{s}}\nu_{i}^{~t}=\mathrm{tr}\left(B_{G}^{~t}\right)=\sum_{w\in\cyr_{t}\left(\F_{r}\right)}\chi_{s}\left(w\left(\sigma_{1},\ldots,\sigma_{r}\right)\right).\label{eq:trace method, no expectations}
\end{equation}
There is an exact formula for the number of such words:
\begin{prop}
\cite[Theorem 1.1]{rivin2010growth}\label{prop:Mann} The number
of cyclically reduced words of length $t$ in $\F_{r}$ is 
\[
\left|\cyr_{t}\left(\F_{r}\right)\right|=\left(2r-1\right)^{t}+r+\left(-1\right)^{t}\left(r-1\right).
\]
\end{prop}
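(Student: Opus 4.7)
The plan is to recognize $|\cyr_t(\F_r)|$ as $\mathrm{tr}\bigl(B_{X_B}^{\,t}\bigr)$, where $B_{X_B}$ is the Hashimoto non-backtracking operator of the bouquet $X_B$, and then diagonalize $B_{X_B}$ by hand. Under the identification $\pi_1(X_B,o)\cong\F$, a cyclically reduced word of length $t$ corresponds to a cyclically non-backtracking closed walk of length $t$ based at the unique vertex of $X_B$: the reducedness condition $x_{i+1}\ne x_i^{-1}$ is precisely non-backtracking, and the cyclic reduction condition $x_t\ne x_1^{-1}$ is precisely cyclic non-backtracking. Thus the same trace-method identity \eqref{eq:trace method, no expectations} used immediately before the statement specializes, when evaluated on $X_B$ rather than on $G$, to $|\cyr_t(\F_r)|=\mathrm{tr}\bigl(B_{X_B}^{\,t}\bigr)$.

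Next I would identify $B_{X_B}$ concretely. Because $X_B$ has a single vertex, the condition $t(e)=h(f)$ in the definition of the non-backtracking matrix is automatic, so $(B_{X_B})_{e,f}=1$ iff $f\ne\bar e$. Writing $J$ for the $2r\times 2r$ all-ones matrix on the oriented edges and $P$ for the permutation matrix of the involution $e\leftrightarrow\bar e$, this gives the clean identity
\[
B_{X_B}=J-P.
\]
To diagonalize, I would split $\mathbb{R}^{2r}$ along the two eigenspaces of $P$, each of dimension $r$. On the $(-1)$-eigenspace of $P$, spanned by $\{e-\bar e\}_e$, every vector has zero coordinate-sum, so $J$ vanishes and $B_{X_B}$ acts as $-P=+I$, contributing eigenvalue $+1$ with multiplicity $r$. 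The $(+1)$-eigenspace of $P$, spanned by $\{e+\bar e\}_e$, contains the all-ones vector $\mathbf{1}$ satisfying $J\mathbf{1}=2r\,\mathbf{1}$ and $P\mathbf{1}=\mathbf{1}$, so $B_{X_B}\mathbf{1}=(2r-1)\mathbf{1}$; its $(r-1)$-dimensional zero-sum complement inside this eigenspace again lies in $\ker J$, producing eigenvalue $-1$ with multiplicity $r-1$.

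Summing the contributions gives
\[
\mathrm{tr}\bigl(B_{X_B}^{\,t}\bigr)=(2r-1)^t + r\cdot 1^t + (r-1)\cdot(-1)^t,
\]
which is exactly the claimed value of $|\cyr_t(\F_r)|$. There is no real obstacle here: the spectral decomposition of $J-P$ on $\mathbb{R}^{2r}$ is essentially forced once one observes that $J$ kills zero-sum vectors. The only point that deserves an explicit sentence of justification is the bijection between cyclically reduced words and cyclically non-backtracking closed walks in $X_B$, which is immediate from the labelling of the $2r$ oriented edges of $X_B$ by the elements of $B\cup B^{-1}$. Sanity checks confirm the formula: at $t=1$ it yields $2r$ (the generators and their inverses), and at $t=2$ it yields $2r(2r-1)$ (reduced length-$2$ words, which are automatically cyclically reduced).
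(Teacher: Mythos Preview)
Your proof is correct. The paper does not supply its own proof here: the proposition is quoted from \cite{rivin2010growth}, with a parenthetical pointer to \cite[Proposition~17.2]{mann2011groups} immediately afterward. Your argument --- recognizing $|\cyr_t(\F_r)|$ as $\mathrm{tr}\bigl(B_{X_B}^{\,t}\bigr)$ and diagonalizing $B_{X_B}=J-P$ by splitting $\mathbb{R}^{2r}$ along the $\pm 1$-eigenspaces of the edge-reversal involution $P$ --- is clean and self-contained, and it has the pleasant feature of reusing precisely the non-backtracking machinery already set up in the surrounding section for the analysis of the Schreier graph $G$.
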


(This is also Proposition 17.2 in \cite{mann2011groups}.) So if $t$
is even, $\left|\cyr_{t}\left(\F_{r}\right)\right|=\left(d-1\right)^{t}+\left(d-1\right)$.
In addition, if $t$ is even, for every real eigenvalue $\nu_{i}$,
the summand $\nu_{i}^{~t}$ is positive. Since every non-real eigenvalue
$\nu_{i}$ lies on $\left\{ z\in\mathbb{C}\colon\left|z\right|=\sqrt{d-1}\right\} $,
the summand $\nu_{i}^{~t}$ in this case has real part at least $-\sqrt{d-1}^{t}$.
Recall also that there is a trivial eigenvalue $\nu_{1}=d-1$ and
that (at least) $\left(N\right)_{s}\cdot\left(d-2\right)+1$ out of
the $\left(N\right)_{s}\cdot d$ eigenvalues are $\pm1$. Hence, for
$t$ even we have
\begin{eqnarray*}
\mathrm{tr}\left(B_{G}^{~t}\right) & = & \left(d-1\right)^{t}+\sum_{i=2}^{2\left(N\right)_{s}-1}\nu_{i}{}^{t}+\left(N\right)_{s}\cdot\left(d-2\right)+1.
\end{eqnarray*}
Taking real parts we have
\begin{eqnarray*}
\mathrm{Re}\left[\nu_{2}\left(\Gamma\right)^{t}\right] & = & \mathrm{tr}\left(B_{G}^{~t}\right)-\left(d-1\right)^{t}-\sum_{i=3}^{2\left(N\right)_{s}-1}\mathrm{Re}\left[\nu_{i}{}^{t}\right]-\left(N\right)_{s}\cdot\left(d-2\right)-1\\
 & \le & \left[\sum_{w\in\cyr_{t}\left(\F_{r}\right)}\chi_{s}\left(w\left(\sigma_{1},\ldots,\sigma_{r}\right)\right)\right]-\left(d-1\right)^{t}+2\left(N\right)_{s}\sqrt{d-1}^{t}-\left(N\right)_{s}\left(d-2\right)-1\\
 & \stackrel{\mathrm{Prop.~\ref{prop:Mann}}}{=} & \left[\sum_{w\in\cyr_{t}\left(\F_{k}\right)}\left[\chi_{s}\left(w\left(\sigma_{1},\ldots,\sigma_{r}\right)\right)-1\right]\right]+d-1+2\left(N\right)_{s}\sqrt{d-1}^{t}-\left(N\right)_{s}\left(d-2\right)-1\\
 & \stackrel{N\ge S}{\le} & \left[\sum_{w\in\cyr_{t}\left(\F_{k}\right)}\left[\chi_{s}\left(w\left(\sigma_{1},\ldots,\sigma_{r}\right)\right)-1\right]\right]+2\left(N\right)_{s}\sqrt{d-1}^{t}.
\end{eqnarray*}
Taking expectations we obtain
\begin{equation}
\mathbb{E}\left[\mathrm{Re}\left[\nu_{2}\left(\Gamma\right)^{t}\right]\right]\le\left[\sum_{w\in\cyr_{t}\left(\F_{k}\right)}\left(\mathbb{E}_{w}\left[\chi_{s}\right]-1\right)\right]+2\left(N\right)_{s}\sqrt{d-1}^{t}.\label{eq:step one}
\end{equation}

We can finally use our main results from the current paper. For $N\ge s$,
the action of $S_{N}$ on $\left(\left[N\right]\right)_{s}$ is transitive,
and so the expected number of fixed points is $\left\langle \chi_{s},1\right\rangle =1$.
Corollary \ref{cor:general sym function} therefore gives
\begin{equation}
\mathbb{E}_{w}\left[\chi_{s}\right]-1=\left\langle \chi_{s},\xi_{1}-1\right\rangle \cdot\frac{\left|\crit\left(w\right)\right|}{N^{\pi\left(w\right)-1}}+O\left(\frac{1}{N^{\pi\left(w\right)}}\right).\label{eq:first approx of E-1}
\end{equation}
To proceed, we estimate the number of words in $\cyr_{t}\left(\F_{r}\right)$
of a given primitivity rank, and then provide a bound of the big-$O$
term in \eqref{eq:first approx of E-1} in a uniform manner across
all words of a given length and a given primitivity rank. The first
of these tasks is given by \cite{Puder2015}:
\begin{thm}
\label{thm:primitivity-rank-bounds}\cite[Proposition 4.3 and Theorem 8.2]{Puder2015}
For every $r\geq2$ and $m\in\left\{ 1,\ldots,r\right\} $,

\begin{equation}
\limsup_{t\to\infty}\left[\sum_{w\in\cyr_{t}\left(\F_{r}\right)\colon\ \pi(w)=m}\left|\crit(w)\right|\right]^{1/t}=\max\left(\sqrt{2r-1},2m-1\right).\label{eq:counting words of given pi}
\end{equation}
\end{thm}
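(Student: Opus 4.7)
The idea is to reinterpret the sum geometrically as a count of algebraic morphisms and then apply spectral estimates on core graphs. A pair $(w,H)$ with $w\in\cyr_t(\F_r)$ and $H\in\crit(w)$ corresponds to a surjective algebraic morphism $\varphi\colon C_t\to\Gamma$, where $C_t=\Gamma_B(\langle w\rangle)$ is a $B$-labelled cycle of length $t$, $\Gamma=\Gamma_B(H)$ is a connected rank-$m$ core graph immersed in $X_B$, and $\varphi$ sends a generator of $\pi_1(C_t)$ to a non-primitive element of $\pi_1(\Gamma)$. The strategy is to enumerate such triples by first fixing the target $\Gamma$ and then counting the $\varphi$'s.

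For the upper bound, Theorem~\ref{thm:properties of algebraic morphisms}\eqref{enu:alg is surjective} ensures $\varphi$ is surjective, so $\#E(\Gamma)\le t$. A key combinatorial input is that a rank-$m$ core graph satisfies $\#E=\#V+m-1$ with every vertex of degree $\ge 2$, hence the maximum degree is at most $2m$; consequently the non-backtracking matrix $B_\Gamma$ has row sums bounded by $2m-1$, and its Perron--Frobenius eigenvalue $\rho(B_\Gamma)$ obeys $\rho(B_\Gamma)\le 2m-1$, with equality only for the bouquet of $m$ loops. The number of $\varphi$'s with target $\Gamma$ is therefore at most the number of cyclically reduced closed walks of length $t$ in $\Gamma$, which is $\mathrm{tr}(B_\Gamma^{\,t})=O(t\cdot(2m-1)^t)$. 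Summing over the at-most polynomial-in-$t$ isomorphism types of rank-$m$ core graphs with $\le t$ edges and their labellings in $X_B$ contributes $\mathrm{poly}(t)\cdot(2m-1)^t$ from morphisms onto ``thick'' graphs. Separately, when $\Gamma$ is a cycle (the case $m=1$), $w=u^k$ is a proper power and such pairs are counted by $\sum_{k\ge 2,\,k\mid t}|\cyr_{t/k}(\F_r)|\cdot(\tau(k)-1)=\Theta(\sqrt{2r-1}^{\,t})$, dominated by the $k=2$ term for even $t$. Taking the maximum of the two regimes yields the claimed upper bound.

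For the lower bound one constructs explicit families realising each rate. When $2m-1\ge\sqrt{2r-1}$, take $w$ to be a cyclically reduced length-$t$ word lying in the bouquet subgroup $H_0=\langle b_1,\ldots,b_m\rangle\le\F_r$ and non-primitive in $H_0$: there are $\Theta((2m-1)^t)$ such words, and a counting argument (using that words in subgroups of rank $<m$ contribute $o((2m-1)^t)$) shows a positive proportion satisfy $\pi(w)=m$ in all of $\F_r$. When $\sqrt{2r-1}>2m-1$, use substitution-type constructions: for $m=1$ take $w=u^2$ with $u\in\cyr_{t/2}(\F_r)$ a non-power, giving $\Theta(\sqrt{2r-1}^{\,t})$ pairs; for $m\ge 2$ take $w=w_0(u_1,\ldots,u_m)$ where $w_0\in\F_m$ is a fixed non-primitive word of length $2m$ with $\pi(w_0)=m$ in $\F_m$ (e.g.\ $[b_1,b_2]$ for $m=2$, or a surface word $b_1^{\,2}\cdots b_m^{\,2}$) and the $u_i\in\F_r$ are balanced cyclically reduced words of length $t/(2m)$, producing $\Theta((2r-1)^{m\cdot t/(2m)})=\Theta(\sqrt{2r-1}^{\,t})$ pairs.

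The hard part is the sharp upper bound in the mixed regime $1<m<r$ with $(2m-1)^2<2r-1$, where a crude enumeration of labelled rank-$m$ core graphs with $\le t$ edges produces $(2r-1)^t$ candidates, vastly exceeding the target $\sqrt{2r-1}^{\,t}$. Overcoming this requires genuinely exploiting algebraicity: surjectivity forces $C_t$ to traverse every edge of $\Gamma$, and the algebraic condition forbids $\Gamma$ from splitting off a free factor, so $C_t$ must fold extensively onto $\Gamma$. The resulting count is then governed by $\rho(B_\Gamma)\le 2m-1$ rather than the naive shape enumeration, and the trade-off between the number of edge-labellings and the spectral radius produces precisely the balance $\max(\sqrt{2r-1},2m-1)$.
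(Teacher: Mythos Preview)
This theorem is quoted from \cite{Puder2015} without proof in the present paper, so there is no in-paper argument to compare against; the question is whether your sketch stands on its own. The upper bound does not: your second paragraph asserts that summing over labelled rank-$m$ core graphs contributes $\mathrm{poly}(t)\cdot(2m-1)^t$, but the number of $B$-labelled rank-$m$ core graphs with at most $t/2$ edges is of order $(2r-1)^{t/2}$, not polynomial. You acknowledge this in your final paragraph, yet the offered resolution (``the trade-off\ldots produces precisely the balance'') is an assertion rather than an argument. The missing calculation is this: pass to the topological type $\Lambda$ of $\Gamma$ (suppress degree-$2$ vertices; finitely many types for each $m$), and for each topological edge $e$ record its subdivision length $\ell_e\ge 1$ in $\Gamma$ and its multiplicity $n_e$ in the walk; the fact that every edge of $\Gamma$ is covered at least twice gives $n_e\ge 2$. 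Now $t=\sum_e n_e\ell_e$, the labelling count is at most $C(2r-1)^{\sum\ell_e}$, the walk-pattern count on $\Lambda$ is at most $C(2m-1)^{\sum n_e}$, and the two elementary inequalities $t-\sum_e n_e=\sum_e n_e(\ell_e-1)\ge 2\bigl(\sum_e\ell_e-|E(\Lambda)|\bigr)$ and $t-2\sum_e\ell_e=\sum_e\ell_e(n_e-2)\ge \sum_e n_e-2|E(\Lambda)|$ yield $(2m-1)^{\sum n_e}(2r-1)^{\sum\ell_e}\le C_m\cdot\max\bigl(\sqrt{2r-1},\,2m-1\bigr)^t$ in the two regimes respectively. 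Your sketch has the ingredients (algebraicity forces the double cover, spectral control on $\Lambda$) but not this step, and without it the upper bound in the regime $(2m-1)^2<2r-1$ collapses to the trivial $(2r-1)^t$.

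The lower-bound sketches also need work. The free-factor construction requires the (true but unstated) fact that $\pi_{\F_r}(w)=\pi_{H_0}(w)$ whenever $w\in H_0\ff\F_r$. More seriously, for the substitution $w=w_0(u_1,\ldots,u_m)$ with $m\ge 3$ you must show that $\pi(w)=m$ exactly (not smaller) for a positive proportion of tuples $(u_i)$, and your sketch gives no mechanism for ruling out smaller witnesses; in \cite{Puder2015} this direction is Theorem~8.2 there and is handled by a substantially different argument.
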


\begin{rem}
\label{rem:counting does not cover 1 and primitive}These counting
results in \cite{Puder2015} are stated for reduced, but not necessarily
cyclically reduced, words. However, the proof also applies to the
slightly smaller set of cyclically reduced words, and, besides, we
only use here the inequality $\le$ which obviously follows from the
original statements in \cite{Puder2015}. We also remark that for
$m\in\left\{ 2,\ldots,r\right\} $, the equality \eqref{eq:counting words of given pi}
holds with ordinary limit instead of limsup, and for $m=1$, it holds
with ordinary limit on even values of $t$.

Theorem \ref{thm:primitivity-rank-bounds} does not cover the cases
$\pi\left(w\right)=0$ and $\pi\left(w\right)=\infty$. But $\pi\left(w\right)=0$
if and only if $w=1$ so this is irrelevant in $\cyr_{t}\left(\F_{r}\right)$.
The other extreme, $\pi\left(w\right)=\infty$, holds if and only
if $w$ is primitive, in which case $\mathbb{E}_{w}\left[\chi_{s}\right]=\mathbb{E}_{\mathrm{unif}}\left[\chi_{s}\right]=1$,
so these words contribute nothing to the summation \eqref{eq:step one}.
(The exponential growth rate of primitive words is $2r-3$ -- see
\cite{PuderWu2014}.)
\end{rem}

The second task, of a uniform bound on the big-$O$ term in Corollary
\ref{cor:general sym function} and in \eqref{eq:first approx of E-1},
is given by the following proposition.
\begin{prop}
\label{prop:char-uniform-bound}Let $f\in\A$ be a class function.
Then there are constants $A,D\geq1$ such that for every word of length
$t$, and any $N>(At)^{2}$,
\begin{equation}
\left|\mathbb{E}_{w}\left[f\right]-\left\langle f,1\right\rangle -\left\langle f,\xi_{1}-1\right\rangle \cdot\frac{\left|\crit(w)\right|}{N^{\pi(w)-1}}\right|\leq\frac{\left(A\cdot t\right)^{2\pi(w)+2D}}{N^{\pi(w)-1}\cdot\left(N-\left(At\right)^{2}\right)}.\label{eq:bound on big O}
\end{equation}
\end{prop}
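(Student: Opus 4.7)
The plan is to reduce to a single monomial and then unfold the Möbius machinery to obtain explicit constants. First, since the monomials $\qak$ (over all choices of $k$ and $\ak$) form a linear basis of $\A$, I would write $f$ as a finite linear combination of monomials and invoke the triangle inequality, reducing to a bound of the stated shape for each monomial with monomial-specific constants $A_{i},D_{i}$; taking $A=\max_{i}A_{i}$ and choosing $D$ slightly larger than $\max_{i}D_{i}$ absorbs the coefficients into the polynomial factor $(At)^{2D}$. Fix such a monomial $\qak$ and set $K=\alpha_{1}+2\alpha_{2}+\ldots+k\alpha_{k}$. Since $\mathbb{E}_{w}[\qak]$ is invariant under cyclic conjugation, I assume $w$ is cyclically reduced with $|w|=t$; then $\Gamma\defi\Gamma_{\ak}^{w}$ has exactly $Kt$ vertices, $Kt$ edges, and $\chi(\Gamma)=0$.

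Next, by the proof of Theorem~\ref{thm:character-first-bound} together with Propositions~\ref{prop:joint-primitivity-rank} and~\ref{prop:critical morphisms coming from Crit(w)}, the quantity inside the absolute value on the left-hand side of \eqref{eq:bound on big O} is exactly the signed sum $\mathrm{Err}(N)\defi\sum C_{\beta_{2}}^{\mathrm{alg}}(N)$ taken over those triples $(\beta_{1},\beta_{2},\beta_{3})\in\algdecompt(\varphi)$ falling outside of Cases~I and~II in that proof; here $\varphi$ is the algebraic part of the algebraic-free decomposition of $\eta_{\ak}^{w}$, so that $\Phi_{\eta_{\ak}^{w}}=\Phi_{\varphi}$ by Proposition~\ref{prop:properties of free morphisms}\eqref{enu:Phi can ignore free extensions}. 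Concretely, every non-leading triple satisfies either $\chi(\mathrm{Im}(\beta_{2}))\le-\pi(w)$, or $\chi(\mathrm{Im}(\beta_{2}))=1-\pi(w)$ with $\beta_{2}$ not an isomorphism.

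Then I would make Corollary~\ref{cor:C-algebraic-upper-bound} quantitative. Unfolding $C^{\mathrm{alg}}$ via Proposition~\ref{prop:C-algebraic-basis-formula} and each resulting $C^{B}$ through the Laurent expansion \eqref{eq:Laurent for L}, and using the crude bound $[X]_{j}^{\eta}\le|X|^{2j}$ (since a norm-$j$ permutation is a product of at most $j$ transpositions, giving at most $\binom{|X|}{2}^{j}\le|X|^{2j}$ possibilities), together with $|V|,|E|\le Kt$ for every relevant multi core graph, a geometric summation yields, for $N\ge 2(Kt)^{2}$,
\[
\bigl|C_{\beta_{2}}^{\mathrm{alg}}(N)\bigr| \;\le\; c\cdot\frac{(Kt)^{2\|\beta_{2}\|}\cdot N^{\chi(\mathrm{Im}(\beta_{2}))-\|\beta_{2}\|}}{1-2(Kt)^{2}/N}
\]
with $c=c(K)$ an absolute constant.

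Finally, I would sum over the non-leading triples. By the merging-step interpretation of $\|\cdot\|_{B}$ (Definition~\ref{def:norm of partition, surjective morphism}), the number of surjective morphisms from $\Gamma$ of $B$-norm at most $j$ is at most $(Kt)^{2j}$; by Lemma~\ref{lem:distance-lower-bound} and $\chi(\Gamma)=0$, any algebraic morphism from $\Gamma$ with image of Euler characteristic $-m$ has norm at least $m$. Combining the per-triple bound with these counts produces a geometric series in $(Kt)^{2}/N$ whose leading non-leading term contributes at combined norm $\pi(w)$, giving
\[
|\mathrm{Err}(N)| \;\le\; \frac{c'\cdot(Kt)^{2\pi(w)+O(1)}}{N^{\pi(w)-1}\bigl(N-(Kt)^{2}\bigr)},
\]
matching \eqref{eq:bound on big O} with $A=\max(cK,1)$ and $D$ absorbing the $O(1)$. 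The hardest part will be this last counting step: ensuring that summing over all three slots of the triple and all allowed Euler characteristics yields only a polynomial of degree linear in $\pi(w)$ in $Kt$, rather than factorial growth. The key observation is that algebraicity forces large norms, so each extra factor $(Kt)^{2}$ from counting is paid for by a factor $1/N$ in the per-term bound, keeping the geometric series convergent and its total of polynomial size.
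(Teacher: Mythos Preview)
Your approach should work but takes a more circuitous route than the paper. You propose to identify the error as the sum of $C_{\beta_{2}}^{\mathrm{alg}}$ over the non-leading triples in $\algdecompt(\varphi)$, make Corollary~\ref{cor:C-algebraic-upper-bound} quantitative by unfolding $C^{\mathrm{alg}}\to C^{B}\to L^{B}$ via Proposition~\ref{prop:C-algebraic-basis-formula} and \eqref{eq:Laurent for L}, and then control the triple-count. The paper never returns to $C^{\mathrm{alg}}$ or $\algdecompt$ at this stage. Instead it uses Theorem~\ref{thm:Word-Measure-Character-Bound} as a black box to identify the first $\pi(w)$ Laurent coefficients of $\Phi_{\eta_{\ak}^{w}}$, and bounds \emph{every} coefficient $|a_{p}|$ of the full Laurent series by $T^{2p+2D}$ (with $T=t\sum i\alpha_{i}$, $D=\sum\alpha_{i}$) directly from $\Phi=\sum_{(\eta_{1},\eta_{2})\in\decomp(\eta_{\ak}^{w})}L_{\eta_{2}}^{B}$, the elementary estimate that the coefficient of $N^{\chi-p}$ in each $L^{B}$ is at most $T^{2p}$ (Lemma~\ref{lem:L-uniform-bound}), and the count $\#\{(\eta_{1},\eta_{2}):\chi(\mathrm{Im}\,\eta_{1})=c\}\le\binom{T}{2}^{D-c}$ coming from Proposition~\ref{prop:upper bound on B-norm}. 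The tail $\sum_{p\ge\pi(w)}|a_{p}|/N^{p}$ then sums as a single geometric series. This completely sidesteps the step you flag as ``the hardest part'': there are no triples to enumerate, no $C^{B}$ to isolate from the Laurent expansion, and no need to track norms across several algebraic morphisms simultaneously. Your route buys nothing extra here; the paper's shortcut is simply to recognise that, once the main theorem is established, only a crude uniform bound on all Laurent coefficients of $\Phi$ is needed, and the pair $(\decomp,L^{B})$ already provides one.
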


We first prove a version of Proposition \ref{prop:char-uniform-bound}
for the monomials $\qak$ -- see Lemma \ref{lem:uniform bound for qak}.
Using the notation of Section \ref{sec:Proof-of-Theorem}, recall
that $\mathbb{E}_{w}\left[\qak\right]=\Phi_{\eta_{\ak}^{w}}$, and
that\linebreak{}
$\Phi_{\eta}=\sum_{\left(\eta_{1},\eta_{2}\right)\in\decomp\left(\eta\right)}L_{\eta_{2}}^{B}$.
By Proposition \ref{prop:rational-function-for-L} and Corollary \ref{cor:Phi, L, R, C all rational},
the functions $\Phi_{\eta}$ and $L_{\eta}^{B}$ are equal to a rational
expression in $N$ (with rational coefficients) for every large enough
$N$. In particular, these functions are given by power series in
$\frac{1}{N}$. We shall use the following lemma.
\begin{lem}
\label{lem:L-uniform-bound}Let $\eta:\Gamma\to X_{B}$ be a $B$-surjective
morphism of multi core graphs. Assume that $\left|V\left(\Gamma\right)\right|\le T$
and that $\left|E\left(\Gamma\right)\right|\le T$. Then the coefficient
of $N^{\chi(\Gamma)-p}$ in the power series expansion of $L_{\eta}^{B}(N)$
is bounded in absolute value by $T^{2p}$.
\end{lem}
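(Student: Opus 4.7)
The plan is to read the coefficient of $N^{\chi(\Gamma)-p}$ directly off the Laurent expansion from Proposition~\ref{prop:L-cover}. Since each summand there contributes to a single power of $N$, collecting the terms with $j_0+j_1+\cdots+j_t=p$ gives
\[
\bigl[N^{\chi(\Gamma)-p}\bigr]\,L_{\eta}^{B}(N)\;=\;(-1)^{p}\sum_{t\ge 0}(-1)^{t}\sum_{\substack{j_{0}\ge 0,\;j_{1},\ldots,j_{t}\ge 1\\ j_{0}+j_{1}+\cdots+j_{t}=p}}[V(\Gamma)]_{j_{0}}^{\eta}\,\prod_{i=1}^{t}[E(\Gamma)]_{j_{i}}^{\eta}.
\]
I would then apply the triangle inequality, bound each factor $[\,\cdot\,]_{j}^{\eta}$ uniformly, and count the number of admissible tuples $(t;j_{0},\ldots,j_{t})$.

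For the first estimate, I would show that for any finite set $X$ with $|X|\le T$ and any map $\varphi\colon X\to Y$, one has $[X]_{j}^{\varphi}\le (T^{2}/2)^{j}$. The key observation is that any $\varphi$-preserving permutation $\sigma$ decomposes uniquely as a product of permutations $\sigma_{\alpha}$ on the $\varphi$-fibers, and $\|\sigma\|=\sum_{\alpha}\|\sigma_{\alpha}\|$; hence $\sigma$ can be written as a product of $j$ transpositions, each supported inside a single $\varphi$-fiber. Counting all sequences of $j$ such transpositions yields $[X]_{j}^{\varphi}\le\bigl(\sum_{\alpha}\binom{n_{\alpha}}{2}\bigr)^{j}$, where the $n_{\alpha}$'s are the fiber sizes. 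Using $\sum_{\alpha}n_{\alpha}^{2}\le\bigl(\sum_{\alpha}n_{\alpha}\bigr)^{2}=|X|^{2}\le T^{2}$ gives the desired $(T^{2}/2)^{j}$ bound, so every summand in the displayed expression has absolute value at most $(T^{2}/2)^{j_{0}+j_{1}+\cdots+j_{t}}=(T^{2}/2)^{p}$.

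For the second estimate, the admissible tuples $(t;j_{0},j_{1},\ldots,j_{t})$ are in bijection, via the substitution $j_{0}':=j_{0}+1$, with compositions of $p+1$ into any positive number of positive parts; there are exactly $2^{p}$ such compositions. Combining the two estimates yields
\[
\Bigl|\bigl[N^{\chi(\Gamma)-p}\bigr]\,L_{\eta}^{B}(N)\Bigr|\;\le\;2^{p}\cdot(T^{2}/2)^{p}\;=\;T^{2p},
\]
as claimed. The only delicate point is the bound $[X]_{j}^{\varphi}\le (T^{2}/2)^{j}$: the factor of $1/2$ coming from $\binom{n_{\alpha}}{2}$ is precisely what cancels the $2^{p}$ produced by the tuple count, so a cruder estimate such as $[X]_{j}^{\varphi}\le T^{2j}$ would only yield $(2T^{2})^{p}$ rather than the stated $T^{2p}$.
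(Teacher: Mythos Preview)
Your proof is correct and follows essentially the same approach as the paper: both start from the Laurent expansion in Proposition~\ref{prop:L-cover}, bound each factor $[X]_{j}^{\eta}\le (T^{2}/2)^{j}$ via fiber-transpositions, and then control the sum over index tuples. The only organizational difference is that the paper sets up a recursion in $p$ (peeling off the last factor $[E(\Gamma)]_{j_{t}}^{\eta}$ and inducting), whereas you bypass the induction by directly counting the admissible tuples as compositions of $p+1$, which is slightly cleaner and makes transparent exactly why the factor $1/2^{j}$ is needed to cancel the $2^{p}$ tuples.
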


\begin{proof}
By Proposition \ref{prop:L-cover}, 
\[
L_{\eta}^{B}\left(N\right)=\sum_{t\geq0}\sum_{j_{0}\geq0;j_{1},...,j_{t}\geq1}(-1)^{t+\sum_{i=0}^{t}j_{i}}\left[V(\Gamma)\right]_{j_{0}}^{\eta}\cdot\left[E(\Gamma)\right]_{j_{1}}^{\eta}\cdot...\cdot\left[E(\Gamma)\right]_{j_{t}}^{\eta}N^{\chi(\Gamma)-\sum_{i=0}^{t}j_{i}}.
\]
The coefficient of $N^{\chi(\Gamma)-p}$ is thus

\begin{equation}
b_{p}\defi\sum_{t\geq0}\sum_{j_{0}\geq0;j_{1},...,j_{t}\geq1\colon\sum j_{i}=p}\left(-1\right)^{t+\sum j_{i}}\left[V(\Gamma)\right]_{j_{0}}^{\eta}\cdot\left[E(\Gamma)\right]_{j_{1}}^{\eta}\cdot...\cdot\left[E(\Gamma)\right]_{j_{t}}^{\eta}.\label{eq:inside proof of uniform bound for L}
\end{equation}
We proceed by induction on $p$ and ignore the signs in \eqref{eq:inside proof of uniform bound for L}.
For $p=0$, $b_{0}=1$. Note that 
\[
\left[V(\Gamma)\right]_{j}^{\eta},\left[E(\Gamma)\right]_{j}^{\eta}\leq\binom{T}{2}^{^{j}}\leq\frac{T^{2j}}{2^{j}},
\]
since any permutation counted by these numbers is the product of $j$
cycles of length $2$, and the number of vertices and edges of the
graph is bounded by $T$. Therefore, the $t=0$ term of \eqref{eq:inside proof of uniform bound for L}
is bounded by $\frac{T^{2p}}{2^{p}}$. For $t\ge1$ we put aside the
term $\left[E(\Gamma)\right]_{j_{t}}^{\eta}$ to obtain 
\begin{eqnarray*}
b_{p} & = & \frac{T^{2p}}{2^{p}}+\sum_{j=1}^{p}\left[E(\Gamma)\right]_{j}^{\eta}\cdot\sum_{t\geq1}\sum_{j_{0}\geq0;j_{1},...,j_{t-1}\geq1\colon\sum j_{i}=p-j}\left(-1\right)^{t+j+\sum j_{i}}\left[V(\Gamma)\right]_{j_{0}}^{\eta}\cdot\left[E(\Gamma)\right]_{j_{1}}^{\eta}\cdot...\cdot\left[E(\Gamma)\right]_{j_{t-1}}^{\eta}\\
 & = & \frac{T^{2p}}{2^{p}}+\sum_{j=1}^{p}\left[E(\Gamma)\right]_{j}^{\eta}\cdot\left(-1\right)^{j}b_{p-j}.
\end{eqnarray*}
By induction we get

\[
|b_{p}|\leq\frac{T^{2p}}{2^{p}}+\sum_{j=1}^{p}\left[E(\Gamma)\right]_{j}^{\eta}\cdot\left|b_{p-j}\right|\leq\frac{T^{2p}}{2^{p}}+\sum_{j=1}^{p}\frac{T^{2j}}{2^{j}}\cdot T^{2p-2j}=T^{2p}\cdot\left(\frac{1}{2^{p}}+\sum_{j=1}^{p}\frac{1}{2^{j}}\right)=T^{2p}.
\]
\end{proof}
\begin{lem}
\label{lem:uniform bound for qak}Let $w\in\cyr_{t}\left(\F_{r}\right)$
and fix $k\ge1$ and $\ak\ge0$, not all zeros. Let $T=t\cdot\sum i\alpha_{i}$
denote the number of edges and the number of vertices in the multi
core graph $\Gamma_{\ak}^{w}$ and let $D=\sum\alpha_{i}$ denote
the number of connected components in this graph. Then, for all $N>T^{2}$,
\[
\left|\mathbb{E}_{w}\left[\qak\right]-\left\langle \qak,1\right\rangle -\left\langle \qak,\xi_{1}-1\right\rangle \cdot\frac{\left|\crit(w)\right|}{N^{\pi(w)-1}}\right|\leq\frac{T^{2\pi(w)+2D}}{N^{\pi(w)-1}\cdot\left(N-T^{2}\right)}.
\]
\end{lem}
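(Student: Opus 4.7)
My plan is to combine the identity $\Phi_\eta = \sum_{(\eta_1,\eta_2)\in\decomp(\eta)} L_{\eta_2}^B$ from Definition \ref{def:B-surjective Mob inversions} with the coefficient bound of Lemma \ref{lem:L-uniform-bound}. First, I would reduce to the case that $\eta_{\ak}^w$ is $B$-surjective: generators not appearing in $w$ correspond to free extensions and, by Proposition \ref{prop:properties of free morphisms}\eqref{enu:Phi can ignore free extensions}, do not affect $\Phi_{\eta_{\ak}^w}$. With this reduction, every intermediate multi core graph $\Sigma$ in a decomposition of $\eta_{\ak}^w$ is a quotient of $\Gamma_{\ak}^w$ via vertex identifications and Stallings foldings, so $|V(\Sigma)|,|E(\Sigma)|\le T$, and Lemma \ref{lem:L-uniform-bound} gives $L_{\eta_2}^B(N)=\sum_{p\ge 0}b_p^{(\eta_2)}N^{\chi(\Sigma)-p}$ with $b_0^{(\eta_2)}=1$ and $|b_p^{(\eta_2)}|\le T^{2p}$.

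Next, I would reproduce the main-term identification from the proof of Theorem \ref{thm:character-first-bound}, but phrased in terms of $\decomp$ rather than $\algdecompt$. The $N^0$ contributions (coming necessarily from decompositions with $\chi(\Sigma)=0$, in which $\Sigma$ is a union of cycles of powers of $w$) sum to $\uexp[\qak]=\langle\qak,1\rangle$; this follows from Lemma \ref{lem:Phi-rank-one} applied inside $\langle w\rangle$, as in Remark \ref{rem:expectaion in uniform measure of qak}. After the subleading cancellations, the $N^{1-\pi(w)}$ contributions reduce to $|\critak(w)|\cdot N^{1-\pi(w)}$, which by Proposition \ref{prop:critical morphisms coming from Crit(w)} equals $\langle\qak,\xi_{1}-1\rangle\cdot|\crit(w)|\cdot N^{1-\pi(w)}$.

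All remaining contributions -- the $p\ge 1$ tails of $\chi(\Sigma)=0$ terms, the high-$p$ tails of terms with intermediate $\chi(\Sigma)<0$, and the leading terms of decompositions with $\chi(\Sigma)<1-\pi(w)$ -- are controlled by Lemma \ref{lem:L-uniform-bound} and a geometric sum. Concretely, for $N>T^2$,
\[
\Bigl|\sum_{p>p_{0}}b_p^{(\eta_2)}N^{\chi(\Sigma)-p}\Bigr|\le\sum_{p>p_{0}}T^{2p}N^{\chi(\Sigma)-p}=\frac{T^{2(p_{0}+1)}N^{\chi(\Sigma)-p_{0}-1}}{1-T^2/N}.
\]
Choosing $p_{0}$ so that $\chi(\Sigma)-p_{0}-1=-\pi(w)$, and multiplying by the total number of relevant decompositions -- bounded by a quantity of order $T^{2D}$ coming from the number of admissible coarsenings of the $D$-component graph $\Gamma_{\ak}^w$ -- yields the claimed bound $T^{2\pi(w)+2D}/(N^{\pi(w)-1}(N-T^2))$.

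The hardest part will be the main-term matching, which in the proof of Theorem \ref{thm:character-first-bound} is handled cleanly by the algebraic \mobius\ inversion $C^{\mathrm{alg}}$ (Definition \ref{def:alg-decompositions}) via the cancellations built into Proposition \ref{prop:C-algebraic-basis-formula} and Corollary \ref{cor:C-algebraic-upper-bound}. Transferring those cancellations to the $\decomp$-level expansion with Lemma \ref{lem:L-uniform-bound}'s explicit coefficient bounds requires careful bookkeeping: the subleading tails of $\chi(\Sigma)=0$ terms and the leading terms of decompositions with intermediate $\chi(\Sigma)$ must combine to produce exactly $|\critak(w)|\cdot N^{1-\pi(w)}$ at order $N^{1-\pi(w)}$, leaving only the anticipated error.
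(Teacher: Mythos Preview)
Your plan has the right ingredients (the $\decomp$ expansion of $\Phi$, Lemma~\ref{lem:L-uniform-bound}, and a geometric tail sum), but you are making the proof much harder than it needs to be, and the hardest step you flag is in fact never carried out.

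The paper does \emph{not} attempt to identify the coefficients of $N^0,\ldots,N^{1-\pi(w)}$ from inside the $\decomp$/$L^B$ expansion. Instead, it writes $\mathbb{E}_w[\qak]=\sum_{p\ge 0}a_pN^{-p}$ and simply \emph{cites} Theorem~\ref{thm:Word-Measure-Character-Bound} (already proven in Section~\ref{sec:Proof-of-Theorem}) to read off $a_0=\langle\qak,1\rangle$, $a_1=\cdots=a_{\pi(w)-2}=0$, and $a_{\pi(w)-1}=\langle\qak,\xi_1-1\rangle\cdot|\crit(w)|$. After that, the only work left is a \emph{uniform} bound $|a_p|\le T^{2p+2D}$, valid for all $p$, obtained exactly as you suggest: in $\Phi_{\eta_{\ak}^w}=\sum_{(\eta_1,\eta_2)\in\decomp(\eta_{\ak}^w)}L^B_{\eta_2}$, the coefficient of $N^{-p}$ in each $L^B_{\eta_2}$ is at most $T^{2p+2\chi(\mathrm{Im}\,\eta_1)}$ by Lemma~\ref{lem:L-uniform-bound}, and the number of decompositions with $\chi(\mathrm{Im}\,\eta_1)=c$ is at most $\binom{T}{2}^{D-c}$ by Proposition~\ref{prop:upper bound on B-norm}. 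Summing over $c$ gives $|a_p|\le T^{2p+2D}$, and the geometric series $\sum_{p\ge\pi(w)}T^{2p+2D}N^{-p}$ yields the stated bound.

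Your proposed route --- identifying the $N^{1-\pi(w)}$ coefficient directly from the $L^B$ expansion --- runs into a real obstacle. Decompositions with $\chi(\Sigma)=0$ contribute to \emph{every} order $N^{-p}$, and decompositions with $\chi(\Sigma)=1-\pi(w)$ need not have $\eta_1$ algebraic. The fact that the net $N^{1-\pi(w)}$ contribution equals $|\critak(w)|$ is precisely the content of the algebraic M\"obius machinery (Proposition~\ref{prop:R-basis-independence}, Corollary~\ref{cor:C-algebraic-upper-bound}) together with Section~\ref{sec:Proof-of-Theorem}; there is no shortcut to these cancellations at the $\decomp$/$L^B$ level. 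Since all of that is already packaged in Theorem~\ref{thm:Word-Measure-Character-Bound}, just cite it --- this is what the paper does, and it collapses your ``hardest part'' to one sentence.
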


\begin{proof}
By Proposition \ref{prop:rational-function-for-L} and Corollary \ref{cor:Phi, L, R, C all rational},
$\mathbb{E}_{w}\left[\qak\right]$ is equal to a rational expression
in $N$ with rational coefficients for large enough $N$ (in fact,
$N\ge T$ suffices), and by Theorem \ref{thm:Word-Measure-Character-Bound}
its degree is zero. In particular, it is equal to a power series in
$\frac{1}{N}$. Denote
\[
\mathbb{E}_{w}\left[\qak\right]=\sum_{p=0}^{\infty}\frac{a_{p}}{N^{p}}.
\]
By Theorem \ref{thm:Word-Measure-Character-Bound}, $a_{0}=\left\langle \qak,1\right\rangle $,
$a_{1}=\ldots=a_{\pi\left(w\right)-2}=0$, and $a_{\pi\left(w\right)-1}=\left\langle \qak,\xi_{1}-1\right\rangle \cdot\left|\crit\left(w\right)\right|$.
In this notation, our goal is to bound $\sum_{p=\pi\left(w\right)}^{\infty}\frac{a_{p}}{N^{p}}$.
We claim that for every $p\ge\pi\left(w\right)$ we have $\left|a_{p}\right|\le T^{2p+2D}$.
Assuming this inequality,
\begin{eqnarray*}
\sum_{p=\pi\left(w\right)}^{\infty}\frac{a_{p}}{N^{p}} & \le & \sum_{p=\pi\left(w\right)}^{\infty}\frac{T^{2p+2D}}{N^{p}}=\frac{T^{2\pi(w)+2D}}{N^{\pi(w)}}\cdot\frac{1}{1-\frac{T^{2}}{N}}=\frac{T^{2\pi(w)+2D}}{N^{\pi(w)-1}\cdot\left(N-T^{2}\right)},
\end{eqnarray*}
as required. It remains to prove that $\left|a_{p}\right|\le T^{2p+2D}$.
Assume without loss of generality that $\eta_{\ak}^{w}\colon\Gamma_{\ak}^{w}\to X_{B}$
is onto -- otherwise, work in a free factor of $\F_{r}$ generated
by a suitable subset of $B$. As 
\begin{equation}
\mathbb{E}_{w}\left[\qak\right]=\sum_{\left(\eta_{1},\eta_{2}\right)\in\decomp\left(\eta_{\ak}^{w}\right)}L_{\mathcal{\eta}_{2}}^{B}.\label{eq:Phi as sum of L}
\end{equation}
For every decomposition $\left(\eta_{1},\eta_{2}\right)$ as in \eqref{eq:Phi as sum of L},
$\mathrm{Im}\left(\eta_{1}\right)$ has at most $T$ vertices and
at most $T$ edges, so by Lemma \ref{lem:L-uniform-bound},
\[
L_{\eta_{2}}^{B}\left(N\right)\le N^{\chi\left(\mathrm{Im}\left(\eta_{1}\right)\right)}\sum_{q=0}^{\infty}\frac{T^{2q}}{N^{q}}.
\]
In particular, the coefficient of $N^{-p}$ is $T^{2p+2\chi\left(\mathrm{Im}\eta_{1}\right)}$
or $0$ if $\chi\left(Im\eta_{1}\right)<-p$. Summing these over all
decompositions gives
\begin{eqnarray*}
\left|a_{p}\right| & \le & \sum_{c=-p}^{0}\sum_{\left(\eta_{1},\eta_{2}\right)\in\decomp\left(\eta_{\ak}^{w}\right)\colon~\chi\left(\mathrm{Im}\left(\eta_{1}\right)\right)=c}T^{2p+2c}.\\
 & \le & \sum_{c=-p}^{0}\binom{T}{2}^{D-c}\cdot T^{2p+2c}\le\sum_{c=-p}^{0}\frac{T^{2D-2c}}{2^{D-c}}\cdot T^{2p+2c}=T^{2D+2p}\cdot2^{-D}\sum_{c=-p}^{0}2^{c}\le T^{2D+2p},
\end{eqnarray*}
where the second inequality is by Proposition \ref{prop:upper bound on B-norm}
and the fact that $\Gamma_{\ak}^{w}$ has $D$ components.
\end{proof}

\begin{proof}[Proof of Proposition \ref{prop:char-uniform-bound}]
 By definition, every $f\in\A$ is a finite linear combination of
the form
\begin{equation}
f=\sum_{k,\ak}\beta_{\ak}\qak\label{eq:f as sum of qak}
\end{equation}
with $\beta_{\ak}\in\mathbb{R}$. All the terms in the bounded expression
in \eqref{eq:bound on big O} are linear, so it is bounded by the
corresponding linear combinations of bounds from Lemma \ref{lem:uniform bound for qak}.
Set $D$ to be the maximal value of $\sum\alpha_{i}$ over the non-vanishing
monomials in \eqref{eq:f as sum of qak}, and $A_{0}$ to be the maximal
value of $\sum i\alpha_{i}$. Then
\begin{eqnarray*}
\left|\mathbb{E}_{w}\left[f\right]-\left\langle f,1\right\rangle -\left\langle f,\xi_{1}-1\right\rangle \cdot\frac{\left|\crit(w)\right|}{N^{\pi(w)-1}}\right| & \le & \sum_{k,\ak}\left|\beta_{\ak}\right|\cdot\frac{\left(A_{o}t\right)^{2\pi(w)+2D}}{N^{\pi(w)-1}\cdot\left(N-\left(A_{o}t\right)^{2}\right)}\\
 & \le & \frac{\left(At\right)^{2\pi(w)+2D}}{N^{\pi(w)-1}\cdot\left(N-\left(At\right)^{2}\right)}
\end{eqnarray*}
with $A=A_{o}\cdot\max\left(\sum_{k,\ak}\left|\beta_{\ak}\right|,1\right)$.
\end{proof}
We now have all the ingredients needed to prove Theorem \ref{thm:Schreier-Graphs-Near-Ramanujan}.
\begin{proof}[Proof of Theorem \ref{thm:Schreier-Graphs-Near-Ramanujan}]
 Recall our notation of $\chi_{s}$ from the beginning of this section
and that $\left\langle \chi_{s},1\right\rangle =1$. The value of
$\left\langle \chi_{s},\xi_{1}-1\right\rangle $ is $s$: this is
a suitable Kostka number \cite[Proposition 7.18.7]{Stanley1999enumerative2},
but any constant suffices for our needs. From Proposition \ref{prop:char-uniform-bound}
it now follows that there are $A,D\ge1$ with

\[
\mathbb{E}_{w}\left[\chi_{s}\right]\leq1+\frac{1}{N^{\pi(w)-1}}\left(s\cdot\left|\crit(w)\right|+\frac{\left(At\right)^{2\pi(w)+2D}}{N-A^{2}t^{2}}\right).
\]
We now bound the summation from \eqref{eq:step one} (see also Remark
\ref{rem:counting does not cover 1 and primitive}): 
\begin{eqnarray*}
\sum_{w\in\cyr_{t}\left(\F_{k}\right)}\left(\mathbb{E}_{w}\left[\chi_{s}\right]-1\right) & = & \sum_{m=1}^{r}\sum_{w\in\cyr_{t}\left(\F_{k}\right)\colon\pi\left(w\right)=m}\left(\mathbb{E}_{w}\left[\chi_{s}\right]-1\right)\\
 & \le & \sum_{m=1}^{r}\frac{1}{N^{m-1}}\sum_{w\in\cyr_{t}\left(\F_{k}\right)\colon\pi\left(w\right)=m}\left(s\cdot\left|\crit(w)\right|+\frac{\left(At\right)^{2\pi(w)+2D}}{N-A^{2}t^{2}}\right)\\
 & \le & \sum_{m=1}^{r}\frac{1}{N^{m-1}}\sum_{w\in\cyr_{t}\left(\F_{k}\right)\colon\pi\left(w\right)=m}s\cdot\left|\crit(w)\right|\left(1+\frac{\left(At\right)^{2r+2D}}{N-A^{2}t^{2}}\right)\\
 & = & \left(1+\frac{\left(At\right)^{2r+2D}}{N-A^{2}t^{2}}\right)s\sum_{m=1}^{r}\frac{1}{N^{m-1}}\sum_{w\in\cyr_{t}\left(\F_{k}\right)\colon\pi\left(w\right)=m}\left|\crit(w)\right|\\
 & \stackrel{\mathrm{Thm}~\ref{thm:primitivity-rank-bounds}}{\le} & \left(1+\frac{\left(At\right)^{2r+2D}}{N-A^{2}t^{2}}\right)s\sum_{m=1}^{r}\frac{1}{N^{m-1}}\left[\max\left(\sqrt{2r-1},2m-1\right)+\varepsilon\right]^{t},
\end{eqnarray*}
where the last inequality holds for every $\varepsilon>0$ and every
large enough $t$. So under the same assumptions on $\varepsilon$
and $t$, we get from \eqref{eq:step one}
\begin{equation}
\mathbb{E}\left[\mathrm{Re}\left[\nu_{2}\left(G\right)^{t}\right]\right]\le2\left(N\right)_{s}\sqrt{d-1}^{t}+\left(1+\frac{\left(At\right)^{2r+2D}}{N-A^{2}t^{2}}\right)s\sum_{m=1}^{r}\frac{1}{N^{m-1}}\left[\max\left(\sqrt{2r-1},2m-1\right)+\varepsilon\right]^{t}.\label{eq:intermediate bound on nu}
\end{equation}
We will soon take $t$ to be a function of $N$ so that as $N\to\infty$,
$N^{1/t}\to c$ for a constant $c$ specified below. Then for every
$\varepsilon>0$ and every large enough $N$, 
\[
\left(1+\frac{\left(At\right)^{2r+2D}}{N-A^{2}t^{2}}\right)s\cdot2\left(r+1\right)\le\left(1+\varepsilon\right)^{t}.
\]
Because the right hand side of \eqref{eq:intermediate bound on nu}
is at most $\left(r+1\right)$ times the maximal summand (among the
$r+1$ summands), we get that for every $\varepsilon>0$ and large
enough $N$,
\begin{align}
 & \mathbb{E}\left[\mathrm{Re}\left[\nu_{2}\left(G\right)^{t}\right]\right]\le\nonumber \\
 & ~~~~\left[\left(1+\varepsilon\right)\cdot\max\left(\left\{ N^{s/t}\sqrt{d-1}\right\} \cup\left\{ \frac{2m-1}{N^{\left(m-1\right)/t}}\,\middle|\,2m-1\in\left[\sqrt{d-1},d-1\right]\right\} \right)\right]^{t},\label{eq:step three}
\end{align}
where we used the observation that if $2m-1<\sqrt{d-1}$ then the
term corresponding to $m$ in \eqref{eq:intermediate bound on nu}
is $\frac{\sqrt{d-1}^{t}}{N^{\left(m-1\right)}}$, and is thus strictly
smaller than the first term $2\left(N\right)_{s}\sqrt{d-1}^{t}$.
A simple analysis yields that, at least for large values of $d$,
the optimal value of $t=t\left(N\right)$ is such that 
\[
N^{1/t}\to e^{\frac{2}{e\sqrt{d-1}}}
\]
as $N\to\infty$. Whenever $2m-1\in\left[\sqrt{d-1},d-1\right]$,
write $m=\beta\sqrt{d-1}$ with $\beta>\frac{1}{2}$. If $N^{1/t}\to e^{\frac{2}{e\sqrt{d-1}}}$
then 
\begin{eqnarray*}
\frac{2m-1}{N^{\left(m-1\right)/t}} & = & \frac{2\beta\sqrt{d-1}-1}{\left(N^{1/t}\right)^{\beta\sqrt{d-1}-1}}<N^{1/t}\sqrt{d-1}\cdot\frac{2\beta}{\left(N^{1/t}\right)^{\beta\sqrt{d-1}}}\\
 & \approx & N^{1/t}\sqrt{d-1}\cdot\frac{2\beta}{e^{2\beta/e}}\le N^{1/t}\sqrt{d-1},
\end{eqnarray*}
where the last inequality follows as $\frac{2\beta}{e^{2\beta/e}}\le1$
with equality if and only if $\beta=e/2$. Therefore, with this value
of $t$, we obtain from \eqref{eq:step three} that for every $\varepsilon>0$,
\[
\mathbb{E}\left[\mathrm{Re}\left[\nu_{2}\left(G\right)^{t}\right]\right]\le\left[\left(1+\varepsilon\right)\cdot\sqrt{d-1}\cdot N^{s/t}\right]^{t}\approx\left[\left(1+\varepsilon\right)\cdot\sqrt{d-1}\cdot e^{\frac{2s}{e\sqrt{d-1}}}\right]^{t}
\]
for every large enough $N$. Recall that if $\nu_{2}\left(G\right)$
is non-real, then it has absolute value $\sqrt{d-1}$, and so we always
have $\mathrm{Re}\left[\nu_{2}\left(G\right)^{t}\right]\ge-\sqrt{d-1}^{t}$
for $t$ even. Therefore, for $x=\frac{2s}{e\sqrt{d-1}}$, 
\[
\mathrm{Prob}\left\{ \nu\left(G\right)\ge\left(1+2\varepsilon\right)\cdot e^{x}\sqrt{d-1}\right\} \cdot\left[\left(1+2\varepsilon\right)e^{x}\sqrt{d-1}\right]^{t}-\sqrt{d-1}^{t}\le\mathbb{E}\left[\mathrm{Re}\left[\nu_{2}\left(G\right)^{t}\right]\right]\le\left[\left(1+\varepsilon\right)e^{x}\sqrt{d-1}\right]^{t},
\]
which yields that for every $\varepsilon>0$
\[
\mathrm{Prob}\left\{ \nu\left(G\right)\ge\left(1+2\varepsilon\right)\cdot\sqrt{d-1}\cdot e^{x}\right\} \underset{N\to\infty}{\to}0.
\]
Finally, by \eqref{eq:nu vs. mu of G}, when $\nu\left(G\right)>\sqrt{d-1}$,
we have that $\mu\left(G\right)=\nu\left(G\right)+\frac{d-1}{\nu\left(G\right)}$,
and as $e^{x}+e^{-x}<2e^{x^{2}/2}$ for $x>0$, we conclude that
\[
\mathrm{Prob}\left\{ \mu\left(G\right)\ge2\sqrt{d-1}\cdot e^{\frac{2s^{2}}{e^{2}\left(d-1\right)}}\right\} \underset{N\to\infty}{\to}0.
\]
\end{proof}
\begin{rem}
\label{rem:how conjecture implies strong expansion, details}For a
fixed value of $r$ (or equivalently $d$), our bound on $\mu\left(G\right)$
gets weaker as $s$ grows. Assuming Conjecture \ref{conj:irreducible-characters},
we could improve this bound to be independent of $s$. Indeed, we
could decompose the character $\chi_{s}$ into a sum of irreducible
characters. Each character of degree $\theta\left(N^{m}\right)$ corresponds
to $O(N^{m})$ eigenvalues, and if indeed $\mathbb{E}_{w}\left[\chi\right]=O(N^{m\left(1-\pi\left(w\right)\right)})$,
we could choose $t$ separately for each $m$, such that $N^{m/t}\to e^{\frac{2}{e\sqrt{d-1}}}$,
and obtain the bound $\nu\left(G\right)<\sqrt{d-1}\cdot e^{\frac{2}{e\sqrt{d-1}}}$
a.a.s.~as $N\to\infty$. 
\end{rem}

\appendix

\section{Conjugacy separability of free groups\label{sec:conjugacy-seperability}}

We use our results to give a simple proof of the known fact that free
groups are conjugacy separable. 
\begin{defn}
A group $G$ is said to be \emph{conjugacy separable} if for every
two distinct conjugacy classes $g^{G}\ne h^{G}$ of elements of $G$
there exists some homomorphism $G\overset{\phi}{\to}Q$ to a finite
group $Q$ such that $\phi(g)^{Q}\neq\phi(h)^{Q}$.
\end{defn}

It is known that finitely generated free groups are conjugacy separable
-- see, for example, \cite[p.~278]{baumslag1965residual} or \cite[Prop.~I.4.8]{Lyndon1977}.
We give another simple proof of this fact.
\begin{prop}
Finitely generated free groups are conjugacy separable.
\end{prop}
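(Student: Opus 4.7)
Given non-conjugate $g,h\in\F$, my goal is to construct a homomorphism $\phi\colon\F\to Q$ to a finite group with $\phi(g)\not\sim_{Q}\phi(h)$. The plan is to take $Q=S_{N}$ and use Theorem \ref{thm:Word-Measure-Character-Bound}: if for some $N$ and some monomial $f=\xi_{1}^{\alpha_{1}}\cdots\xi_{k}^{\alpha_{k}}\in\A$ one has $\mathbb{E}_{g}[f]\neq\mathbb{E}_{h}[f]$ as functions of $N$, then some $\varphi\in\Hom(\F,S_{N})$ must satisfy $f(\varphi(g))\neq f(\varphi(h))$, and since $f$ is a class function this forces $\varphi(g)\not\sim_{S_{N}}\varphi(h)$. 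So the task is to exhibit a separating class function $f$ and dimension $N$.

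First I would reduce: conjugation in $\F$ preserves both sides of the claim, so assume $g,h$ are cyclically reduced and non-trivial. Powers reduce to the non-power case via the identity $\xi_{k}(\sigma^{t})=\xi_{kt}(\sigma)$, used in Corollary \ref{cor:fixed points of power words}, so we may also assume $g,h$ are non-powers and non-primitive (primitive elements induce the uniform measure and are treated directly using the abelianization). In this reduced setting Theorem \ref{thm:Word-Measure-Character-Bound} yields
\[
\mathbb{E}_{w}[f]=\uexp[f]+\langle f,\xi_{1}-1\rangle\cdot\frac{|\crit(w)|}{N^{\pi(w)-1}}+O\!\left(\frac{1}{N^{\pi(w)}}\right),
\]
and both $\pi(w)$ and $|\crit(w)|$ depend only on the conjugacy class of $\langle w\rangle$. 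Taking $f=\xi_{1}$, for which $\langle f,\xi_{1}-1\rangle=1$, the two leading terms of the Laurent expansion recover $\pi$ and $|\crit|$ from $\mathbb{E}_{w}[\xi_{1}]$; hence any mismatch between $g$ and $h$ in either invariant is detected by a mismatch of expectations for sufficiently large $N$, producing the desired separating $\varphi$.

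The main obstacle is the residual case in which $g$ and $h$ are non-conjugate but induce the same word measures on every $S_{N}$, so the $S_{N}$-approach is insufficient on its own. The canonical instances are pairs $g,g^{-1}$—whose images always share a cycle type—and distinct primitive elements such as $x_{1}$ and $x_{2}$. To cover these I would pair the $S_{N}$-quotient with auxiliary finite quotients: for non-equal abelianizations a finite quotient of $\F^{ab}$ already separates $g$ from $h$ (this handles e.g.\ distinct primitive pairs), while for $g\in[\F,\F]$ and $h=g^{-1}$ one maps into a finite group possessing non-real conjugacy classes (such as $A_{n}$ with split classes, or $\mathrm{PSL}_{2}(p)$). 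Combining these auxiliary inputs with the word-measure analysis supplied by Theorem \ref{thm:Word-Measure-Character-Bound} covers every pair $g\not\sim h$ and completes the proof. The delicate point is verifying that these auxiliary quotients always separate the leftover cases uniformly; the bulk of the work, however, is the word-measure step, where the precise leading-order formula from Theorem \ref{thm:Word-Measure-Character-Bound}—rather than merely residual finiteness—renders the argument short and uniform.
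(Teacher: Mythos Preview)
Your approach has a genuine gap that the patches do not close. The quantities $\mathbb{E}_{w}[f]$ you use are invariants of the $\Aut(\F)$-orbit of $w$, not of its conjugacy class: if $\alpha\in\Aut(\F)$ then $\mathbb{E}_{\alpha(w)}[f]=\mathbb{E}_{w}[f]$ for every finite group and every class function $f$. An $\Aut$-orbit typically contains many conjugacy classes, so marginal word measures alone cannot separate them. Your ``residual case'' is therefore not a small leftover but the heart of the matter, and the two patches you propose do not cover it. For a concrete failure, take $g=xyz$ and $h=xzy$ in $\F_3$: they lie in the same $\Aut$-orbit (swap $y$ and $z$), have the same abelianisation, are not conjugate, and are not inverses of one another. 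Neither the abelianisation nor the ``non-real conjugacy class'' trick applies, and producing a separating quotient in such cases is exactly the content of conjugacy separability---you would be assuming what you want to prove. (Your reductions are also loose: in ``primitive elements are treated directly using the abelianization'' you overlook that distinct primitive conjugacy classes can have equal abelianisation, as in the example above.)

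The paper avoids this obstruction by \emph{not} comparing the marginal measures $\mathbb{E}_{u}[\cdot]$ and $\mathbb{E}_{v}[\cdot]$ at all. Instead it exploits the joint random variable: if $u$ and $v$ were conjugate in every $S_{N}$ then $\#\mathrm{fix}(u(\sigma))=\#\mathrm{fix}(v(\sigma))$ pointwise, hence
\[
\Phi_{\{\langle u\rangle^{\F},\langle v\rangle^{\F}\}\to\F}
=\mathbb{E}\bigl[\#\mathrm{fix}(u)\cdot\#\mathrm{fix}(v)\bigr]
=\mathbb{E}\bigl[\#\mathrm{fix}(u)^{2}\bigr]
=\Phi_{\{\langle u\rangle^{\F},\langle u\rangle^{\F}\}\to\F}.
\]
This is an equality of $\Phi$ for two different \emph{two-component} multi core graphs, and the constant term on each side counts decompositions through rank-one subgroups. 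On the diagonal side one picks up the extra decompositions merging the two copies of $\langle u\rangle$ into a single cyclic group; forcing the cross side to match produces a cyclic subgroup containing conjugates of both $u$ and $v$, whence $u_{0}$ and $v_{0}$ are conjugate, and a further comparison of the exact counts gives $k=m$. The crucial point is that the cross-moment $\mathbb{E}[\#\mathrm{fix}(u)\cdot\#\mathrm{fix}(v)]$ is \emph{not} a function of the two marginal word measures separately---it genuinely sees how $u$ and $v$ sit together inside $\F$---and this is precisely what your marginal approach lacks.
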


Our proof uses a similar plan to \cite[Cor.~3.16]{wise2000subgroup},
proving separation assuming that $u^{G}\neq v^{G},(v^{-1})^{G}$,
and then separating an element from its inverse using an odd-order
quotient. For completeness, we include all the details of the proof.
\begin{proof}
Assume that $u,v\in\F=\F_{r}$, both not the identity element, are
conjugate under every homomorphism to a finite group $Q$. We write
$u,v$ as maximal powers in the free group, that is, $u=u_{0}^{~k},v=v_{0}^{~m}$,
where $u_{0},v_{0}$ are non-powers and $k,m\in\mathbb{Z}_{\ge1}$.
Recall that $\tau\left(k\right)$ marks the number of positive divisors
of $k$. We assume without loss of generality that $\tau(k)\geq\tau(m)$.

Using $Q=S_{N}$, we see that for every $r$ permutations $\sigma_{1},\dots,\sigma_{r}\in S_{N}$,
the resulting permutations $u\left(\sigma_{1},\ldots,\sigma_{r}\right)$
and $v\left(\sigma_{1},\ldots,\sigma_{r}\right)$ are conjugate. In
particular, they have the same number of fixed points. It follows
that 

\[
\Phi_{\left\{ \left\langle u\right\rangle ^{\F},\left\langle v\right\rangle ^{\F}\right\} \to\F}=\mathbb{E}_{\sigma_{1},\ldots,\sigma_{r}\in S_{N}}\left[\#\mathrm{fix}\big(u\big)\cdot\#\mathrm{fix}\big(v\big)\right]=\mathbb{E}\left[\#\mathrm{fix}\left(u\right)^{2}\right]=\Phi_{\left\{ \left\langle u\right\rangle ^{\F},\left\langle u\right\rangle ^{\F}\right\} \to\F}.
\]
As a simple consequence of Theorem \ref{thm:character-first-bound}
we obtain\footnote{In fact, while \eqref{eq:Phi of u,u} can be derived directly from
Theorem \ref{thm:character-first-bound}, it is a much simpler fact.
Using left Möbius inversion, one can easily see that for $\eta:\Gamma\to\Delta$,
$L_{\eta}^{B}=N^{\chi(\Gamma)}\big(1+O\big(\frac{1}{N}\big)\big)$,
and therefore one simply needs to count decompositions of $\left\{ \left\langle u\right\rangle ^{\F},\left\langle u\right\rangle ^{\F}\right\} \to\F$
as $\left\{ \left\langle u\right\rangle ^{\F},\left\langle u\right\rangle ^{\F}\right\} \to\H\to\F$
with $\chi\left(\H\right)=0$. A similar argument applies to computing
the free coefficient of $\Phi_{\left\{ \left\langle u\right\rangle ^{\F},\left\langle v\right\rangle ^{\F}\right\} \to\F}$.
The method for solving such counting problems, albeit not explicitly
with multi core graphs involving (powers of) different words, appears
already in \cite{nica1994number} and especially in \cite{Linial2010}.}, 
\begin{equation}
\Phi_{\left\{ \left\langle u\right\rangle ^{\F},\left\langle u\right\rangle ^{\F}\right\} \to\F}=\uexp\left[\xi_{k}^{2}\right]+O\left(\frac{1}{n}\right)=\tau\left(k\right)^{2}+\sum_{d|k}d+O\left(\frac{1}{n}\right)\geq\tau(k)^{2}+1+O\left(\frac{1}{n}\right).\label{eq:Phi of u,u}
\end{equation}
On the other hand, the decompositions of $\left\{ \left\langle u\right\rangle ^{\F},\left\langle v\right\rangle ^{\F}\right\} \to\H\to\F$
with $\chi\left(\H\right)=0$ and $c\left(\H\right)=2$ (two connected
components) are in one to one correspondence with pairs of positive
roots of $u$ and $v$. Therefore, there are $\tau(k)\cdot\tau(m)\leq\tau(k)^{2}$
such decompositions. It follows that there is at least one decomposition
$\left\{ \left\langle u\right\rangle ^{\F},\left\langle v\right\rangle ^{\F}\right\} \to\H\to\F$
with $\chi\left(\H\right)=0$ and $c\left(\H\right)=1$, namely $\H=\left\{ \left\langle w\right\rangle ^{\F}\right\} $
for some $1\ne w\in\F$. Therefore, some conjugates of $u$ and $v$
belong to $\left\langle w\right\rangle $, which yields that $\left\langle u_{0}\right\rangle $
and $\left\langle v_{0}\right\rangle $ are conjugate, i.e., $u_{0}$
is conjugate to either $v_{0}$ or $v_{0}^{-1}$. We now prove that
$k=m$. Indeed, 

\[
\Phi_{\left\{ \left\langle u\right\rangle ^{\F},\left\langle v\right\rangle ^{\F}\right\} \to\F}=\tau\left(k\right)\cdot\tau\left(m\right)+\sum_{d|gcd(k,m)}d+O\left(\frac{1}{n}\right),
\]
and therefore,

\[
\tau(k)^{2}+\sum_{d|k}d=\tau(k)\cdot\tau(m)+\sum_{d|gcd(k,m)}d.
\]
It follows that $k=m$, and so $u$ is conjugate to either $v$ or
$v^{-1}$.

We finish the proof by showing that there is no $1\ne u\in\F$ such
that $u$ and $u^{-1}$ are conjugate in every finite quotient of
$\F$. Assume otherwise. By \cite[Thm.~6.1.9]{robinson2012course}
free groups are residually $p$-finite for every prime number $p$.
In particular, this is the case for any odd $p$, so there is a finite
quotient $\phi:\F\to P$ where $P$ is a $p$-group of odd order,
and $\phi(u)\neq1$. By our assumption, there is an element $x\in P$
such that $\phi(u)^{-1}=x\phi(u)x^{-1}$. Then $x^{2}\phi(u)x^{-2}=x\phi(u)^{-1}x^{-1}=\phi(u)$,
so $x^{2}$ lies in the centralizer $C_{P}(\phi(u))$. However, as
$P$ has odd order, so does $x$ and therefore $x\in\left\langle x^{2}\right\rangle \le C_{P}(\phi(u))$.
In particular, $\phi(u)^{-1}=x\phi(u)x^{-1}=\phi(u)$, and as $P$
has odd order, $\phi(u)=1$, in contradiction. 
\end{proof}
\begin{rem}
This proof is even simpler for non-powers: in this case there are
no non-trivial decompositions of the morphisms $\left\{ \left\langle u\right\rangle \right\} \to\F,\left\{ \left\langle v\right\rangle ^{\F}\right\} \to\F$
with Euler characteristic $0$.
\end{rem}

\section{The ring of class functions\label{sec:class-functions}}

Recall that for any $N$ and any permutation $\sigma\in S_{N}$, $\xi_{k}\left(\sigma\right)$
denotes the number of fixed points of $\sigma^{k}$, and $a_{t}\left(\sigma\right)$
denotes the number of $t$-cycles in $\sigma$. As in Section \ref{sec:Introduction},
we consider $\A=\mathbb{Q}\left[\xi_{1},\xi_{2},\ldots\right]$, the
ring of formal polynomials in the countably many variables $\xi_{k}$.
Every element of $\A$ is a class function defined on $S_{N}$ for
every $N$. Note that as class functions on $S_{N}$,

\[
\z_{k}=\sum_{t|k}t\cdot a_{t}.
\]
Therefore, $\A$ can be equivalently defined as $\A=\mathbb{Q}\left[a_{1},a_{2},\ldots\right]$.
The following proposition shows that for any $f,g\in\A$, the inner
product $\left\langle f,g\right\rangle _{S_{N}}$ stabilizes for large
enough $N$, and therefore our definition in Section \ref{sec:Introduction}
of $\left\langle f,g\right\rangle $ (as the constant value obtained
for large $N$) makes sense.
\begin{prop}
\label{prop:inner product stabilizes}For every two class functions
$f,g\in\A$, and for all large enough $N$, $\left\langle f,g\right\rangle _{S_{N}}$
is independent of $N$.
\end{prop}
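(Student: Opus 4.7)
The plan is to reduce the statement to a single rational-function computation about a morphism of multi core graphs over $\F_1 = \mathbb{Z}$, and then invoke Lemma~\ref{lem:Phi-rank-one}. First, by the $\mathbb{Q}$-bilinearity of $\langle \cdot, \cdot\rangle_{S_N}$ and the fact that the monomials $\qak$ form a linear basis of $\A$, it suffices to treat the case $f=\qak$ and $g = \xi_1^{\beta_1}\cdots\xi_\ell^{\beta_\ell}$. Since the $\xi_i$ take only integer values, $\overline{g(\sigma)} = g(\sigma)$ and the pointwise product $f\cdot g$ is again a single monomial, $\xi_1^{\gamma_1}\cdots\xi_m^{\gamma_m}$, for some non-negative integers $\gamma_i$. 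Hence it is enough to show that $\mathbb{E}_{\sigma\in S_N}[\xi_1^{\gamma_1}\cdots\xi_m^{\gamma_m}]$ stabilizes for large $N$.

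Next, I would identify this expectation with a value of $\Phi$. Fix a basis element $x \in \F$. The map $\mathrm{Hom}(\F,S_N) \to S_N$, $\varphi \mapsto \varphi(x)$, pushes the uniform measure to the uniform measure on $S_N$, so
\[
\mathbb{E}_{\sigma\in S_N}\bigl[\xi_1^{\gamma_1}\cdots\xi_m^{\gamma_m}\bigr]
= \mathbb{E}_{x}\bigl[\xi_1^{\gamma_1}\cdots\xi_m^{\gamma_m}\bigr]
= \Phi_{\eta_{\gamma_1,\ldots,\gamma_m}^{x}}(N),
\]
in the notation of Example~\ref{exa:main thm in terms of Phi}. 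Every connected component of the domain $\Gamma_{\gamma_1,\ldots,\gamma_m}^{x}$ corresponds to a conjugacy class of a subgroup of $\langle x\rangle \cong \F_1$, and a direct inspection of Definition~\ref{def:Phi} shows that $\Phi$ is unchanged if we replace the ambient free group by any free subgroup containing all the relevant subgroups --- this is because the restriction of a uniformly random homomorphism $\F \to S_N$ to any free factor is again a uniformly random homomorphism. Therefore we may regard $\eta_{\gamma_1,\ldots,\gamma_m}^{x}$ as a morphism in $\mocc(\F_1)$ with codomain the singleton $\{\F_1^{~\F_1}\}$, without changing the value of $\Phi$.

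Finally, in this rank-one reinterpretation the image of the morphism meets the (unique) component of the codomain, so Lemma~\ref{lem:Phi-rank-one} applies and gives, for all large enough $N$,
\[
\Phi_{\eta_{\gamma_1,\ldots,\gamma_m}^{x}}(N) \;=\; \bigl|\decomp\bigl(\eta_{\gamma_1,\ldots,\gamma_m}^{x}\bigr)\bigr|,
\]
which is a finite combinatorial quantity manifestly independent of $N$. This completes the argument.

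There is really no genuine obstacle here, as all the heavy lifting has been done in Section~\ref{sec:M=0000F6bius-inversions}; the only point requiring care is the innocuous-looking passage from $\mocc(\F)$ to $\mocc(\F_1)$, i.e.\ checking that $\Phi$ does not see the ambient group beyond the subgroups actually appearing. Once this is in place, Lemma~\ref{lem:Phi-rank-one} delivers an explicit constant value for large $N$. (Alternatively, one could simply cite the explicit combinatorial formula for $\mathbb{E}_{\mathrm{unif}}[\qak]$ given in Remark~\ref{rem:expectaion in uniform measure of qak}, which is itself a consequence of Lemma~\ref{lem:Phi-rank-one}.)
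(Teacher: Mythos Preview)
Your proof is correct, but it takes a genuinely different route from the paper's. The paper reduces to monomials in the $a_t$'s and then quotes an external result of Diaconis and Shahshahani \cite[Theorem~7]{diaconis1994eigenvalues}, which identifies $\langle a_1^{b_1}\cdots a_k^{b_k},1\rangle_{S_N}$ with a product of Poisson moments for $N\ge\sum_t tb_t$. You instead stay entirely within the machinery of the paper: after the same reduction to a single monomial, you recognise $\mathbb{E}_{\mathrm{unif}}[\qak]=\mathbb{E}_x[\qak]=\Phi_{\eta_{\ak}^{x}}(N)$, pass from $\F$ to $\langle x\rangle\cong\F_1$ (either by the direct observation that only $\varphi(x)$ matters, or via Proposition~\ref{prop:properties of free morphisms}\eqref{enu:Phi can ignore free extensions} applied to the free morphism $\Gamma_B(\langle x\rangle)\to X_B$), and then invoke Lemma~\ref{lem:Phi-rank-one} to conclude that $\Phi$ equals the constant $|\decomp(\eta_{\ak}^{x})|$ for large $N$. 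There is no circularity: the proof of Lemma~\ref{lem:Phi-rank-one} rests only on the explicit formula~\eqref{eq:rational exp for L} for $L^B$ and basic facts about rank-one morphisms, none of which uses the stabilisation of $\langle\cdot,\cdot\rangle_{S_N}$.

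What each approach buys: the paper's citation gives an explicit closed form (Poisson moments) and an explicit threshold $N\ge\sum_t tb_t$ with no extra work; your argument is self-contained and shows that the result is already a formal consequence of the paper's own rank-one analysis, which is conceptually satisfying and is essentially the content of Remark~\ref{rem:expectaion in uniform measure of qak}. One cosmetic gap: you should dispose of the trivial monomial $1$ (all $\gamma_i=0$) separately, since then the domain multi core graph is empty and Lemma~\ref{lem:Phi-rank-one} does not literally apply; of course $\langle 1,1\rangle_{S_N}=1$ for all $N$.
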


\begin{proof}
By the previous paragraph, $f$ and $g$ are equal to polynomials
in the $a_{t}$'s. Thus, it is enough to prove the proposition when
$f$ and $g$ are monomials in the $a_{t}$'s. Note that $\left\langle f,g\right\rangle _{S_{N}}=\left\langle fg,1\right\rangle _{S_{N}}$,
so it is enough to show that for every monomial $m$ in the $a_{t}$'s,
$\left\langle m,1\right\rangle _{S_{N}}$ stabilizes. But \cite[Theorem 7]{diaconis1994eigenvalues}
states that for every $b_{1},\ldots,b_{k}\in\mathbb{Z}_{\ge0}$, and
for every\footnote{There is a typo in the original statement of \cite[Theorem 7]{diaconis1994eigenvalues},
where it says $N\ge\sum_{t=1}^{k}ta_{t}$ instead.} $N\ge\sum_{t=1}^{k}tb_{t}$
\[
\left\langle a_{1}^{b_{1}}\cdots a_{k}^{b_{k}},1\right\rangle _{S_{N}}=\prod_{t=1}^{k}\mathbb{E}\left[Z_{t}^{b_{t}}\right],
\]
where $Z_{t}$ is Poisson with parameter $\frac{1}{t}$. In particular,
$\left\langle a_{1}^{b_{1}}\cdots a_{k}^{b_{k}},1\right\rangle _{S_{N}}$
is constant for $N\ge\sum_{t=1}^{k}tb_{t}$.
\end{proof}
It is well known that there is a natural correspondence between partitions
$\lambda$ of $N$ and irreducible representations of $S_{N}$. We
denote the character corresponding to $\lambda$ by $\chi^{\lambda}$.
Recall our notation from Section \ref{sec:Introduction} and particularly
Section \ref{subsec:Similar-phenomena-in} of $\left|\lambda\right|$
(the sum of blocks in $\lambda$), of $\chi^{\lambda}\left(\rho\right)$
where $\rho\vdash\left|\lambda\right|$ (the value of $\chi^{\lambda}$
on permutations with cycle structure $\rho$) and of $z_{\lambda}\defi\prod_{r}r^{\alpha_{r}}\alpha_{r}!$,
where $\lambda$ has $\alpha_{r}$ parts of size $r$. Also recall
from Section \ref{sec:Introduction} that every partition $\lambda$
gives rise to a family of irreducible characters $\chi=\left\{ \chi_{N}\right\} _{N\ge\left|\lambda\right|+\lambda_{1}}$,
and that $\symirr$ denotes the family of such families of irreducible
characters.
\begin{prop}
\label{prop:irreps as linear basis of A}Every $\chi\in\symirr$ corresponds
to an element of $\A$, namely, $\chi_{N}$ and this element of $\A$
coincide as class functions on $S_{N}$ for every $N\ge\left|\lambda\right|+\lambda_{1}$.
Moreover, the elements of $\A$ corresponding to the elements of $\symirr$
constitute a linear basis of $\A$.
\end{prop}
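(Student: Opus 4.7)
The plan is to bridge between $\A$ and $\symirr$ through the induced characters
\begin{equation*}
\psi_\lambda^N \;\defi\; \mathrm{Ind}_{S_{|\lambda|}\times S_{N-|\lambda|}}^{S_N}\bigl(\chi^\lambda\boxtimes\mathbf{1}\bigr).
\end{equation*}
Evaluating at $\sigma\in S_N$ via the standard induced-character formula gives $\psi_\lambda^N(\sigma)=\sum_{\rho\vdash|\lambda|}\chi^\lambda(\rho)\cdot M_\rho(\sigma)$, where $M_\rho(\sigma)$ counts $\sigma$-invariant subsets of $[N]$ on which $\sigma$ acts with cycle type $\rho$. For $\rho=(1^{b_1}2^{b_2}\cdots)$ this count equals $\prod_t\binom{a_t(\sigma)}{b_t}$, which is visibly a polynomial in the $a_t$'s (equivalently, in the $\xi_k$'s), independent of $N$, and of degree exactly $|\lambda|$ under the grading $\deg a_t=t$. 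In particular, $\psi_\lambda^N\in\A$.

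First I would prove part~(1) by induction on $|\lambda|$; the base case $\lambda=\emptyset$ is the constant~$1$. For the inductive step, Pieri's rule applied to $s_\lambda\cdot s_{(N-|\lambda|)}$ yields, for $N\ge|\lambda|+\lambda_1$, a decomposition
\begin{equation*}
\psi_\lambda^N \;=\; \chi^{\lambda\cup(N-|\lambda|)} + \sum_{\mu\subsetneq\lambda}c_\mu^\lambda\cdot\chi^{\mu\cup(N-|\mu|)},
\end{equation*}
with $c_\mu^\lambda\in\{0,1\}$, each appearing $\mu$ being a proper sub-partition of $\lambda$ (so $|\mu|<|\lambda|$). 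By induction each summand on the right lies in $\A$, so $\chi^{\lambda\cup(N-|\lambda|)}$ does as well; moreover its degree (with $\deg a_t=t$) is at most $|\lambda|$.

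For part~(2), linear independence of $\symirr$ follows from orthogonality of irreducible characters combined with Proposition~\ref{prop:inner product stabilizes}: once $N$ exceeds both $|\lambda|+\lambda_1$ and $|\lambda'|+\lambda_1'$, one has $\langle\chi,\chi'\rangle_{S_N}=\delta_{\chi,\chi'}$, and this is the stable value $\langle\chi,\chi'\rangle$. For spanning, grade $\A$ by $\deg a_t=t$ and let $\A_n\subseteq\A$ be the degree-$\le n$ subspace; a monomial basis gives $\dim\A_n=\sum_{m=0}^n p(m)$, which equals the number of partitions $\lambda$ with $|\lambda|\le n$. The degree bound from part~(1) places the $\dim\A_n$ linearly independent families $\{\chi\in\symirr:|\lambda|\le n\}$ inside $\A_n$, forcing them to be a basis of $\A_n$; letting $n\to\infty$ yields a basis of $\A$. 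The main obstacle will be verifying the stable form of the Pieri decomposition used above --- namely, that for $N\ge|\lambda|+\lambda_1$ every $\nu\vdash N$ contributing to $\psi_\lambda^N$ satisfies $\nu=\mu\cup(N-|\mu|)$ for some $\mu\subseteq\lambda$. This reduces to a short combinatorial check: the horizontal-strip condition on $\nu\supseteq\lambda$ forces $\nu_{i+1}\le\lambda_i$, so $\tilde\nu:=(\nu_2,\nu_3,\ldots)\subseteq\lambda$; and $N\ge|\lambda|+\lambda_1$ ensures $\nu_1=N-|\tilde\nu|\ge\lambda_1\ge\tilde\nu_1$, confirming that $\nu$ has the claimed stable form.
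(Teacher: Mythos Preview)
Your proof is correct and takes a genuinely different route from the paper's. The paper quotes Macdonald's explicit formula \cite[Example I.7.14]{macdonald1998symmetric}, which writes $\chi_{N}$ directly as
\[
\chi_{N}=\sum_{\rho,\sigma:\,|\rho|+|\sigma|=|\lambda|}\frac{(-1)^{\ell(\sigma)}\chi^{\lambda}(\rho\cup\sigma)}{z_{\sigma}}\binom{a}{\rho},
\]
so membership in $\A$ and the degree bound $\deg\chi\le|\lambda|$ are read off in one stroke; the basis claim then follows by the same dimension count you use. You instead go through the induced characters $\psi_{\lambda}^{N}=\mathrm{Ind}_{S_{|\lambda|}\times S_{N-|\lambda|}}^{S_{N}}(\chi^{\lambda}\boxtimes\mathbf{1})$, observe from the Frobenius formula that $\psi_{\lambda}^{N}(\sigma)=\sum_{\rho\vdash|\lambda|}\chi^{\lambda}(\rho)\prod_{t}\binom{a_{t}(\sigma)}{b_{t}(\rho)}$ is an $N$-independent element of $\A$ of degree $|\lambda|$, and then invert the unitriangular Pieri expansion $\psi_{\lambda}^{N}=\chi^{\lambda\cup(N-|\lambda|)}+\sum_{\mu\subsetneq\lambda}c_{\mu}^{\lambda}\chi^{\mu\cup(N-|\mu|)}$ by induction on $|\lambda|$. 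The combinatorial check you flag (that in the stable range every contributing $\nu$ has the form $\mu\cup(N-|\mu|)$ with $\mu\subseteq\lambda$) is exactly right, and you should also note that $\mu\subseteq\lambda$ forces $|\mu|+\mu_{1}\le|\lambda|+\lambda_{1}\le N$, so the inductive hypothesis indeed applies at the given $N$. Your argument is more self-contained---Pieri's rule and the induced-character formula are more standard than Macdonald's identity---while the paper's buys an explicit closed formula for each $\chi$ as a polynomial in the $a_{t}$'s.
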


\begin{proof}
Our proof relies on results from \cite{macdonald1998symmetric}. For
a partition $\lambda$, denote by $\ell(\lambda)$ the number of parts
in $\lambda$. If $\rho\vdash k$ and $\sigma\vdash m$, denote by
$\rho\cup\sigma$ the partition of $m+k$ obtained at the disjoint
union of parts of $\rho$ and $\sigma$. Also denote
\[
\binom{a}{\lambda}\defi\prod_{r}\binom{a_{r}}{\alpha_{r}\left(\lambda\right)}=\prod_{r}\frac{a_{r}\cdot\left(a_{r}-1\right)\cdots\left(a_{r}-\alpha_{r}\left(\lambda\right)+1\right)}{\alpha_{r}\left(\lambda\right)!},
\]
where $\alpha_{r}\left(\lambda\right)$ is the number of parts of
size $r$ in $\lambda$. Now let $\chi=\left\{ \chi_{N}\right\} _{N\ge\left|\lambda\right|+\lambda_{1}}\in\symirr$
be the family of irreducible characters corresponding to the partition
$\lambda$. According to \cite[Example I.7.14]{macdonald1998symmetric},
for every $N\ge\left|\lambda\right|+\lambda_{1}$, the class function
$\chi_{N}$ on $S_{N}$ is equal to 
\begin{equation}
\chi_{N}=\sum_{\rho,\sigma~\colon\ \left|\rho\right|+\left|\sigma\right|=\left|\lambda\right|}\frac{\left(-1\right)^{\ell\left(\sigma\right)}\cdot\chi^{\lambda}\left(\rho\cup\sigma\right)}{z_{\sigma}}\cdot\binom{a}{\rho},\label{eq:formula for chi-lambda}
\end{equation}
where the sum is over all partitions $\rho$ and $\sigma$, including
the empty partitions (of $0$), with $\left|\rho\right|+\left|\sigma\right|=\left|\lambda\right|$.
See Example \ref{exa:how the formula works} below. In particular,
\eqref{eq:formula for chi-lambda} shows that indeed $\chi$ coincides
with a certain element of $\A$ for every $N\ge\left|\lambda\right|+\lambda_{1}$.

Now fix $k\in\mathbb{Z}_{\ge1}$, and consider all partitions $\left\{ \lambda\vdash q\,\middle|\,0\le q\le k\right\} $.
The number of such partitions is $p\left(0\right)+p\left(1\right)+\ldots+p\left(k\right)$.
For large enough $N$, all these partitions give rise to distinct
irreducible characters of $S_{N}$, and are, in particular, linearly
independent class functions. On the other hand, the formula \eqref{eq:formula for chi-lambda}
shows that as elements in $\A$, they are spanned by the monomials
\[
a_{1}^{m_{1}}a_{2}^{m_{2}}\cdots a_{k}^{m_{k}}
\]
with $m_{1}+2m_{2}+\ldots+km_{k}\le k$. These are precisely the possible
cycle-structures of elements in $S_{0},S_{1},\ldots,S_{k}$, and therefore
there are $p\left(0\right)+p\left(1\right)+\ldots+p\left(k\right)$
such monomials, which span a linear subspace of $\A$ of dimension
$p\left(0\right)+p\left(1\right)+\ldots+p\left(k\right)$. We conclude
that this subspace is spanned by the $\chi\in\symirr$ corresponding
to $\lambda$ with $\left|\lambda\right|\le k$, and thus that the
elements in $\symirr$ form a linear basis of $\A$.
\end{proof}
\begin{rem}
Proposition \ref{prop:irreps as linear basis of A} yields that every
class function $f\in\A$ is a linear combination of the elements of
$\symirr$. Together with the orthogonality of irreducible characters,
this gives another proof of Proposition \ref{prop:inner product stabilizes}.
\end{rem}

We end this appendix by illustrating how the formula \eqref{eq:formula for chi-lambda}
works.
\begin{example}
\label{exa:how the formula works}Consider the partition $\lambda=(1)$,
of a single element. It gives rise to the family of irreducible characters
in the second row of Table \ref{tab:Some-families-of-irreps}. The
character $\chi^{1}$ is the trivial character on the trivial group
$S_{1}$. In the sum \eqref{eq:formula for chi-lambda}, $\left(\rho;\sigma\right)$
are either $\left(1;\emptyset\right)$ or $\left(\emptyset;1\right)$,
and

\[
\chi_{N}=\frac{\left(-1\right)^{0}\cdot1}{1}\binom{a}{1}+\frac{\left(-1\right)^{1}\cdot1}{1}\binom{a}{0}=a_{1}-1=\z_{1}-1.
\]

Next, consider the partition $\lambda=(1,1)$. It gives rise to the
family of irreducible characters in the fourth row of Table \ref{tab:Some-families-of-irreps}.
The character $\chi^{1,1}$ is the sign character on $S_{2}$. In
the sum \eqref{eq:formula for chi-lambda}, $\left(\rho;\sigma\right)$
are either $\left(2;\emptyset\right)$, $\left(1,1;\emptyset\right)$,
$\left(1;1\right)$, $\left(\emptyset;1,1\right)$ or $\left(\emptyset;2\right)$,
and so
\begin{eqnarray*}
\chi_{N} & = & \frac{\left(-1\right)^{0}\cdot\left(-1\right)}{1}\binom{a}{2}+\frac{\left(-1\right)^{0}\cdot1}{1}\binom{a}{1,1}+\frac{\left(-1\right)^{1}\cdot1}{1}\binom{a}{1}+\frac{\left(-1\right)^{2}\cdot1}{2}\binom{a}{0}+\frac{\left(-1\right)^{1}\cdot\left(-1\right)}{2}\binom{a}{0}\\
 & = & -\binom{a_{2}}{1}+\binom{a_{1}}{2}-\binom{a_{1}}{1}+\frac{1}{2}+\frac{1}{2}=\frac{\left(a_{1}-1\right)\left(a_{1}-2\right)}{2}-a_{2}.
\end{eqnarray*}
\end{example}

\section{Norm of morphisms: the proof of Theorem \ref{thm:alg distance  =00003D comb distance}\label{sec:norm-of-morphism-proof}}

In this final section we prove Theorem \ref{thm:alg distance  =00003D comb distance}
which shows that the norm and the $B$-norm of a morphism are, in
principle, identical. More concretely, if $\eta\colon\Gamma\to\Delta$
is a morphism of $B$-labeled multi core graphs, $\Sigma=\mathrm{Im}\left(\eta\right)$
and $\xymatrix{\Gamma\ar@{->>}[r]^{\overline{\eta}} & \Sigma\ar@{^{(}->}[r]^{\iota} & \Delta}
$ is the decomposition of $\eta$ to a surjective and an injective
morphisms, then 
\begin{equation}
\left\Vert \eta\right\Vert =\left\Vert \overline{\eta}\right\Vert _{B}+\left[\chi\left(\Sigma\right)-\chi\left(\Delta\right)\right].\label{eq:apndx B - what we need to prove}
\end{equation}
The following proof generalizes the ideas in \cite[Section 3]{Puder2014},
which dealt only with connected core graphs.
\begin{proof}[Proof of Theorem \ref{thm:alg distance  =00003D comb distance}]
 Clearly, any component $\Delta'$ of $\Delta$ which does not meet
$\eta\left(\Gamma\right)$, adds $-\chi\left(\Delta'\right)$ to both
sides of \eqref{eq:apndx B - what we need to prove}, so we may ignore
such components altogether and assume that $\mathrm{Im}\left(\eta\right)$
meets every component of $\Delta$.

Recall Definition \ref{def:basis independent norm}, and consider
the set of all possible sequences 
\begin{equation}
\left(\beta_{1},\ldots,\beta_{\left\Vert \eta\right\Vert }\right)~\colon~\beta_{\left\Vert \eta\right\Vert }\circ\ldots\circ\beta_{1}=\eta\label{eq:sequence of immedaite moprhisms}
\end{equation}
of length $\left\Vert \eta\right\Vert $ of immediate morphisms, such
that the composition of the sequence gives $\eta$. For each such
sequence, consider a sequence of non-negative integers 
\[
\left(h_{1},\ldots,h_{\left\Vert \eta\right\Vert }\right),
\]
defined as the number of edges in the codomain of $\beta_{i}$ \emph{not
}covered by edges from its domain. Namely, if $\beta_{i}\colon\Sigma_{i-1}\to\Sigma_{i}$,
then $h_{i}=\left|E\left(\Sigma_{i}\right)\right|-\left|E\left(\mathrm{Im}\beta_{i}\right)\right|$.
There are two observations to be made now:
\begin{itemize}
\item $h_{i}=0$ if and only if $\beta_{i}$ is $B$-surjective, if and
only if $\beta_{i}$ also corresponds to a ``merging step'' corresponding
to the $B$-norm from Definition \ref{def:norm of partition, surjective morphism}.
\item $h_{i}>0$ if and only if $\beta_{i}$ is $B$-injective.
\end{itemize}
In the case of an immediate morphism of the first type (of the two
types described in Definition \ref{def:basis independent norm}),
these observations are explained in \cite[Section 3]{Puder2014}.
The reasoning in the case of immediate morphisms of the second type
is very similar.

Among all sequences as in \eqref{eq:sequence of immedaite moprhisms},
consider one with minimal corresponding integer sequence with respect
to the lexicographic order. We will next prove that in such a sequence
it is impossible to have $h_{i}>0$ and $h_{i+1}=0$. This will imply
that $\eta$ can be obtained as a sequence $\left(\beta_{1},\ldots,\beta_{k}\right)$
of merging-steps from Definition \ref{def:norm of partition, surjective morphism},
followed by a sequence $\left(\beta_{k+1},\ldots,\beta_{\left\Vert \eta\right\Vert }\right)$
of embeddings. This break-up of $\eta$ thus exactly corresponds to
the decomposition $\xymatrix{\Gamma\ar@{->>}[r]^{\overline{\eta}} & \Sigma\ar@{^{(}->}[r]^{\iota} & \Delta}
$ of $\eta$ to a surjective $\overline{\eta}$ and an injective $\iota$.
So $\left\Vert \overline{\eta}\right\Vert _{B}\le k=\left\Vert \overline{\eta}\right\Vert $,
and knowing the converse inequality from \eqref{eq:ineq of norms for surj morphism},
we get $\left\Vert \overline{\eta}\right\Vert _{B}=k$. As $\iota$
is injective, it is also free (Proposition \ref{prop:properties of free morphisms}\eqref{enu:Every-injective-morphism is free}),
and by Lemma \ref{lem:distance-lower-bound}, $\left\Vert \iota\right\Vert =\chi\left(\Sigma\right)-\chi\left(\Delta\right)$.
All in all
\[
\left\Vert \eta\right\Vert =k+\left\Vert \iota\right\Vert =\left\Vert \overline{\eta}\right\Vert _{B}+\chi\left(\Sigma\right)-\chi\left(\Delta\right),
\]
as required.

It remains to prove that in the minimal sequence $\left(h_{1},\ldots,h_{\left\Vert \eta\right\Vert }\right)$,
we cannot have $h_{i}>0$ and $h_{i+1}=0$. Let $\beta_{i}\colon\Sigma_{i-1}\to\Sigma_{i}$
be an immediate morphism and $h_{i}=\left|E\left(\Sigma_{i}\right)\right|-\left|E\left(\mathrm{Im}\beta_{i}\right)\right|$
the corresponding integer. The two types of immediate morphisms from
Definition \ref{def:basis independent norm} have the following geometric
realizations. A step of the first type, where $H^{\F}$ is replaced
by $\left\langle H,j\right\rangle ^{\F}$, is obtained geometrically
by adding a cycle spelling the word $j$ at the vertex $v$ at which
$H$ is based (so $\piol\left(\Sigma,v\right)=H$), and folding. A
step of the second type, where $H^{\F}$ and $H'^{\F}$ are replaced
by $\left\langle jHj^{-1},H'\right\rangle ^{\F}$, is obtained geometrically
by adding a path spelling the word $j$ starting at $v'$ and ending
at $v$, and folding. In both cases, if $h_{i}>0$, the excessive
edges of $\Sigma_{i}\setminus\mathrm{Im}\beta_{i}$ form either a
path, a cycle or a balloon. If $h_{i}=0$, this process can also be
obtained by gluing together two suitable vertices of $\Sigma_{i-1}$
and folding. 

Now assume that $h_{i}>0$ and $h_{i+1}=0$, denote by $p=\Sigma_{i}\setminus\mathrm{Im}\beta_{i}$
the (open) path, cycle or balloon in $\Sigma_{i}\setminus\mathrm{Im}\beta_{i}$,
and consider a pair of vertices of $\Sigma_{i}$ that are glued together
to obtain $\beta_{i+1}$ (with folding). We claim that we may ``exchange''
the order of these two steps and get a pair of integers which is lexicographically
smaller that $\left(h_{i},h_{i+1}\right)$. Indeed, this is certainly
true if both vertices are not on $p$, in which case we may first
merge them and only then add the path\textbackslash cycle corresponding
to $\beta_{i}$: this results in the pair $\left(0,h'\right)$ (with
$h'<h_{i}$, although this is immaterial). If one or two of the merged
vertices are on $p$, we can easily find a step which, algebraically,
is equivalent to merging them, and which can be performed on $\Sigma_{i-1}$
with corresponding $h$ strictly smaller than $h_{i}$ (and then perform
the step corresponding algebraically to $\beta_{i}$). This is illustrated
in Figure \ref{fig:how exchanging steps can reduce integer sequence}.
\end{proof}
\begin{figure}
\includegraphics[scale=0.6]{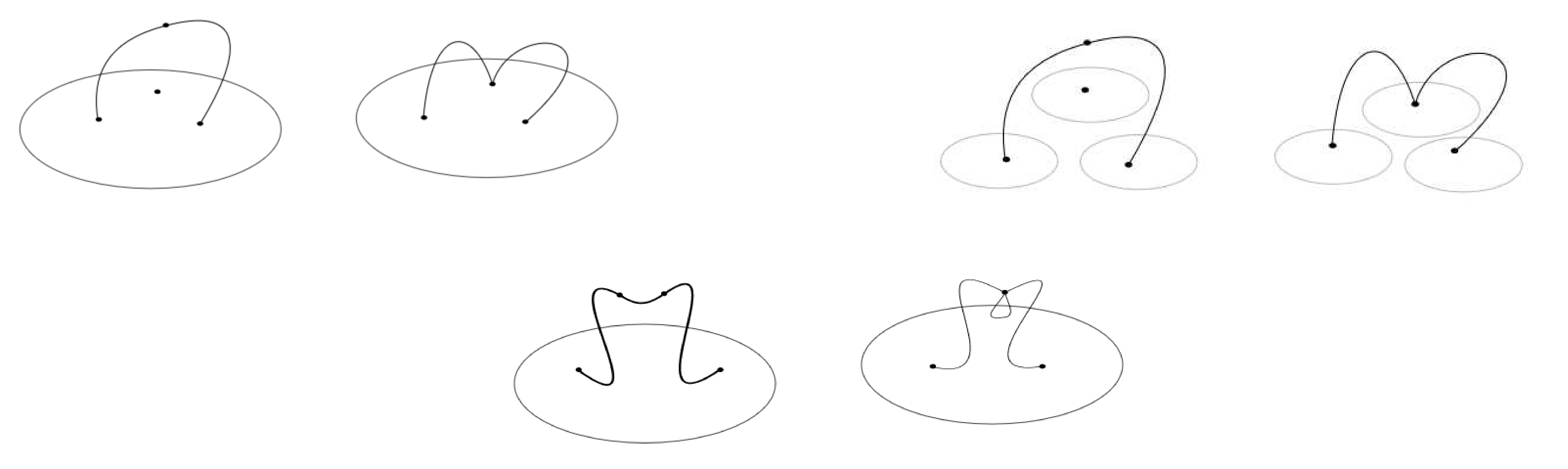}

\caption{\label{fig:how exchanging steps can reduce integer sequence}In every
pair of figures, the one on the left shows the path $p$ of length
$h_{i}>0$ which corresponds to the immediate morphism $\beta_{i}$,
and two vertices whose merging corresponds to the immediate morphism
$\beta_{i+1}$ (with $h_{i+1}=0$). The right figure in every pair
shows how the same final result can be obtained by first performing
a step which is equivalent to merging the two vertices and only then
performing a step equivalent to $\beta_{i}$. Making this change results
in a lexicographically smaller pair of integers.}
\end{figure}

\section*{Glossary}
\begin{center}
\begin{tabular}{|>{\centering}m{0.2\columnwidth}|>{\centering}m{0.37\columnwidth}|>{\centering}m{0.18\columnwidth}|>{\centering}m{0.25\columnwidth}|}
\hline 
 &  & Reference & Remarks\tabularnewline[\doublerulesep]
\hline 
$\F$ & free group of rank $r$ &  & \tabularnewline[\doublerulesep]
\hline 
$\mathbb{E}_{w}$ & expectation w.r.t.~the $w$-measure &  & \tabularnewline[\doublerulesep]
\hline 
$\mathbb{E}_{\mathrm{unif}}$ & expectation w.r.t.~the uniform measure &  & \tabularnewline[\doublerulesep]
\hline 
$\xi_{k}\left(\sigma\right)$ & number of fixed points in the permutation $\sigma^{k}$ & Equation \eqref{eq:xi k} & \tabularnewline[\doublerulesep]
\hline 
$a_{t}\left(\sigma\right)$ & number of $t$-cycles in the permutation $\sigma$ &  & \tabularnewline[\doublerulesep]
\hline 
$\pi\left(w\right)$ & primitivity rank of $w$ & Definition \prettyref{def:primitivity-rank} & \tabularnewline[\doublerulesep]
\hline 
$\crit\left(w\right)$ & set of critical subgroups of $w$ & Definition \prettyref{def:primitivity-rank} & \tabularnewline[\doublerulesep]
\hline 
$\A$ & the algebra $\mathbb{Q}\left[\xi_{1},\xi_{2},\ldots\right]$ & page \pageref{the ring A} & \tabularnewline[\doublerulesep]
\hline 
$\left\langle f,g\right\rangle $ & stable value of $\left\langle f,g\right\rangle _{S_{N}}$ & page \pageref{<f,g>} & $f,g\in\A$\tabularnewline[\doublerulesep]
\hline 
$\symirr$ & stable irreducible characters of $\left\{ S_{N}\right\} _{N}$ & page \pageref{stable irreps} & subset of $\A$\tabularnewline[\doublerulesep]
\hline 
$B=\left\{ b_{1},\ldots,b_{r}\right\} $ & a fixed basis of $\F$ &  & \tabularnewline[\doublerulesep]
\hline 
\end{tabular}
\par\end{center}

\begin{center}
\begin{tabular}{|>{\centering}m{0.2\columnwidth}|>{\centering}m{0.37\columnwidth}|>{\centering}m{0.18\columnwidth}|>{\centering}m{0.25\columnwidth}|}
\hline 
 &  & Reference & Remarks\tabularnewline[\doublerulesep]
\hline 
$\mocc\left(\F\right)$ & the category of multisets of conjugacy classes of non-trivial f.g.~subgroups
of $\F$ &  & for the morphisms see Definition \ref{def:morphism in MOCC}\tabularnewline[\doublerulesep]
\hline 
$\mcg_{B}\left(\F\right)$ & the category of $B$-labeled multi core graphs & Definition \ref{def:multi-core-graphs} & for the morphisms see Definition \ref{def:morphism of multi CG}\tabularnewline[\doublerulesep]
\hline 
$\piol\left(\Gamma\right)$ & the multiset in $\mocc\left(\F\right)$ corresponding to the multi
core graph $\Gamma$ & Section \ref{subsec:Multi-core-graphs} & \tabularnewline[\doublerulesep]
\hline 
$\Gamma_{B}\left(\H\right)$ & the multi core graph corresponding to the multiset $\H\in\mocc\left(\F\right)$ & Section \ref{subsec:Multi-core-graphs} & \tabularnewline[\doublerulesep]
\hline 
$\rk\Gamma=\rk\H,\chi\left(\Gamma\right)=\chi\left(\H\right)$, $c\left(\Gamma\right)=c\left(\H\right)$ & sum of ranks of subgroups in $\H$, Euler characteristic of $\Gamma$,
$\left|\H\right|$ & Definition \ref{def:chi, c, rank} & $\H=\piol\left(\Gamma\right)$; $\rk+\chi=c$\tabularnewline[\doublerulesep]
\hline 
$\Phi_{\eta}\left(N\right)$ & the expected number of lifts of $\eta$ to a random $N$-cover of
$\Delta$ & Definition \ref{def:Phi}, Proposition \ref{prop:geometric meaning for Phi} & $\eta\colon\Gamma\to\Delta$ is a morphism in $\mcg_{B}\left(\F\right)$\tabularnewline[\doublerulesep]
\hline 
$X_{B}$ & the bouquet in $\mcg_{B}\left(\F\right)$ representing $\left\{ \F^{\F}\right\} $ &  & \tabularnewline[\doublerulesep]
\hline 
$\Gamma\stackrel{*}{\to}\Delta$ & a free morphism of multi core graphs & Definition \ref{def:free morphisms} & \tabularnewline[\doublerulesep]
\hline 
$\left\Vert \eta\right\Vert _{B}$ & $B$-norm of the $B$-surjective morphism $\eta$ & Equation \eqref{eq:B-norm of a morphism} & \tabularnewline[\doublerulesep]
\hline 
$\left\Vert \eta\right\Vert $ & norm of the morphism $\eta$ & Definition \ref{def:basis independent norm} & \tabularnewline[\doublerulesep]
\hline 
$\chimax\left(\eta\right)$ & maximal $\chi\left(\Sigma\right)$ of all decompositions of $\eta\colon\Gamma\to\Delta$
as $\xymatrix{\Gamma\ar[r]_{\mathrm{alg}} & \Sigma\ar[r]_{*} & \Delta}
$ & Definition \ref{def:chi-max} & \tabularnewline[\doublerulesep]
\hline 
$\crit\left(\eta\right)$ & set of critical decompositions of $\eta$ & Definition \ref{def:chi-max} & \tabularnewline[\doublerulesep]
\hline 
$\decomp\left(\eta\right)$, $\decompt\left(\eta\right)$ & decompositions of $\eta$ to pairs/triples of $B$-surjective morphisms & Definition \ref{def:set of decompositions} & $\eta$ is $B$-surjective\tabularnewline[\doublerulesep]
\hline 
$L^{B},R^{B},C^{B}$ & Möbius inversions of $\Phi$ in the category of $B$-surjective morphisms & Section \ref{subsec:Basis-dependent-M=0000F6bius} & \tabularnewline[\doublerulesep]
\hline 
$\algdecomp\left(\eta\right)$, $\algdecompt\left(\eta\right)$ & decompositions of $\eta$ to pairs/triples of algebraic morphisms & Definition \ref{def:set of decompositions} & $\eta$ is algebraic\tabularnewline[\doublerulesep]
\hline 
$L^{\mathrm{alg}},R^{\mathrm{alg}},C^{\mathrm{alg}}$ & Möbius inversions of $\Phi$ in the category of algebraic morphisms & Section \ref{subsec:Algebraic-M=0000F6bius-Inversion} & \tabularnewline[\doublerulesep]
\hline 
$\Gamma_{\ak}^{w}$ & the multiset of cycles corresponding to $\qak$ in $\mcg_{B}\left(\F\right)$ & Example \ref{exa:main thm in terms of Phi} & \tabularnewline[\doublerulesep]
\hline 
$\eta_{\ak}^{w}$ & the morphism from $\Gamma_{\ak}^{w}$ to $X_{B}$ & Example \ref{exa:main thm in terms of Phi} & \tabularnewline[\doublerulesep]
\hline 
$\chiak$ & maximal \emph{negative} $\chi\left(\Sigma\right)$ of all algebraic
morphisms $\Gamma_{\ak}^{w}\to\Sigma$ & Definition \ref{def:chi-max for multiset of cycles} & \tabularnewline[\doublerulesep]
\hline 
$\crit_{\ak}$ & critical morphisms realizing $\chiak$ & Definition \ref{def:chi-max for multiset of cycles} & \tabularnewline[\doublerulesep]
\hline 
\end{tabular}
\par\end{center}

\bibliographystyle{alpha}
\bibliography{word_measures_on_symmetric_groups}

\noindent Liam Hanany, School of Mathematical Sciences, Tel Aviv University,
Tel Aviv, 6997801, Israel\\
\texttt{liamhanany@mail.tau.ac.il }~\\

\noindent Doron Puder, School of Mathematical Sciences, Tel Aviv University,
Tel Aviv, 6997801, Israel\\
\texttt{doronpuder@gmail.com}
\end{document}